\def\AC{\mathcal{A}}
\def\PC{\mathcal{P}}
\def\MC{\mathcal{M}}
\def\PC{\mathcal{P}}
\def\C{\mathbf{C}}
\def\E{\mathbf{E}}
\def\N{\mathbf{N}}
\def\P{\mathbf{P}}
\def\R{\mathbf{R}}
\def\Z{\mathbf{Z}}
\def\1{\mathbf{1}}
\def\0{\mathbf{0}}
\def\sgn{\rm{sgn}}
\def\al{\alpha}
\def\be{\beta}
\def\pa{\partial}
\def\ep{\epsilon}
\def\de{\delta}
\def\ga{\gamma}
\def\ka{\varkappa}
\newtheorem{prop}{Proposition}[section]
\newtheorem{theorem}{Theorem}[section]
\newtheorem{remark}{Remark}
\newcommand{\la}{\lambda}
\newcommand{\om}{\omega}
\newcommand{\Ga}{\Gamma}
\newcommand{\Si}{\Sigma}
\newcommand{\Om}{\Omega}
\newcommand{\La}{\Lambda}
\begin{document}
\frontmatter

\title{Multi-agent interaction and nonlinear Markov games
\thanks{ This is the final draft of the 1st part of the book 'Many agent games in socio-economic systems: corruption, inspection, coalition building, network growth, security'
published by Springer in the Springer Series on Operational Research, 2019}}
\author{
Vassili N. Kolokoltsov\thanks{Department of Statistics, University of Warwick,
Coventry CV4 7AL UK, and associate member of the Fac. of Appl. Math. and Control Processes,
St.-Petersburg State Univ., Russia, Email: v.kolokoltsov@warwick.ac.uk}
and Oleg A. Malafeyev\thanks{Fac. of Appl. Math. and Control Processes, St.-Petersburg State Univ., Russia}}

\maketitle

\newpage

\section*{Preface}

 The general picture of game theoretic modeling dealt with in this book
 is characterized by a set of big players, also referred to as principals
or major agents, acting on the background of large pools of small players,
the impact of the behavior of each small player in a group on the overall
 evolution decreasing with the increase of the size of the group.

Two approaches to the analysis of such systems are clearly distinguished
and are dealt with in Parts I and II.

(1) Players in groups are not independent rational optimizers. They are either directly controlled by principals
 and serve the interests of the latter (pressure and collaboration setting) or they resist the actions of the
 principals (pressure and resistance setting) by evolving their strategies in an 'evolutionary manner' via interactions
 with other players subject to certain clear rules, deterministic or stochastic. Such interactions, often referred to
 as myopic or imitating, include the exchange of opinions or experience, with some given probabilities of moving to
  more profitable strategies. They can also evolve via the influence of social norms.

The examples of the real world problems involved include government representatives
(often referred to in the literature as benevolent dictators) chasing corrupted bureaucrats,
inspectors chasing tax-paying avoidance, police acting against terrorist groups or models
describing the attacks of computer or biological viruses. This includes the problem of optimal
allocation  of the budget or efforts of the big player to different strategies affecting small
players, for instance, the allocation of funds (corrected in real time) for the financial
support of various business or research projects. Other class of examples concerns appropriate
(or better optimal) management of complex stochastic systems consisting of large number
of interacting components (agents, mechanisms, vehicles, subsidiaries, species, police units,
robot swarms, etc), which may have competitive or common interests. Such management can also
deal with the processes of merging and splitting of functional units (say, firms or banks) or
the coalition building of agents. The actions of the big players effectively control the
distribution of small players among their possible strategies and can influence the rules of
their interaction. Several big players can also compete for more effective pressure on small
players. This includes, in particular, the controlled extensions of general (nonlinear)
evolutionary games. Under our approach the classical games of evolutionary biology,
like hawk and dove game, can be recast as a controlled process of the propagation of violence,
say in regions with mixed cultural and/or religious traditions.

For discrete state spaces the games of this kind
were introduced in \cite{Ko09}, \cite{Ko10},\cite{Ko12}
under the name of nonlinear Markov games. Similar
models with continuous state space were developed in
\cite{BenFr} under the name of mean-field-type games,
see also \cite{Djehiche15}.

(2) Small players in groups are themselves assumed to be rational optimizers, though in the limit of large number
of players the influence of the decisions of each individual player on the whole evolution becomes negligible.
The games of this type are referred to as mean field games. They were introduced in \cite{HCM3} and \cite{LL2006},
with the finite state space version put forward in \cite{CMS2010}, and since then developed into one
of the most active directions of research in game theory. We shall discuss this setting mostly in combination with
the first approach, with evolutionary interactions (more precisely, pressure and resistance framework) and individual
decision making taken into account simultaneously. This combination leads naturally to two-dimensional arrays of possible states
of individual players, one dimension controlled by the principals and/or evolutionary interactions and another dimension
by individual decisions.

Carrying out a traditional Markov decision analysis for
a large state space (large number of players and particles)
is often unfeasible. The general idea for our analysis is
that under rather general assumptions, the limiting problem
for a large number of agents can be described by a well
manageable deterministic evolution, which represents a
performance of the dynamic law of large numbers (LLN).
This procedure turns the 'curse of dimensionality' to
the 'blessing of dimensionality'. As we show all basic
criteria of optimal decision making (including competitive
control) can be transferred from the models of a large
number of players to a simpler limiting decision problem.

 Since even the deterministic limit of the combined rational decision making processes and evolutionary type models
 can become extremely complex, another key idea of our analysis is in searching for certain reasonable asymptotic regimes,
 where explicit calculations can be performed. Several such regimes are identified and effectively used.

We will deal mostly with discrete models, thus avoiding technicalities arising in general models (requiring
stochastic differential equations or infinite-dimensional analysis). Extensions dealing with general jump-type
 Markov processes are often straightforward, see e.g. \cite{Ko17}.

From the practical point of view the  approaches developed here are mostly appropriate for dealing with socioeconomic processes
that are not too far from an equilibrium. For those processes the equilibria play the role of the so-called turnpikes, that is,
attracting stationary developments. Therefore much attention in our research is given to equilibria (rest points of the dynamics)
and their structural and dynamic stability. For processes far from equilibria other approaches seem to be more relevant,
for instance the methods for the analysis of turbulence. Another problem needed to be addressed for concrete applications of our models
lies in the necessity to get hold of the basic parameters entering its formulation, which may not be that easy. Nevertheless,
the strong point of our approach is that it requires to identify really just a few real numbers, which may be derived in principle
 from statistical experiments or field observations, and not any unknown multi-dimensional distributions.

In the first introductory chapter we explain the main
results and applications with the minimal use of technical
language trying to make the key ideas accessible to readers
with only rudimentary mathematical background. The core of
the book is meant for readers with some basic knowledge of
probability, analysis and ordinary differential equations.

For the sake of transparency we systematically develop
the theory with the sequential increase of complexity,
formulating and proving results first in their simplest
 versions and then extending generality.

\section*{Basic notation}

Notations introduced here are used in the main text systematically without further reminder.

$\Z$, $\C$, $\R$, $\N$ denote the sets of integer, complex,
real and natural numbers, $\Z_+$ and $\R_+$ the
subsets of non-negative elements of the corresponding sets.

Letters $\E$, $\P$ are reserved for the expectation
and probability with respect to various Markov chains.

For a convex closed subset $Z$ of $\R^d$, let $C(Z)$ denote the
space of bounded continuous functions equipped with the sup-norm:
$\|f\|=\|f\|_{sup}=\sup_x |f(x)|$, and $C_{\infty}(Z)$ its closed
 subspace of functions vanishing at infinity.

The vectors in $x\in \R^d$ can be looked at as functions on $\{1, \cdots, d\}$, in which case the natural norm
is the sup-norm $\|x\|_{sup}=\max_i|x_i|$ (rather than more standard rotation invariant Euclidean norm). But much
 often we shall interpret these vectors as measures on  $\{1, \cdots, d\}$,
in which case the natural norm becomes the $l_1$-norm or the integral norm
$\|x\|=\|x\|_1=\sum_j |x_j|$. This norm will be used as default in $\R^d$,
unless stated otherwise in specific cases. Accordingly, for functions
 on $Z\subset \R^n$ we define the Lipschitz constant
 \begin{equation}
\label{eqdefLipnorml1}
\|f\|_{Lip} = \sup_{x\neq y} \frac{|f(x)-f(y)|}{\|x-y\|}.
\end{equation}
As is easy to see, this can be equivalently rewritten as
 \begin{equation}
\label{eqdefLipnorml1a}
\|f\|_{Lip} =\sup_j \sup \frac{|f(x)-f(y)|}{|x_j-y_j|},
\end{equation}
where the last $\sup$ is the supremum over the pairs $x,y$ that differ
 only in its $j$th coordinate. By $C_{bLip}(Z)$ we denote the space of
 bounded Lipschitz functions with the norm
 \begin{equation}
\label{eqdefLipnorm}
 \|f\|_{bLip}=\|f\|+\|f\|_{Lip}.
\end{equation}

Norms of $d\times d$-matrices are identified with their norms as linear operators in $\R^d$:
 $\|A\|=\sup_{x\neq 0}\|Ax\|/\|x||$. This depends on the norm in $\R^d$. We will look at matrices
 as acting by usual left multiplications $x\to Ax$ in $\R^d$ equipped with the sup-norm or
 as acting by the right multiplications $x\to xA=A^Tx$ in $\R^d$ equipped with the integral norm,
 so that in both cases (as is checked directly)
 \begin{equation}
\label{eqourmatrixnorm}
\|A\|=\sup_i \sum_j |A_{ij}|.
\end{equation}

By $C^k(Z)$ we denote the space of $k$ times continuously differentiable
functions on $Z$ with uniformly bounded derivatives equipped with the norm
\[
\|f\|_{C^k(Z)}=\|f\|+\sum_{j=1}^k \| f^{(j)}(Z)\|,
\]
where $\|f^{(j)}(Z)\|$ is the supremum of the magnitudes of all partial derivatives of $f$ of order $j$ on $Z$.
In particular, for a differentiable function, $\|f\|_{C^1}=\|f\|_{bLip}$ (by \eqref{eqdefLipnorml1a}).
For a function $f(t,x)$ that may depend on other variables, say $t$, we shall also use the notation
\[
 \| f^{(j)}\|=\| \frac{ \pa ^j f}{\pa x^j}\|=\|f(t,.)^{(j)}\|,
 \]
 if we need to stress the variable $x$ with respect
 to which the smoothness is assessed. We shall also
 use shorter notations $\| f^{(j)}\|$, $C^k$ or $C_{bLip}$
for the norms $\| f^{(j)}(Z)\|$ and the spaces $C^k(Z)$
or $C_{bLip}(Z)$, when it is clear which $Z$ is used.

The space  $C(Z,\R^n)$ of bounded continuous vector-valued
 functions $f=(f_i):Z\to \R^n$, $Z\subset \R^d$, will be also
usually denoted $C(Z)$ (with some obvious abuse of notations),
their norm being
\[
\|f\|=\sup_{x\in Z} \|f(x)\|=\sup_{x \in Z} \sum_i |f_i(x)|,
\quad
\|f\|_{sup}=\sup_{i,x} |f_i(x)|.
\]
Similar shorter notation will be used for spaces of smooth
vector-valued functions $f=(f_i):Z\to \R^n$ equipped with the norm
\[
\|f\|_{C^k(Z)}=\|f\|+\sum_{j=1}^k \| f^{(j)}\|,
\quad \| f^{(j)}\|= \| f^{(j)}(Z)\|=\sum_i \|f_i^{(j)}(Z)\|.
\]

In chapter \ref{secmodgrowthpres} we will work with the functions
on infinite sequences, namely on the space $l_1$ of sequences
$\{x=(x_1,x_2, \cdots )\}$ having finite norm  $\|x\|=\sum_j |x_j|$.
All the notations above will be used then for closed convex subsets $Z$ of $l_1$.

\section*{Standard abbreviations}

\hspace{5mm}

r.h.s. right-hand side

l.h.s. left-hand side

LLN law of large numbers

ODE ordinary differential equation

PDE partial differential equation

PDO partial differential operator

\section*{Acknowledgements}

The authors are grateful to many colleagues for fruitful discussions
and especially to our collaborators on the related research developments:
Yu. Averbukh, A. Bensoussan, A. Hilbert, S. Katsikas, L. A. Petrosyan,
M. Troeva, W. Yang. We also thank R. Avenhaus, N. Korgyn, V. Mazalov
and the anonymous referees for careful reading the preliminary versions
and making important comments.

The authors gratefully acknowledge support of the RFBR grants 17-01-00069
(Part II) and 18-01-00796 (Part I).

\newpage

\tableofcontents

\newpage

\mainmatter

\chapter{Introduction: main models and LLN  methodology}
\label{chapintkolMaCor}

First sections of this chapter are devoted to a brief introduction to our basic model
of controlled mean-field interacting particle systems, its main performance in the pressure
and resistance framework, and to the fundamental paradigm of the scaling limit (dynamic law of large numbers)
allowing one to reduce the analysis of these Markov chains with exponentially large
state spaces to a low dimensional dynamic system expressed in terms of simple ordinary
 differential equations (ODEs), so called kinetic equations. The rigorous mathematical
 justification of this reduction being postponed to the next chapter, in the rest of this chapter
 we describe various classes of real life socio-economic processes fitting to the general scheme:
 inspection, corruption, cyber-security, counterterrorism, coalition building, merging and splitting,
 threshold control behavior in active network structures, evolutionary games with controlled
 mean-field dependent payoffs and many other. In fact, our general
 framework allows us to bring a big variety of models (often studied independently) under a single umbrella
 amenable to simple unified analysis. Moreover, many standard socio-economic models, dealt with traditionally
 via small games with 2 or 3 players, arise in our presentation in new clothes allowing for the analysis of
 their many-player extensions. In the last section we give a brief guide to further content of the book.

\section{What is a Markov chain}
\label{secMarch}

The notion of Markov chains, the simplest models of random evolutions, is a cornerstone of our analysis.
Here we describe the basic ideas with the minimal usage of technical terms.

One distinguishes Markov chains in discrete and continuous time.

The Markov chains in discrete time are characterized by
a finite set $\{1,...,d\}$, called the {\it state space}, of possible positions of an object under investigation (particle or agent),
and a collection of non-negative numbers $P=\{P_{ij}\}$, $i,j$ from $\{1,...,d\}$,
called the {\it transition probabilities}\index{transition probabilities} from $i$ to $j$, such that $P_{i1}+\cdots +P_{id}=1$ for any $i$.
Collection of numbers $P=\{P_{ij}\}$ satisfying these conditions is called a {\it matrix of transition probabilities},
or a {\it transition matrix}\index{transition matrix} or a {\it stochastic matrix}.
The {\it Markov chain}\index{Markov chain} specified by the collection $P$ is the process of random transitions between
the states  $\{1,...,d\}$ evolving in discrete times $t=0,1,2, \cdots$ by the following rule. If at some time $t$ the object
is in a state $i$, in the next moment $t+1$ it migrates from state $i$ to some other state $j$,
choosing $j$ with the probability $ P_{ij}$. Then the process repeats starting from $j$ at time $t+1$,
and the same procedure continues ad infinitum.

The Markov chains in continuous time are characterized by
a finite set $\{1,...,d\}$, called the {\it state space}, of possible positions of an object under investigation (particle or agent),
and a collection of non-negative numbers $Q=\{Q_{ij}\}$, $i\neq j$, $i,j$ from $\{1,...,d\}$,
called the {\it transition rates}\index{transition rates} from $i$ to $j$.
To circumvent the restriction $i\neq j$ it is convenient to introduces the negative numbers
\begin{equation}
\label{defintenMC}
Q_{ii}=-\sum_{j\neq i} Q_{ij}.
\end{equation}
The positive numbers $|Q_{ii}|=-Q_{ii}$ are called the {\it intensities of jumps}\index{intensities of jumps}
at state $i$. The collection $Q=\{Q_{ij}\}$,  $i,j\in\{1,...,d\}$, is then called a $Q$-{\it matrix} or a
{\it Kolmogorov's matrix}\index{Kolmogorov's matrix}. The {\it Markov chain}\index{Markov chain} specified by the
collection $Q$ is the process of random transitions between the states  $\{1,...,d\}$ evolving by the following
rule. If at some time $t$ the object is in a state $i$, the object sits at $i$ a random time $\tau$
characterized by the following waiting probability:
\[
\P (\tau >s)=\exp\{-s|Q_{ii}|\},
\]
such random time being referred to as $|Q_{ii}|$-{\it exponential waiting time}.
At time $\tau$ the object instantaneously migrates from state $i$ to some other state $j$,
choosing $j$ with the probability $ Q_{ij}/ |Q_{ii}|$. Then the process repeats starting from $j$,
and the same procedure continues ad infinitum.

Models with discrete or continuous time are chosen from practical convenience. If we observe a process
with some fixed frequency (say, measure a temperature in the sea every morning), discrete setting is more appropriate.
If we observe a process steadily, continuous-time-setting is more appropriate.

It is useful and popular to represent Markov chains geometrically via
oriented graphs. Namely, the {\it graph of a continuous time chain with
the rates}\index{Markov chain!graph of} $Q_{ij}$, $i,j \in \{1, \cdots , d\}$, is
a collection of $d$ vertices with the oriented edges $e_{ij}$ (arrows
directed from $i$ to $j$) drawn for the pairs $(i,j)$ such that $Q_{ij} \neq 0$. To
complete the picture the values $Q_{ij}$ can be placed against each edge $e_{ij}$.

The typical examples of Markov chains with two states represent radioactive atoms that can decay
after some random time (decay time) or firms that can be defaulted after some random time
(time to default). In fact, Markov chains are indispensable tools in all branches of science.
For instance, in genetics they model the propagation of various genes through the reproduction
(that can be randomly chosen from father of mother), in biology they control the processes
of evolution, in finances they model the dynamics of prices of financial instruments (say,
stocks or options), in economics they are used to model firms' growth and creditability
or the competitions between banks or supermarkets, etc.

The state spaces of Markov chains that we will be mostly concern with here
are the strategies of individuals (say, being corrupted or honest inspector,
level of illegal activity, etc) or the working conditions of agents or devices
(say, infected or susceptible computer or individual, level of defence, etc).

\section{Mean-field interacting particle systems}
\label{secmeanfieldch1}

 Let us turn to the basic setting of mean-field interacting particle systems with a finite number of types.
 We want to model a large number $N$ of particles or agents evolving each according to a Markov chain on some
 common state space $\{1,...,d\}$, but with the transition rates depending on the overall distribution of agents
 among the states. Thus the overall state of the system of such agents is given by a collection of integers
 $n=(n_1, \cdots, n_d)$, where $n_j$ is the number of agents in the state $i$, so that $N=n_1+\cdots +n_d$.

 Usually it is more convenient to work with frequencies, that is, instead of $n$, to use the normalized quantities
  $x=(x_1, \cdots , x_d)=n/N$, that is,  $x_j=n_j/N$. All such vectors $x$ belong to the unit simplex
  $\Si_d$, defined as the collections of all $d$ non-negative numbers summing up to unity:
  \[
 \Si_d=\{x=(x_1,...,x_d): x_j\ge 0 \,\, \text{for all}\,\, j \,\, \text{and} \,\, \sum_{j=1}^d x_j=1\}.
 \]
  To describe transitions of individuals depending on the overall
  distribution $x$, we have to have a family $\{Q (x)\}=\{(Q_{ij})(x)\}$ of Kolmogorov's matrices
  depending on a vector $x$ from $\Si_d$.

Thus, for any $x$, the family $\{Q (x)\}$ specifies a Markov chain on the state space $\{1,...,d\}$.
But this is not what we are looking for. We are interested in a system with many agents, i.e. a Markov chain
on the states $n=(n_1, \cdots, n_d)$. Namely, the {\it mean-field interacting particle system}
\index{mean-field interacting particle system} with $N$ agents specified by
the family $\{Q(x)\}$ is defined as the Markov chain on the set of collections
  $n=(n_1, \cdots, n_d)$ with $N=n_1+\cdots +n_d$ evolving
according to the following rule.  Starting from any time and current state $n$
one attaches to each particle a $|Q_{ii}|(n/N)$-exponential random waiting time (where $i$ is the
type of this particle). These $N$ times are random and their minimum is therefore also a random time,
which is in fact (as can be shown) a $\sum_i n_i|Q_{ii}(n/N)|$-exponential random waiting time.
If this minimal time $\tau$ turns out to be attached to some particle
of type $i$, this particle jumps to a state $j$ according to the probability law $(Q_{ij}/|Q_{ii}|)(n/N)$.
After such a transition the state $n$ of our system turns to the state $n^{ij}$,
which is the state obtained from $n$ by removing one particle of type $i$ and adding a particle of type $j$,
that is $n_i$ and $n_j$ are changed to $n_i-1$ and $n_j +1$ respectively.
After any such transition the evolution continues starting from the new state $n^{ij}$, etc.

Equivalently the transition from a state $n$ can be described by a single clock, like in the standard setting
for Markov chains. Namely, to a state $n$ one attaches a $\sum_i n_i|Q_{ii}(n/N)|$-exponential random waiting time. Once the bell rings one chooses the transition $i\to j$, $i\neq j$, with the probability
\begin{equation}
\label{deftransprobmfMC}
\P(i\to j)= \frac{n_iQ_{ij}(n/N)}{\sum_k n_k |Q_{kk}(n/N)|}.
\end{equation}
These are well defined probabilities, since
\[
\sum_i \sum_{j\neq i} \P(i\to j)= 1.
\]

As was mentioned, as the number of particles $N$ becomes large, the usual
analysis of such Markov chain becomes unfeasible, the situation often referred
to as the 'curse of dimensionality'. To overcome this problem one is searching
for a simpler limiting evolution as $N\to \infty$. The situation is quite similar
to the basic approaches used in physics to study the dynamics of gases and liquids:
instead of following the random behavior of immense number of individual molecules,
one looks for the evolution of the main mean characteristics, like bulk velocities
or temperatures that often turns out to be described by some
deterministic differential equations.

As will be shown, the limiting evolution of the frequencies
$x=(x_1, \cdots, x_d)$ evolving according to the Markov chain described above
 is governed by the system of ordinary differential equations
\begin{equation}
\label{eqdefgenmeanfieldchainkineq}
\dot x_k =\sum_{i\neq k}( x_i Q_{ik}(x)-x_kQ_{ki}(x))
=\sum_{i=1}^d x_i Q_{ik}(x), \quad k=1,...,d
\end{equation}
(the last equation arising from \eqref{defintenMC}), called the
{\it kinetic equations}\index{kinetic equation} for the process
 of interaction described above.

More precisely, under some continuity assumptions on the family
$\{Q(x)\}$ one shows that if $X^N_x(t)$ is the position of our
mean-field interacting particle system at time $t$ when started
at $x$ at the initial time and $X_x(t)$ is the solution of system
\eqref{eqdefgenmeanfieldchainkineq} at time $t$ also started at
$x$ at the initial time, then the probability for the deviation
$|X^N_x(t)-X_x(t)|$ to be larger than any number
$\ep$ tends to zero as $N\to \infty$:
\begin{equation}
\label{eqdynLLNprob}
\P (|X^N_x(t)-X_x(t)|>\ep) \to 0, \quad N\to \infty.
\end{equation}

Let us stress again that system \eqref{eqdefgenmeanfieldchainkineq}
is deterministic and finite dimensional, which is an essential
simplification as compared with Markov chains with increasingly large state spaces.

In this most elementary setting the result \eqref{eqdynLLNprob} is well known.
We will be interested in more general situations when the transition rates $Q$
are controlled by one or several big players that can influence the dynamics of
the chain to their advantages. Moreover, we are concerned with the rates of
convergence under mild regularity assumptions on $Q$. The rates are better
characterized in terms of the convergence of some bulk characteristics
(so-called weak convergence), rather than directly via the trajectories.

The convergence rates are crucial for any practical calculations. For instance,
if the difference of some characteristics of Markov chain $X^N_x(t)$ and its
limit $X_x(t)$ is of order $1/N$, then applying the approximation $X_x(t)$ to
the chain $X^N_x(t)$ yields the error of order $1\%$ for $N=100$ players and
the error of order $10\%$ for $N=10$ players. Thus the number $N$ does not have
to be very large for the approximation \eqref{eqdefgenmeanfieldchainkineq}
of 'infinitely many players' to give reasonable predictions.

An important question concerns the large-time-behavior of mean-field interacting particle systems with finite $N$.
It is usually possible to show that this system spends time $t$ of order of the number of particles, $t\sim N$, in
a neighborhood of a stable set of fixed points of dynamics  \eqref{eqdefgenmeanfieldchainkineq}.
Often more precise conclusions can be made.

\section{Migration via binary interaction}
\label{secmigbin}

An important particular case of the scheme above occurs from the setting of
{\it binary interactions}\index{binary interaction}.
Namely, let us assume that any player from any state $i$ can be influenced by any player in any other state $j$
to migrate from $i$ to $j$, the random events occurring with some given intensity $T_{ij}(n/N)/N$
(the appearance of the multiplier $1/N$ being the mark of the general approach according to which
each individual's contribution to the overall evolution becomes negligible in the limit of large number of players $N$).
More precisely, to any ordered pair of agents $A$, $B$ in different states, say $i \neq j$, one attaches random
$T_{ij}(n/N)/N$-exponential waiting time. The minimum of all these waiting time is again a random waiting time.
If the minimum occurs on a pair $(A,B)$ from some states $i$ and $j$ respectively, then $A$ migrates
from the state $i$ to the state $j$, and then the process continues from the state $n^{ij}$. Equivalently, this process
can be again described by one clock with $\sum_{i\neq j} n_in_jT_{ij}(n/N)/N$-exponential waiting time, attached to a state $n$.
When such clock rings,  the transition $i\to j$, $i\neq j$, is chosen with probability
\[
\P(i\to j)= \frac{n_in_jT_{ij}(n/N)/N}{\sum_{k\neq l} n_kn_l T_{kl}(n/N)/N}
=\frac{n_ix_jT_{ij}(x)}{\sum_{k\neq l} n_kx_l T_{kl}(x)}.
\]

As is seen directly, the process is the same as the one described above with the matrix $Q_{ij}(x)$ of the type
\[
Q_{ij}(x)= x_j T_{ij}(x), \quad i\neq j.
\]

\begin{remark}
The possibility of such full reduction of binary interaction to a mean-field interaction model is due to the
simple model of pure migration that we considered here, and it does not hold for arbitrary binary interactions.
\end{remark}

\begin{remark}
Another point to stress is that the ability of any pair to meet is the mathematical expression of full mixing.
If it were not assumed (say, agent spatially separated were not able to meet), the number of interactions would
not depend essentially on the size of the population, but rather on the size of a typical neighborhood,
and the modelling would be essentially different.
\end{remark}

\section{Introducing principals}
\label{secprinplaysim}

As was already mentioned, the next step is to include a major player or principal
that can influence the evolution of our mean-field interacting particle system.
As examples, one can think about the management of complex stochastic systems
consisting of large number of interacting components: agents, mechanisms, robots,
vehicles, subsidiaries, species, police units, etc. To take this into account, our
family of $Q(x)$ matrices should become dependent on some parameter $b$ controlled
by the principle: \{Q=Q(x,b)\}. This can be a real parameter (say, the budget used
by the principle for management) or a vector-valued parameter, for instance,
specifying resources for various directions of development. In any case, $b$ is
supposed to belong to some domain in a Euclidean space, usually bounded,
as the resources can not be infinite.

We shall  also assume that the principal has some objectives in this process, aiming to maximize
some profit or minimize certain costs arising from the functioning of the Markov chain.
In the simplest situation we can assume the principal to
be a {\it best response principal}\index{best response principal},
which instantaneously chooses the value of $b^*$  maximizing
 some current profit $B(x,b,N)$ for given $x, N$:
\begin{equation}
\label{eqbestrespprin}
b^*=b^*(x,N)=argmax B(x,.,N).
\end{equation}

This procedure reduces the dynamics to the previous case but with $Q^N(x)=Q(x, b^*(x,N))$,
which can now depend on $N$. Convergence \eqref{eqdynLLNprob} can still be shown if $b^*$
stabilizes as $N\to \infty$, that is, $b^*(x,N)\to b^*(x)$, as $N\to \infty$,
for some function $b^*(x)$. The limiting evolution  \eqref{eqdefgenmeanfieldchainkineq}
 turns to the evolution
\begin{equation}
\label{eqkineqprin}
\dot x_k =\sum_{i=1}^d x_i Q_{ik}(x, b^*(x)), \quad k=1,...,d.
\end{equation}

More realistic modeling would require the principal to choose the control parameter strategically
with some planning horizon $T$ and a final goal at the end. Moreover, the model should include costs
for changing strategies. This leads to a dynamic control problem, where the principal chooses
controls at fixed moments of time (discrete control setting) or in continuous time (continuous time setting).

Further extensions deal with several principals competing on the background of small
players, where the pool of small players can be common for the major agents (think
about advertising models, or several states fighting together a terrorist group)
or be specific to each major agent (think about generals controlling armies, big
banks controlling their subsidaries, etc). The development of such models and their
dynamic LLN (law of large numbers), leading to better manageable finite-dimensional
systems is the main concern in this book.

\section{Pressure and resistance framework}
\label{secpresres}

Let us present now a key particular performance of the above general scheme introduced
 by one of the authors in \cite{Ko17} and called there the {\it pressure and resistance game}.
In this setting the interests of the major player $P$ and the small players are essentially opposite, and
the mean-field dependence of the actions of small players arises from their binary
interactions. Namely, as in the above general setting, the strategies of player $P$ are
 given by the choices of controls $b(x,N)$, from a given  subset of Euclidean space, based
on the number of small players $N$ and the overall distribution $x$ of the strategies of small
 players. The evolution of the states of small players goes as in Section \ref{secmigbin} with
  $T_{ij}(x,b)$ depending also on $b$, so that

\begin{equation}
\label{eqbinwithprin}
Q_{ij}(x,b)= x_j T_{ij}(x,b), \quad i\neq j.
\end{equation}

The resulting limiting process described by ODE \eqref{eqkineqprin} becomes
\begin{equation}
\label{eqbinmiglimevol}
\dot x_j=x_j \sum_i x_i [T_{ij}(x,b^*(x))-T_{ji}(x,b^*(x))], \quad j=1,...,d.
\end{equation}

More concrete performance of this model is the following
{\it pressure and resistance model}\index{pressure and resistance game}.
Let us assume that each small player enters certain interaction with $P$ (small players try to resist
 the pressure exerted on them by $P$) defined by the strategy $j\in \{1, \cdots , d\}$
and that the result of this interaction is the payoff $R_j(x,b)$ to the small player.
Moreover, with rate $\ka/N$ ($\ka$ being some constant) any pair of agents can meet and discuss their payoffs,
which may result in the player with lesser payoff $R_i$ switching to the strategy with the better payoff $R_j$,
which may occur with a probability proportional to $(R_j-R_i)$.

In other words, this is exactly the scheme described above by \eqref{eqbinwithprin} with
\begin{equation}
\label{eqtransbinpresres}
T_{ij}(x,b)= \ka (R_j(x,b)-R_i(x,b))^+,
\end{equation}
where we used the standard notation $y^+=\max(0,y)$.
Therefore, this model is also a performance of the model of Section \ref{secprinplaysim} with
\begin{equation}
\label{eqQmapresres}
Q_{ij}(x,b)= \ka x_j(R_j(x,b)-R_i(x,b))^+.
\end{equation}
The resulting limiting process described by ODE \eqref{eqbinmiglimevol} becomes
\begin{equation}
\label{eqpresreslimevol}
\dot x_j=\sum_i \ka x_i x_j[R_j(x,b^*(x))-R_i(x,b^*(x))], \quad j=1,...,d.
\end{equation}

\begin{remark}
We are working here with a pure myopic behavior for simplicity. Introduction of random mutation
on global or local levels (see e. g. \cite{KaMaRo} for standard evolutionary games) would
not affect essentially the convergence result below,
but would lead to serious changes in the long run of the game, which are worth being exploited.
\end{remark}

Thus we are dealing with a class of models, where a distinguished 'big' player exerts certain level $b$ of pressure
on (or interference into the affairs of) a large group of $N$ 'small' players that can respond with a strategy $j$.
Often these $j$ are ordered reflecting the strength or the level of the resistance.
The term 'small' reflects the idea that the influence of each particular player becomes negligible as $N\to \infty$.
As an example of this general setting one can mention the interference of humans on the environment (say, by hunting or fishing)
or the use of medications to fight with infectious bacteria in a human body,
with resisting species having the choice of occupying the areas of ample foraging but more dangerous interaction with the big player
(large resistance levels $r$) or less beneficial but also less dangerous areas (low resistance level). Another example can be the level of
 resistance of the population on a territory occupied by military forces. Some key examples will be discussed in more detail in the next sections.

In many situations, the members of the pool of small players have an alternative class
of strategies of collaborating with the big player on various levels. The creation of
such possibilities can be considered as a strategic action of the major player (who can
thus exert some control on the rules of the game). In biological setting this is, for
instance, the strategy of dogs joining humans in hunting their 'relatives' wolves or
foxes (nicely described poetically as the talk between a dog and a fox in the famous
novel \cite{SBambi}). Historical examples include the strategy of slaves helping their
 masters to terrorize and torture other slaves and by doing this gaining for themselves
more beneficial conditions, as described e.g. in the classics \cite{TomCab}. As a military
example one can indicate the strategy of the part of the population on a territory occupied
by foreign militaries that joins the local support forces for the occupants, for US troops
in Iraq this strategy being well discussed in Chapter 2 of \cite{Mesq}. Alternatively, this
is also the strategy of population helping police to fight with criminals and/or terrorists.
In the world of organized crime it is also a well known strategy to play simultaneously
both resistance (committing crime) and collaboration (to collaborate with police to get
rid of the competitors), the classic presentation in fiction being novel \cite{Fieldi}.

In general pressure and resistance games, the payoff
$R_j(x,b)$ has often the following special features: $R$ increases
in $j$ and decreases in $b$. The dependence of $R$ and $b^*$ on $x$
is more subtle, as it may take into account social norms of various character.
In case of the pressure game with resistance and collaboration, the strategic
parameter $j$ of small players naturally decomposes into two coordinates $j=(r,c)$,
the first one reflecting the level of resistance and the second
the level of collaboration.
If the correlation between these activities are not taken into account
the payoff $R$ can be decomposed into the sum
of rewards $R=R^1_j(x,b)+R^2_j(x,b)$ with $R^1$
having the same features as $R$ above, but with
$R^2$ increasing both in $j$ and $b$.

Another extension to mention is the possibility to have different classes of small players, which
brings us to the domain of optimal allocation games, but now in the competitive evolutionary setting, where
the principal (say an inspector) has the task to distribute limited resources as efficiently as possible.

\section{Nash equilibria and the fixed points of kinetic equations}
\label{secNasheqfixedp}

Since the limiting behavior of the controlled Markov chain in the pressure-resistance setting
is given by the kinetic equations \eqref{eqpresreslimevol}, one can expect
that eventually the evolution will settle down near some stable equilibrium points of
this dynamic systems. This remark motivates the analysis of these equilibria.

Moreover, these equilibria have an important game-theoretic interpretation, which is
independent of the myopic hypothesis defining the Markov evolution.

Let us consider explicitly the following game $\Ga_N$ of $N+1$ players (that was tacitly borne in mind
 when discussing dynamics).
 When the major player chooses the strategy $b$ and
each of $N$ small players chooses the state $i$, the major player receives the payoff
$B(x,b,N)$ and each player in the state $i$ receives $R_i(x,b)$, $i=1, \cdots, d$ (as above, with $x=n/N$ and
$n=(n_1, \cdots , n_d)$ the realized occupation numbers of all the states). Thus a strategy profile
of small players  in this game can be specified either by a sequence of $N$ numbers
(expressing the choice of the state by each agent), or more succinctly, by the resulting collection
of frequencies $x=n/N$.

The key notion of the game theory is that of the {\it Nash equilibrium}\index{Nash equilibrium},
which is any profile of strategies $(x_N,b_N^*)$ such that for any player
changing its choice unilaterally would not be beneficial. For the game $\Ga_N$, this is
a profile of strategies $(x_N,b_N^*)$ such that
\[
b_N^* =b_N^*(x_N,N)= argmax \, B(x_N,b,N)
\]
and for any $i,j\in \{1, \cdots , d\}$
\begin{equation}
\label{eqNashevolprin10}
R_j(x-e_i/N+e_j/N,b_N^*)\le R_i(x,b_N^*).
\end{equation}

As finding Nash equilibria is a difficult task, one often preforms
approximate calculations via some numeric schemes leading to approximate
Nash equilibria or $\ep$-Nash equilibria A profile of strategies is an
$\ep$-{\it Nash equilibrium}\index{$\ep$-Nash equilibrium} if the above
conditions hold up to an additive correction term not exceeding $\ep$.

As it will be shown, the rest points of \eqref{eqpresreslimevol}
describe all approximate Nash equilibria for $\Ga_N$. These approximate
Nash equilibria are $\ep$-Nash equilibria with $\ep$ of order $1/N$.

Let us discuss the main concrete examples of the pressure and resistance framework.

\section{Inspection and corruption}

In the inspection game with a large number of inspectees, see \cite{KoPaYa},
any one from a large group of $N$ inspectees has a number of strategies
parametrized by a finite or infinite set of nonnegative numbers $r$ indicating
the level at which she chooses to break the regulations ($r=0$ corresponds to
the full compliance). These can be the levels of tax evasion, the levels
of illegal traffic through a check point, the amounts at which the arms
production exceeds the agreed level, etc. On the other hand, a specific player,
the inspector, tries to identify and punish the trespassers. Inspector's strategies
are real numbers $b$ indicating the level of her involvement in the search process,
for instance, the budget spent on it, which is related in a monotonic way to the
probability of the discovery of the illegal behavior of trespassers. The payoff
of an inspectee depends on whether her illegal behavior is detected or not. If
social norms are taken into account, this payoff will also depend on the overall
crime level of the population, that is, on the probability distribution of inspectees
playing different strategies. The payoff of the inspector may depend on the fines
collected from detected violators, on the budget spent and again on the overall crime
level (that she may have to report to governmental bodies, say).  As time goes by,
random pairs of inspectees can communicate in such a way that one inspectee of the
pair can start copying the strategy of another one if it turns out to be more beneficial.

 A slightly different twist to the model presents the whole class of games modeling
 corruption (see \cite{Aidt}, \cite{Jain}, \cite{LaMoMaRa09}, \cite{Mal14} and \cite{KolMa15}
and references therein for a general background). For instance, developing the initial simple
 model of \cite{Beck}, a large class of these games studies the strategies of a benevolent
principal (representing, say, a governmental body that is interested in the efficient development
of economics) that delegates a decision-making power to a non-benevolent (possibly corrupt) agent,
whose behavior (legal or not) depends on the incentives designed by the principal. The agent
can deal, for example, with tax collection of firms. The firms can use bribes to persuade
a corrupted tax collector to accept falsified revenue reports. In this model the set of
inspectors can be considered as a large group of small players that can choose the level of
corruption (quite in contrast to the classical model of inspection) by taking no bribes at all,
or not too much bribes, etc. The strategy of the principal consists in fiddling with two instruments:
choosing wages for inspectors (to be attractive enough, so that the agents should be afraid to
loose it) and investing in activities aimed at the timely detection of the fraudulent behavior.
Mathematically these two types are fully analogous to preemptive and defensive methods discussed
in models on counterterrorism, see below.

In the standard setting of inspection games with a possibly tax-evading inspectee
(analyzed in detail in \cite{KoPaYa} under some particular assumptions),
the payoff $R$ looks as follows:
\begin{equation}
\label{eqdefinsppay}
R_j(x,b) =r+(1-p_j(x,b))r_j-p_j(x,b) f(r_j),
\end{equation}
where $r$ is the legal payoff of an inspectee, various $r_j$
denote various amounts of not declared profit, $j=1, \cdots , d$,
$p_j(x,b)$ is the probability for the illegal behavior of an inspectee
to be found when the inspector uses budget $b$ for searching operation
and $f(r_j)$ is the fine that the guilty inspectee has to pay when being discovered.

In the standard model of corruption 'with benevolent principal', see e. g. \cite{Aidt},
one sets the payoff of a possibly corrupted inspector (now taking the role
of a small player) as
\[
(1-p)(r+w)+p(w_0-f),
\]
where $r$ is now the bribe an inspector asks from a firm to agree not to publicize its profit
(and thus allowing her not to pay tax), $w$ is the wage of an inspector, $f$ the fine she has to pay
when the corruption is discovered and $p$ the probability of a corrupted behavior to be discovered by the
benevolent principal (say, governmental official). Finally it is assumed that when the corrupted behavior
is discovered the agent not only pays fine, but is also fired from the job
and has to accept a lower level activity with the reservation wage $w_0$. In our strategic model
we make $r$ to be the strategy of an inspector with possible levels $r_1, \cdots, r_d$
(the amount of bribes she is taking) and the probability $p$ of discovery to be dependent
on the effort (say, budget $b$) of the principal and the overall level of corruption $x$,
with fine too depending on the level of illegal behavior. This natural extension of the standard model
 leads to the payoff
\begin{equation}
\label{eqdefcorrpay}
R_j(x,b) =(1-p_j(x,b))(r_j+w)+p_j(x,b)(w_0- f(r_j)),
\end{equation}
 which is essentially identical to \eqref{eqdefinsppay}.

\section{Cyber-security, biological attack-defence and counterterrorism modeling}
\label{seccyber-secint}

A 'linguistic twist' that changes 'detected agents' to 'infected agents' brings us directly
to the (seemingly quite different) setting of cyber-security or biological attack-defence games.

For instance, let us look at the applications to the botnet defense (for example, against the
famous conflicker botnet), widely discussed in the contemporary literature, since botnets
(zombie networks) are considered to pose the biggest threat to the international cyber-security,
see e. g. review of the abundant bibliography in \cite{BenKaHo}. The comprehensive game
theoretical framework of \cite{BenKaHo} (that extends several previous simplified models)
models the group of users subject to cybercriminal attack of botnet herders as a differential
game of two players, the group of cybercriminals and the group of defenders.

Our approach adds to this analysis the networking aspects by allowing the defenders to communicate
and eventually copy more beneficial strategies. More concretely, our general model of inspection or
corruption becomes almost directly applicable in this setting by the clever linguistic change of
'detected' to 'infected' and by considering the cybecriminal as the 'principal agent'!
Namely, let $r_j$ (the index $j$ being taken from some discrete set here, though more advanced
theory of the next sections allows for a continuous parameter $j$) denote the level of defense
applied by an individual (computer owner) against botnet herders (an analog of the parameter $\ga$
of  \cite{BenKaHo}), which can be the level of antivirus programs installed or the measures envisaged
to quickly report and repair a problem once detected (or possibly a multidimensional parameter
reflecting several defense measures).
Similarly to our previous models, let $p_j(x,b)$ denote the probability for a computer of being infected
given the level of defense measures $r_j$, the effort level $b$ of the herder (say, budget or time spent)
and the overall distribution $x$ of infected machines (this 'mean-field' parameter is crucial in the present setting,
since infection propagates as a kind of epidemic). Then, for a player with a strategy $j$,
the cost of being (inevitably) involved in the conflict can be naturally estimated by the formula
\begin{equation}
\label{eqdefcybersecpay}
R_j(x,b) =p_j(x,b)c+r_j,
\end{equation}
where $c$ is the cost (inevitable losses) of being infected (thus one should aim
at minimizing this $R_j$, rather then maximizing it, as in our previous models). Of course, one can extend the model
to various classes of customers (or various classes of computers) for which values of $c$ or $r_j$ may vary
and by taking into account more concrete mechanisms of virus spreading, as described e. g. in
\cite{LiLiSt} and \cite{LyWi}.

Similar models can be applied to the analysis of defense against a biological weapon, for instance by adding
the active agent (principal interested in spreading the disease), into the general mean-field epidemic model
of \cite{LiuTaYa} that extends the well established SIS (susceptible-infectious-susceptible) and
SIR (susceptible-infectious-recovered) models.

Yet another set of examples represent the models of terrorists' attacks and counterterrorism
 measures, see e. g. \cite{ArSa05},  \cite{SaAr03}, \cite{SandLa88}, \cite{BrKi88}
for the general background on game -theoretic models of terrorism, and \cite{FaAr12} for more
recent developments. We again suggest here a natural extension to basic models to the possibility
of interacting large number of players and of various levels of attacks, the latter extension being
in the line with argument from \cite{ClYoGl07} advocating consideration of 'spectacular attacks' as
part of a continuous scale of attacks of various levels.

In the literature, the counterterrorists' measures are usually
 decomposed into two groups, so called proactive (or preemptive),
like direct retaliation against the state-sponsor
and defensive (also referred to as deterrence), like strengthening security at an airport, with
the choice between the two considered as the main strategic parameter.
As stressed in \cite{RosSand04} the first group of action is 'characterized in
the literature as a pure public good, because a weakened terrorist group poses less of a
threat to all potential targets', but on the other hand, it 'may have a downside by creating more grievances in reaction
to heavy-handed tactics or unintended collateral damage' (because it means to
'bomb alleged terrorist assets, hold suspects without
charging them, assassinate suspected terrorists, curb civil freedoms, or impose retribution
on alleged sponsors'), which may result in the increase of terrorists' recruitment.
Thus, the model of \cite{RosSand04} includes the recruitment benefits of terrorists as a positively correlated function
of preemption efforts.

A direct extension of the model of \cite{RosSand04} in the line indicated above
(large number of players and the levels of attacks) suggests to write down the reward of a terrorist, or a terrorist group,
considered as a representative of a large number of small players, using one of the
levels of attack $j=1, \cdots, d$ (in \cite{RosSand04} there are two levels, normal and spectacular only), to be
\begin{equation}
\label{eqdefterrpay}
R_j(x,b) =(1-p_j(x,b))r_j^{fail}(b)+p_j(x,b)(S_j+r_j^{succ}(b)),
\end{equation}
where $p_j(x,b)$ is the probability of a successful attack (which depends on the level $b$ of preemptive efforts
of the principal $b$ and the total
 distribution of terrorists playing different strategies), $S_j$ is the direct benefits in case of a success and
 $r_j^{fail}(b)$, $r_j^{succ}(b)$ are the recruitment benefits in the cases of failure or success respectively.
 The costs of principal are given by
 \[
 B(x,b)=\sum_j x_j \left[(1- p_j(x,b))b+p_j(b)(b+S_j)\right].
 \]
 It is seen directly that we are again in the same situation as described by \eqref{eqdefcorrpay}
 (up to constants and notations). The model extends naturally to account for possibility of the actions of two types,
 preemption and deterrence. Also very natural can be the extension to several major players
For instance, USA and EU cooperation infighting terrorism was considered in \cite{ArSa05}.

\section{Coalition building, merging and splitting, network growth, queues}

In a large number of problems it is essential to work with an infinite state-space
of small players, in particular, with the state-space being the set of all natural numbers.
Mathematical results are much rare for this case, as compared with finite state-spaces.
This infinite-dimensional setting is crucial for the  analysis of models with growth,
like merging banks or firms (see \cite{Pushkin04} and \cite{SaMaSo})
or the evolution of species and the development of networks with preferential attachment
 (the term coined in \cite{BaAl99}), for instance scientific citation networks or the
network of internet links (see a detailed discussion in \cite{KraRed}).

Apart from the obvious economic examples mentioned above, similar process of the growth of
coalitions under pressure can be possibly used for modeling the development of human cooperation
(forming coalitions under the 'pressure' exerted by the nature) or the creation of liberation armies
(from the initially small guerillas groups) by the population of the territories oppressed by an
external military force.

Yet another example can represent the analysis of the financial support of various projects
(developmental or scientific), when the funding body has tools to merge or split projects and
 transfer funds between them in certain periods of time.

Our evolutionary theory of coalition building (properly developed in
Chapter \ref{secmodgrowthpres}) can be considered as an alternative to the well
 known theory of coalitional bargaining, see e.g. \cite{Chatterjee93}.

Models of growth are known to lead to power laws in equilibrium, which are verified in
a variety of real life processes, see e.g. \cite{SaMaSo} for a general overview and
\cite{Rich48} for particular applications in crime rates. In Chapter \ref{secmodgrowthpres}
 we will deal with the
response of a complex system, that includes both the ability of growth and the coalition
 building mechanisms, to external parameters that may be set by the principal (say,
by governmental regulations) who has her own agenda (may wish to influence the growth
of certain economics sectors).

 The modern literature on the network growth is very extensive and we will not try
 to review it referring for this purpose, for instance, to \cite{BellNet17} and references therein.

Of course these processes of growth have a clear physical analogs, say the formation
of dimers and trimers by the molecules of gas with eventual condensation
under (now real physical) pressure. The relation with the Bose-Einstein
condensation is also well known, see e. g. \cite{SimRoy}.

 We shall pay attention only to infinite spaces that are countable, which are
 possibly mostly relevant for practical studies of the evolutionary growth.
For this class of models the number $N$ of agents become variable (and usually
growing in the result of the evolution) and the major characteristics of the
system becomes just the distribution $x=(x_1, x_2, \cdots )$ of the sizes
of the groups. The analysis of the evolution of these models has a long history,
see \cite{SimRoy}, though in the abundant literature on the models of evolutionary
growth, the discussion usually starts directly with the deterministic limiting
model, with the underlying Markov model being just mentioned as a motivating
heuristics. Mathematically the analysis is similar to finite state spaces, though
 serious technical complications may arise. We develop the 'strategically enhanced
model' in Section \ref{secbirthdeath} analyzing such evolutions under the 'pressure'
of strategically varying parameters set by the principal.

As another insightful example of games with a countable state space let us mention the model of the
propagation of knowledge from \cite{Duffie} and \cite{CarDelbook18}. In this model an agent is
characterized by a number $n\in \N$, which designates the amount of information the agent has
about certain subject. Meetings of two agents with the information $n$ and $m$ occur with certain
intensities (which represent the control parameters) and result in each of them acquiring the total
information $m+n$. Summation of the information of agents is based, of course, on a highly idealized
assumption of the independent knowledge. Anyway, unlike the model of merging the number of agents
here remains constant, but their information (the analog of mass) is increasing.

Yet another big class of models with countable state spaces represent various models of queues parametrized
by the sizes of the queues at different servers, see e.g \cite{Ga17}, \cite{Ga18} and references therein for detail.

\section{Minority game, its strategic enhancement and sex ratio game}
\label{minorgamedef}

An important example of the pressure and resistance framework represents the famous
minority game designed to model the evolution of financial markets.
The remarkable properties of the minority game were revealed partially by the
extensive computer simulations and partially by the application of the methods
 of statistical mechanics.

 Recall that in the {\it minority game}\index{minority game}
 each of $N$ players declares (or adopts) independently and simultaneously one of the
 two possible strategies, denoted  $0$ and $1$. The payoff $1$ is paid to all players that
 declared the strategy that turned out to be in minority, and other players pay
  fine of amount $1$. In a rare event of equal amounts of declared two strategies, all
  payoffs can be set to zero. The game is supposed to be repeated many times, so that
  players can learn to apply better strategies.

 Let $n_0,n_1$ denote the amount of players that have chosen $0$ or $1$ respectively
 in one step of the game. Then $n_0+n_1=N$. Let $x=n_0/N$. We are exactly in the
 situation described in Sections \ref{secpresres} \ref{secNasheqfixedp}
(without the major player) with $d=2$ and the payoffs
\begin{equation}
\label{eqpresresminor}
R_1(x)=-R_0(x)=\, {\sgn} \,  (x-1/2)=\left\{
\begin{aligned}
& 1, \quad x>1/2 \\
& 0, \quad x=1/2 \\
& -1, \quad x<1/2. \\
\end{aligned}
\right.
\end{equation}

The only specific feature of this setting is the obvious discontinuity of the payoffs $R_j(x)$.
However, for large $N$ the chance of a draw is really negligible, so that the model will
be not changed essentially if one takes a smooth version of the payoffs,
\begin{equation}
\label{eqpresresminor1}
R_1(x)=-R_0(x)=\theta  (x-1/2),
\end{equation}
where $\theta(x)$ is a smooth non-decreasing function such that $\theta(x)=\, {\sgn} \,(x)$
for $|x|\ge \ep$ with some small $\ep$.
The kinetic equations \eqref{eqdefgenmeanfieldchainkineq}
can be written  down in terms of a single variable $x$ as
\begin{equation}
\label{eqpresresminor2}
\dot x=-x(1-x) \theta  (x-1/2).
\end{equation}

The analysis of the dynamics arising from this game is given in Sections
\ref{sectwostatemod} and \ref{secdiscontrates}.

Placing the minority game in this general setting suggests various reasonable extensions
of this market model, for instance, by including major players (say, a market makers) that
have tools to control the interactions or the payoffs of minor players, or by a more subtle
 grouping of players. Moreover, it reveals a nice link with the so-called {\it sex-ratio game}
 \index{sex-ratio game} of evolutionary biology. The latter game is designed to explain the
common $1/2$ probability for births of males and females in basically any population. The rules
of the sex-ratio game arise from the observation that, if there are $n_0$ males and $n_1$ females
in a population (its current generation), then producing a male (resp. female) offspring would
 yield to this offspring an expected number of $n_1/n_0$ (resp. $n_0/n_1$) of its own offsprings.
Thus we find ourselves in the situation described by the rules above with payoff
\eqref{eqpresresminor1} having the form
\begin{equation}
\label{eqpresressexra}
R_1(x)=\theta  (x-1/2)=x/(1-x).
\end{equation}

\section{Threshold collective behavior}
\label{secmobcontr}

A similar two-state model arises in a seemingly different setting of the models of
{\it threshold collective behavior}\index{threshold collective behavior}, for example,
the well known model of mob control (see \cite{BreerNovMob17} for the general introduction).
It is assumed there that a crowd consists of a large number $N$ of agents that can be in
two states, excited (say, aggressive) and quiet, denoted $1$ and $0$ respectively.
In discrete time modeling the excitation is supposed to propagate by copying the
neighbors. Namely, each agent $i$ is characterized by the threshold level
$\theta_i\in [0,1]$ and the influence vector $w_{ji}\ge 0$, $j\neq i$,
where $w_{ji}$ is the weight of the opinion of $i$ for agent $j$. If at
some time $t$ the states of the agents are given by the profile vector
$\{y_1^t, \cdots, y^t_N\}$, $y_j^t\in \{0,1\}$, an agent $i$ in the next
 moment of time $t+1$ becomes excited if the weighted level of excitation
of its neighbors exceeds the threshold $\theta_i$, that is, $y^{t+1}_i$
turns to $1$ or $0$ depending on whether $\sum_{j\neq i} w_{ij} y_j \ge \theta_i$
or otherwise, respectively. Thus each agent acts as a classical {\it perseptron}
(threshold decision unit) of the theory of neural networks. Quite similar are more
general models of the so called {\it active network structures} that are used to
model the propagation of opinion in social media like Facebook or Twitter.

Concerning $\theta_j$ it is often assumed that they are random identically
 distributed random variables with some distribution function
 $F(x)=\P(\theta_i \le x)$. To fit this process to our general model
 it is necessary to assume that the mob is decomposed in a finite number
of classes with the identical weights, so that $w_{ij}$ can be characterized
 as the weights of influence between representatives of different classes,
 rather that individual agents, and in fact the theory usually develops under
an assumption of this kind. The standard simplifying assumption is the
so-called {\it anonymity case}, where all influences are considered to be
identical, that is $w_{ij}=1/(N-1)$ for all $i\neq j$. Let us assume this
is the case. One can model the propagation of excitation both in discrete
or continuous time. For definiteness, let us choose the second approach
(more in line with our general modeling). Then the propagation is measured
in terms of the rates of transitions, rather than probabilities, and we
naturally assume here that the rate of transition from the state
 $0$ to the state $1$ is proportional to $\P(\theta_i\le x)=F(x)$
and from the state $1$ to the state $0$ to $\P(\theta_i>x)=1-F(x)$, where
$x\in [0,1]$ is the proportion of the excited states among the agents. Thus
we find ourselves  fully in our general setting with a two state model,
$d=2$, and the Kolmogorov matrix $Q_{ij}$, $i,j\in \{0,1\}$ with
\[
Q_{01}=\al F(x), \quad Q_{10}=\al (1-F(x)),
\]
with some constant $\al$, and thus $Q_{00}=-\al F(x), Q_{11}=-\al (1-F(x)$.
A constant $\al$ can be used to extend the model to the case when any agent is influenced only
 by the neighbors, where $\al$ can be characterized by the average number of these neighbors.

As for any model with $d=2$, the kinetic equations \eqref{eqdefgenmeanfieldchainkineq}
can be written in terms of just one equation for $x$:
\[
\dot x= xQ_{11}+(1-x)Q_{01}=-x\al (1-F(x))+(1-x) \al F(x)=\al (F(x)-x),
\]
that is, finally as
\begin{equation}
\label{eqmobthres}
\dot x=\al (F(x)-x),
\end{equation}
which is a standard equation used in modeling the
 threshold collective behavior (see \cite{BreerNovMob17}).

\section{Projects selection, investment policies and optimal allocation}
\label{projectselectdef}

As yet another example let us introduce a simple model for
{\it project selection}\index{project selection}. The specific feature of
 this model is the necessity to work with a two-parameter family of states.
 Assume an investment body aims at developing several directions
 $j\in \{1, \cdots , d\}$ of business or science, ordered according
to their importance, with some budget allocated to each of these directions.
When a call for bidding for the support of projects is declared, the
interested parties have to choose both the project to bid for and the level
of the investment of their efforts in preparation of the bid, having some
possible costs, $c\in \{c_1, \cdots, c_m\}$, with  $c_1< \cdots < c_m$.
The probability of a project being selected, $p(j,c,x_j)$,  will be thus
an increasing function of $j$, an increasing function of $c$ and a decreasing
function of $x_j=n_j/N$, the fraction of bids in the $j$th direction.
 Therefore if $A_j$ is an award (the value of a grant) for a project in the
$j$th direction, the average payoff of small players can be written as
 \begin{equation}
\label{eqinvestpolproj}
R_{j,c}(x)=-c+A_jp(j,c,x_j).
\end{equation}

The kinetic equations, see \eqref{eqdefgenmeanfieldchainkineq} and
\eqref{eqQmapresres}, in this two-dimensional index model take the form
 \begin{equation}
\label{eqinvestpolproj1}
\dot x_{jc}=\sum_{i,\tilde c} \ka x_{i\tilde c} x_{jc}(R_{i,\tilde c}(x)-R_{j,c}(x)).
\end{equation}

Here both $A_j$ and the functions $p(j,c,x)$ are the parts of the mechanism
design of the principal (investment body). They can be encoded, of course,
by certain parameters $b$ that are chosen by the principal.

The model is a performance of the problem of optimal allocation in a competitive environment.
It can be extended in various direction, for instance, by taking into account the initial expertise
of the bidders and the related costs for switching between the topics.

\section{Games on networks and positivity preserving evolutions}
\label{secposevolandnet}

Here we stress two rather trivial aspects of evolution \eqref{eqdefgenmeanfieldchainkineq}.
The first one is its universality as a positivity preserving evolution. Namely, it is not difficult to show
(see e.g. \cite{Ko10}, Sections 1.1 and 6.8) that any sufficiently regular transformation of $\R^d$
preserving positivity and the 'number of particles' $x_1+\cdots +x_n$, in other words any smooth
transformation of the simplex $\Si_d$, can be described by an ODE of type \eqref{eqdefgenmeanfieldchainkineq}.
This is in fact a performance of a more general representation result for positivity preserving transformations
of measures.

The second aspect is just a geometrical rewording. Following the general idea
of the graphical representation of Markov chains (see Section  \ref{secMarch})
we can  similarly draw a graph of the mean-field interacting particle system of
Section \ref{secmeanfieldch1},  which will be a graph with $d$ vertices and edges
$e_{ij}$ drawn whenever $Q_{ij}(x)$ does not vanish identically. The evolution
\eqref{eqdefgenmeanfieldchainkineq} can then be looked at as the evolution of
the 'weights' of the vertices. Due to this geometric interpretation  mean-field
 interacting particle system and their limits \eqref{eqdefgenmeanfieldchainkineq}
 are sometimes referred to as the {\it networked models}\index{networked models},
 see e.g. \cite{Nowzari17} for a concrete epidemic model presented in this way.

\section{Swarm intelligence and swarm based robotics}
\label{secswarm}

{\it Swarm based robotics}\index{swarm based robotics} is often understood as
an attempt to build algorithms or mechanisms
for the functioning of complex systems of interacting robots by the analogy with the
observed behavior in nature,  mostly social insects, like ants or bees. The evolutions
of such mechanisms are usually represented as mean-field interacting particle systems,
thus given approximately (in the LLN limit) by kinetic equations (positivity preserving
evolutions on the simplexes, see Section \ref{secposevolandnet} above). Typical example
(taken from \cite{BonaSwarmbook}) is as follows. Two types of workers (for instance,
so-called minor and major in some ant species) of amount $n_1$ and $n_2$ respectively,
 with the total number $N=n_1+n_2$, can perform a certain task (say, clean the nest
 from the dead bodies). The level of unfulfilled task $s$ increases in time during
functioning of the nest and decreases proportionally to the number of workers engaging in it:
  \begin{equation}
\label{eqevolant1}
 \dot s=\de -\al (N_1+N_2)/N,
 \end{equation}
 where $N_i$ is the number of $i$th type workers performing the job and $\de, \al$ some positive
constants. The workers of two types have propensity or inclination to do the task $T_1(s)$ and $T_2(s)$
 (mathematically the rates with which they take it) depending on the level of the stimulus specified
 by $s$ (say, difficulty to move in a nest full of rubbish). If $p$ is the probability to give up job
after starting it, the evolution of the frequencies $x_i=N_i/n_i$ is given by the (easy to interpret)
equations
\begin{equation}
\label{eqevolant2}
 \dot x_1=T_1(s)(1-x_1)-px_1, \quad \dot x_2=T_2(s)(1-x_2)-px_2.
 \end{equation}
For system \eqref{eqevolant1} - \eqref{eqevolant2} the rest points can be found explicitly.
More general situations (with several tasks and several types of workers) were analyzed numerically
giving reasonable agreement with the concrete observations of the ants and then were used to build
algorithms for robots.

A well known analytic model of collective behaviour of a swarm of
interacting Brownian oscillators was proposed by Y. Kuramoto, see
\cite{AcerOnKuram05} for its review and extensions. As was shown in
\cite{YinOnKuram12}, the corresponding dynamics can be also obtained
as the solution to a certain mean-field game.

The theory developed in this book offers the precise relations between the processes with a finite number of agents
and their LLN limits like \eqref{eqevolant1} - \eqref{eqevolant2}. Moreover, it suggests to choose the parameters
available to the major player (say, the designer of the swarm) strategically via forward looking perspectives
with finite or infinite horizon (in the spirit of Chapter \ref{strategicprincipal}).

\section{Replicator dynamics with mean-field dependent payoffs: controlled hawk and dove evolution}
\label{repdynmf}

As a final example let us touch upon the applications to the well established
theory of evolutionary games and evolutionary dynamics reducing the discussion
to the variations on most popular concrete model.

 Recall the celebrated {\it Hawk-and-Dove game}\index{Hawk-and-Dove game}, which
  is a symmetric two-player game with the matrix of payoffs
(the letters in the table denote the payoffs to the row player)
\[
 \begin{tabular}{|c|c|c|}
\hline
&hawk& dove\\
\hline
hawk &$H$&$V$\\
\hline
dove &0&$D$\\
\hline
\end{tabular}
\]

The game can be interpreted as being played by individuals sharing
common resource. If a hawk and a dove meet, the dove abandons the site
without fight leaving the whole resource, $V$, to the hawk. If two
 doves meet, they share resource yielding $D$ each (that could be
more or less than $V$ depending on the level of collaboration).
If two hawks meet, they fight inflicting strong damages to each
other so that the average payoff can become even negative.

The {\it replicator dynamics}\index{replicator dynamics}, which is one
of the most basic dynamics for the theory of evolutionary games
(see \cite{KolMal1} or \cite{HS} for general background) is given by
the equation
\[
\dot x=x(1-x) [x(H+D-V)-(D-V)],
\]
which is of course a particular case of dynamics \eqref{eqdefgenmeanfieldchainkineq},
\eqref{eqQmapresres}. This evolution may have only one internal rest point
(point of internal equilibria),
\[
x^*= \frac{V-D}{V-D-H}.
\]
If $V>D$, as is usually assumed, then $x^*\in (0,1)$ if and only if $H<0$.
In this case, this point is known (and easily checked) to be  a stable
equilibrium of the replicator dynamics.

\begin{remark}
This equilibrium is also known to be ESS (evolutionary stable equilibrium) of the
Hawk-and-Dove game (see e.g. \cite{KolMal1} or \cite{HS} for the general background on ESS).
\end{remark}

The replicator dynamics can be obtained as the dynamic LLN (law of large number) for a Markov
model of $N$ interacting species, which can use one of two strategies, $h$ or $d$ (hawk or dove),
with each pair meeting randomly and playing the game described by the Table above.  The payoff for each player in
this game of $N$ players, the fraction $x$ (resp. $1-x$) of which playing $h$ (resp. $d$), is
$Hx+V(1-x)$ for being a hawk and $D(1-x)$ for being a dove.

The point we like to stress here is the following. If there are only a few hawks
and plenty of dove to rob, the hawks get less impetus to fight between themselves
meaning that $H$ may increase with the decay of $x$, the fraction of species playing $h$.
Similarly, if there are too many doves on the  restricted resource area, they may start
behaving more aggressively meaning that $D$ may decrease with the decay of $x$.
Choosing linear functions to express this dependence in the simplest form,
the modified table of this {\it nonlinear Hawk-and-Dove game}
\index{Hawk-and-Dove game!nonlinear} becomes
\[
 \begin{tabular}{|c|c|c|}
\hline
&hawk& dove\\
\hline
hawk &$H+ax$ &$V$\\
\hline
dove &0&$D-bx$\\
\hline
\end{tabular}
\]
with some constants $a,b>0$.

In this setting the game of two players stops being defined independently, but
the game of $N$ players choosing one of the strategies $h$ or $d$ remains perfectly
 well defined (with the theory mentioned in Section \ref{secNasheqfixedp} and
 developed in Chapters  \ref{chbestres}, \ref{strategicprincipal} fully applied)
with the payoffs of hawks and doves becoming
\[
\Pi_h(x)=(H+ax)x+V(1-x), \quad \Pi_d(x)=(D-bx)(1-x).
\]

The replicator dynamics generalizes to the equation
\begin{equation}
\label{repdynnew}
\dot x=x(1-x) [x(H+ax +D-bx-V)-(D-bx-V)],
\end{equation}
which remains to be a particular case of dynamics \eqref{eqdefgenmeanfieldchainkineq},
\eqref{eqQmapresres}. The internal fixed points of this dynamics are the solutions of
 the quadratic equation
\begin{equation}
\label{repdynnewfix}
(a-b)x^2-(V-D-H-b)x+(V-D)=0.
\end{equation}
It is seen that, say for $a=b$, the internal equilibrium becomes
\[
x^*= \frac{V-D}{V-D-H-b},
\]
which moves to the left with the increase of $b$. When the denominator in this expression
becomes negative, the internal equilibrium gets lost and the pure strategy $d$ becomes the
stable equilibrium. On the other hand, if $b=0$, then the roots of the quadratic equation
\eqref{repdynnewfix} have asymptotic expressions
\[
x_1= \frac{V-D}{V-D-H}\left(1+a  \frac{V-D}{(V-D-H)^2}+O(a^2)\right),
\quad x_2 =\frac{1}{a} (V-D-H)+O(1)
\]
for small $a$. Therefore, for small $a$, the internal rest point starts moving go the
right remaining the only internal rest point. When $a$ increases various bifurcations
can occur. For instance, if $-H<a< (V-D-H)/2$, the internal rest points disappear. If
\[
\frac{V-D-H}{2}<a < \frac{(V-D-H)^2}{4(V-D)},
\]
two internal fixed points appear (both solutions of \eqref{repdynnewfix} belong to $(0,1)$),
the left being stable, and the right unstable, so that in total we get two stable equilibria,
one mixed and one pure hawk.

Similar shifts of equilibria and bifurcations occur when mean-field dependence
is introduced in other popular two-player games. It is interesting to see whether
this kind of bifurcations could be observed in real field experiments.

The model can be used to describe the transformations of behavioral patterns and
social norms in society, how doves (social behavior) develop themselves into hawks
(aggressive behavior, violence) and vice versa.
Moreover, making the parameter $a$, $b$ depending on the some control parameter
$u$ of a major player  allows one to introduce the model of controlled hawk and
dove evolution, with the principal having a tool to drive the equilibria
in the direction desired. Results of Chapter \ref{strategicprincipal} provide
the general technique for the analysis of such controlled processes.

\section{Fluctuations}
\label{secflucbr}

An important direction that we are not pursuing here is the analysis of fluctuations of the
random Markov processes of $N$ agents and the deterministic LLN provided by the kinetic equations.
This problem can be approached in two slightly different ways. Firstly one can start with
the generator of the $N$ agent process and look for the next order of its approximation (as
compared with the first order approximation used for obtaining the kinetic equations). Expanding
the increments of functions to the second order yields directly the second order operator, and hence
the diffusion process. Such diffusion process may capture much better the long time behavior
of the $N$ agent process, for instance implying the dying out of certain strategies, which was
demonstrated both theoretically and experimentally on many concrete biological systems
with the evolution described by the models of evolutionary games, see \cite{Dobr18} and references therein.
For models with countable state space this analysis was performed in \cite{Ko04}. An alternative
approach is to write down explicitly the parameters (e.g. the generator) of the
process of fluctuations and look for its limit under appropriate scaling. Such limit can be often
found to be described by a certain Gaussian process thus leading to the
{\it dynamic central-limit-type}\index{dynamic central limit theorem}
theorems. Such results were obtained for various models, see references in Chapter \ref{chapbibl}.
Both approaches can be looked at as providing a stochastic law of large numbers for process
involving $N$ agents. Unlike deterministic LLN, these random LLN are less robust and much more
sensitive to the methods of approximations and scaling, with a variety of limits that can be
obtained depending on the choice of the latter.

\section{Brief guide to the content of the book}
\label{secguidetocont}

As was already mentioned, the first part of the book deals with the models of a large number $N$ of
minor players interacting with major players, the strategies of small players propagating
by direct imitation (myopic behavior) of more successful strategies of the surrounding players.
The purpose of the analysis is to turn the 'curse of dimensionality' into the 'blessing of
dimensionality' by obtaining better manageable deterministic limits, that is, kinetic equations,
as $N\to \infty$, and providing effective rates of convergence. Numerous examples of the concrete
models were presented above.

The first part is mostly based on papers \cite{Ko12}, \cite{Ko17},
with a gap from \cite{Ko12} corrected (see Remark \ref{remongapinme})
and with the results of \cite{Ko17} improved and strengthened in many ways,
specifically for countable state-spaces.

In Chapter \ref{chbestres} we shall work with the case of a finite number $d$
of strategies of small players and major players acting with the instantaneous
best response. Together with convergence rates to the deterministic limit we
present the game -theoretic interpretation of the fixed points of the limiting
dynamics providing the link between the large time behavior of the limiting,
$N\to \infty$, and approximating, finite $N$, dynamics. The results of the last
Sections \ref{stableanalin} - \ref{secfracLLN} are essentially new, though the
last section can be looked at as a performance of the general results from \cite{Ko07a} or
\cite{Meerbook} in the specific setting of interacting particles.

In Chapter \ref{strategicprincipal} the behavior of major players is modified
by a more realistic requirement that they choose their control parameters
strategically aiming at some optimal payoff during either a fixed finite
period or over infinite horizon. The dynamics is considered in the
multi-step version, where major players choose their controls at discrete
times $k\tau$, $k\in \N$, with a fixed $\tau$, and in the continuous-time limit,
obtained from the former by letting $\tau \to 0$. The deterministic continuous-time
problem is obtained when $\tau \to 0$ and $N\to \infty$ simultaneously subject to
some bounds on the dependence of $\tau$ and $N$. Long time behavior is analyzed
both for the discounted payoff and for the average payoff, the latter often leading
to the so-called turnpike behavior of the trajectories, well known in classical
mathematical economics.

Chapter \ref{secmodgrowthpres} extends the results of Chapters \ref{chbestres}
and \ref{strategicprincipal} to the case of countable state-space of small players.
This extension is carried out in order to include important models of evolutionary
coalition building and strategically enhanced preferential attachment. The mathematics
of this chapter is more demanding, as it heavily exploits the theory of
infinite-dimensional ODEs. The results of Section \ref{secunboundgrow} are new.

In the second part we model small players as rational optimizers bringing the
analysis into the realm of the so-called mean-field games (MFG), widely discussed
in modern research literature. We start in Chapter \ref{chapthreestate} with some
general introduction to the MFGs on finite state spaces including the theorem on
the approximations of finite-number-of-player games by the limiting MFG evolutions
following mostly the papers \cite{BasRa14}, \cite{BasRa17}, but with significant
improvements and simplifications.

The rest of the second part is based essentially
on papers \cite{KolMa15}, \cite{KolMa17}, \cite{KoBens} and \cite{KatKol}. As the
complexity of MFGs increases with the increase of the state-space of small players
we start with the detailed description of toy models of three-state and four-state
games in Chapters \ref{chapthreestate} and \ref{chapfourstate} respectively. These
games represent remarkable and rare examples, where all stationary solutions to the
MFG consistency problem can be found explicitly. The analysis of these toy models
reveals and provides explicit description of the nontrivial properties of MFGs, like
phase transitions, and of the key parameters that control these transitions. It also
signifies a reasonably large class of models, where the state-space of small players
is two-dimensional, one direction being controlled by the individual player (say, the
level of tax evasion or the costs spent on a defence system) and another by the
principal or binary interactions (say, the position of agent on the bureaucratic
staircase or the state of computer being infected or not).

In Chapter \ref{chapturnpike} a more general theory is developed revealing
the turnpike property in the MFG context.
Roughly speaking the turnpike property means that time-dependent control evolutions
with large horizon spend most of the time around some fixed stationary solution
(a turnpike), which leads to the crucial simplification of the analysis,
because stationary solutions are usually much easier to identify and calculate.

A very detailed analytic description of models is not achievable for models with larger state-spaces.
For such models numeric calculations can be effectively exploited for getting the detailed picture
 of the dynamics, its phase portrait and bifurcations. We are not touching numerics in this book.
 Instead of numerics we employ the asymptotic analysis to describe the dynamics under reasonable
 asymptotic regimes. This approach is akin to physics, where the search for a relevant small
 parameter is the key tool for getting insights into a complicated phenomenon. We identify and exploit
 three natural small parameters in our MFGs leading to the three asymptotic regimes: fast execution
 of personal decisions (why should one wait long to execute one's own decisions?), small discounts
 (means in practical terms that the planning horizon is not very small) and small coupling constants
  of binary interaction, the last one being of course the most standard in physics.

Finally we present some literature review indicating main trends
both in our main methodology (dynamic law of large numbers) and in the
concrete areas of applications (inspection, corruption, etc),
most closely related to our methods and objectives.
The literature is quite abundant and keeps growing rapidly. 



\chapter{Best response principals}
\label{chbestres}

In this chapter our basic tool, the dynamic law of large numbers
(LLN, whose descriptive explanation was given in Chapter \ref{chapintkolMaCor}),
is set on the firm mathematical ground. We prove several versions of this LLN, with different
regularity assumptions on the coefficients, with and without major players, and finally with
a distinguished (or tagged) player, the latter version being used later in Part II.
The convergence is proved with rather precise estimates of the error terms, which is of course
crucial for any practical applications. For instance, we show that in the case of smooth
 coefficients, the convergence rates, measuring the difference
between the various bulk characteristics of the dynamics of $N$ players and the limiting evolution
(corresponding to the infinite number of players), are of order $1/N$. For example, for
$N=10$ players this  difference is about $10\%$, showing that the number $N$ does not have
to be 'very large' for the approximation \eqref{eqdefgenmeanfieldchainkineq}
of 'infinitely many players' to give reasonable predictions.

In the presence of a major player, the theory is built here for the model of
what can be called the {\it best response principal}\index{best response principal},
where the major player chooses the strategy at any time as the instantaneous
best response (in case of several major players as an instantaneous Nash equilibrium)
 for the current distribution of small players. More realistic forward
looking major player will be analysed in the next chapter.

Further in this chapter
we develop the theory of approximate Nash equilibria for our mean-field interacting
systems of a large number of agents showing that these equilibria can be approximated
by the rest points of the system of the limiting kinetic equations. This result yields a
remarkable static game-theoretic interpretation of the rest points of kinetic equations,
which is independent of the myopic hypothesis and migration models used in the derivation
of the latter. When these rest points are dynamically stable, we show further that the
Markov evolution of the systems of many agents spends large periods of time in a neighborhood
of these rest points. Moreover, in some cases, the connection becomes even deeper: the supports
of the invariant measures of the Markov evolutions of $N$ agents turn out to be concentrated
around these  fixed points. This result makes the crucial extension of the LLN limit from
finite to infinite times.

Next we extend our modelling to the case of discontinuous coefficients linking uniqueness
and non-uniqueness of the solutions to the kinetic equations with the problem of the existence
of the invariant measures for the prelimiting game of a finite number of agents.
Discontinuous coefficients arise naturally, when small payers are considered as rational optimizers
(that is, in the setting of MFG) with a finite-valued control parameter, switching between
these values creating discontinuities, separating regular regimes.

Finally we extend the modelling to the so-called semi Markov chains allowing for the waiting
times between the jumps to be non-exponential, but to have heavy power tails. In this case the limiting
dynamics turns to the fractional PDE, with the solution process obtained by certain random time change of
the solutions of the initial kinetic equations. In particular, the dynamic LLN limit becomes a random
process, unlike all other situations discussed in this book. This brings the theory to the realm of the
fractional calculus, which becomes one of the major trends in the modern application of analysis to physics and
economics. In fact, many observations of naturally occurring processes give strong evidence of the presence
of heavy tails in the distribution of time between various transitions (see e. g. \cite{UchBook} or \cite{West}).

\section{Preliminaries: Markov chains}

Two basic notions that are crucial for our analysis are Markov chains and the characteristics
 of the first order linear partial differential equations (PDE). We briefly recall now these notions
(with more analytic detail than in Section \ref{secMarch}) referring for systematic exposition to
numerous textbooks, see e.g. \cite{Nobook} or \cite{Her2} for Markov chains
 and \cite{Petr1}, \cite{Petr2}, \cite{Konewbook} for ODEs and PDEs.

 Let us first recall the basic setting of continuous time Markov chains with
a finite state space $\{1,...,d\}$, which can be interpreted as the types of agents or
particles (say, possible opinions of individuals on a certain subject, or the levels of
fitness in a military unit, or the types of robots in a robot swarm). A Markov chain
on $\{1,...,d\}$ is specified by the choice of a $Q$-{\it matrix}
or a {\it Kolmogorov's matrix}\index{Kolmogorov's matrix} $Q$, which is a $d\times d$
square matrix such that its non-diagonal elements are non-negative  and the elements of
each row sum up to zero (and thus the diagonal elements are non-positive). As was mentioned
in Chapter \ref{chapintkolMaCor}, the {\it Markov chain}\index{Markov chain} with the
$Q$-matrix $Q$ is the process evolving by the following rule. Starting from any time $t$ and
a current state $i$ one waits a $|Q_{ii}|$-exponential random waiting time
$\tau$ and then the position jumps to a state $j$ according to the distribution
 $Q_{ij}/|Q_{ii}|$. Then at time $t+\tau$ the procedure starts again from position $j$, etc.

 To work with the Markov chain we need more detail of its analytic description.
 Let $X_i(t)$ denote the position of the chain at time $t\ge 0$, if it started
 at $i$ at the initial time $t=0$, and let $P_{ij}(t)$ denote the probability
 of the transfer from $i$ to $j$ in time $t$, so that
 \[
 \P(X_j(t)=j)=P_{ij}(t).
 \]
Then one can show that these {\it transition probabilities}\index{transition probabilities}
$P(t)=\{P_{ij}(t)\}$ satisfies the following
{\it Kolmogorov's forward equations}\index{Kolmogorov's forward equations}
 \[
\frac{d}{dt} P_{ij}(t)=\sum_{l=1}^d Q_{lj}P_{il}(t), \quad t\ge 0,
\]
or in matrix form
\[
\frac{d}{dt} P(t)=P(t) Q.
\]
On the other hand, the evolution of averages,
\[
T^tf(i)=\E f(X_i(t))=\sum_{j=1}^d P_{ij}(t)f(j)
\]
for any function $f$ on the state space $\{1,...,d\}$, i.e. for any vector $f\in \R^d$,
satisfies the  {\it Kolmogorov's backward equations}\index{Kolmogorov's backward equations}
\[
\frac{d}{dt} T^tf(i)=\sum_{j=1}^d Q_{ij}T^tf(j), \quad t\ge 0,
\]
or in the vector form
\[
\frac{d}{dt} T^tf=Q T^tf.
\]
Consequently $T^t$ can be calculated as the matrix exponent:
\[
T^tf=e^{tQ}f.
\]
\begin{remark} The values of the function $f$ above will be denoted both by $f_n$ and $f(n)$ reflecting
two interpretations of $f$ as a vector in $\R^d$ and a function on the state space.
\end{remark}

The matrix $Q$ considered as the linear operator in $\R^d$ is called the
{\it generator}\index{Markov chain!generator of} of the Markov chain $X_i(t)$
and the operators $T^t$ in $\R^d$ are called the {\it transition operators}.
These operators form a semigroup, that is, they satisfy the equation $T^tT^s=T^{t+s}$
 for any $s,t>0$, which is directly seen from the exponential representation $T^t=e^{tQ}$.

These definitions have natural extensions for the case of time dependent matrices $Q$.
Namely, let $\{Q (t)\}=\{(Q_{ij})(t)\}$ be a family of $d\times d$ square $Q$-matrices
or Kolmogorov matrices depending piecewise continuously on time
$t\ge 0$. The family $\{Q (t)\}$ specifies a (time non-homogeneous)
{\it Markov chain}\index{Markov chain} $X_{s,j}(t)$ on the state
space $\{1,...,d\}$, which is the following process. Starting from any time $t$ and
a current state $i$ one waits a $|Q_{ii}|(t)$-exponential random waiting time.
After such time $\tau$ the position jumps to a state $j$ according to the distribution
 $(Q_{ij}/|Q_{ii}|)(t)$. Then at time $t+\tau$ the procedure starts again from position $j$, etc.

Let us denote $X_{s,j}(t)$ the position of this process at time $t$ if it was
initiated at time $s$ in the state $i$. The {\it transition probabilities}
\index{transition probabilities}  $P(s,t)=(P_{ij}(s,t))_{i,j=1}^d$, $s\le t$,
defining the probabilities to migrate from $i$ to $j$ during the time segment
$[s,t]$, are said to form the {\it transition matrix} and the corresponding operators
\[
U^{s,t}f(i)=\E f(X_{s,i}(t))=\sum_{j=1}^d P_{ij}(s,t)f(j)
\]
are called the {\it transition operators}\index{transition operators} of the Markov chain.
The matrices $Q(t)$ define the time-dependent generator of the chain
acting in $\R^d$ as
\[
(Q(t)f)_n=\sum_{m\neq n}  Q_{nm}(t)(f_m-f_n), \quad f=(f_1, \cdots, f_d).
\]

The transition matrices satisfy the {\it Kolmogorov's forward equation}
\index{Kolmogorov's forward equations}
\[
\frac{d}{dt} P_{ij}(s,t)=\sum_{l=1}^d Q_{lj}(t)P_{il}(s,t), \quad s\le t,
\]
and the transition operators of this chain
 satisfy the {\it chain rule}\index{chain rule}, also called the {\it Chapman-Kolmogorov equation}\index{Chapman-Kolmogorov equation}, $U^{s,r}U^{r,t}=U^{s,t}$ for $s\le r \le t$,
 and the {\it Kolmogorov backward equations}\index{Kolmogorov's backward equations}
\[
\frac{d}{ds} (U^{s,t}f)(i)=-\sum_{j=1}^d Q_{ij} (U^{s,t}f)(j), \quad s\le t.
\]
A two-parameter family of operators, $U^{s,t}$, $s\le t$, satisfying the chain rule,
is said to from a (backward) {\it propagator}\index{propagator}.

Concerning discrete time Markov chains $X_i(t)$ specified by the transition matrix $P=\{P_{ij}\}$
(briefly mentioned in Section \ref{secMarch})
 let us recall the evident fact that the probabilities $P_{ij}^n$ of transitions $i\to j$ in time
 $n$ (i.e. the probabilities of being in $j$ at time $t$ conditioned on the initial state $i$ at time $0$)
form the matrix $P^n=\{P_{ij}^n\}$, which is the power of the transition matrix $P$: $P^n=(P)^n$, $n=0,1, \cdots$.
The corresponding transition operators describing the dynamics of the averages act as the multiplication by $P^n$,
and hence will be also denoted by $P^n$:
\[
P^nf(i)=\E f(X_i(n))=(P^n f)(i)=\sum_j P^n_{ij} f(j).
\]

A trivial but important modification to be mentioned is the similar setting
but with time between jumps being any fixed time $\tau$. This modification
allows one to establish close link between discrete and continuous time modeling.
Namely, for a continuous time Markov chain with the transition operators $T^t$
specified by the $Q$-matrix $Q$, let
\[
\tau<(\max_i|Q_{ii}|)^{-1}.
\]
Then one can define the discrete-time Markov chain on the same state space with the
transition matrix $P^{\tau}=\{P^{\tau}_{ij}\}$, where
\begin{equation}
\label{eqdiscanfconttimeMC}
P^{\tau}_{ii}=1-\tau |Q_{ii}|, \quad P^{\tau}_{ij}=\tau Q_{ij}, \quad j\neq i.
\end{equation}
In matrix form this rewrites as
\[
P^{\tau}=\1+\tau Q,
\]
where $\1$ denotes the unit matrix. Hence
\[
(P^{\tau})^n=(\1+\tau Q)^n.
\]
Consequently, if $\tau \to 0$ and $n\to \infty$ in such a way that $n\tau \to t$,
\begin{equation}
\label{eqdiscanfconttimeMC1}
\lim(P^{\tau})^n=e^{tQ}=T^t,
\end{equation}
yielding the important link between the semigroup $T^t$ and its discrete-time approximations $P^{\tau}$.

Extension of these results to time-dependent chains (with $P$ depending explicitly on $t$)
is more or less straightforward. The transition probabilities just become time dependent:
\begin{equation}
\label{eqdiscanfconttimeMC2}
P_{ii}^{\tau,t} =1-\tau \sum_i |Q_{ii}(t)|, \quad P_{ij}^{\tau,t}=\tau Q_{ij}(t), \quad i\neq j,
\end{equation}
or $P^{\tau,t}=\1+\tau Q(t)$.

\section{Preliminaries: ODEs and first order PDEs}

Let us turn to the auxiliary results on ODEs and linear PDEs that we shall need.
The results are mostly standard (see e.g. \cite{Petr1}, \cite{Petr2} and \cite{Konewbook})
and we will not prove them, but collect them in the form
convenient for direct referencing.

Let $\dot x=g (x)$ be a (vector-valued) ordinary differential equation (ODE) in $\R^d$. If
 $g$ is a Lipschitz function, its solution $X_x(t)$ with the initial condition $x$ is known to be well defined for
all $t\in \R$. Hence one can define the lifting of this evolution on functions:
\[
T^tf(x)=f(X_x(t)).
\]
These {\it transition operators} act as continuous contraction operators on the space $C(\R^d)$
and in its subspace $C_{\infty}(\R^d)$,
and they form a group, i.e. $T^tT^s=T^{s+t}$ for any $s,t$.
The operator
\[
Lf(x)= g(x) \cdot \frac{\pa f}{\pa x}
\]
is called the {\it generator} of this group, because for any $f\in C_{\infty}(\R^d)\cap  C^1(\R^d)$
\[
\frac{d}{dt}|_{t=0} T^tf=Lf.
\]

If $g\in C^1(\R^d)$, then
\begin{equation}
\label{eqPDS1storlin0}
\frac{d}{dt} T^tf=LT^tf=T^tLf
\end{equation}
for any $t$, so that the function $S(t,x)=T^tf(x)=f(X_x(t))$ satisfies
the linear first order partial differential equation (PDE)
\begin{equation}
\label{eqPDS1storlin}
\frac{\pa S}{\pa t} -g(x) \cdot \frac{\pa S}{\pa x}=0,
\end{equation}
with the initial condition $S(0,x)=f(x)$ for any $f\in C_{\infty}(\R^d)\cap  C^1(\R^d)$.
 Solutions $X_x(t)$ to the ODE $\dot x=g(x)$ are referred
 to as the {\it characteristics} of PDE \eqref{eqPDS1storlin}, because the solutions to \eqref{eqPDS1storlin}
 are expressed in terms of $X_x(t)$.

Let us recall the basic result on the smooth dependence of the solutions $X_x(t)$
on the initial point $x$ and the implied regularity of the solutions to
\eqref{eqPDS1storlin} (key steps of the proof are provided in Section \ref{secprelonl1}
in a more general case).

\begin{prop}
\label{ODEsmoo}
(i) Let $g\in C_{bLip}(\R^d)$. Then $X_x(t)\in C_{bLip}(\R^d)$ as a function of $x$ and
\begin{equation}
\label{eq0ODEsmoo}
\| X_x(t)\|_{Lip} \le \exp\{ t \| g\|_{Lip} \}.
\end{equation}

(ii) Let $g\in C^1(\R^d)$. Then $X_x(t)\in C^1(\R^d)$ as a function of $x$ and
(recall that  $\|g\|_{Lip}= \| g^{(1)}\|$)
\begin{equation}
\label{eq1ODEsmoo}
\left\| \frac{\pa X_x(t)}{\pa x}\right\|=\sup_{j,x} \left\|\frac{\pa X_x(t)}{\pa x_j}\right\|
\le \exp\{ t \| g^{(1)}\|\}=\exp\{ t \sup_{k,x} \left\| \frac{\pa g(x)}{\pa x_k}\right\| \}.
\end{equation}
Moreover, if $f\in C^1(\R^d)$, then $T^tf\in C^1(\R^d)$ and
\begin{equation}
\label{eq2ODEsmoo}
\| (T^tf)^{(1)}\|=\sup_{j,x} \left|\frac{\pa}{\pa x_j} f(X_x(t))\right|
\le \|f^{(1)}\| \,  \left\| \frac{\pa X_x(t)}{\pa x}\right\|
\le \|f^{(1)}\| \exp\{ t \|g^{(1)}\|\}.
\end{equation}

(iii)  Let $g\in C^2(\R^d)$. Then $X_x(t)\in C^2(\R^d)$ as a function of $x$ and
\begin{equation}
\label{eq3ODEsmoo}
\left\| \frac{\pa ^2X_x(t)}{\pa x^2}\right\|=\sup_{j,i,x} \left\|\frac{\pa^2 X_x(t)}{\pa x_i \pa x_j}\right\|
\le t \|g^{(2)}\| \exp\{ 3t \|g^{(1)}\|\}.
\end{equation}
Moreover, if $f\in C^2(\R^d)$, then $T^tf\in C^2(\R^d)$ and
\begin{equation}
\label{eq4ODEsmoo}
\| (T^tf)^{(2)}\|=\sup_{j,i,x} \left|\frac{\pa^2}{\pa x_j \pa x_i} f(X_x(t))\right|
\le  \|f^{(2)}\|  \exp\{ 2t \|g^{(1)}\|\}
+ t \|f^{(1)}\| \, \|g^{(2)}\| \exp\{ 3t \|g^{(1)}\|\}.
\end{equation}
\end{prop}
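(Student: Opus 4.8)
The plan is to reduce all four estimates to Gronwall's inequality applied to the \emph{variational equations} satisfied by the derivatives of the flow map $x\mapsto X_x(t)$, together with the chain rule for $T^tf(x)=f(X_x(t))$. Throughout one keeps the default $\ell_1$-norm on $\R^d$: then the Jacobian $g'(x)$, acting on column vectors, has $\ell_1$-operator norm equal to $\sup_{j,x}\|\pa g(x)/\pa x_j\|=\|g\|_{Lip}=\|g^{(1)}\|$ by \eqref{eqdefLipnorml1a}, and the symmetric bilinear map $(u,v)\mapsto g''(x)[u,v]$ satisfies $\|g''(x)[u,v]\|\le\|g^{(2)}\|\,\|u\|\,\|v\|$ by the same coordinatewise estimate. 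I will also use repeatedly the elementary fact that an absolutely continuous $\R^d$-valued curve $v(t)$ obeys $\|v(t)\|\le\|v(0)\|+\int_0^t\|\dot v(s)\|\,ds$, which converts each of the differential inequalities below into an integral one to which Gronwall's lemma (homogeneous and inhomogeneous) applies.

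For (i), given $x\neq y$ put $w(t)=X_x(t)-X_y(t)$; then $\dot w(t)=g(X_x(t))-g(X_y(t))$, so $\|\dot w(t)\|\le\|g\|_{Lip}\|w(t)\|$, and Gronwall yields $\|w(t)\|\le\|w(0)\|\,e^{t\|g\|_{Lip}}$, which is \eqref{eq0ODEsmoo} (global existence of $X_x(t)$ for Lipschitz $g$ being the standard Picard--Lindel\"of theorem together with the a priori linear growth bound). For (ii), invoking the classical theorem on $C^1$-dependence on initial data — so that $Y_j(t):=\pa X_x(t)/\pa x_j$ exists, is continuous, and solves the first variational equation $\dot Y_j(t)=g'(X_x(t))Y_j(t)$, $Y_j(0)=\e_j$ (the detailed argument being given, in the more general $l_1$ setting, in Section \ref{secprelonl1}) — one gets $\|\dot Y_j(t)\|\le\|g^{(1)}\|\,\|Y_j(t)\|$ with $\|Y_j(0)\|=1$, hence $\|Y_j(t)\|\le e^{t\|g^{(1)}\|}$ and \eqref{eq1ODEsmoo}. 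The chain rule $\pa_j[f(X_x(t))]=\sum_i(\pa_if)(X_x(t))(Y_j(t))_i$ then gives $|\pa_j[f(X_x(t))]|\le\|f^{(1)}\|\,\|Y_j(t)\|\le\|f^{(1)}\|\,e^{t\|g^{(1)}\|}$, i.e. \eqref{eq2ODEsmoo}.

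For (iii), differentiating the first variational equation once more in $x_i$ (using now the $C^2$-version of the smooth-dependence theorem) shows that $Z_{ij}(t):=\pa^2 X_x(t)/\pa x_i\pa x_j$ solves the second variational equation $\dot Z_{ij}=g'(X_x(t))Z_{ij}+g''(X_x(t))[Y_i,Y_j]$, $Z_{ij}(0)=\0$. By (ii), $\|g''(X_x(t))[Y_i(t),Y_j(t)]\|\le\|g^{(2)}\|\,e^{2t\|g^{(1)}\|}$, hence $\|\dot Z_{ij}(t)\|\le\|g^{(1)}\|\,\|Z_{ij}(t)\|+\|g^{(2)}\|\,e^{2t\|g^{(1)}\|}$; multiplying through by the integrating factor $e^{-t\|g^{(1)}\|}$, integrating, and using $\int_0^t e^{s\|g^{(1)}\|}\,ds\le t\,e^{t\|g^{(1)}\|}$ gives $\|Z_{ij}(t)\|\le t\|g^{(2)}\|e^{2t\|g^{(1)}\|}$, in particular \eqref{eq3ODEsmoo}. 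Differentiating $\pa_j[f(X_x(t))]$ once more in $x_i$ produces $\pa_i\pa_j[f(X_x(t))]=\sum_{k,l}(\pa^2_{kl}f)(X_x(t))(Y_i)_l(Y_j)_k+\sum_k(\pa_kf)(X_x(t))(Z_{ij})_k$; bounding the first sum by $\|f^{(2)}\|\,\|Y_i\|\,\|Y_j\|$ and the second by $\|f^{(1)}\|\,\|Z_{ij}\|$ and inserting the estimates above yields \eqref{eq4ODEsmoo}.

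The only genuinely non-routine ingredient is the classical theorem on $C^1$ (resp.\ $C^2$) differentiable dependence of ODE solutions on the initial condition — that is, the legitimacy of differentiating through the flow and the validity of the variational equations. Since the paper carries out this argument in the harder $l_1$-valued setting of Section \ref{secprelonl1}, here it suffices to cite it; beyond that the whole proof is bookkeeping with Gronwall's lemma and the chain rule, the only care being to keep all operator and multilinear norms consistent with the ambient $\ell_1$-norm, as noted above.
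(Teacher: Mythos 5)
Your proof is correct and follows essentially the same route the paper takes: the paper states Proposition \ref{ODEsmoo} as standard and defers the argument to the variational-equation computations carried out (in the more general $l_1$ setting) in the proof of Theorem \ref{lemonsenseBanach1}, which is exactly your combination of the first and second variational equations, Gronwall's lemma, and the chain rule for $T^tf$. Your integrating-factor step even yields the slightly sharper intermediate bound $t\|g^{(2)}\|e^{2t\|g^{(1)}\|}$ for the second derivative, from which the stated estimate \eqref{eq3ODEsmoo} follows a fortiori.
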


Let us also quote the result on the continuous dependence of the solutions on the r.h.s. and initial data.

\begin{prop}
\label{ODErhs}
Let $g_i\in C_{bLip}(\R^d)$, $i=1,2$ and let $X^i_x(t)$ denote the solutions
of the equations  $\dot x=g_i (x)$. Let $\|g_1-g_2\|\le \de$. Then
\begin{equation}
\label{eqODErhs}
| X^1_x(t)-X^2_y(t)| \le (t \de+|x-y|) \exp\{ t \| g_i\|_{Lip} \}, \quad i=1,2.
\end{equation}
\end{prop}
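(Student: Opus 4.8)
The plan is to prove \eqref{eqODErhs} by the standard Gr\"onwall argument applied to the integrated form of the two ODEs. First I would write both solutions in integral form,
\[
X^1_x(t)=x+\int_0^t g_1(X^1_x(s))\,ds, \qquad X^2_y(t)=y+\int_0^t g_2(X^2_y(s))\,ds,
\]
subtract, and estimate the difference $u(t)=|X^1_x(t)-X^2_y(t)|$. Inside the integral I would insert the intermediate term $g_1(X^2_y(s))$, splitting $g_1(X^1_x(s))-g_2(X^2_y(s))$ into $[g_1(X^1_x(s))-g_1(X^2_y(s))]+[g_1(X^2_y(s))-g_2(X^2_y(s))]$. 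The first bracket is bounded by $\|g_1\|_{Lip}\,u(s)$ and the second by $\|g_1-g_2\|\le\de$, so that
\[
u(t)\le |x-y|+\de t+\|g_1\|_{Lip}\int_0^t u(s)\,ds.
\]

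Applying Gr\"onwall's inequality to this integral inequality gives $u(t)\le(|x-y|+\de t)\exp\{t\|g_1\|_{Lip}\}$, which is \eqref{eqODErhs} with $i=1$. Since the roles of $g_1$ and $g_2$ are symmetric — one could equally insert $g_2(X^1_x(s))$ instead — the same bound holds with $\|g_2\|_{Lip}$ in place of $\|g_1\|_{Lip}$, which accounts for the statement being valid for both $i=1,2$. Note also that the well-posedness of $X^i_x(t)$ for all $t$ (used implicitly so that $u(t)$ is finite and continuous) is guaranteed by the hypothesis $g_i\in C_{bLip}(\R^d)$, exactly as recalled before Proposition \ref{ODEsmoo}.

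This proof is routine; there is no real obstacle. The only mild subtlety is the bookkeeping in the triangle-inequality split — one must add and subtract a term evaluated along \emph{one} of the two trajectories (so that the Lipschitz estimate applies to a single function $g_1$ or $g_2$ evaluated at two nearby points, and the $\de$-estimate compares the two functions at a single point), rather than mixing the two. A secondary point worth a sentence is that Gr\"onwall is being used in its integral form for a merely continuous (not necessarily differentiable) function $u$, which is the version that applies here since $u=|X^1_x-X^2_y|$ need not be differentiable where the two trajectories cross; the integral inequality above is nonetheless valid because each $X^i$ is $C^1$ and the absolute value is $1$-Lipschitz.
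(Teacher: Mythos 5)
Your proof is correct and is precisely the standard Gr\"onwall argument that the paper itself omits, deferring instead to the ODE textbooks cited before Proposition \ref{ODEsmoo}. The decomposition, the integral Gr\"onwall inequality with the non-decreasing majorant $|x-y|+\de t$, and the symmetry remark explaining why the bound holds for either $i$ are all exactly as one would expect; nothing further is needed.
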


As follows from \eqref{eqPDS1storlin0},
\[
\frac{T^tf-f}{t} \to Lf, \quad t\to 0,
\]
for any $f\in C_{\infty}(\R^d)\cap  C^1(\R^d)$. As a corollary to previous
results let us obtain the rates of this convergence.

\begin{prop}
\label{PDEgenconv}
Let $g\in C^1(\R^d)$ and $f\in C_{\infty}(\R^d)\cap  C^2(\R^d)$. Then
\begin{equation}
\label{eq1PDEgenconv}
\|T^tf-f\|\le t\|g\|\, \|f\|_{bLip},
\end{equation}
\begin{equation}
\label{eq2PDEgenconv}
\|\frac{T^tf-f}{t} - Lf\|
\le t\|g\| (\|g\|\, \|f^{(2)}\|+\|g\|_{Lip} \|f\|_{Lip})
\le t\|g\|\, \|g\|_{bLip} \|f\|_{C^2}.
\end{equation}
\end{prop}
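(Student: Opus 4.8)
The plan is to reduce both estimates to integration along the characteristics $X_x(t)$ of $\dot x=g(x)$, relying on only two elementary facts: the a priori bound $\|X_x(s)-x\|\le s\|g\|$, obtained by integrating the ODE, and the identity $\frac{d}{ds}f(X_x(s))=(Lf)(X_x(s))$ recorded in \eqref{eqPDS1storlin0}. All norm manipulations use the paper's convention that $g(x)$ is measured in the $l_1$-norm while $f^{(1)}(x)$ is measured in the sup-norm, so that the pointwise estimate $|Lf(x)|=|g(x)\cdot f^{(1)}(x)|\le\|g(x)\|\,\|f^{(1)}(x)\|_{sup}\le\|g\|\,\|f\|_{Lip}$ holds by \eqref{eqdefLipnorml1a}.

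For \eqref{eq1PDEgenconv} I would write $T^tf(x)-f(x)=\int_0^t (Lf)(X_x(s))\,ds$, bound the integrand by $\|Lf\|\le\|g\|\,\|f\|_{Lip}\le\|g\|\,\|f\|_{bLip}$ uniformly in $s$ and $x$, and take the supremum over $x$. For \eqref{eq2PDEgenconv} the same representation gives
\[
\frac{T^tf(x)-f(x)}{t}-Lf(x)=\frac1t\int_0^t\big[(Lf)(X_x(s))-(Lf)(x)\big]\,ds ,
\]
so the task becomes estimating $|(Lf)(y)-(Lf)(x)|$ when $\|y-x\|\le s\|g\|$. I would split $Lf(y)-Lf(x)=g(y)\cdot\big(f^{(1)}(y)-f^{(1)}(x)\big)+\big(g(y)-g(x)\big)\cdot f^{(1)}(x)$; the first term is controlled by $\|g\|\,\|f^{(2)}\|\,\|y-x\|$, integrating the second derivatives of $f$ along the segment $[x,y]$ and using that $\|f^{(2)}\|$ dominates the Lipschitz constant of each $\pa f/\pa x_j$; the second term is controlled by $\|g\|_{Lip}\,\|f\|_{Lip}\,\|y-x\|$. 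Substituting $\|y-x\|\le s\|g\|$ and integrating produces the factor $t^{-1}\int_0^t s\,ds=t/2\le t$, which yields the middle inequality of \eqref{eq2PDEgenconv}; the last inequality then follows at once from $\|g\|\,\|f^{(2)}\|+\|g\|_{Lip}\,\|f\|_{Lip}\le(\|g\|+\|g\|_{Lip})(\|f\|+\|f\|_{Lip}+\|f^{(2)}\|)=\|g\|_{bLip}\,\|f\|_{C^2}$.

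I do not expect a genuine obstacle: the statement is simply a quantitative form of \eqref{eqPDS1storlin0}. The only points deserving care are the consistent use of the $l_1$--sup duality when turning $g\cdot f^{(1)}$ into a product of norms, and the observation that $g\in C^1(\R^d)$ enters only through $\|g\|_{Lip}=\|g^{(1)}\|<\infty$, so that Proposition \ref{ODEsmoo} is not needed here and the argument never has to differentiate $X_x(t)$ in $x$.
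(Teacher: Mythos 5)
Your argument is correct and is essentially the paper's proof: both rest on the identity $T^tf-f=\int_0^t T^sLf\,ds$ from \eqref{eqPDS1storlin0} together with the $l_1$–sup duality bound $\|Lf\|\le\|g\|\,\|f\|_{Lip}$. Your direct estimate of $|(Lf)(X_x(s))-(Lf)(x)|$ via the product-rule bound on $\|Lf\|_{Lip}$ and $\|X_x(s)-x\|\le s\|g\|$ is just an unpacking of the paper's one-line step "apply \eqref{eq1PDEgenconv} to the function $Lf$".
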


\begin{proof}
By \eqref{eqPDS1storlin0},
\[
\|T^tf-f\|=\|\int_0^t T^sLf \, ds\| \le t \|Lf\|,
\]
implying \eqref{eq1PDEgenconv}. Next, by \eqref{eqPDS1storlin0},
\[
\frac{T^tf-f}{t} - Lf=\frac{1}{t}\int_0^t T^sLf \, ds - Lf=\frac{1}{t}\int_0^t (T^sLf-Lf) \, ds.
\]
Applying \eqref{eq1PDEgenconv} to the function $Lf$ yields the first inequality in \eqref{eq2PDEgenconv},
the second inequality being a direct consequence.
\end{proof}

A more general setting of non-autonomous equations $\dot x=g (t,x)$ is required, where
 $g$ is Lipschitz in $x$ and piecewise continuous in $t$. Here the solutions  $X_{s,x}(t)$
 with the initial point $x$ at time $s$ are well defined and the transition operators
 form a two-parameter family:
\begin{equation}
\label{eqPDS1storlin1}
U^{s,t}f(x)=f(X_{s,x}(t)).
\end{equation}
These operators satisfy the chain rule $U^{s,r}U^{r,t}=U^{s,t}$ and
the function $S(s,x)=U^{s,t}f(x)$ satisfies the PDE
\begin{equation}
\label{eqPDS1storlin2}
\frac{\pa S}{\pa s} +g(s,x) \cdot \frac{\pa S}{\pa x}=0,
\end{equation}
with the initial (or terminal) condition $S(t,x)=f(x)$. As mentioned above,
the operators $U^{s,t}$ satisfying the chain rule are said to form a
{\it propagator}\index{propagator}. Moreover, these operators form a
{\it Feller propagator} meaning that they also act in the space $C_{\infty}(\R^d)$
and depend strongly continuous on $s$ and $t$. That is, if $f\in C_{\infty}(\R^d)$,
then $U^{s,t}f(x)$ is a continuous function $t\mapsto U^{s,t} \in C_{\infty}(\R^d)$
for any $s$ and a continuous function $s\mapsto U^{s,t} \in C_{\infty}(\R^d)$ for any $t$.

 The solutions $X_{s,x}(t)$  to the ODE $\dot x=g (t,x)$ are called the
 {\it characteristics of PDE}\index{characteristics of PDE} \eqref{eqPDS1storlin2}.
 The results of Propositions \ref{ODEsmoo} and \ref{ODErhs} (and their proofs) hold for
 the equation $\dot x=g (t,x)$, if all norms of functions of $t,x$ are understood as
 $\sup_t$ of their norms as functions of $x$, say
 \begin{equation}
\label{eqtimedepnorm}
 \|g\|_{Lip}=\sup_t \|g(t,.)\|_{Lip}, \quad \|g^{(2)}\|=\sup_t \|g^{(2)}(t,.)\|.
 \end{equation}
With this agreement the following estimates hold:
\begin{equation}
\label{eq0ODEsmootd}
\| X_{s,x}(t)\|_{Lip} \le \exp\{ (t-s) \| g\|_{Lip} \},
\end{equation}
\begin{equation}
\label{eq2ODEsmootd}
\| (U^{s,t}f)^{(1)}\|=\sup_{j,x} \left|\frac{\pa}{\pa x_j} f(X_{s,x}(t))\right|
\le \|f^{(1)}\| \exp\{ (t-s) \|g^{(1)}\| \},
\end{equation}
\begin{equation}
\label{eq3ODEsmootd}
\left\| \frac{\pa ^2X_{s,x}(t)}{\pa x^2}\right\|\le (t-s) \|g^{(2)}\| \exp\{ 3(t-s) \|g^{(1)}\| \},
\end{equation}
\begin{equation}
\label{eq4ODEsmootd}
\| (U^{s,t}f)^{(2)}\|\le  \|f^{(2)}\|  \exp\{ 2(t-s) \|g^{(1)}\|\}
+ (t-s) \|f^{(1)}\| \, \|g^{(2)}\| \exp\{ 3(t-s)  \|g^{(1)}\|\},
\end{equation}
with the derivatives on l.h.s. existing whenever the derivatives on the r.h.s. exist.

As is easily seen, for propagators estimate
\eqref{eq1PDEgenconv} still holds, that is,
\[
\|U^{s,t}f-f\|\le (t-s)\|g\|\, \|f\|_{bLip},
\]
and in estimate \eqref{eq2PDEgenconv} an additional term appears on the r.h.s. depending on
the continuity of $g$ with respect to $t$. For instance, if $g$ is Lipschitz in $t$ with the constant
\[
\ka =\sup_x \|g(.,x)\|_{Lip},
\]
the analog of estimate \eqref{eq2PDEgenconv} writes down as
\begin{equation}
\label{eq2PDEgenconvpr}
\|\frac{U^{s,t}f-f}{t-s} - L_sf\|
\le (t-s)\|g\|\, \|g\|_{bLip} \|f\|_{C^2}+(t-s)\ka \|f\|_{bLip}.
\end{equation}

\begin{remark}
Evolution $(s,x)\to X_{s,x}(t)$
can be looked at as the deterministic Markov process with the transition operators $U^{s,t}$.
\end{remark}

Finally let us remind the result on the sensitivity of linear evolutions with respect to a parameter.
Let us consider the following linear ODE in $\R^d$:
\begin{equation}
\label{eqlinevol}
\dot y=A(t,x)y, \quad y\in \R^d, \quad t\in [0,T],
\end{equation}
with the matrix $A$ depending smoothly on a parameter $x=(x_1, \cdots , x_n)\in \R^n$.
Here we shall work with $y$ using their sup-norms. All function norms of $A$ will be
meant as $\sup_{t\in[0,T]}$ of the corresponding norms as functions of $x$. For instance,
\[
\|A\|=\sup_{t\in[0,T],x} \sup_i \sum_j |A_{ij}(t,x)|.
\]
For the solution $y(t)=y(t,x)$ to \eqref{eqlinevol} with some initial data $y_0(x)$
that may itself depend on $x$ we have the evident estimate
\[
\|y(t)\|_{sup} \le \exp\{ t\|A\| \} \|y_0\|_{sup}.
\]

\begin{prop}
\label{linODEsens}
Let $A(t,.)\in C^2(\R^n)$. Then $y(t)$ is also twice differentiable with respect to $x$ and
\begin{equation}
\label{eq1linODEsens}
 \left\|\frac{\pa y}{\pa x_i}\right\|_{sup}\le  \exp\{ 2t\|A\| \}
 \left( \left\|\frac{\pa y_0}{\pa x_i}\right\|_{sup} +t \left\|\frac{\pa A}{\pa x_i}\right\| \, \|y_0\|_{sup}\right),
\end{equation}
\[
\left\|\frac{\pa ^2y}{\pa x_i \pa x_j}\right\|_{sup}\le  \exp\{ 3t\|A\| \}
\left( \left\|\frac{\pa^2 y_0}{\pa x_i \pa x_j}\right\|_{sup}
+t \left\|\frac{\pa ^2A}{\pa x_i \pa x_j}\right\| \, \|y_0\|_{sup}\right.
\]
\begin{equation}
\label{eq2linODEsens}
 \left.  +2t  \sup_k\left\|\frac{\pa y_0}{\pa x_k}\right\|_{sup}  \, \sup_k \left\|\frac{\pa A}{\pa x_k}\right\|_{sup}
 + 2t^2 \sup_k \left\|\frac{\pa A}{\pa x_k}\right\|^2 \|y_0\|_{sup} \right).
\end{equation}
\end{prop}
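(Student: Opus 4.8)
The plan is to derive both inequalities by differentiating the linear equation \eqref{eqlinevol} with respect to the components of the parameter and then applying Gronwall's lemma to the resulting \emph{variational equations}. Since $A(t,\cdot)\in C^2(\R^n)$, the standard theory of smooth dependence of ODE solutions on parameters (the same argument sketched for Proposition \ref{ODEsmoo} and carried out in Section \ref{secprelonl1}) guarantees that $y(t,x)$ is twice continuously differentiable in $x$, so every object below is well defined; I would take this differentiability as given and concentrate purely on the quantitative bounds, using throughout the elementary estimate $\|y(t)\|_{sup}\le \exp\{t\|A\|\}\|y_0\|_{sup}$ recorded just before the proposition.

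For the first-order bound, put $u_i(t)=\pa y(t)/\pa x_i$. Differentiating $\dot y=A(t,x)y$ in $x_i$ gives the inhomogeneous linear equation
\[
\dot u_i=A(t,x)u_i+\frac{\pa A}{\pa x_i}(t,x)\,y,\qquad u_i(0)=\frac{\pa y_0}{\pa x_i}.
\]
Passing to the integral form and estimating componentwise, one gets
\[
\|u_i(t)\|_{sup}\le \|u_i(0)\|_{sup}+\int_0^t\Big(\|A\|\,\|u_i(s)\|_{sup}+\Big\|\frac{\pa A}{\pa x_i}\Big\|\,\exp\{s\|A\|\}\,\|y_0\|_{sup}\Big)\,ds,
\]
and Gronwall's lemma (the source term being already of order $t$ in $s$ after the obvious evaluation of the integral) yields \eqref{eq1linODEsens}; in fact a factor $\exp\{t\|A\|\}$ already suffices, so the stated $\exp\{2t\|A\|\}$ is comfortably on the safe side.

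For the second-order bound, put $w_{ij}(t)=\pa^2 y(t)/\pa x_i\pa x_j$. Differentiating the equation for $u_i$ in $x_j$ produces
\[
\dot w_{ij}=A(t,x)w_{ij}+\frac{\pa A}{\pa x_j}u_i+\frac{\pa A}{\pa x_i}u_j+\frac{\pa^2 A}{\pa x_i\pa x_j}\,y,
\]
again a linear inhomogeneous equation, now with four source terms. Applying the same Gronwall argument, bounding $\|y\|$ exponentially, bounding $\|u_i\|_{sup}$ and $\|u_j\|_{sup}$ by the first-order estimate just obtained, and bounding the $\pa A$ and $\pa^2 A$ factors by their (time-sup) norms, one collects: the homogeneous contribution $\exp\{3t\|A\|\}\|w_{ij}(0)\|_{sup}$, the term carrying $t\,\|\pa^2A/\pa x_i\pa x_j\|\,\|y_0\|_{sup}$ from the last source, and — because each of the two middle sources is itself linearly growing in time and undergoes a further time integration — the terms $2t\sup_k\|\pa y_0/\pa x_k\|_{sup}\sup_k\|\pa A/\pa x_k\|_{sup}$ and $2t^2\sup_k\|\pa A/\pa x_k\|^2\|y_0\|_{sup}$ of \eqref{eq2linODEsens}, all multiplied by the single worst exponential factor $\exp\{3t\|A\|\}$, which absorbs the products of the various $\exp\{(t-s)\|A\|\}\exp\{s\|A\|\}$.

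The genuinely fiddly part — and the only real obstacle — is this second-order bookkeeping: one must be careful to fall back on the crude uniform quantities $\sup_k\|\pa A/\pa x_k\|_{sup}$ and $\sup_k\|\pa y_0/\pa x_k\|_{sup}$ when the two mixed source terms $(\pa A/\pa x_j)u_i$ and $(\pa A/\pa x_i)u_j$ are estimated, and to remember that a source term of size $O(t)$ in the time variable contributes $O(t^2)$ after the Duhamel integration. With these two points kept in mind, \eqref{eq2linODEsens} comes out of a single application of Gronwall's lemma to $t\mapsto\|w_{ij}(t)\|_{sup}$.
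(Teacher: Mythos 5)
Your proposal is correct and follows exactly the paper's own route: the proof there consists precisely of writing the two variational equations $\dot u_i=Au_i+(\pa A/\pa x_i)y$ and $\dot w_{ij}=Aw_{ij}+(\pa A/\pa x_j)u_i+(\pa A/\pa x_i)u_j+(\pa^2A/\pa x_i\pa x_j)y$, deferring the existence of the derivatives to standard ODE textbooks, and reading off the bounds via Duhamel/Gronwall. Your observation that the sharp exponential factors are $e^{t\|A\|}$ (so the stated $e^{2t\|A\|}$, $e^{3t\|A\|}$ and the coefficient $2t^2$ are deliberately generous) is accurate.
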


\begin{proof}
All these bounds arise from the solutions of the linear equations satisfied by the derivatives in question:
\[
\frac{d}{dt}\frac{\pa y}{\pa x_i} = A(t,x) \frac{\pa y}{\pa x_i} +\frac{\pa A}{\pa x_i}y,
\]
\[
\frac{d}{dt}\frac{\pa^2 y}{\pa x_i \pa x_j}
= A(t,x) \frac{\pa^2 y}{\pa x_i \pa x_j}
+\frac{\pa A}{\pa x_j}\frac{\pa y}{\pa x_i}
+\frac{\pa A}{\pa x_i}\frac{\pa y}{\pa x_j}
+\frac{\pa ^2A}{\pa x_i \pa x_j}y.
\]
For the justification (existence of the derivatives) we again refer to the textbooks in ODEs mentioned above.
\end{proof}

\section{Mean-field interacting particle systems}
\label{secmeanfieldint}

 Let us turn to the basic setting of mean-field interacting particle systems with a finite number of types.

Let $\{Q (t,x)\}=\{(Q_{ij})(t,x)\}$ be a family of $d\times d$ square $Q$-matrices or Kolmogorov matrices
depending Lipschitz continuously on a vector $x$ from the closed simplex
 \[
 \Si_d=\{x=(x_1,...,x_d)\in \R^d_+ : \sum_{j=1}^d x_j=1\},
 \]
and piecewise continuously on time $t\ge 0$, so that $\|Q\|_{bLip}=\|Q\|+\|Q\|_{Lip}$, where we define
(in accordance with our basic conventions, see \eqref{eqdefLipnorml1a} - \eqref{eqourmatrixnorm} in Preface)
\begin{equation}
\label{eqassumcontQ0}
\|Q\|=\sup_{i,t,x} \sum_j |Q_{ij}(t,x)| <\infty,
\quad
\|Q\|_{Lip}=\sup_{x\neq y} \sup_{i,t} \frac{\sum_j|Q_{ij}(t,x)-Q_{ij}(t,y)|}{|x-y|}.
\end{equation}

In view of the properties of $Q$-matrices this implies
\begin{equation}
\label{eqassumcontQ}
\|Q\|=2\sup_{i,t,x} |Q_{ii}(t,x)| <\infty,
\quad
\|Q\|_{Lip}\le 2\sup_{x\neq y} \sup_{i,t} \frac{\sum_{j\neq i}|Q_{ij}(t,x)-Q_{ij}(t,y)|}{|x-y|}.
\end{equation}

The norms on the derivatives of the families of Kolmogorov matrices $Q$ with respect to $x$ can be, of course,
introduced, in various equivalent ways. For definiteness, we will use the following quantities, most convenient for our purposes:
\begin{equation}
\label{eqassumsmooQ0}
\|Q\|_{C^1}=\|Q\|+\sup_i \sum_j \left|\sup_{k,t,x} \frac{\pa Q_{ij}}{\pa x_k}\right|,
\quad
\|Q\|_{C^2}=\|Q\|_{C^1}+ \sup_i \sum_j \left|\sup_{k,l,t,x} \frac{\pa^2 Q_{ij}}{\pa x_k \pa x_l}\right|.
\end{equation}
As above this implies the estimates in terms of the transitions $Q_{ij}$ with $j\neq i$:
\begin{equation}
\label{eqassumsmooQ}
\|Q\|_{C^1}\le \|Q\|+2\sup_i \sum_{j\neq i} \left|\sup_{k,t,x} \frac{\pa Q_{ij}}{\pa x_k}\right|,
\quad
\|Q\|_{C^2}\le \|Q\|_{C^1}+ 2\sup_i \sum_{j\neq i} \left|\sup_{k,l,t,x} \frac{\pa^2 Q_{ij}}{\pa x_k \pa x_l}\right|.
\end{equation}

In what follows the matrices $Q$ will depend on the additional parameter controlled by the principal,
but for the moment this dependence is not relevant and will be ignored.

Suppose we have a large number of particles distributed arbitrary among the types $\{1,...,d\}$.
More precisely our state space is $\Z^d_+$, the set of sequences of $d$ non-negative integers $n=(n_1,...,n_d)$,
 where each $n_i$ specifies the number of particles in the state $i$. Let $N$ denote the total
 number of particles in state $n$: $N=n_1+...+n_d$. For $i\neq j$ and a state $n$ with $n_i>0$ denote by
 $n^{ij}$ the state obtained from $n$ by removing one particle of type $i$ and adding a particle of type $j$,
 that is $n_i$ and $n_j$ are changed to $n_i-1$ and $n_j +1$ respectively.
The {\it mean-field interacting particle system}\index{mean-field interacting particle system}
 (in continuous time) specified
by the family $\{Q\}$ is defined as the Markov chain on $S$ with the generator
\begin{equation}
\label{eqdefgenmeanfieldchain}
L_t^Nf (n)=\sum_{i,j=1}^d n_i Q_{ij}(t,n/N)[f(n^{ij})-f(n)].
\end{equation}

Probabilistic description of this process is as follows. Starting from any time and current state $n$
one attaches to each particle a $|Q_{ii}|(t,n/N)$-exponential random waiting time (where $i$ is the
type of this particle). If the shortest of the waiting times $\tau$ turns out to be attached to a particle
of type $i$, this particle jumps to a state $j$ according to the distribution $(Q_{ij}/|Q_{ii}|)(t,n/N)$.
Briefly, with this distribution and at rate $|Q_{ii}|(t,n/N)$, any particle of type $i$ can turn (migrate)
to a type $j$. After any such transition the process starts again from the new state $n^{ij}$.
Notice that since the number of particles $N$ is preserved by any jump, this process is in fact a Markov chain
with a finite state space.

Normalizing the states to $x=n/N \in \Si_d\cap \Z^d_+/N$, leads to the generator (also denoted by $L^N_t$,
 with some abuse of notation)
\begin{equation}
\label{eqdefgenmeanfieldchainnorm}
L^N_tf (n/N)=\sum_{i=1}^d \sum_{j=1}^d n_i Q_{ij}(t,n/N)[f(n^{ij}/N)-f(n/N)],
\end{equation}
or equivalently
\begin{equation}
\label{eqdefgenmeanfieldchainnorm1}
L^N_tf (x)=\sum_{i=1}^d \sum_{j=1}^d x_i Q_{ij}(t,x)N[f(x-e_i/N+e_j/N)-f(x)], \quad x\in \Z^d_+/N,
\end{equation}
where $e_1,...,e_d$ denotes the standard basis in $\R^d$. Let us denote by $X^N(t)=X^N_{s,x}(t)$
 the corresponding Markov chain and by $\E=\E_{s,x}$ the expectation with respect to this chain,
 where $x\in \Si_d\cap \Z^d_+/N$ denotes the initial state at time $s<t$.  The transition
operators of this chain will be denoted by $U_N^{s,t}$:
\begin{equation}
\label{eqdefpropagprelim}
U_N^{s,t}f(x)=\E f(X^N_{s,x}(t))=\E_{s,x}f(X^N(t)), \quad s\le t.
\end{equation}
Two versions of the notations in this formula (with $(s,x)$ attached either to $\E$ or to $X^N(t)$)
are both standard in the theory. As any transition operators of a Markov chain, these operators satisfy
the chain rule (or Chapman-Kolmogorov equation)
\[
U_N^{s,r}U_N^{r,t}=U_N^{s,t}, \quad s\le r\le t.
\]
and are said to form a propagator.

In the pressure and resistance setting of \eqref{eqQmapresres} generator
\eqref{eqdefgenmeanfieldchainnorm1} reduces to
\begin{equation}
\label{eqdefgenmeanfieldchainnorm1aa}
L^N_tf (x)=\sum_{i=1}^d \sum_{j=1}^d \ka x_i x_j (R_j(t,x)-R_i(t,x))^+N[f(x-e_i/N+e_j/N)-f(x)].
\end{equation}

Recall that the dual operator $(L_t^N)^*$ to the operator $L_t^N$ is defined from the relation
\[
\sum_{x=n/N\in \Si_d} [(L_t^Nf)(x)g(x)-f(x)((L_t^N)^*g)(x)]=0.
\]
By the shift of the summation index it is straightforward to see that the dual is given by the formula
\begin{equation}
\label{eqgendualfinN}
(L^N_t)^*g (y)=\sum_{i=1}^d \sum_{j\neq i}
[ (y_i+1/N)Q_{ij}(t,y+e_i/N-e_j/N)g(y+e_i/N-e_j/N)-y_iQ_{ij}(t,y) g(y)],
\end{equation}
where, for convenience, it is set that $Q_{ij}(t,x)=0$ for $x\notin \Si_d$.
As is known from the theory of Markov chains, stable distributions $g$
(also referred to as equilibrium probabilities) for the chains, defined by the
generator $L_t^N=L^N$ in the time-homogeneous case, solve the equation $(L^N)^*g=0$,
and for large times $t$ these Markov chains converge to some of these equilibria.

In accordance with \eqref{eqdiscanfconttimeMC}, {\it mean-field interacting systems in discrete time}
\index{mean-field interacting particle system} related to the above discussed mean-field interacting
particle system in continuous time specified by
the family $\{Q\}$ is defined as the Markov chain evolving in discrete time $t=k\tau$, $k\in \N$, with
\begin{equation}
\label{eqintercontdis0}
\tau \le (N\|Q\|)^{-1}
\end{equation}
with the transition probabilities
\begin{equation}
\label{eqintercontdis}
P_{nn^{ij}}^{\tau,t} =P_{nn^{ij},N}^{\tau,t}=\tau n_i Q_{ij}(t,x), \quad i\neq j,
\end{equation}
the probability of remaining in a given state $n$ being
$1-\tau \sum_i n_i |Q_{ii}(t,x)|$.
As it follows from \eqref{eqintercontdis},
\begin{equation}
\label{eqintercontdis1}
\frac{P^{\tau,t}_Nf-f}{\tau}=L_t^Nf
\end{equation}
for all $N$, $t$ and $\tau$. If $Q$ does not depend explicitly on $t$, the dependence on $t$
disappears in \eqref{eqintercontdis}, \eqref{eqintercontdis1}.

As one can expect and as will be shown, the theories of discrete and continuous time
 mean-field interacting systems are very similar. However, the
 discrete-time version is a bit more restrictive for the choice of parameters, as they
 are linked by \eqref{eqintercontdis0}.

Two asymptotic regimes for  mean-field interacting particle system (given by operators $L_t^N$)
are studied in the first part of this book: $N\to \infty$ and $t\to \infty$. In full detail
the limit $\lim_{t\to \infty} \lim_{N\to \infty}$ will be studied. An important question is
whether the order of applying these limits can be interchanged. In other words, what can be said
about $t\to \infty$ limit of the chain with a finite number of particles $N$ (which is described essentially
by the highly multidimensional equation  $(L^N)^*g=0$) if we know the
large-time-behavior of the simpler limiting dynamics $N\to \infty$
(given by \eqref{eqdefgenmeanfieldchainkineq0} below).

\section{Dynamic LLN: smooth coefficients}
\label{LLNsmooth}

As above, the functional norms of functions of two variables $(t,x)$ with and $x\in \Si_d$
will mean the $\sup_t$ of their respective norms as functions of $x$.

Our main interest concerns the asymptotic behavior of these chains as $N \to \infty$.
To this end, let us observe that, for $f\in C^1(\Si_d)$,
\[
\lim_{N \to \infty,\,  n/N \to x}N [f(n^{ij}/N)-f(n/N)]
=\frac{\pa f}{\pa x_j}(x)-\frac{\pa f}{\pa x_i}(x),
\]
 so that
\[
\lim_{N \to \infty,\,  n/N \to x} L^N_tf (n/N)=\La_t f(x),
\]
where
\begin{equation}
\label{eqdefgenmeanfieldchainlim}
\La_t f(x)=\sum_{i=1}^d \sum_{j\neq i} x_i Q_{ij}(t,x)[\frac{\pa f}{\pa x_j}-\frac{\pa f}{\pa x_i}](x)
=\sum_{k=1}^d \sum_{i\neq k} [x_i Q_{ik}(t,x)-x_k Q_{ki}(t,x)]\frac{\pa f}{\pa x_k}(x).
\end{equation}

More precisely, if $f\in C^2(\Si_d)$, then, by the Taylor formula,
\[
 L^N_tf (x)-\La_t f(x)=\frac{1}{2N} \sum_{i=1}^d \sum_{j\neq i} x_i Q_{ij}(t,x)
 [\frac{\pa ^2f}{\pa x_i^2}+\frac{\pa ^2f}{\pa x_j^2}-\frac{\pa ^2f}{\pa x_i \pa x_j}](\theta),
 \]
 for $x=n/N$,  with some $\theta \in \Si_d$, and thus
 \begin{equation}
\label{eqdefgenmeanfieldchainlima}
 \|L^N_tf -\La_t f\|\le \frac{1}{N} \| f^{(2)}\| \, \|Q\|.
\end{equation}

The limiting operator $\La_t f$ is a first-order PDO with characteristics solving the equations
\begin{equation}
\label{eqdefgenmeanfieldchainkineq0}
\dot x_k =\sum_{i \neq k} [x_i Q_{ik}(t,x)-x_k Q_{ki} (t,x)]
=\sum_{i=1}^d x_i Q_{ik}(t,x), \quad k=1,...,d,
\end{equation}
called the {\it kinetic equations}\index{kinetic equation} for the process of interaction described above.
In vector form this system rewrites as
\begin{equation}
\label{eqdefgenmeanfieldchainkineq1}
\dot x=Q^T(t,x) x=x Q(t,x),
\end{equation}
where $Q^T$ is the transpose matrix to $Q$. (In the second notation $xQ(t,x)$ the vector $x$ is understood as
a row vector allowing for the multiplication by $Q$ from the right.)

The corresponding transition operators act on $C(\Si_d)$ as
\begin{equation}
\label{eqdefdetersemigroup}
U^{s,t} f(x)=f(X_{s,x}(t)), \quad s\le t.
\end{equation}

For the case of operator \eqref{eqdefgenmeanfieldchainnorm1aa}
the limiting operator takes the form
\[
\La_t f(x)
=\sum_{i,j=1}^d \ka x_i x_j [R_j(t,x)-R_i(t,x)]^+
\left[\frac{\pa f}{\pa x_j}-\frac{\pa f}{\pa x_j}\right]
\]
\begin{equation}
\label{eqdefgenmeanfieldchainlimaa}
=\sum_{i,j=1}^d \ka x_i x_j [R_j(t,x)-R_i(t,x)]\frac{\pa f}{\pa x_j},
\end{equation}
and the characteristics (or the kinetic equations) become
\begin{equation}
\label{eqdefgenmeanfieldchainlimaaa}
\dot x_j=\sum_{i=1}^d \ka x_i x_j [R_j(t,x)-R_i(t,x)].
\end{equation}

For the sake of clarity we often derive results first for time independent $Q$
and then comment on (usually straightforward) modification required for the general case
(time dependent versions are important only for the second part of the book).

Thus we shall denote
by $X_x(t)$ the characteristics and by $X^N_x(t)$ the Markov chain
 starting at $x$ at time $t=0$. The generators of these processes also
 become time-homogeneous: $L^N_t=L^N$, $\La_t=\La$. The corresponding
transition operators $U^{s,t}$ depend only on the difference $t-s$ and
the operators $U^t=U^{0,t}$, defined as $U^tf(x)=f(X_x(t))$, form a semigroup.
The transition operators for the Markov chain $X_x^N(t)$ are
\[
U^t_Nf(x)=\E f(X^N_x(t))=\E_x f(X^N(t)).
\]

Let us write down explicitly the straightforward estimates of the norms of the r.h.s.
$g(t,x)=xQ(t,x)$ of  \eqref{eqdefgenmeanfieldchainkineq1}, as a function of $x$, in
terms of the norms of $Q$ introduced in \eqref{eqassumcontQ} and \eqref{eqassumsmooQ}:
\begin{equation}
\label{eqQinkin}
\|g\|\le \|Q\|, \quad \|g\|_{Lip}\le \|Q\|_{bLip}, \quad \|g\|_{bLip}\le 2\|Q\|_{bLip},
\quad \|g^{(2)}\|\le 2\|Q\|_{C^2},
\end{equation}
because
\[
\frac{\pa g_k}{\pa x_l}=Q_{lk}+\sum_j x_j\frac{\pa Q_{jk}}{\pa x_l},
\quad
\frac{\pa ^2g_k}{\pa x_l \pa x_m}
=\frac{\pa Q_{lk}}{\pa x_m}+\frac{\pa Q_{mk}}{\pa x_l}
+\sum_j x_j\frac{\pa ^2Q_{jk}}{\pa x_l \pa x_m}.
\]
Under \eqref{eqassumcontQ} system \eqref{eqdefgenmeanfieldchainkineq1} is easily seen
to be well-posed in $\Si_d$ (see Remark below for additional comments and the proof in
 more general infinite-dimensional setting in Section \ref{secprelonl1}), that  is, for
any $x\in \Si_d$ the solution $X_x(t)$ to \eqref{eqdefgenmeanfieldchainkineq1} with $Q$
not depending on $t$ and with the initial condition $x$ at time $t=0$
is well defined and belongs to $\Si_d$ for all times $t>0$ (not necessarily for $t<0$).

\begin{remark}
(i) Unlike general setting \eqref{eqPDS1storlin1}, where the condition $t>0$ was not essential,
here it is essential, because the preservation of the simplex $\Si_d$ holds only in forward time.
It holds, because  $\dot x_k\ge 0$ whenever $x_k=0$ and $x\in \Si_d$, which does not allow a
trajectory to cross the boundary of $\Si_d$. Hence the operators $\Phi^t$ are defined as
operators in $C(\Si_d)$ only for $t>0$. (ii) It is seen from the structure of
\eqref{eqdefgenmeanfieldchainkineq1} that if $x_k\neq 0$, then $(X_x(t))_k\neq 0$ for any
 $t\ge 0$. Hence the boundary of $\Si_d$ is not attainable for this semigroup, but, depending
on $Q$, it can be gluing or not. For instance, if all elements of $Q$ never vanish, then the
points $X_x(t)$ never belong to the boundary of $\Si_d$ for $t>0$, even if the initial point
$x$ does so. In fact, in this case $\dot x_k>0$ whenever $x_k=0$ and $x\in \Si_d$.
\end{remark}

Our first objective now is to show that the Markov chains $X^N_x(t)$ do in fact converge
to the deterministic evolution $X_x(t)$ in the sense that the corresponding transition
operators converge (the so-called weak convergence of Markov processes), the previous arguments
showing only that their generators converge on sufficiently smooth functions. We are also
interested in precise rates of convergence.

The next elementary result concerns the (unrealistic) situation with optimal
 regularity of all objects concerned.

  \begin{theorem}
 \label{thsiplestLLN}
Let all the elements $Q_{ij}(x)$ belong to $C^2(\Si_d)$ and $f\in C^2(\Si_d)$. Then
\begin{equation}
 \label{eq1thsiplestLLN}
 \sup_{x\in \Z^d_+/N} |U^t_Nf(x)-U^tf(x)|
 \le \frac{t\|Q\|}{N} (\|f^{(2)}\|+2t\|f\|_{bLip} \|Q\|_{C^2}) \exp\{3t\|Q\|_{bLip}\},
 \end{equation}
 and, for any $x$ and $n/N$,
\[
 |U^t_Nf(n/N)-U^tf(x)|
 \]
 \begin{equation}
 \label{eq2thsiplestLLN}
 \le \left[\frac{t \|Q\|}{N}  (\|f^{(2)}\|+2t\|f\|_{bLip} \|Q\|_{C^2}) + \|f\|_{bLip}  \|x-n/N\|\right]
  \exp\{3t\|Q\|_{bLip}\}.
 \end{equation}
 Finally, if the initial states $n/N$ converge to a point $x\in \Si_d$, as $N\to \infty$, then
 \begin{equation}
 \label{eq3thsiplestLLN}
 \sup_{0\le t\le T} |U^t_Nf(n/N)-U^tf(x)| \to 0, \quad N\to \infty,
 \end{equation}
 for any $T$ and any $f\in C(\Si_d)$.
 \end{theorem}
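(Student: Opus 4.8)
The plan is to run the standard semigroup (Duhamel) comparison, feeding in the generator estimate \eqref{eqdefgenmeanfieldchainlima} and the flow-regularity bounds of Proposition \ref{ODEsmoo}. Write $U^t$ for the deterministic semigroup $U^tf(x)=f(X_x(t))$ and $U^t_N$ for the Markov transition semigroup of the interacting particle system, and fix $t$ and $f\in C^2(\Si_d)$. First I would introduce the interpolating family $\phi(s)=U^s_N\big(U^{t-s}f\big)$ for $s\in[0,t]$, viewed as functions on the lattice $\Z^d_+/N$. Since $Q\in C^2(\Si_d)$, Proposition \ref{ODEsmoo}(iii) gives $U^{t-s}f\in C^2(\Si_d)$, so both $L^N$ and $\La$ act on it; differentiating in $s$, using $\frac{d}{ds}U^s_N=U^s_NL^N$ and $\frac{d}{ds}U^{t-s}f=-\La U^{t-s}f$ (the latter from \eqref{eqPDS1storlin0}), yields $\phi'(s)=U^s_N(L^N-\La)U^{t-s}f$ and hence, on $\Z^d_+/N$,
\[
U^t_Nf-U^tf=\phi(t)-\phi(0)=\int_0^t U^s_N(L^N-\La)U^{t-s}f\,ds.
\]

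Next I would bound the integrand. As $U^s_N$ is a Markov transition operator it is a sup-norm contraction, so $\|U^s_N(L^N-\La)U^{t-s}f\|\le\|(L^N-\La)U^{t-s}f\|\le\frac1N\|Q\|\,\|(U^{t-s}f)^{(2)}\|$ by \eqref{eqdefgenmeanfieldchainlima}. To control $\|(U^{t-s}f)^{(2)}\|$ I apply \eqref{eq4ODEsmoo} to the flow generated by $g(x)=xQ(x)$, together with the norm translations \eqref{eqQinkin} (so $\|g^{(1)}\|=\|g\|_{Lip}\le\|Q\|_{bLip}$ and $\|g^{(2)}\|\le 2\|Q\|_{C^2}$) and $\|f^{(1)}\|\le\|f\|_{bLip}$; bounding every exponential by $\exp\{3t\|Q\|_{bLip}\}$ and $(t-s)$ by $t$ gives $\|(U^{t-s}f)^{(2)}\|\le(\|f^{(2)}\|+2t\|f\|_{bLip}\|Q\|_{C^2})\exp\{3t\|Q\|_{bLip}\}$. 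Inserting this and integrating over $s\in[0,t]$ produces exactly \eqref{eq1thsiplestLLN}. For \eqref{eq2thsiplestLLN} I split $|U^t_Nf(n/N)-U^tf(x)|\le|U^t_Nf(n/N)-U^tf(n/N)|+|U^tf(n/N)-U^tf(x)|$: the first term is estimated by \eqref{eq1thsiplestLLN}, and by \eqref{eq2ODEsmoo} the map $U^tf$ satisfies $\|U^tf\|_{Lip}\le\|f\|_{bLip}\exp\{t\|Q\|_{bLip}\}\le\|f\|_{bLip}\exp\{3t\|Q\|_{bLip}\}$, which bounds the second term by $\|f\|_{bLip}\exp\{3t\|Q\|_{bLip}\}\|x-n/N\|$.

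Finally, \eqref{eq3thsiplestLLN} follows by density. For $f\in C^2(\Si_d)$ the right side of \eqref{eq2thsiplestLLN} is uniform in $t\in[0,T]$ and tends to $0$ as $N\to\infty$ (using $n/N\to x$), proving the claim on that class. For general $f\in C(\Si_d)$, given $\ep>0$ I choose $g\in C^2(\Si_d)$ with $\|f-g\|\le\ep$ (polynomials are dense in $C(\Si_d)$); since $U^t_N$ and $U^t$ are both sup-norm contractions,
\[
\sup_{0\le t\le T}|U^t_Nf(n/N)-U^tf(x)|\le 2\ep+\sup_{0\le t\le T}|U^t_Ng(n/N)-U^tg(x)|,
\]
and letting $N\to\infty$ the last term vanishes, so $\limsup_N\sup_{0\le t\le T}|U^t_Nf(n/N)-U^tf(x)|\le 2\ep$; as $\ep$ is arbitrary the convergence follows.

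I do not expect a genuine obstacle: every ingredient — the generator bound \eqref{eqdefgenmeanfieldchainlima}, the $C^2$-regularity and norm estimates for the characteristic flow, and the contractivity of $U^t_N$ — is already available. The only points needing mild care are the legitimacy of differentiating the Duhamel interpolation (which is why the $C^2$ hypothesis on $Q$ is used, guaranteeing $U^{t-s}f\in C^2$ and enough smoothness in $s$), making sure $L^N$ and $\La$ are compared only at lattice points $n/N$ where both are meaningful, and the approximation step for the last, merely continuous, $f$.
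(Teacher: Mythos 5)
Your argument is correct and is essentially the paper's own proof: your interpolation $\phi(s)=U^s_N(U^{t-s}f)$ is the same Duhamel comparison as \eqref{eqcompprop} (up to the substitution $s\mapsto t-s$), and the remaining ingredients — the generator estimate \eqref{eqdefgenmeanfieldchainlima}, the bound on $\|(U^{t-s}f)^{(2)}\|$ via \eqref{eq4ODEsmoo} and \eqref{eqQinkin}, the contractivity of $U^s_N$, the Lipschitz bound \eqref{eq2ODEsmoo} for \eqref{eq2thsiplestLLN}, and the density argument for \eqref{eq3thsiplestLLN} — are exactly those used in the text. No gaps.
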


\begin{proof}

 To compare the semigroups, we shall use the following standard trick. We write
\begin{equation}
 \label{eqcompprop}
(U^t-U^t_N )f=U^{t-s}_NU^s|_{s=0}^t f
=\int_0^t \frac{d}{ds}U^{t-s}_NU^s f \, ds
=\int_0^t U^{t-s}_N (\La-L^N)U^s f \, ds.
\end{equation}
Let us apply this equation to an $f\in C^2(\Si_d)$.
By \eqref{eq4ODEsmoo} and the second estimate of \eqref{eqQinkin},
\[
 \|(U^t f)^{(2)}\| \le (\|f^{(2)}\|+2t\|f\|_{bLip} \|Q\|_{C^2})\exp\{3t\|Q\|_{bLip}\}.
 \]
 Hence by \eqref{eqdefgenmeanfieldchainlima},
 \[
\|(L^N-\La) U^sf\| \le \frac{\|Q\|}{N}  (\|f^{(2)}\|+2t\|f\|_{bLip} \|Q\|_{C^2}) \exp\{3t\|Q\|_{bLip}\}.
\]
Consequently \eqref{eq1thsiplestLLN} follows from \eqref{eqcompprop} and the contraction property
of $U^t_N$.

Equation \eqref{eq2thsiplestLLN} follows, because, by \eqref{eq2ODEsmoo} and  \eqref{eqQinkin},
\[
|U^tf(x)-U^tf(y)| \le \|U^tf\|_{Lip} \, \|x-y\|\le \exp\{t\|Q\|_{bLip}\} \|f\|_{Lip} \, \|x-y\|.
\]

The last statement is obtained because of the possibility to approximate any function $f\in C(\Si_d)$
by smooth functions.
\end{proof}

By the definition of the propagators  $U^t,U^t_N$, equations
\eqref{eq1thsiplestLLN}-\eqref{eq3thsiplestLLN} can be written
in terms of the averages of the Markov chains $X^N_x$. For instance,
\eqref{eq3thsiplestLLN}  takes the form
\begin{equation}
 \label{eq3athsiplestLLN}
 \sup_{0\le t\le T} |\E f(X^N_{n/N}(t))-f(X_x(t))| \to 0, \quad N\to \infty.
 \end{equation}

For time-dependent $Q$ everything remains the same:

 \begin{theorem}
 \label{thsiplestLLNtd}
Let all the elements $Q_{ij}(t,x)$ belong to $C^2(\Si_d)$ as functions of $x$
and are piecewise continuous as functions of $t$. Let $f\in C^2(\Si_d)$. Then
\begin{equation}
 \label{eq1thsiplestLLNtd}
 \sup_{x\in \Z^d_+/N} |U^{s,t}_Nf(x)-U^{s,t}f(x)|
 \le \frac{(t-s)\|Q\|}{N} (\|f^{(2)}\|+2(t-s)\|f\|_{bLip} \|Q\|_{C^2}) \exp\{3(t-s)\|Q\|_{bLip}\},
 \end{equation}
 and the corresponding analog of \eqref{eq2thsiplestLLN} holds.
 \end{theorem}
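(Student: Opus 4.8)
The plan is to repeat the argument of Theorem~\ref{thsiplestLLN} with the one-parameter Duhamel formula \eqref{eqcompprop} replaced by its two-parameter analogue for propagators. Fix $s\le t$ and $f\in C^2(\Si_d)$, and consider on $[s,t]$ the function $r\mapsto U^{s,r}_N U^{r,t}f$. Off the finitely many points where $t\mapsto Q(t,\cdot)$ is discontinuous, $U^{s,r}_N$ satisfies $\frac{d}{dr}U^{s,r}_N h=U^{s,r}_N L^N_r h$, and, since $U^{r,t}f\in C^2(\Si_d)$ by the time-dependent form of Proposition~\ref{ODEsmoo} (see the estimates after \eqref{eqtimedepnorm}, in particular \eqref{eq4ODEsmootd}), the deterministic characteristic propagator obeys the backward equation $\frac{d}{dr}U^{r,t}f=-\La_r U^{r,t}f$ (this is exactly \eqref{eqPDS1storlin2}). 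The product rule then gives $\frac{d}{dr}\bigl(U^{s,r}_N U^{r,t}f\bigr)=U^{s,r}_N\bigl(L^N_r-\La_r\bigr)U^{r,t}f$, and integrating in $r$ from $s$ to $t$, using $U^{t,t}f=f$ and $U^{s,s}_N h=h$, yields
\[
U^{s,t}_N f-U^{s,t}f=\int_s^t U^{s,r}_N\bigl(L^N_r-\La_r\bigr)U^{r,t}f\,dr.
\]

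The remaining steps are the same bookkeeping as in the time-homogeneous case. By \eqref{eq4ODEsmootd} together with \eqref{eqQinkin} (so $\|g\|_{Lip}\le\|Q\|_{bLip}$ and $\|g^{(2)}\|\le 2\|Q\|_{C^2}$ for the r.h.s.\ $g(r,x)=xQ(r,x)$ of the kinetic equations) one has $\|(U^{r,t}f)^{(2)}\|\le(\|f^{(2)}\|+2(t-r)\|f\|_{bLip}\|Q\|_{C^2})\exp\{3(t-r)\|Q\|_{bLip}\}$. Plugging this into the uniform estimate \eqref{eqdefgenmeanfieldchainlima}, which reads $\|L^N_r h-\La_r h\|\le N^{-1}\|Q\|\,\|h^{(2)}\|$, and using the contraction property $\|U^{s,r}_N\|\le 1$, the sup-norm of the integrand is at most
\[
\frac{\|Q\|}{N}\bigl(\|f^{(2)}\|+2(t-s)\|f\|_{bLip}\|Q\|_{C^2}\bigr)\exp\{3(t-s)\|Q\|_{bLip}\}
\]
after the crude replacement $t-r\le t-s$; integrating $r$ over $[s,t]$ contributes the factor $(t-s)$ and reproduces \eqref{eq1thsiplestLLNtd}. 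The pointwise analogue of \eqref{eq2thsiplestLLN} then follows by adding the initial-data discrepancy: by \eqref{eq2ODEsmootd} and \eqref{eqQinkin}, $|U^{s,t}f(x)-U^{s,t}f(n/N)|\le\|f\|_{bLip}\exp\{(t-s)\|Q\|_{bLip}\}\,\|x-n/N\|$, and a triangle inequality with the bound just obtained gives the claim.

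The one step needing care is the justification of the two-parameter Duhamel identity: the differentiability in $r$ of $U^{s,r}_N$ and of $U^{r,t}f$, with the stated derivatives, and the fact that on the lattice state space $\Si_d\cap\Z^d_+/N$ the $r$-derivative passes through the finite sum defining $L^N_r$. For a finite-state Markov chain with piecewise-continuous $Q$-matrix this is standard (the forward/backward Kolmogorov equations hold off the finite discontinuity set of $t\mapsto Q$), and for the deterministic characteristic propagator it is contained in the time-dependent versions of Propositions~\ref{ODEsmoo}--\ref{ODErhs} quoted after \eqref{eqtimedepnorm}; so no real difficulty arises beyond carrying the convention that all function norms of $Q$ are $\sup_t$ of the corresponding norms in $x$. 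Everything else is identical to the proof of Theorem~\ref{thsiplestLLN}.
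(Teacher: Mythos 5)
Your argument is correct and coincides with the paper's proof: the paper likewise reduces Theorem \ref{thsiplestLLNtd} to the argument of Theorem \ref{thsiplestLLN} by replacing \eqref{eqcompprop} with its propagator version \eqref{eqcompproptd} (your Duhamel identity, up to an overall sign), and then runs the identical bookkeeping with \eqref{eq4ODEsmootd}, \eqref{eqQinkin} and \eqref{eqdefgenmeanfieldchainlima}. Your extra care about differentiability in $r$ off the discontinuity set of $t\mapsto Q(t,\cdot)$ is a welcome, if routine, addition that the paper leaves implicit.
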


The proof is also the same, though instead of \eqref{eqcompprop} one uses its version for propagators:
\begin{equation}
 \label{eqcompproptd}
(U^{s,t}-U^{s,t}_N )f=-U^{s,r}_NU^{r,t}|_{r=s}^t f
=\int_s^t U^{s,r}_N (\La_r-L^N_r)U^{r,t} f \, dr.
\end{equation}

The LLN for discrete-time setting is also analogous. Namely, the following result holds,
where we returned to the time-homogeneous setting.

  \begin{theorem}
 \label{thsiplestLLNdis}
Let all the elements $Q_{ij}(x)$ belong to $C^2(\Si_d)$ and $f\in C^2(\Si_d)$.
Let the discrete-time Markov chain be defined by \eqref{eqintercontdis0}-\eqref{eqintercontdis1}
 Then
\begin{equation}
 \label{eq1thsiplestLLNdis}
  \sup_{n\in \Z^d_+} |(P^{\tau}_N)^kf(n/N)-U^tf(n/N)|
 \le \frac{2t\|Q\|}{N} (\|f^{(2)}\|+2t\|f\|_{bLip} \|Q\|_{C^2}) \exp\{3t\|Q\|_{bLip}\},
 \end{equation}
 for $t=k\tau$, and, for any $x$ and $n/N$,
\[
 |(P^{\tau}_N)^kf(n/N)-U^tf(x)|
 \]
 \begin{equation}
 \label{eq2thsiplestLLNdis}
 \le  \left[\frac{2t \|Q\|}{N}  (\|f^{(2)}\|+2t\|f\|_{bLip} \|Q\|_{C^2}) + \|f\|_{bLip}  \|x-n/N\|\right]
  \exp\{3t\|Q\|_{bLip}\}.
 \end{equation}
 And of course, the analog of convergence \eqref{eq3thsiplestLLN} also holds.
 \end{theorem}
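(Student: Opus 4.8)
The plan is to repeat the semigroup-comparison argument of Theorem~\ref{thsiplestLLN}, with the continuous interpolation $U^{t-s}_N U^s$ replaced by its discrete analogue. Writing $t=k\tau$ and telescoping,
\[
(P^{\tau}_N)^k f - U^t f = \sum_{m=0}^{k-1} (P^{\tau}_N)^{k-1-m}\bigl(P^{\tau}_N - U^{\tau}\bigr) U^{m\tau} f ,
\]
so the whole estimate reduces to bounding the one-step discrepancy $(P^{\tau}_N - U^{\tau})h$ for $h=U^{m\tau}f$ and then summing the $k$ terms, using that $P^{\tau}_N$ is a contraction on $C(\Si_d)$ (being the transition operator of a Markov chain, cf.\ \eqref{eqintercontdis0}--\eqref{eqintercontdis1}).

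For the one-step term I would use $P^{\tau}_N=\1+\tau L^N$ from \eqref{eqintercontdis1} to split the error into two pieces,
\[
P^{\tau}_N h - U^{\tau} h = \tau\bigl(L^N h - \La h\bigr) + \bigl(\tau \La h - (U^{\tau} h - h)\bigr).
\]
The first piece is the genuine particle-discreteness error, controlled by \eqref{eqdefgenmeanfieldchainlima}: $\|L^N h - \La h\|\le \|h^{(2)}\|\,\|Q\|/N$. The second piece is the Euler time-discretization error of the deterministic flow; since the vector field $g(x)=xQ(x)$ satisfies the bounds \eqref{eqQinkin}, estimate \eqref{eq2PDEgenconv} of Proposition~\ref{PDEgenconv} applied with $t=\tau$ gives $\|\tau\La h - (U^{\tau}h-h)\|\le 2\tau^2\|Q\|\,\|Q\|_{bLip}\,\|h\|_{C^2}$.

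It remains to insert the regularity of $h=U^{m\tau}f$. By \eqref{eq4ODEsmoo} together with \eqref{eqQinkin}, for all $m$ with $m\tau\le t$ one has $\|(U^{m\tau}f)^{(2)}\|\le(\|f^{(2)}\|+2t\|f\|_{bLip}\|Q\|_{C^2})\exp\{3t\|Q\|_{bLip}\}$, and likewise $\|U^{m\tau}f\|_{C^2}$ is dominated by the same quantity (using also \eqref{eq2ODEsmoo} for the first-order part). Summing the $k$ one-step errors, using $k\tau=t$ and the contractivity of $(P^{\tau}_N)^{k-1-m}$, then produces a bound of the form $\tfrac{c\,t}{N}(\|f^{(2)}\|+2t\|f\|_{bLip}\|Q\|_{C^2})\exp\{3t\|Q\|_{bLip}\}$, with the constant $2$ of \eqref{eq1thsiplestLLNdis} reflecting the two separate contributions above. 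The step I expect to need the most care is exactly this summation: the Euler error $2\tau^2\|Q\|\|Q\|_{bLip}\|h\|_{C^2}$ accumulated over $k=t/\tau$ steps is a priori only $O(t\tau)$, hence not small in $N$ by itself, and it becomes $O(1/N)$ \emph{only} because the admissibility constraint \eqref{eqintercontdis0}, $\tau\le(N\|Q\|)^{-1}$, forces $\tau\|Q\|\le 1/N$. So the bookkeeping that ties the time step to $N$, together with the uniform-in-$m$ $C^2$ bounds for $U^{m\tau}f$, is the delicate point; the rest is routine. Finally, \eqref{eq2thsiplestLLNdis} follows by adding $|U^t f(x)-U^t f(n/N)|\le\exp\{t\|Q\|_{bLip}\}\|f\|_{Lip}\,\|x-n/N\|$ (from \eqref{eq2ODEsmoo} and \eqref{eqQinkin}), and the convergence statement analogous to \eqref{eq3thsiplestLLN} follows, as in Theorem~\ref{thsiplestLLN}, from the density of $C^2(\Si_d)$ in $C(\Si_d)$ and the uniform contractivity of $(P^{\tau}_N)^k$ and $U^t$.
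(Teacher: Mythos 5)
Your proposal is correct and follows essentially the same route as the paper: the same telescoping identity for $(P^{\tau}_N)^k-U^{\tau k}$, the same splitting of the one-step error into the particle-discreteness term $\tau(L^N-\La)h$ (controlled by \eqref{eqdefgenmeanfieldchainlima}) and the Euler-type term $\tau\La h-(U^{\tau}h-h)$ (controlled by Proposition~\ref{PDEgenconv}), with the constraint \eqref{eqintercontdis0} converting the accumulated $O(t\tau)$ Euler error into $O(t/N)$, and the uniform $C^2$ bounds on $U^{m\tau}f$ from \eqref{eq4ODEsmoo}. The point you flag as delicate — tying $\tau$ to $N$ via $\tau\le(N\|Q\|)^{-1}$ — is exactly the step the paper invokes, so there is nothing to add.
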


\begin{proof}
We have
\[
((P^{\tau}_N)^k-U^{\tau k})f=(P^{\tau}_N)^{(k-1)}(P^{\tau}_N-U^{\tau})
+(P^{\tau}_N)^{(k-2)}(P^{\tau}_N-U^{\tau})U^{\tau}+\cdots +(P^{\tau}_N-U^{\tau})U^{\tau(k-1)}.
\]
By \eqref{eqdefgenmeanfieldchainlima},  \eqref{eqintercontdis1} and \eqref{eq2PDEgenconv},
\[
\|(P^{\tau}_N-U^{\tau})f\|=\tau \left\|\frac{P^{\tau}_Nf-f}{\tau}-\frac{U^{\tau}f-f}{\tau}\right\|
\]
\[
\le \tau \left(\frac{\|Q\|}{N} \|f^{(2)}\|+\tau \|Q\|^2 \|f^{(2)}\|+\tau  \|Q\|\, \|Q\|_{bLip}\|f\|_{bLip}\right)
\]
\[
\le \frac{\tau}{N} (2\|Q\| \, \|f^{(2)}\|+ \|Q\|_{bLip}\|f\|_{bLip}),
\]
where \eqref{eqintercontdis0} was taken into account.
Consequently, taking into account \eqref{eq4ODEsmoo},
\[
\|(P^{\tau}_N)^kf-U^{\tau k}f\|
\le  \exp\{3t\|Q\|_{bLip}\}
\]
\[
\times \left(\frac{2\tau n}{N} \|Q\| \, \|f^{(2)}\|
+\frac{\tau }{N}\sum_{k=0}^{n-1}(\|Q\|_{bLip}\|f\|_{bLip}+2k\tau \|Q\|_{C^2} \|f\|_{bLip})\right),
\]
yielding \eqref{eq1thsiplestLLNdis}. Estimate \eqref{eq2thsiplestLLNdis} follows like in Theorem \ref{thsiplestLLN}.
\end{proof}

\section{Dynamic LLN: Lipschitz coefficients}
\label{secLLNLip}

Let us move now to a more realistic situation where $Q$ is only assumed to be Lipschitz,
that is, \eqref{eqassumcontQ} holds.

\begin{theorem}
\label{thconv1}
Let the functions $Q(t,x)$ be piecewise continuous in $t$ and belong to $C_{bLip}$ as functions of $x$
with $\|Q\|_{bLip}=\sup_t \|Q(t,.)\|\le \om$ with some $\om$.
Suppose the initial data $x(N)=n/N$ of the Markov chains  $X^N_{s,x(N)}(t)$
converge to a certain $x$ in $\R^d$, as $N\to \infty$. Then these
Markov chains  converge to the deterministic
evolution $X_{s,x}(t)$, in the weak sense:
\begin{equation}
\label{eq1thconv1}
 |\E f (X^N_{s,x(N)}(t))-f(X_{s,x}(t))| \to 0, \,\, \text{as} \,\, N\to \infty,
 \end{equation}
 or in terms of the transition operators
\begin{equation}
\label{eq2thconv1}
 |U^{s,t}_N f(x)-U^{s,t}f(x)| \to 0, \,\, \text{as} \,\, N\to \infty,
 \end{equation}
 for any $f\in C(\Si_d)$, the convergence being uniform in $x$ whenever the convergence $x(N)\to x$ is uniform.

For smooth or Lipschitz $f$, the following rates of convergence are valid:
\[
 |\E f (X^N_{s,x(N)}(t))-f(X_{s,x(N)}(t))|
 \]
 \begin{equation}
\label{eq3thconv1}
 \le C (t-s) \exp\{3(t-s)\|Q\|_{bLip}\}
  \left(\frac{(t-s)^{1/2}}{N^{1/2}}\|Q\|_{bLip}(d+\|Q\|)\|f\|_{bLip}+\frac{\|Q\|}{N}\|f^{(2)}\|\right),
 \end{equation}
\begin{equation}
\label{eq4thconv1}
 |\E f (X^N_{s,x(N)}(t))-f(X_{s,x(N)}(t))|
 \le C \exp\{3(t-s)\|Q\|_{bLip}\} (d+\|Q\|)\|Q\|_{bLip} \frac{(t-s)^{1/2}}{N^{1/2}} \|f\|_{bLip},
 \end{equation}
 \begin{equation}
\label{eq5thconv1}
|f(X_{s,x(N)}(t))-f(X_{s,x}(t))|\le \exp\{(t-s)\|Q\|_{bLip}\} \|f\|_{bLip}\|x(N)-x\|
\end{equation}
with a constant $C$.
\end{theorem}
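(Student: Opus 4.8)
The plan is to reduce the merely-Lipschitz case to the smooth case of Theorem~\ref{thsiplestLLN} by mollifying the family $Q$ at a scale $\de>0$ and then choosing $\de$ optimally in terms of $N$; the factor $N^{-1/2}$ in \eqref{eq3thconv1}--\eqref{eq4thconv1} is exactly what this optimisation produces. For $\de>0$ let $Q^\de$ be obtained from $Q$ by convolving each entry with one fixed smooth mollifier of width $\de$, done intrinsically on $\Si_d$; this is legitimate because convolution with a nonnegative kernel preserves nonnegativity of off-diagonal entries and, being linear, preserves the zero row sums, so $Q^\de$ is again a Lipschitz family of Kolmogorov matrices. Standard mollification bounds give $\|Q-Q^\de\|\le\de\|Q\|_{Lip}$, $\|Q^\de\|_{bLip}\le\|Q\|_{bLip}$ and $\|Q^\de\|_{C^2}\le\|Q\|+c\,\|Q\|_{Lip}/\de$ with $c$ depending only on $d$ and the kernel. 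Write $\La^\de$, $X^\de_{s,x}(\cdot)$ for the limiting generator and deterministic flow of $Q^\de$ (via \eqref{eqdefgenmeanfieldchainlim}, \eqref{eqdefgenmeanfieldchainkineq0}) and $V^{s,t}f(x)=f(X^\de_{s,x}(t))$ for the associated propagator, which by Proposition~\ref{ODEsmoo} carries $C^2(\Si_d)$ into itself with norms controlled by $\|Q^\de\|_{C^2}$; $L^N$, $U^{s,t}_N$ and $U^{s,t}$ keep their meaning from the original $Q$.

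First I would split, for $x\in\Z^d_+/N$,
\[
U^{s,t}_N f(x)-U^{s,t}f(x)=\bigl(U^{s,t}_N f(x)-V^{s,t}f(x)\bigr)+\bigl(V^{s,t}f(x)-U^{s,t}f(x)\bigr).
\]
The second bracket is purely deterministic: Proposition~\ref{ODErhs} applied to the right-hand sides $g(x)=xQ(x)$, $g^\de(x)=xQ^\de(x)$ of the kinetic equations, which satisfy $\|g-g^\de\|\le\|Q-Q^\de\|\le\de\|Q\|_{Lip}$ on $\Si_d$, gives $|V^{s,t}f(x)-U^{s,t}f(x)|\le\|f\|_{Lip}(t-s)\de\|Q\|_{Lip}\exp\{(t-s)\|Q\|_{bLip}\}$. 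For the first bracket I use the propagator comparison identity (as in \eqref{eqcompprop}, \eqref{eqcompproptd}) in the form
\[
V^{s,t}f-U^{s,t}_N f=\int_s^t U^{s,r}_N\,\bigl(\La^\de_r-L^N_r\bigr)\,V^{r,t}f\,dr.
\]
The crucial point is that this compares $U^{s,t}_N$ with the mollified \emph{deterministic} propagator $V^{s,t}$, not with the mollified particle system, so that no Lipschitz (let alone $C^2$) estimate on the Markov operators $U^{s,t}_N$ --- which has not been established at this level of generality --- is ever needed; all the regularity consumed by the argument is the $C^2$-regularity of $V^{r,t}f$ supplied by Proposition~\ref{ODEsmoo}. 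Since $V^{r,t}f\in C^2(\Si_d)$, a second-order Taylor expansion of the increments $N[h(x-e_i/N+e_j/N)-h(x)]$, together with the correction caused by the first-order part carrying $Q$ rather than $Q^\de$, yields the pointwise estimate
\[
\|(\La^\de_r-L^N_r)V^{r,t}f\|\le\frac{2\|Q\|}{N}\,\|(V^{r,t}f)^{(2)}\|+2\|Q-Q^\de\|\,\|(V^{r,t}f)^{(1)}\|,
\]
i.e. \eqref{eqdefgenmeanfieldchainlima} for the mollified problem plus a term of size $2\de\|Q\|_{Lip}\|f\|_{Lip}e^{(t-r)\|Q\|_{bLip}}$.

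Now I insert the flow bounds \eqref{eq2ODEsmoo}, \eqref{eq4ODEsmoo} (time-dependent forms \eqref{eq2ODEsmootd}, \eqref{eq4ODEsmootd}) for $V^{r,t}f$, using $\|g^\de\|_{Lip}\le\|Q\|_{bLip}$ and $\|(g^\de)^{(2)}\|\le2\|Q^\de\|_{C^2}\le2\|Q\|+2c\|Q\|_{Lip}/\de$, and the contractivity $\|U^{s,r}_N\|\le1$. Integrating in $r$ and adding the deterministic term gives, after bounding $1+\|Q\|\le d+\|Q\|$, an estimate of the shape
\[
|U^{s,t}_N f(x)-U^{s,t}f(x)|\le C\,e^{3(t-s)\|Q\|_{bLip}}\!\left(\frac{(t-s)\|Q\|}{N}\|f^{(2)}\|+\frac{(t-s)^2\|Q\|_{bLip}(d+\|Q\|)}{N\de}\|f\|_{bLip}+(t-s)\de\|Q\|_{bLip}\|f\|_{bLip}\right),
\]
and the last step is to minimise the sum of the two $\de$-dependent terms, i.e. to take $\de$ of order $\sqrt{(t-s)\|Q\|/N}$, which collapses them into a single term of order $(t-s)^{3/2}N^{-1/2}\|Q\|_{bLip}(d+\|Q\|)\|f\|_{bLip}e^{3(t-s)\|Q\|_{bLip}}$; this is \eqref{eq3thconv1}. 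Estimate \eqref{eq4thconv1} for merely Lipschitz $f$ follows by the same device applied once more to $f$ (mollify $f$ at a scale $\eta$, use \eqref{eq3thconv1} on the smooth part, pay $\|f-f^\eta\|\le\eta\|f\|_{Lip}$ twice, re-optimise over $\eta$) --- or, more directly, from a coupling/martingale estimate on the trajectories, which also explains its sharper dependence on $t-s$. Inequality \eqref{eq5thconv1} is merely the continuous dependence of the true deterministic flow on its initial data, i.e. Proposition~\ref{ODErhs} with $g_1=g_2=xQ(x)$. Finally, the qualitative weak convergence \eqref{eq1thconv1}--\eqref{eq2thconv1} for an arbitrary $f\in C(\Si_d)$ follows by approximating $f$ uniformly on the compact simplex by Lipschitz functions, invoking \eqref{eq4thconv1} together with \eqref{eq5thconv1} to pass the limit through the initial data $x(N)\to x$, and letting first $N\to\infty$ and then the approximation parameter tend to $0$; the time-dependent case needs no structural change, using \eqref{eqcompproptd} and the estimates \eqref{eq0ODEsmootd}--\eqref{eq4ODEsmootd}.

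I expect the main obstacle to be the conceptual point rather than any one computation: with $Q$ only Lipschitz the lifted function $U^{s,t}f$ is Lipschitz but generally not $C^2$, so the clean one-step bound \eqref{eqdefgenmeanfieldchainlima} of the smooth case is simply unavailable, and one has to pay for the missing derivative through the mollification width $\de$. The entire quantitative content is the trade-off between the mollification error ($\sim\de$) and the blow-up $\|Q^\de\|_{C^2}\sim\de^{-1}$ inside the $1/N$ term, and it is precisely this trade-off that degrades the rate from $1/N$ to $1/\sqrt N$. The secondary technical point, already flagged, is to route the comparison through the mollified \emph{deterministic} propagator $V^{s,t}$, whose $C^2$-bounds are guaranteed by Proposition~\ref{ODEsmoo}, so that the unavailable regularity of $U^{s,t}_N$ is never invoked; and one should record that the mollification on $\Si_d$ keeps the Kolmogorov-matrix structure, which it does.
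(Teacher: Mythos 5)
Your proposal is correct and follows essentially the same route as the paper: mollify $Q$ (and, for \eqref{eq4thconv1}, also $f$) at a scale $\de$, pay the mollification error $O(\de)$ against the $C^2$-blow-up $O(1/\de)$ inside the $1/N$ term of the smooth-coefficient estimate, and optimise $\de\sim\sqrt{(t-s)/N}$ to get the $N^{-1/2}$ rate, with \eqref{eq5thconv1} from Proposition~\ref{ODErhs} and the general continuous case by density. The only (harmless) difference is organisational: the paper inserts the mollified Markov chain $U^t_{N,\de}$ and uses a three-term triangle inequality, whereas you compare $U^{s,t}_N$ directly with the mollified deterministic propagator and absorb the $Q$-versus-$Q^\de$ discrepancy into the generator difference, which slightly shortens the bookkeeping without changing the substance.
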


\begin{remark}
The dependence on $t$ and $d$ is not essential here,
but the latter becomes crucial for dealing with infinite state-spaces, while the former for dealing
with a forward looking principal.
\end{remark}

\begin{remark}
Assuming intermediate regularity of $Q$, that is, assuming $Q\in C^1(\Si_d)$ with all first order derivatives being
H\"older continuous with a fixed index $\al \in (0,1)$ will yield intermediate rates of convergence between $1/N$
and $1/\sqrt N$ above.
\end{remark}

\begin{proof}
To shorten the formulas let us write down a proof for time independent $Q$.

The  Lipshitz continuity \eqref{eq5thconv1} of the solutions is
a consequence of Proposition \ref{ODEsmoo}.
Next, since any function $f\in C(\R^d)$ can be approximated by functions from $C^2(\R^d)$,
the convergence \eqref{eq1thconv1} follows from \eqref{eq3thconv1} and \eqref{eq5thconv1}.
Thus it remains to show \eqref{eq3thconv1} and \eqref{eq4thconv1}.

The main idea is to approximate all Lipschitz continuous functions involved by the smooth ones.
Namely, choosing an arbitrary mollifier $\chi$  (non-negative infinitely smooth even function on $\R$ with
 a compact support and $\int \chi (w) \, dw=1$) and the corresponding mollifier $\phi(y)=\prod \chi (y_j)$
 on $\R^{d-1}$, let us define, for any function $V$ on $\Si_d$,
 its approximation
 \[
 \Phi_{\de}[V](x)=\int_{R^{d-1}}\frac{1}{\de^{d-1}} \phi \left(\frac{y}{\de}\right)V(x-y) \, dy
 =  \int_{\R^{d-1}}\frac{1}{\de^{d-1}} \phi \left(\frac{x-y}{\de}\right)V(y) \, dy.
 \]
 Notice that $\Si_d$ is $(d-1)$-dimensional object, so that any $V$ on it can be considered
 as a function of first $(d-1)$ coordinates of a vector $x\in \Si_d$ (continued to $\R^{d-1}$
 in an arbitrary continuous way).
 It follows that
 \begin{equation}
\label{eq5athconv1}
\|\Phi_{\de}[V]\|_{C^1} =|\Phi_{\de}[V]\|_{bLip}\le  \|V\|_{bLip}
\end{equation}
for any $\de$ and
\[
 |\Phi_{\de}[V](x)-V(x)|\le \int\frac{1}{\de^{d-1}} \phi \left(\frac{y}{\de}\right)|V(x-y)-V(x)| \, dy
 \]
 \begin{equation}
\label{eq6thconv1}
 \le \|V\|_{Lip}  \int_{\R^{d-1}} \frac{1}{\de^{d-1}} \phi \left(\frac{y}{\de}\right) |y|_1\, dy
 \le \de (d-1) \|V\|_{Lip}   \int_{\R} |w| \chi (w) \, dw.
 \end{equation}

\begin{remark}
We care about dimension $d$ in the estimates only for future use (here it is irrelevant). By a different choice
of mollifier $\phi$ one can get rid of $d$ in \eqref{eq6thconv1}, but then it would pop up
in \eqref{eq7thconv1}, which is avoided with our $\phi$.
\end{remark}

Next, the norm $ \|\Phi_{\de}[V]\|_{C^2}$
does not exceed the sum of the norm $ \|\Phi_{\de}[V]\|_{C^1}$
and the supremum of the Lipschitz constants of the functions
\[
\frac{\pa}{\pa x_j} \Phi_{\de}[V](x)=\int\frac{1}{\de^d} \left(\frac{\pa}{\pa x_j} \phi\right) \left(\frac{y}{\de}\right)V(x-y) \, dy.
\]
Hence
\begin{equation}
\label{eq7thconv1}
\|\Phi_{\de}[V]^{(2)}\| \le \|V\|_{bLip} \frac{1}{\de} \int |\chi'(w) | \, dw, \quad
\|\Phi_{\de}[V]\|_{C^2} \le \|V\|_{bLip}\left(1 + \frac{1}{\de} \int |\chi'(w) | \, dw\right).
\end{equation}

 Let $U_{N,\de}^t$ and $U_{\de}^t$ denote the same transition operators as
 above but built with respect to the matrices
\[
\Phi_{\de}[Q](x)=\int\frac{1}{\de^d} \phi \left(\frac{y}{\de}\right)Q(x-y) \, dy
\]
rather than $Q$. Notice that $\Phi_{\de}[Q](x)$ are also $Q$-matrices for any $\de$.

Similarly we denote by $L^{N,\de}$ and $\La^{\de}$ the corresponding generators
and by $X_x^{\de}(t)$ the characteristics  with $\Phi_{\de}[Q]$ used instead of $Q$.

By \eqref{eq6thconv1} and \eqref{eqODErhs},
\[
\|X_t(x)- X_t^{\de}(x)\| \le C \de t d  \|Q\|_{bLip}\exp\{t\|Q\|_{bLip}\}
\]
 and hence
\begin{equation}
\label{th1eq2}
|U^tf(x)-U^t_{\de}f(x)|=|f(X_t(x)-f(X_t^{\de}(x))|\le C \|Q\|_{bLip} \|f\|_{bLip} \de t d \exp\{t\|Q\|_{bLip}\}.
\end{equation}

Moreover, since
\[
\|(L^{N,\de}-L^N)f\| \le \de (d-1)\|f\|_{bLip} \|Q\|_{bLip}  \int_{\R} |w| \chi (w) \, dw,
\]
it follows by \eqref{eqcompprop} applied to propagators $U_N$ and $U_{N,\de}$ that the same estimate
\eqref{th1eq2} holds for the difference $U_{N,\de}^t -U^t_{N}$:
\begin{equation}
\label{th1eq2a}
\|U_{N,\de}^tf -U^t_{N}f\|\le  C \|Q\|_{bLip} \|f\|_{bLip} \de t d  \exp\{t\|Q\|_{bLip}\}.
\end{equation}

By \eqref{eq1thsiplestLLN} and \eqref{eq7thconv1},
\begin{equation}
\label{eqcompsem2rep}
\|U_{N,\de}^tf -U^t_{\de}f\|\le \frac{t \|Q\|}{N} (\|f^{(2)}\|+\frac{Ct}{\de}\|Q\|_{bLip}\|f\|_{bLip}).
\end{equation}

Therefore,
\[
\|U_{N}^tf -U^tf\| \le \|U_{N}^tf -U^t_{N,\de}f\|+\|U_{N,\de}^tf -U^t_{\de}f\|+\|U_{\de}^tf -U^tf\|
\]
\[
\le C t \left(\de d \|Q\|_{bLip} \|f\|_{bLip} +  \frac{\|Q\|}{N} \|f^{(2)}\|
+\frac{t}{N \de}\|Q\|_{bLip}\|Q\|\|f\|_{bLip}\right)\exp\{3t\|Q\|_{bLip}\}
\]
Thus choosing $\de =\sqrt{t/N}$, makes the decay rate of $\de$ and $t/(N\de)$ equal yielding \eqref{eq3thconv1}.

Finally, if $f$ is only Lipschitz, we approximate it by $\tilde f=\Phi_{\tilde \de}[f]$, so that the second derivative of
$\Phi_{\tilde \de}[f]$ is bounded by $\|f\|_{bLip}/\tilde \de$.
By the contraction property of $U^t_N$ and $U^t$,
\[
\|U_N^t (f-\tilde f)\|\le \|f-\tilde f\|\le C d\tilde \de \|f\|_{bLip},
\quad
 \|U^t (f-\tilde f)\|\le \|f-\tilde f\|\le  C d\tilde \de \|f\|_{bLip}.
 \]
Thus the rates of convergence for $f$ become of order
\[
[d\tilde \de + t \de d \|Q\|_{bLip}+\frac{t^2}{N\de}\|Q\|_{bLip}\|Q\|+\frac{t}{N\tilde \de}]\|f\|_{bLip} \exp\{3t\|Q\|_{bLip}\}.
\]
Choosing  $\de =\tilde \de =\sqrt{t/N}$ yields \eqref{eq4thconv1}.
\end{proof}

The corresponding result for the discrete-time setting is again fully analogous.

\begin{theorem}
\label{thconv1dis}
Let the functions $Q(x)$ belong to $C_{bLip}$.
Let the discrete-time Markov chains $X^{N,\tau}_x(t)$ be defined by \eqref{eqintercontdis0}-\eqref{eqintercontdis1}.
Suppose the initial data $x(N)=n/N$ of these Markov chains
converge to a certain $x$ in $\R^d$, as $N\to \infty$ (and thus $\tau \to 0$ by \eqref{eqintercontdis0}). Then these
Markov chains  converge to the deterministic
evolution $X_x(t)$, in the weak sense:
\begin{equation}
\label{eq1thconv1dis}
 |\E f (X^{N,\tau}_{x(N))}(t))-f(X_x(t))| \to 0, \,\, \text{as} \,\, N\to \infty,
 \end{equation}
 for any $f\in C(\Si_d)$, the convergence being uniform in $x$ whenever the convergence $x(N)\to x$ is uniform.

For smooth or Lipschitz $f$, the following rates of convergence are valid:
\[
 |\E f (X^{N,\tau}_{x(N)}(t))-f(X_{x(N)}(t))|
 \]
 \begin{equation}
\label{eq3thconv1dis}
 \le  C t \exp\{3t\|Q\|_{bLip}\}
  \left(\frac{t^{1/2}}{N^{1/2}}(d+\|Q\|_{bLip}^2)\|f\|_{bLip}+\frac{\|Q\|}{N}\|f^{(2)}\|\right),
 \end{equation}
\begin{equation}
\label{eq4thconv1dis}
 |\E f (X^{N,\tau}_{x(N)}(t))-f(X_{x(N)}(t))|
 \le C \exp\{3t\|Q\|_{bLip}\} (d+\|Q\|_{bLip}^2) \frac{t^{1/2}}{N^{1/2}} \|f\|_{bLip},
 \end{equation}
with a constants $C$.
\end{theorem}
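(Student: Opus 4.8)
The plan is to run the proof of Theorem~\ref{thconv1} essentially verbatim, with two substitutions: the continuous-time propagator $U^t_N$ is replaced by the iterated discrete transition operator $(P^\tau_N)^k$ evaluated at $t=k\tau$, and the ``optimal regularity'' input Theorem~\ref{thsiplestLLN} is replaced by its discrete-time counterpart Theorem~\ref{thsiplestLLNdis}. The Lipschitz bound \eqref{eq5thconv1} on the deterministic flow $X_x(t)$ is unchanged, since that is the same ODE as before and \eqref{eq5thconv1} comes straight from Proposition~\ref{ODEsmoo}. As in the continuous case, once the quantitative estimates \eqref{eq3thconv1dis} and \eqref{eq4thconv1dis} are in hand, the weak convergence \eqref{eq1thconv1dis} follows by approximating an arbitrary $f\in C(\Si_d)$ by $C^2$ functions and using that $(P^\tau_N)^k$ and $U^t$ are sup-norm contractions; so the whole task is the two rate bounds.

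For those, I would mollify $Q$ to the smooth $Q$-matrix family $\Phi_\de[Q]$ and introduce the associated objects: the discrete chain with transition operator $P^{\tau,\de}_N$, the flow $X^\de_x(t)$, and its lift $U^t_\de$. Then there are three comparisons. First, $U^t-U^t_\de$ is bounded exactly as in \eqref{th1eq2}, via Proposition~\ref{ODErhs} and the mollification estimate \eqref{eq6thconv1} applied to $Q$, giving order $\de t d\,\|Q\|_{bLip}\|f\|_{bLip}e^{t\|Q\|_{bLip}}$. Second, $(P^\tau_N)^k-(P^{\tau,\de}_N)^k$ is handled by the discrete telescoping identity from the proof of Theorem~\ref{thsiplestLLNdis}, $(P^\tau_N)^k-(P^{\tau,\de}_N)^k=\sum_{j=0}^{k-1}(P^\tau_N)^{j}(P^\tau_N-P^{\tau,\de}_N)(P^{\tau,\de}_N)^{k-1-j}$, together with $P^\tau_N-P^{\tau,\de}_N=\tau(L^N-L^{N,\de})$ (from \eqref{eqintercontdis1}) and the estimate $\|(L^N-L^{N,\de})g\|\le C\de d\,\|Q\|_{bLip}\|g\|_{bLip}$, which follows from $\|\Phi_\de[Q]-Q\|\le C\de d\|Q\|_{Lip}$ and the crude bound $N|g(n^{ij}/N)-g(n/N)|\le 2\|g\|_{Lip}$; each of the $k$ summands carries a factor $\tau$ and $k\tau=t$, so the total is again of order $\de t d\,\|Q\|_{bLip}\|f\|_{bLip}e^{Ct}$, matching \eqref{th1eq2a}. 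Third, $(P^{\tau,\de}_N)^k-U^t_\de$ is precisely Theorem~\ref{thsiplestLLNdis} for the smooth family $\Phi_\de[Q]$ (the constraint $\tau\le(N\|Q\|)^{-1}$ of \eqref{eqintercontdis0} suffices since $\|\Phi_\de[Q]\|\le\|Q\|$); by \eqref{eq1thsiplestLLNdis} with \eqref{eq5athconv1} ($\|\Phi_\de[Q]\|_{bLip}\le\|Q\|_{bLip}$) and \eqref{eq7thconv1} ($\|\Phi_\de[Q]\|_{C^2}\le\|Q\|_{bLip}(1+C/\de)$) this is bounded by $\tfrac{2t\|Q\|}{N}(\|f^{(2)}\|+Ct\de^{-1}\|Q\|_{bLip}\|f\|_{bLip})e^{3t\|Q\|_{bLip}}$. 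Summing the three and balancing $\de$ against $t/(N\de)$ by $\de=\sqrt{t/N}$ yields \eqref{eq3thconv1dis}; for merely Lipschitz $f$ one additionally replaces $f$ by $\Phi_{\tilde\de}[f]$ with $\|\Phi_{\tilde\de}[f]^{(2)}\|\le\|f\|_{bLip}/\tilde\de$, absorbs $\|f-\Phi_{\tilde\de}[f]\|\le Cd\tilde\de\|f\|_{bLip}$ by contractivity, and takes $\de=\tilde\de=\sqrt{t/N}$ to get \eqref{eq4thconv1dis}.

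The step I expect to be the main obstacle is the second comparison. Beyond the bookkeeping that keeps the telescoped error of order $t$ rather than of order $k=t/\tau$, the genuine point is that the generator-difference estimate must be applied to the intermediate functions $(P^{\tau,\de}_N)^{k-1-j}g$, so one needs a Lipschitz bound on these that is \emph{uniform in $N$ and $\tau$} --- the discrete analogue of ``$U^s_Nf$ is Lipschitz with constant $e^{s\|Q\|_{bLip}}$''. This follows from the standard coupling of the $N$-particle chains started at neighbouring lattice points of $\Z^d_+/N$, which contract in $\ell_1$-distance at a rate controlled by $\|Q\|_{bLip}$, but it is the one ingredient not recorded explicitly in the preceding estimates and must be inserted here. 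Finally, the extra power of $\|Q\|$ in the rate --- $(d+\|Q\|_{bLip}^2)$ here versus $\|Q\|_{bLip}(d+\|Q\|)$ in \eqref{eq3thconv1} --- is not a new phenomenon: it simply tracks the discrete-time correction terms $\tau\|Q\|^2\|f^{(2)}\|$ and $\tau\|Q\|\,\|Q\|_{bLip}\|f\|_{bLip}$ that already appear in the proof of Theorem~\ref{thsiplestLLNdis} and are converted into $1/N$ terms using $\tau\le(N\|Q\|)^{-1}$ from \eqref{eqintercontdis0}.
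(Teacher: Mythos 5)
Your proof follows the same route as the paper's: the published argument for Theorem \ref{thconv1dis} consists precisely of repeating the proof of Theorem \ref{thconv1} with the single modification of estimating $(P^{\tau}_N)^k-(P^{\tau}_{N,\de})^k$ via the telescoping identity borrowed from the proof of Theorem \ref{thsiplestLLNdis}, which is exactly your three-comparison scheme with the same choice $\de=\tilde\de=\sqrt{t/N}$. The uniform-in-$N$ Lipschitz propagation bound that you single out as the main obstacle is indeed left implicit, but it is equally implicit in the paper's own derivation of \eqref{th1eq2a} in the continuous-time case, so it is not a point where your argument diverges from, or falls short of, the one in the text.
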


\begin{proof} The only modification as compared with the proof of Theorem \ref{thconv1dis}
is the necessity to estimate $(P^{\tau}_{N,\de})^k-(P^{\tau}_N)^k$ instead of
$U^t_{N,\de}-U^t_N$. This is done like in the proof of Theorem \ref{thsiplestLLNdis} using the identity
\[
((P^{\tau}_N)^k-(P^{\tau}_{N,\de})^k)f=(P^{\tau}_N)^{(n-1)}(P^{\tau}_N-P^{\tau}_{N,\de})
\]
\[
+(P^{\tau}_N)^{(n-2)}(P^{\tau}_N-P^{\tau}_{N,\de})P^{\tau}_{N,\de}+\cdots +(P^{\tau}_N-P^{\tau}_{N,\de})(P^{\tau}_{N,\de})^{k-1}.
\]
\end{proof}

\section{Dynamic LLN with major players}
\label{secLLNmajor}

As discussed in Section \ref{secprinplaysim}, we are mostly interested in the presence of  a principal
that may exert pressure on small players on the level described by the parameter $b$ from a bounded convex subset of
a Euclidean space. Mean-field interacting particle system controlled by the principal will be generated by
\eqref{eqdefgenmeanfieldchain} with the coefficients depending on the control parameter $b$ of the principal
(and not depending on time, for simplicity):
\begin{equation}
\label{eqdefgenmeanfieldchainprin}
L^{N,b}f (n)=\sum_{i,j=1}^d n_i Q_{ij}(n/N,b)[f(n^{ij})-f(n)].
\end{equation}

In the simplest setting, which we refer to as 'best response principal',
one can imagine the principal choosing the value of $b^*$  maximizing
 some current profit $B(x,b,N)$ for given $x, N$, that is via \eqref{eqbestrespprin}:
\begin{equation}
\label{eqbestrespprinrep}
b^*(x,N)=argmax B(x,.,N).
\end{equation}
If there exists a limit $b^*(x)=\lim b^*(x,N)$,
 the limiting evolution  \eqref{eqdefgenmeanfieldchainkineq} turns to evolution \eqref{eqkineqprin}:
\begin{equation}
\label{eqkineqprinrep}
\dot x_k =\sum_{i=1}^d x_i Q_{ik}(x, b^*(x)), \quad k=1,...,d,
\end{equation}
or, in particular in pressure and resistance framework, to evolution \eqref{eqpresreslimevol}:
\begin{equation}
\label{eqpresreslimevolrep}
\dot x_j=\sum_i \ka x_i x_j[R_j(x,b^*(x))-R_i(x,b^*(x))], \quad j=1,...,d.
\end{equation}

\begin{remark}
A more strategic thinking principal will be discussed in Chapter \ref{strategicprincipal}.
\end{remark}

The corresponding modification of Theorem  \ref{thconv1} is straightforward yielding the following result
(using time-homogeneous $Q$ for simplicity).
\begin{theorem}
\label{thconv1c}
Assume
\begin{equation}
\label{eq1thconv1c0}
|b^*(x,N)-b^*(x)|\le \ep(N),
\end{equation}
with some $\ep(N)\to 0$, as $N\to \infty$ and some function $b^*(x)$, and let the functions
 $Q(x,b)$ (or, in particular, $R_j(x,b)$ in the pressure and resistance framework), $b^*(x,N)$, $b^*(x)$
 belong to $C_{bLip}$ as a function of their variables with norms uniformly bounded
 by some $\om$. Suppose the initial data $x(N)=n/N$ of the Markov chains  $X^N_{x(N)}(t)$
converge to a certain $x$ in $\R^d$, as $N\to \infty$. Then these Markov chains  converge to
the deterministic evolution $X_x(t)$ solving \eqref{eqkineqprinrep} (or \eqref{eqpresreslimevolrep} respectively):
\begin{equation}
\label{eq1thconv1c}
 |\E f (X^N_{x(N))}(t)-f(X_x(t))| \to 0, \,\, \text{as} \,\, N\to \infty,
 \end{equation}
 for any $f\in C(\Si_d)$, the convergence being uniform in $x$ whenever the convergence $x(N)\to x$ is uniform.
For Lipschitz $f$, estimate \eqref{eq5thconv1} holds and \eqref{eq4thconv1} generalizes to
\begin{equation}
\label{eq4thconv1c}
 |\E f (X^N_{x(N)}(t)-f(X_{x(N)}(t))| \le C(\om,t) \left(\frac{d t^{1/2}}{N^{1/2}}+t\ep(N)\right) \|f\|_{bLip}.
 \end{equation}
\end{theorem}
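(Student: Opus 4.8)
The plan is to reduce the statement to Theorem \ref{thconv1}, applied to a suitably chosen $N$-dependent family of Kolmogorov matrices, together with a single application of the continuous-dependence estimate of Proposition \ref{ODErhs}. Set $Q^N(x):=Q(x,b^*(x,N))$ and $Q^\infty(x):=Q(x,b^*(x))$. Since $b^*(\cdot,N)$, $b^*(\cdot)$ and $Q(\cdot,\cdot)$ are all Lipschitz with norms bounded by $\om$, the composed matrices $Q^N$ and $Q^\infty$ are Lipschitz in $x$ with $\|Q^N\|_{bLip},\|Q^\infty\|_{bLip}\le \om+\om^2=:\om'$, a bound independent of $N$ (in the pressure and resistance case \eqref{eqpresreslimevolrep} the same holds for $Q_{ij}(x,b)=\ka x_j(R_j(x,b)-R_i(x,b))^+$, because $y\mapsto y^+$ is a contraction and the simplex $\Si_d$ is bounded). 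Moreover, using $\sum_i x_i=1$ and the $Q$-matrix structure, assumption \eqref{eq1thconv1c0} gives $\|Q^N-Q^\infty\|\le c\,\om\,\ep(N)$ for a universal constant $c$, whence also $\sup_x\|xQ^N(x)-xQ^\infty(x)\|\le c\,\om\,\ep(N)$.

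Next I would observe that the controlled Markov chain $X^N_{x(N)}(t)$ is precisely the mean-field interacting particle system of generator \eqref{eqdefgenmeanfieldchain} built from the family $Q^N$, while $X_x(t)$ solving \eqref{eqkineqprinrep} (resp. \eqref{eqpresreslimevolrep}) is the characteristic of the limiting PDE attached to $Q^\infty$. Inserting the deterministic flow $Y^N_{x}(t)$ of the kinetic equation $\dot y=yQ^N(y)$ (finite-$N$ control, no randomness), one writes
\[
|\E f(X^N_{x(N)}(t))-f(X_{x(N)}(t))|
\le |\E f(X^N_{x(N)}(t))-f(Y^N_{x(N)}(t))|
+|f(Y^N_{x(N)}(t))-f(X_{x(N)}(t))| .
\]
For the first term I would invoke the bound \eqref{eq4thconv1} of Theorem \ref{thconv1} with $Q$ replaced by $Q^N$: inspection of that proof shows the constant depends on the coefficients only through $\|Q\|$ and $\|Q\|_{bLip}$, which here are $\le\om'$, so the resulting estimate $C(\om,t)\,d\,t^{1/2}N^{-1/2}\|f\|_{bLip}$ holds uniformly in $N$. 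For the second term, Proposition \ref{ODErhs} applied to $g_1(y)=yQ^N(y)$ and $g_2(x)=xQ^\infty(x)$, with $\|g_1-g_2\|\le c\,\om\,\ep(N)$ and $\|g_i\|_{Lip}\le 2\om'$ (by \eqref{eqQinkin}), yields $|Y^N_{x(N)}(t)-X_{x(N)}(t)|\le c\,\om\,t\,\ep(N)\,e^{2\om' t}$, so this term is $\le C(\om,t)\,t\,\ep(N)\,\|f\|_{bLip}$. Adding the two gives \eqref{eq4thconv1c}. Estimate \eqref{eq5thconv1} (with $\|Q^\infty\|_{bLip}\le\om'$ in the exponent, absorbed into $C(\om,t)$) is just Proposition \ref{ODEsmoo}(i) for the flow $X$, and the qualitative convergence \eqref{eq1thconv1c} for arbitrary $f\in C(\Si_d)$ follows by uniform approximation of $f$ by $C_{bLip}$ functions combined with the two rate estimates and \eqref{eq5thconv1}, exactly as in Theorem \ref{thconv1}; uniformity in $x$ when $x(N)\to x$ uniformly is then immediate.

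The only genuinely new ingredient beyond Theorem \ref{thconv1} is the passage from a fixed matrix to the $N$-dependent family $\{Q^N\}_N$, and this is where I expect the (mild) main obstacle to lie: one has to verify carefully that all constants generated in the proof of Theorem \ref{thconv1} are uniform over $\{Q^N\}_N$, i.e. depend on the family only through the common bound $\om'$ on $\|Q^N\|$ and $\|Q^N\|_{bLip}$, and likewise that the composition $x\mapsto Q(x,b^*(x,N))$ is Lipschitz with an $N$-independent constant — which remains finite for the pressure and resistance payoffs despite the kink of $y\mapsto y^+$. Granting this, the argument is a routine reuse of Propositions \ref{ODEsmoo} and \ref{ODErhs} together with Theorem \ref{thconv1}, and the extra error term $t\,\ep(N)$ in \eqref{eq4thconv1c} is exactly the one produced by the $Q^N$ versus $Q^\infty$ comparison.
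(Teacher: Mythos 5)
Your argument is correct and is exactly the ``straightforward modification of Theorem \ref{thconv1}'' that the paper invokes without spelling out: you reduce to Theorem \ref{thconv1} applied to the $N$-dependent family $Q^N(x)=Q(x,b^*(x,N))$ (with constants controlled uniformly through the common bound on $\|Q^N\|_{bLip}$) and generate the extra $t\,\ep(N)$ term via Proposition \ref{ODErhs} applied to the two kinetic flows. Nothing is missing; the only point worth stating explicitly, which you do, is the $N$-uniformity of the constants and the Lipschitz bound for the composition in the pressure-and-resistance case.
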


Similarly, there can be several, say $K$, major players or principals that may exert pressure
on small players. Assume their control parameters are $b_j$ (each chosen from some bounded
closed domain in a Euclidean space) and their profits are given by some Lipschitz continuous functions $B_j(x,b_1, \cdots, b_K,N)$
depending on the distribution $x$ of small players. Recall that a {\it Nash equilibrium}\index{Nash equilibrium} in the game of $K$
players given by these payoffs (for any fixed $x,N$) is a profile of 'no regret' strategies $(b_1^*(x,N), \cdots, b_K^*(x,N))$,
that is, such strategies that unilateral deviation cannot be profitable:
\[
 B_j(x,b_1^*(x,N), \cdots , b_K^*(x,N),N)
 \]
 \begin{equation}
\label{eqdefNash}
 = \max_{b_j} B_j(x,b_1^*(x,N), \cdots, b_{j-1}^*(x,N), b_j, b_{j+1}^*(x,N), \cdots , b_K^*(x,N),N)
 \end{equation}
  for all $j$.

  Assume that for $x\in\Si_d$ there exists a branch of such Nash equilibria
   $(b_1^*(x,N), \cdots, b_K^*(x,N))$ depending continuously on $x$ such that there exists
   a limit
   \begin{equation}
\label{eqNashlimpr}
    (b_1^*(x), \cdots, b_K^*(x))=\lim_{N\to \infty} (b_1^*(x,N), \cdots, b_K^*(x,N)).
    \end{equation}
The dynamics \eqref{eqkineqprinrep} or \eqref{eqpresreslimevolrep}
and Theorem \ref{thconv1c} extend automatically to the case of major players
adhering to these local ($x$ dependent) Nash equilibria. For instance,
\eqref{eqpresreslimevolrep} takes the form
\begin{equation}
\label{eqpresreslimevolrepsev}
\dot x_j=\sum_i \ka x_i x_j[R_j(x,b_1^*(x), \cdots, b_K^*(x))-R_i(x,b_1^*(x), \cdots, b_K^*(x))], \quad j=1,...,d.
\end{equation}

\section{Dynamic LLN with distinguished (tagged) player}
\label{secLLNtag}

Looking for a behavior of some particular distinguished or tagged particle
inside the pool of a large number of indistinguishable ones, is a well known useful tool
in statistical mechanics and experimental biology. Let us extend here the results of Sections
\ref{LLNsmooth} and \ref{secLLNLip} to the case of a tagged agent following
different transition rules than the crowd. This analysis will be used in
Section \ref{secepNashMFG}.

Under the setting of Theorem \ref{thconv1} let us assume that one
 distinguished agent in the group of $N$ players deviates from the general rules moving
 according to the transition $Q$-matrix $Q^{dev}(t,x)$. Then the natural state-space
 for such Markov chain will be $\{1, \cdots, d \}\times \Si_d$, the first coordinate
 $j$ denoting the position of the tagged player. Instead of \eqref{eqdefgenmeanfieldchainnorm1},
 the generator of this Markov chain becomes
 \[
L_t^{N,dev}f(j,x)= \sum_k Q^{dev}_{jk} (t,x) (f(k,x)-f(j,x))
\]
\begin{equation}
\label{eqgenwithtagged}
+\sum_i (x_i-\de^j_i/N) \sum_{k\neq i} Q_{ik}  (t, x)
\left[ f(j, x-e_i/N +e_k/N)-f(j,x)\right].
\end{equation}

Let $U^{s,t}_N$ denote the transition operators of this Markov chains.

For smooth $f$ and as $N\to \infty$ operators \eqref{eqgenwithtagged} converge to the operator
 \[
\La_t^{dev}f(j,x)= \sum_k Q_{jk}^{dev} (t,x) (f(k,x)-f(j,x))
\]
\begin{equation}
\label{eqgenwithtaggedlim}
+\sum_i x_i \sum_{k\neq i} Q_{ik}  (t,x)
\left[ \frac{\pa f}{\pa x_k}-\frac{\pa f}{\pa x_i}\right](j,x),
\end{equation}
with the rates of convergence
\begin{equation}
\label{eqgenwithtaggedlim1}
\|L_t^{N,dev}f-\La_t^{dev}f\|=\sup_{j,x}|(L_t^{N,dev}-\La_t^{dev})f(j,x)|
\le \frac{\|Q\|}{N} (\|f^{(2)}\|+2 \|f\|),
\end{equation}
where
\[
\|f^{(2)}\|=\sup_{j,i,k,x} \left| \frac{\pa f}{\pa x_i \pa x_k}(j,x)\right|.
\]

Operator \eqref{eqgenwithtaggedlim} generates quite specific Markov process on
$\{1, \cdots , d\}\times \Si_d$ (generally speaking, not a chain any more, as
it has a continuous state-space). Its second coordinate $x$ evolves according
to the deterministic kinetic equations $\dot x=Q^T(t,x)x$, independently
on the random first coordinate, which, given $j,x$ at time $s$,  evolves according to the
time-nonhomogeneous Markov chain $J^x_{s,j}(t) \in \{1, \cdots , d\}$ with
the $Q$-matrix
 \[
 Q_{ij}^{dev}(t)= Q_{ij}(t,X_{s,x}(t), u^{dev}_i(t)).
 \]
 Therefore the transition operators $U^{s,t}$ of this process can be written as
 \begin{equation}
 \label{eqgenwithtaggedlim2}
 U^{s,t}f(j,x) =\E f (J^x_{s,j}(t) , X_{s,x}(t)).
 \end{equation}
 For a function $f$ that does not explicitly depends on $x$ this simplifies to
  \begin{equation}
 \label{eqgenwithtaggedlim2a}
 U^{s,t}f(j,x) =\E f (J^x_{s,j}(t)).
 \end{equation}

 \begin{theorem}
 \label{LLNtagged}
 Under setting of Theorem \ref{thconv1} let us assume that one
 distinguished agent in the group of $N$ players deviates from
 the general rules moving according to the transition $Q$-matrix
$Q^{dev}(t,x)$, $t\in [0,T]$, satisfying the same regularity
assumptions as $Q$. Let $f(j,x) =f(j)$ does not explicitly depend
on $x$. Then
\begin{equation}
\label{eq2LLNtagged}
 \|(U^{s,t}-U^{s,t}_N )f\|_{sup}
 \le \frac{(t-s)^{3/2}}{N^{1/2}} C (d, T, \|Q\|_{bLip},\|Q^{dev}\|_{bLip}) \|f\|_{sup}, \quad 0\le s\le t\le T,
 \end{equation}
with a constant $C$ depending on $d, T,\|Q\|_{bLip},\|Q^{dev}\|_{bLip}$.
For smooth $Q$ and $Q^{dev}$,
 \begin{equation}
\label{eq3LLNtagged}
 \|(U^{s,t}-U^{s,t}_N )f\|_{sup}
 \le \frac{(t-s)^2}{N} C (T, \|Q\|_{C^2},\|Q^{dev}\|_{C^2})\|f\|_{sup},
 \end{equation}
with a constant $C$ depending on $T, \|Q\|_{C^2}, \|Q^{dev}\|_{C^2}$.
\end{theorem}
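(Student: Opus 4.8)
The plan is to transcribe the proofs of Theorems \ref{thsiplestLLN} and \ref{thconv1} to the enlarged state space $\{1,\dots,d\}\times\Si_d$, the one genuinely new ingredient being a regularity analysis of the limiting propagator $U^{s,t}$ acting on functions that depend on the tagged coordinate $j$ alone. As in \eqref{eqcompproptd}, I would start from the comparison identity
\[
(U^{s,t}-U^{s,t}_N)f=\int_s^t U^{s,r}_N(\La_r^{dev}-L_r^{N,dev})U^{r,t}f\,dr,\qquad 0\le s\le t\le T.
\]
The operators $U^{s,r}_N$ are transition operators of a Markov chain, hence sup-norm contractions, so the estimate reduces to bounding $\|(\La_r^{dev}-L_r^{N,dev})U^{r,t}f\|_{sup}$, and for that the generator estimate \eqref{eqgenwithtaggedlim1} shows it is enough to control the $x$-derivatives of $U^{r,t}f$, uniformly in $j$. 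The key structural point is that the terminal datum $f(j,x)=f(j)$ carries no $x$-dependence, so $U^{r,t}f$ has vanishing $x$-derivatives at $r=t$ and its $x$-derivatives for $r<t$ are created purely by the $x$-dependence of the dynamics on $[r,t]$; this is exactly what produces the extra power of $(t-s)$ over Theorems \ref{thsiplestLLN} and \ref{thconv1}.

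For smooth $Q$ and $Q^{dev}$, \eqref{eqgenwithtaggedlim2a} gives $U^{r,t}f(j,x)=\E f(J^x_{r,j}(t))$, where the chain $J^x_{r,j}$ has time-dependent generator matrix $A_{jk}(\tau,x)=Q^{dev}_{jk}(\tau,X_{r,x}(\tau))$ ($k\neq j$), driven by the deterministic characteristic $\tau\mapsto X_{r,x}(\tau)$. Thus $x\mapsto U^{r,t}f(\cdot,x)$ is a composition: first the characteristic flow, which is $C^2$ in $x$ with the norm bounds of Proposition \ref{ODEsmoo} applied to $g=xQ$ (whose $C^2$-norm is controlled by \eqref{eqQinkin}), then the solution operator of the linear backward Kolmogorov ODE for $J$, whose sensitivity to the parameter $x$ entering $A$ is quantified by Proposition \ref{linODEsens}. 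Since the initial datum of that linear ODE is $x$-independent, the estimates of Proposition \ref{linODEsens} lose their $\|\pa y_0/\pa x\|$-terms, leaving
\[
\|U^{r,t}f\|_{sup}\le\|f\|_{sup},\qquad \|(U^{r,t}f)^{(1)}\|+\|(U^{r,t}f)^{(2)}\|\le(t-r)\,C(T,\|Q\|_{C^2},\|Q^{dev}\|_{C^2})\,\|f\|_{sup}
\]
on $[0,T]$. Substituting into \eqref{eqgenwithtaggedlim1} — in which the $2\|f\|$ term, when evaluated on a function Lipschitz in $x$, is really an increment of that function over $\ell_1$-distance $2/N$ and hence of the lower order $\tfrac{2}{N}\|(U^{r,t}f)^{(1)}\|$ — gives $\|(\La_r^{dev}-L_r^{N,dev})U^{r,t}f\|_{sup}\le(t-r)\,C\,\|f\|_{sup}/N$, and integrating over $r\in[s,t]$ in the comparison identity yields \eqref{eq3LLNtagged}.

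For merely Lipschitz $Q$ and $Q^{dev}$, I would run the mollification argument from the proof of Theorem \ref{thconv1} verbatim: replace $Q$, $Q^{dev}$ by $\Phi_\de[Q]$, $\Phi_\de[Q^{dev}]$ (still $Q$-matrices, with $\|\cdot\|_{bLip}$ unchanged and $\|\cdot\|_{C^2}$ of order $1/\de$), apply \eqref{eq3LLNtagged} to the mollified model, and control the two mollification errors — $\|U^{s,t}_Nf-U^{s,t}_{N,\de}f\|$ via the comparison identity together with $\|L^{N,dev}-L^{N,dev}_\de\|$ of order $\de$, and $\|U^{s,t}f-U^{s,t}_\de f\|$ via Proposition \ref{ODErhs} for the characteristics and the companion sensitivity bound for $J$. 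The mollified smooth estimate then contributes a term of order $(t-s)^2/(N\de)$ and the mollification errors a term of order $\de(t-s)$; balancing with $\de=\sqrt{(t-s)/N}$ gives the rate $(t-s)^{3/2}/N^{1/2}$ of \eqref{eq2LLNtagged}. Note that $f$ never needs mollifying, as it depends only on the finite index $j$ and so is trivially smooth in $x$ with all $x$-derivatives zero.

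The one step demanding care is the composite regularity bound for $U^{r,t}f$: one must differentiate through both the characteristic flow and the randomised chain, keep every constant dependent only on $T$, $\|Q\|_{C^2}$, $\|Q^{dev}\|_{C^2}$ (respectively their $bLip$-analogues), and, crucially, use the vanishing of the $x$-derivatives of the terminal datum to gain the factor $(t-r)$; without it one would recover only the weaker $(t-s)/N$ rate of the untagged Theorem \ref{thsiplestLLN}. Everything else is a direct transcription of arguments already established.
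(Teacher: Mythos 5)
Your proposal follows essentially the same route as the paper's proof: the propagator comparison identity, the reduction via \eqref{eqgenwithtaggedlim1} to the $x$-derivatives of $U^{r,t}f$, the representation of $U^{r,t}f$ as the composition of the characteristic flow with the auxiliary linear chain whose sensitivity is controlled by Proposition \ref{linODEsens} with $x$-independent terminal data (whence the crucial factor $(t-r)$), and mollification with $\de=\sqrt{(t-s)/N}$ in the Lipschitz case. Your remark that the $2\|f\|$ term in \eqref{eqgenwithtaggedlim1} should be read as an increment of $U^{r,t}f$ over $\ell_1$-distance $2/N$ is a useful sharpening, since it is what actually recovers the full $(t-s)^2$ factor claimed in \eqref{eq3LLNtagged}.
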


\begin{proof}
Let us start with the case of smooth $Q$ and $Q^{dev}$.
 Using \eqref{eqgenwithtaggedlim1} and the comparison of
 propagators formula \eqref{eqcompproptd} we derive that
\[
\|(U^{s,t}-U^{s,t}_N )f\|_{sup}=sup_{j,x}|(U^{s,t}-U^{s,t}_N )f(j,x)|
\]
\begin{equation}
 \label{eq4LLNtagged}
\le (t-s) \sup_{r\in [s,t]} \|(L_t^{N,dev}-\La_t^{dev})U^{r,t} f \|_{sup}
\le  \frac{t-s}{N} \|Q\| \left(\sup_{r\in [s,t]} \|(U^{r,t} f)^{(2)} \|+2\|f\|\right).
\end{equation}

Thus we need to estimate
\[
\|(U^{r,t} f)^{(2)} \|=\sup_{k,l,j,x} \left|\frac{\pa^2 U^{r,t}f(j,x)}{\pa x_k \pa x_l}\right|,
\]
with $U^{s,t}$ given by \eqref{eqgenwithtaggedlim2a} (and with $f(j,x)=f(j)$ not depending on $x$).

To deal with $U^{s,t}$ it is convenient to fix $s<T$ and $x$ and to consider
the auxiliary propagator $U_{[s,x]}^{r,t}$, $s\le r \le t\le T$, of the Markov
chain $Y_{r,j}(t)$ in $\{1,\cdots, d\}$ with the $Q$-matrix $Q^{dev}(t, X_{s,x}(t))$,
so that
\begin{equation}
 \label{eq5LLNtagged}
 U_{[s,x]}^{r,t}f(j) =\E f (Y_{r,j}(t)),
 \end{equation}
 and
 \begin{equation}
 \label{eq6LLNtagged}
 U^{s,t}f(j) =U_{[s,x]}^{s,t} f (j).
 \end{equation}

Unlike $U^{s,t}$ acting on functions on  $\{1, \cdots , d\}\times \Si_d$,
the propagator $U_[s,x]^{r,t}$ is a propagator of a usual Markov chain and
hence its action satisfies the ODE
 \[
 \frac{d}{dr} U_{[s,x]}^{r,t}f (j)=[Q^{dev}(t, X_{s,x}(t)) U_{[s,x]}^{r,t}f] (j)=
 \sum_k Q^{dev}_{jk}(t, X_{s,x}(t)) (U_{[s,x]}^{r,t}f)(k).
 \]
To find the derivatives with respect to $x$ we can use the standard ODE sensitivity results,
 Proposition \ref{linODEsens} (used in backward time and with the initial condition not
 depending explicitly on the parameter), yielding
 \[
 \sup_{i,k,x}\left|\frac{\pa}{\pa x_i}U_{[s,x]}^{r,t}f (k)\right|
 \le (t-r) \sup_k |f(k)|\sup_{x,i} \left\|\frac{\pa Q^{dev}(t, X_{s,x}(t))}{\pa x_i}\right\| \exp \{2(t-r) \|Q^{dev}\|\},
 \]
 and
\[
\sup_{i,j,k} \left|\frac{\pa ^2}{\pa x_i \pa x_j} U_{[s,x]}^{r,t}f (k)\right|
\le \exp\{ 3(t-r)\|Q^{dev}\| \} \sup_k |f(k)|
\]
\[
\times  \left((t-r) \sup_{i,j} \left\|\frac{\pa ^2 Q^{dev}(t, X_{s,x}(t))}{\pa x_i \pa x_j}\right\|
 + (t-r)^2 \sup_i \left\|\frac{\pa  Q^{dev}(t, X_{s,x}(t))}{\pa x_i}\right\|^2 \right).
\]
Since
\[
\frac{\pa Q^{dev}(t, X_{s,x}(t))}{\pa x_i}
=\frac{\pa Q^{dev}(t, y)}{\pa y}|_{y=X_{s,x}(t)} \frac{\pa X_{s,x}(t))}{\pa x_i}
\]
and similarly for the second derivative, we can use Proposition \ref{ODEsmoo} to estimate the
derivatives of $X_{s,x}(t)$ and thus to obtain
$\|(U^{r,t} f)^{(2)} \|\le C$ with $C$ depending on $T,\|Q\|_{C^2}, \|Q^{dev}\|_{C^2}$
and hence \eqref{eq3LLNtagged} follows by \eqref{eq4LLNtagged}.

When $Q$ and $Q^{dev}$ are only Lipschitz we use the same approximations $\Phi_{\de}[Q]$ and
$\Phi_{\de}[Q^{dev}]$ as in Section \ref{secLLNLip}. And as in Section \ref{secLLNLip} we get the rate
of convergence of order $t d\de+t^2/(\de N)$ yielding \eqref{eq2LLNtagged} by choosing $\de=\sqrt{t/N}$.
\end{proof}

Let us extend the result to functions $f(j,x)$ depending on $x$ explicitly.

\begin{theorem}
\label{LLNtaggedg}
 Under setting of Theorem \ref{thconv1} let us assume that one
 distinguished agent in the group of $N$ players deviates from
 the general rules moving according to the transition $Q$-matrix
$Q^{dev}(t,x)$, $t\in [0,T]$, satisfying the same regularity
assumptions as $Q$. Then
\begin{equation}
\label{eq2LLNtaggedg}
 \|(U^{s,t}-U^{s,t}_N )f\|_{sup}
 \le \frac{(t-s)^{1/2}}{N^{1/2}} C (d, T, \|Q\|_{bLip},\|Q^{dev}\|_{bLip})
 \|f\|_{bLip}, \quad 0\le s\le t\le T,
 \end{equation}
with a constant $C$ depending on $d, T,\|Q\|_{bLip},\|Q^{dev}\|_{bLip}$.
For smooth $Q$ and $Q^{dev}$ and $f$,
 \begin{equation}
\label{eq3LLNtaggedg}
 \|(U^{s,t}-U^{s,t}_N )f\|_{sup}
 \le \frac{(t-s)}{N} C (T, \|Q\|_{C^2},\|Q^{dev}\|_{C^2})\|f\|_{C^2},
 \end{equation}
with a constant $C$ depending on $T, \|Q\|_{C^2}, \|Q^{dev}\|_{C^2}$.
\end{theorem}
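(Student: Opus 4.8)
The plan is to follow the proof of Theorem \ref{LLNtagged} closely, the only genuinely new point being the control of the second $x$-derivatives of $U^{r,t}f$ when $f$ depends on $x$. As there, I start from the comparison-of-propagators formula \eqref{eqcompproptd} applied to $U^{s,t}_N$ (generated by $L_t^{N,dev}$ of \eqref{eqgenwithtagged}) and $U^{s,t}$ (generated by $\La_t^{dev}$ of \eqref{eqgenwithtaggedlim}), together with the generator estimate \eqref{eqgenwithtaggedlim1}, to obtain
\[
\|(U^{s,t}-U^{s,t}_N)f\|_{sup}\le \frac{t-s}{N}\,\|Q\|\,\Big(\sup_{r\in[s,t]}\|(U^{r,t}f)^{(2)}\|+2\|f\|\Big),
\]
where $\|(U^{r,t}f)^{(2)}\|$ denotes the supremum over $j$ and $x$ of the magnitudes of the second-order $x$-derivatives of $U^{r,t}f(j,x)$. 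Thus \eqref{eq3LLNtaggedg} follows once I show $\sup_{r}\|(U^{r,t}f)^{(2)}\|\le C(T,\|Q\|_{C^2},\|Q^{dev}\|_{C^2})\,\|f\|_{C^2}$.

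For this bound I use the representation \eqref{eqgenwithtaggedlim2}, in which the second component $X_{s,x}(t)$ is deterministic, and write, fixing the initial time and point, $U^{s,t}f(j,x)=(U_{[s,x]}^{s,t}\bar f_x)(j)$, where $U_{[s,x]}^{\rho,t}$ is the auxiliary propagator of the chain on $\{1,\dots,d\}$ with $Q$-matrix $Q^{dev}(t,X_{s,x}(t))$ (as in the proof of Theorem \ref{LLNtagged}) and $\bar f_x(k):=f(k,X_{s,x}(t))$. Differentiating in $x$ produces two groups of terms: those in which the $x$-derivatives fall on the propagator $U_{[s,x]}^{\rho,t}$, which enter only through $X_{s,x}(\cdot)$ in the rates and are estimated exactly as in Theorem \ref{LLNtagged} by Proposition \ref{linODEsens} (for the linear ODE governing $\rho\mapsto U_{[s,x]}^{\rho,t}$) combined with Proposition \ref{ODEsmoo} (for the $x$-derivatives of $X_{s,x}(t)$); and those in which they fall on $\bar f_x$, which by the chain rule are the $y$-derivatives of $f$ evaluated at $X_{s,x}(t)$ times the first or second $x$-derivatives of $X_{s,x}(t)$, again bounded by Proposition \ref{ODEsmoo} and harmless after applying the sup-norm contraction $U_{[s,x]}^{\rho,t}$. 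Collecting everything (including the mixed second-order terms) gives the claim; the point worth checking carefully — and the place where the rate $(t-s)/N$ rather than $(t-s)^2/N$ is earned — is that the contribution carrying $\|f^{(2)}\|$ comes with the coefficient $\|\pa X/\pa x\|^2\le \exp\{2(t-r)\|g^{(1)}\|\}$, with no extra polynomial factor $(t-r)$, unlike the $\|f\|$-term of the $x$-independent case.

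For Lipschitz $Q$, $Q^{dev}$ and merely Lipschitz $f$, I mollify all three in the $x$-variables, as in Section \ref{secLLNLip}, replacing $Q,Q^{dev}$ by $\Phi_\de[Q],\Phi_\de[Q^{dev}]$ and $f$ by $\tilde f=\Phi_{\tilde\de}[f]$. Using the contraction of $U^{s,t}_N$ and $U^{s,t}$ I absorb the error $\|f-\tilde f\|_{sup}\le Cd\tilde\de\|f\|_{bLip}$, and \eqref{eq7thconv1} gives $\|\tilde f^{(2)}\|\le C\|f\|_{bLip}/\tilde\de$; the remaining error splits, as in Section \ref{secLLNLip} and in the proof of Theorem \ref{LLNtagged}, into an ODE-perturbation part of order $td\de$ (via Proposition \ref{ODErhs} and \eqref{eqcompproptd}) and a smooth-LLN part of order $t/(\de N)+t/(\tilde\de N)$ (from the bound just obtained with $\|f^{(2)}\|$ replaced by $C\|f\|_{bLip}/\tilde\de$). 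Balancing by $\de=\tilde\de=\sqrt{t/N}$ yields \eqref{eq2LLNtaggedg}.

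The main obstacle is entirely bookkeeping: organizing the second-derivative estimate for $U^{r,t}f(j,x)$ so as to separate cleanly the $x$-dependence entering through the tagged chain's rates from that entering through the second argument of $f$, and thereby to confirm the absence of a spurious extra factor $(t-r)$ on the dominant $\|f^{(2)}\|$ term, on which the claimed rates depend.
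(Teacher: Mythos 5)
Your proof is correct and follows essentially the same route as the paper's: the paper's own (very terse) proof says precisely that the only change from Theorem \ref{LLNtagged} is to apply Proposition \ref{linODEsens} in full, i.e.\ with the initial condition $\bar f_x(k)=f(k,X_{s,x}(t))$ now depending on the parameter $x$, and to mollify $f$ in the non-smooth case — which is exactly your decomposition. Your additional observation that the $\|f^{(2)}\|\,\|\pa X/\pa x\|^2$ term carries no extra factor $(t-r)$, accounting for the weaker rates $(t-s)/N$ and $(t-s)^{1/2}/N^{1/2}$ compared with Theorem \ref{LLNtagged}, is a correct and useful piece of bookkeeping that the paper leaves implicit.
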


\begin{proof} The only difference with the previous case is the
necessity to use Proposition \ref{linODEsens} in full, that is,
with the initial condition also depending on the parameter and,
in case $f$ is not smooth, approximate it in a usual way by smooth functions.
\end{proof}

\section{Rest points of limiting dynamics and Nash equilibria}

Theorem \ref{thconv1c} suggests that eventually the controlled Markov
evolution will settle down near some stable equilibrium points of dynamic
systems \eqref{eqkineqprinrep} or \eqref{eqpresreslimevolrep}.

Let us deal now specifically with system \eqref{eqpresreslimevolrep}.
For a subset $I\subset \{1, \cdots , d\}$, let
\[
\Om_I =\{ x \in \Si_d: x_k =0,  k\in I, \,
\text{and} \, R_j(x,b^*(x))=R_i(x,b^*(x)) \, \text{for} \,  i,j\notin I \}.
\]

\begin{theorem}
\label{th11}
A vector $x$ with non-negative coordinates is a rest point of  \eqref{eqpresreslimevolrep},
that is, it satisfies the system of equations
\begin{equation}
\label{eqdefgenmeanfpoolsing1}
\sum_{i} \ka x_i x_j[R_j(x,b^*(x))-R_i(x,b^*(x))]=0, \quad j=1,...,d,
\end{equation}
if and only if $x\in \Om_I$ for some $I\subset \{1, \cdots , d\}$.
\end{theorem}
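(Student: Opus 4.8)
The plan is to prove both implications by a direct manipulation of the rest-point equations \eqref{eqdefgenmeanfpoolsing1}, using that on the relevant state space $\Si_d$ one has $\sum_i x_i = 1$. Throughout I would abbreviate $\tilde R_j(x) = R_j(x, b^*(x))$, so that the internal structure of the map $b^*$ plays no role and everything reduces to elementary algebra in the $\tilde R_j$.

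First I would handle the "if" direction. Suppose $x \in \Om_I$ for some $I \subset \{1,\dots,d\}$. For an index $j \in I$ the $j$-th equation in \eqref{eqdefgenmeanfpoolsing1} is automatically $0$ because $x_j = 0$. For $j \notin I$, split the sum $\sum_i \ka x_i x_j(\tilde R_j(x) - \tilde R_i(x))$ into the terms with $i \in I$ and those with $i \notin I$: the former vanish since $x_i = 0$ there, and the latter vanish since $\tilde R_j(x) = \tilde R_i(x)$ whenever $i,j \notin I$. Hence every equation in \eqref{eqdefgenmeanfpoolsing1} holds and $x$ is a rest point.

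For the "only if" direction, suppose $x \in \Si_d$ satisfies \eqref{eqdefgenmeanfpoolsing1} and set $I = \{k : x_k = 0\}$, the exact zero set of $x$. The condition $x_k = 0$ for $k \in I$ holds by construction, so it remains to verify $\tilde R_i(x) = \tilde R_j(x)$ for all $i,j \notin I$. Fix $j \notin I$; then $x_j > 0$, so dividing the $j$-th equation of \eqref{eqdefgenmeanfpoolsing1} by $\ka x_j$ gives $\sum_i x_i(\tilde R_j(x) - \tilde R_i(x)) = 0$, i.e. $\tilde R_j(x) = \sum_i x_i \tilde R_i(x)$ since $\sum_i x_i = 1$. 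The right-hand side is a weighted average independent of $j$, so $\tilde R_j(x)$ is the same for all $j \notin I$, which is precisely membership in $\Om_I$.

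I do not expect a real obstacle here — once the index sets are organised the argument is a two-line computation in each direction. The one point needing a word of care is the standing hypothesis: the equivalence genuinely needs $x \in \Si_d$ (so that $\sum_i x_i = 1$ makes the weighted-average step work, and so that the trivial solution $x = 0$, which lies in no $\Om_I$, is excluded), and this costs nothing since $\Si_d$ is forward-invariant for \eqref{eqpresreslimevolrep}. I would also remark that in the "only if" part the natural witness is $I$ equal to the exact zero set of $x$; a set $I$ that contains an index $k$ with $x_k > 0$ violates $\Om_I$, while an $I$ strictly smaller than the zero set would demand extra equalities among the $\tilde R$'s that need not hold.
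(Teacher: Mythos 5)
Your proof is correct and follows essentially the same route as the paper's: reduce to the indices outside the zero set and then use $\sum_i x_i=1$ to force equality of the payoffs $R_j(x,b^*(x))$ for $j\notin I$ (the paper subtracts the $j$th and $k$th reduced equations where you take the weighted average, which is the same computation). Your remark that the statement implicitly requires $x\in\Si_d$ — so that $\sum_i x_i>0$ and the trivial rest point $x=0$ is excluded — is a valid observation that the paper leaves tacit.
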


\begin{proof}
For any $I$ such that $x_k=0$ for $k\in I$, system  \eqref{eqdefgenmeanfpoolsing1}
reduces to the same system but with coordinates $k\notin I$. Hence it is sufficient to show the result for
the empty $I$. In this situation, system  \eqref{eqdefgenmeanfpoolsing1} reduces to
\begin{equation}
\label{eqdefgenmeanfpoolsing2}
\sum_{i} x_i [R_j(x,b^*(x))-R_i(x,b^*(x))]=0, \quad j=1,...,d.
\end{equation}
Subtracting $j$th and $k$th equations of this system yields
\[
(x_1+\cdots + x_d)  [R_j(x,b^*(x))-R_k(x,b^*(x))]=0,
\]
and thus
\[
R_j(x,b^*(x))=R_k(x,b^*(x)),
\]
as required.
\end{proof}

So far we have deduced the dynamics arising from a certain Markov model of interaction.
As it is known, the internal (not lying on the boundary of the simplex) singular points
of the standard replicator dynamics of evolutionary game theory correspond to the
mixed-strategy Nash equilibria of the initial game with a fixed number of players
(in most examples just two-player game). Therefore, it is natural to ask whether a
similar interpretation can be given to fixed points of Theorem \ref{th11}. Because
of the additional nonlinear mean-field dependence of $R$ on $x$ the interpretation
of $x$ as mixed strategies is not at all clear. However, consider explicitly the
following game $\Ga_N$ of $N+1$ players (that was tacitly borne in mind when
discussing dynamics). When the major player chooses the strategy $b$ and each of
$N$ small players chooses the state $i$, the major player receives the payoff
$B(x,b,N)$ and each player in the state $i$ receives $R_i(x,b)$, $i=1, \cdots, d$
(as above, with $x=n/N$ and $n=(n_1, \cdots , n_d)$ the realized occupation numbers
of all the states). Thus a strategy profile of small players  in this game can be
specified either by a sequence of $N$ numbers (expressing the choice of the state
by each agent), or more succinctly, by the resulting collection of frequencies $x=n/N$.

As usual (see any text in game theory, e.g. \cite{Mazabook} or \cite{PetrZen})
one defines a Nash equilibrium in $\Ga_N$ as a profile of strategies $(x_N,b^*_N)$
such that for any player changing its choice unilaterally would not be beneficial, that is
\[
b^*_N =b^*(x_N,N)= argmax \, B(x_N,b,N)
\]
and for any $i,j\in \{1, \cdots , d\}$
\begin{equation}
\label{eqNashevolprin1}
R_j(x-e_i/N+e_j/N,b_N^*)\le R_i(x,b_N^*).
\end{equation}
A profile is an $\ep$-{\it Nash equilibrium}\index{$\ep$-Nash equilibrium} if these
inequalities hold up to an additive correction term not exceeding $\ep$. It turns out
 that the singular points of \eqref{eqpresreslimevolrep} describe all approximate
Nash equilibria for $\Ga_N$ in the following precise sense:

\begin{theorem}
\label{thfixpointNashl}
Let $R(x,b)$ be Lipschitz continuous in $x$ uniformly in $b$. Let
\[
\hat R=\sup_{i,b} \|R_i(.,b)\|_{Lip},
\]
and, for $I\subset \{1, \cdots , d\}$, let
\[
\hat \Om_I
=\{x\in \Om_I:  R_k(x,b^*(x)) \le R_i(x,b^*(x)) \, \text{for} \, k\in I, i\notin I\}.
\]
Then the following assertions hold.

(i) The limit points
of any sequence $x_N$ such that $(x_N, b^*(x_N, N))$ is a Nash equilibrium for $\Ga_N$
belong to $\hat \Om_I$ for some $I$. In particular, if all $x_N$ are internal points of
$\Si_d$, then any limiting point belongs to $\Om_{\emptyset}$.

(ii) For any $I$ and $x\in \hat \Om_I$ there exists an $2\hat R d/N$-Nash equilibrium
$(x_N,b^*(x_N,N))$ to $\Ga_N$ such  that the difference of any coordinates of $x_N$
and $x$ does not exceed $1/N$ in magnitude.
\end{theorem}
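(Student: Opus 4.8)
The plan is to derive both parts from elementary Lipschitz estimates, the one new combinatorial input being that a single unilateral deviation $i\to j$ perturbs the empirical distribution by exactly $2/N$ in the $\ell_1$-norm, so that $|R_l(y-e_i/N+e_j/N,b)-R_l(y,b)|\le 2\hat R/N$ for every $l$ and every admissible $b$. Part (i) will then be a passage to a limit along the Nash inequalities \eqref{eqNashevolprin1}, and part (ii) a rounding of $x$ onto the lattice $\Si_d\cap N^{-1}\Z^d_+$ followed by a direct verification of the $\ep$-Nash inequalities.

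For (i), fix a sequence $x_N$ with $(x_N,b^*(x_N,N))$ a Nash equilibrium of $\Ga_N$ and, by compactness of $\Si_d$, pass to a subsequence with $x_N\to x$ (and $b^*(x_N,N)\to b^*(x)$; see the last paragraph). Put $I=\{k:x_k=0\}$. For $i,j\notin I$ we have $(x_N)_i,(x_N)_j>0$ for large $N$, so \eqref{eqNashevolprin1} holds for both deviations $i\to j$ and $j\to i$; combined with the $2/N$-shift bound this gives $|R_i(x_N,b^*(x_N,N))-R_j(x_N,b^*(x_N,N))|\le 2\hat R/N$, and letting $N\to\infty$ (using Lipschitz continuity in $x$ and continuity of $R$ in $b$) we get $R_i(x,b^*(x))=R_j(x,b^*(x))$. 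For $k\in I$ and $i\notin I$ the deviation $i\to k$ is admissible for large $N$ and yields $R_k(x_N,b^*(x_N,N))\le R_i(x_N,b^*(x_N,N))+2\hat R/N$, hence $R_k(x,b^*(x))\le R_i(x,b^*(x))$ in the limit. Thus $x\in\hat\Om_I$. If every $x_N$ is internal, \eqref{eqNashevolprin1} holds for all $i,j$, the argument applies to every pair, and $x\in\Om_\emptyset$.

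For (ii), choose $n\in\Z^d_+$ with $\sum_k n_k=N$ by rounding up the $m:=\sum_k\{Nx_k\}$ coordinates with the largest nonzero fractional part and rounding all other coordinates (in particular every $k$ with $x_k=0$) down; set $x_N=n/N$. Then $|(x_N)_k-x_k|<1/N$ for all $k$, $(x_N)_k=0$ for $k\in I$ (so $\supp x_N\subseteq\supp x$ and only states $i\notin I$ can host a deviating player), and a short estimate gives $\|x_N-x\|_1\le d/(2N)$. Put $b_N=b^*(x_N,N)$, so the principal's best-response requirement holds by definition. For $i\notin I$ and arbitrary $j$,
\[
R_j\!\left(x_N-\tfrac{e_i}{N}+\tfrac{e_j}{N},b_N\right)
\le R_j(x_N,b_N)+\tfrac{2\hat R}{N}
\le R_i(x_N,b_N)+\tfrac{2\hat R}{N}+2\hat R\,\|x_N-x\|_1
\le R_i(x_N,b_N)+\tfrac{2\hat R d}{N},
\]
where the middle inequality uses $R_j(x,b^*(x))\le R_i(x,b^*(x))$ (valid since $x\in\hat\Om_I$ and $i\notin I$) together with the identification of $b_N$ with $b^*(x)$, and the last step holds for $d\ge2$ (the case $d=1$ being vacuous). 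Hence $(x_N,b_N)$ is a $2\hat R d/N$-Nash equilibrium of $\Ga_N$ with the asserted proximity to $x$.

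I expect the only genuine obstacle to be the drift of the principal's best response: the sets $\hat\Om_I$ are defined through the limiting map $b^*(x)$, whereas in $\Ga_N$ the principal is pinned to $b^*(x_N,N)$. For (i) this is handled by the standing hypothesis of Section \ref{secLLNmajor} that $b^*(x_N,N)\to b^*(x)$ when $x_N\to x$, plus continuity of $R$ in $b$; for the sharp constant in (ii) one also needs $|b^*(x_N,N)-b^*(x)|=O(1/N)$ and Lipschitz dependence of $R$ on $b$, which is automatic whenever — as in the concrete models, e.g. \eqref{eqdefterrpay} with its accompanying $B$ — the principal's payoff, and hence $b^*$, does not depend on $N$. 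All remaining steps (compactness, the explicit rounding, and the Lipschitz bookkeeping) are routine.
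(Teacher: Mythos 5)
Your proof is correct and follows essentially the same route as the paper's: the $2\hat R/N$ bound on the payoff change under a single deviation gives the two-sided and one-sided inequalities that pass to the limit in (i), and a $1/N$-lattice rounding of $x$ verifies the $\ep$-Nash property in (ii). Your explicit treatment of the discrepancy between $b^*(x_N,N)$ and $b^*(x)$ (and the careful rounding giving $\|x_N-x\|_1\le d/(2N)$) merely fills in details the paper's one-line verification of (ii) leaves implicit.
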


\begin{proof}
(i) Let us consider a sequence of Nash equilibria  $(x_N,b^*(x_N,N))$ such that the
coordinates of all $x_N$ in $I$ vanish. By \eqref{eqNashevolprin1} and the definition of $\hat R$,
\begin{equation}
\label{eqNashevolprin2}
|R_j(x_N,b^*(x_N,N)) - R_i(x_N,b^*(x_N,N))| \le \frac{2}{N}\hat R
\end{equation}
for any $i,j \notin I$ and
\begin{equation}
\label{eqNashevolprin2a}
R_k(x_N,b^*(x_N,N)) \le R_i(x_N,b^*(x_N,N)) + \frac{2}{N}\hat R, \quad k\in I, i\notin I.
\end{equation}
Hence  $x\in \hat \Om_I$ for any limiting point $(x,b)$.

(ii) If $x\in \hat \Om_I$ one can construct its $1/N$-rational approximation,
namely a sequence $x_N\in \Si_d \cap \Z_+^d/N$ such that the difference
of any coordinates of $x_N$ and $x$ does not exceed $1/N$ in magnitude.
For any such $x_N$, the profile $(x_N,b^*(x_N,N))$ is an $2\hat R d/N$-Nash equilibrium for $\Ga_N$.
\end{proof}

Theorem \ref{thfixpointNashl} provides a game-theoretic interpretation of the fixed points of dynamics
\eqref{eqpresreslimevolrep}, which is independent of any myopic hypothesis used to
justify this dynamics. To better illustrate this independence we can easily extend this theorem
to the situations, when the solutions to the kinetic equations are not well defined, namely to the case
 of only continuous $R_j(x)$ (neither Lipschitz not even H\"older). The result can be best expressed
 in terms of the modulus of continuity of the function $R_j$ that we define in the following way:
 $w_j(h;b)=\sup\{|R_j(x,b)-R_j(y,b)|\}$, where $\sup$ is over the pairs of $x,y$ that differ only in
 one coordinate and by amount not exceeding $h$. Straightforward extension of the proof of
 Theorem \ref{thfixpointNashl} yields the following result.

\begin{theorem}
\label{thfixpointNashlcon}
Let $R(x,b)$ be continuous in $x$ uniformly in $b$, so that $\hat R(h)\to 0$ as $h\to 0$, where
\[
\hat R(h)=\sup_{i,b} w_i(h;b).
\]
Then (i) the limit points
of any sequence $x_N$ such that $(x_N, b_N^*(x_N, N))$ is a Nash equilibrium for $\Ga_N$
belong to $\hat \Om_I$ for some $I$; and
(ii) for any $I$ and $x\in \hat \Om_I$ there exists an $2d\hat R(1/N)$-Nash equilibrium
$(x_N,b_N^*(x_N,N))$ to $\Ga_N$ such  that the difference of any coordinates of $x_N$
and $x$ does not exceed $1/N$ in magnitude.
\end{theorem}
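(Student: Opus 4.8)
The statement is the modulus-of-continuity counterpart of Theorem~\ref{thfixpointNashl}, so the plan is simply to rerun that proof, replacing every use of the Lipschitz estimate $|R_j(x,b)-R_j(y,b)|\le\hat R\,\|x-y\|$ by the bound through the modulus of continuity. The elementary observation that makes this work is: if $x$ and $y$ differ in $m$ coordinates, each by at most $h$, then inserting $m$ intermediate points gives $|R_j(x,b)-R_j(y,b)|\le m\,\hat R(h)$; and the two kinds of perturbations occurring in $\Ga_N$ involve a controlled number of coordinate changes of size $1/N$, namely a single-agent switch $x\mapsto x-e_i/N+e_j/N$ changes two coordinates by $1/N$, while a $1/N$-rational approximant of a point of $\Si_d$ differs from it in at most $d$ coordinates by $1/N$. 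Hence every $O(1/N)$ error term in the Lipschitz proof becomes an $O(\hat R(1/N))$ term, which still tends to $0$ because $\hat R(h)\to0$ as $h\to0$.

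For part (i) I would start from an arbitrary sequence of Nash equilibria $(x_N,b_N^*(x_N,N))$ of $\Ga_N$, pass to a subsequence along which the support $\{k:(x_N)_k>0\}$ is a fixed set $\{1,\cdots,d\}\setminus I$ (finitely many possibilities), and, using compactness of $\Si_d$ and of the principal's control domain, also assume $x_N\to x$ and $b_N^*(x_N,N)\to b$, with $b=b^*(x)$ by the ambient continuity of $b^*$ and the convergence $b^*(\cdot,N)\to b^*(\cdot)$. For $i,j\notin I$ both states are occupied, so the two instances of \eqref{eqNashevolprin1} obtained by moving one agent $i\to j$ and one agent $j\to i$ combine, via the modulus estimate, into $|R_j(x_N,b_N^*)-R_i(x_N,b_N^*)|\le 2\hat R(1/N)$; for $k\in I$, $i\notin I$, moving an occupied agent from $i$ to the empty state $k$ gives $R_k(x_N,b_N^*)\le R_i(x_N,b_N^*)+2\hat R(1/N)$. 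Letting $N\to\infty$ and using continuity of $R$ yields precisely the defining relations of $\hat\Om_I$.

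For part (ii) I would take $x\in\hat\Om_I$, choose a standard rounding $x_N\in\Si_d\cap\Z_+^d/N$ with $|(x_N)_j-x_j|\le1/N$ for all $j$ and $(x_N)_k=0$ for $k\in I$, and set $b_N=b^*(x_N,N)$, so that the principal's best-response condition holds with equality. For an occupied index $i$ (hence $i\notin I$) and any $j$, the point $x_N-e_i/N+e_j/N$ differs from $x$ in at most $d$ coordinates, each by at most $2/N$, so $R_j(x_N-e_i/N+e_j/N,b_N)$ and $R_i(x_N,b_N)$ each differ from $R_j(x,b^*(x))$, resp. $R_i(x,b^*(x))$, by $O(d\,\hat R(1/N))$ once the vanishing discrepancy $|R(x,b_N)-R(x,b^*(x))|$ is folded in; combining with $R_j(x,b^*(x))=R_i(x,b^*(x))$ for $i,j\notin I$ and $R_j(x,b^*(x))\le R_i(x,b^*(x))$ for $j\in I$ gives \eqref{eqNashevolprin1} up to an additive error of the claimed order $2d\,\hat R(1/N)$, while by construction the coordinates of $x_N$ are within $1/N$ of those of $x$.

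The computations are routine; the only point that needs genuine care is the bookkeeping of the principal's control, since $\hat\Om_I$ is written in terms of $b^*(x)$ whereas the prelimiting equilibria carry $b^*(x_N,N)$ — passing to the limit in (i) and absorbing the extra term in (ii) both rely on the standing regularity of $b^*$ under which the kinetic equations \eqref{eqpresreslimevolrep} were derived (cf.\ Theorem~\ref{thconv1c}). Everything involving only the minor players is a line-by-line transcription of the proof of Theorem~\ref{thfixpointNashl}, with $\hat R\cdot(\text{distance})$ replaced by the appropriate multiple of the modulus $\hat R(\cdot)$.
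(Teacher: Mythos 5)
Your proposal is correct and follows essentially the same route as the paper, which itself obtains Theorem \ref{thfixpointNashlcon} as the "straightforward extension" of the proof of Theorem \ref{thfixpointNashl}: the per-coordinate Lipschitz bound $\hat R\cdot|x_j-y_j|$ is replaced throughout by the modulus $\hat R(1/N)$ per perturbed coordinate, with the single-agent switch contributing the factor $2$ in part (i) and the $1/N$-rational rounding contributing the factor $d$ in part (ii). Your explicit bookkeeping of the principal's control $b^*(\cdot,N)$ versus $b^*(\cdot)$ is a welcome clarification that the paper leaves implicit.
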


Theorem \ref{thfixpointNashl} extends also automatically to the case of several major players and dynamics
\eqref{eqpresreslimevolrepsev}. For example, let us discuss the case of Lipschitz continuous payoffs.
Namely, let us consider the game $\Ga_{N,K}$ of $N+K$ players,
where the major players choose the strategies $b_1, \cdots , b_K$ and
each of $N$ small players chooses the state $i$. The payoffs of the major players are
$B(x,b,N)$ and each player in the state $i$ receives $R_i(x,b)$, $i=1, \cdots, d$.
Assume the existence of a continuous branch of Nash equilibria
$(b_1^*(x,N), \cdots, b_K^*(x,N))$ having limit \eqref{eqNashlimpr}.

\begin{theorem}
\label{thfixpointNashl1}
Let $R(x,b_1, \cdots, b_K)$ be Lipschitz continuous in $x$ uniformly in $b_1, \cdots, b_K$ and
\[
\hat R=\sup_{i,b_1, \cdots, b_K} \|R_i(.,b_1, \cdots, b_K)\|_{Lip}.
\]
For $I\subset \{1, \cdots , d\}$, let us define $\Om_I$ and $\hat \Om_I$ as above
but with  $(b_1^*(x), \cdots, b_K^*(x))$ instead of just $b^*(x)$.
Then

(i) The limit points
of any sequence $x_N$ such that $(x_N, b_1^*(x_N,N), \cdots, b_K^*(x_N,N))$
is a Nash equilibrium for $\Ga_{N,K}$ belong to $\hat \Om_I$ for some $I$.

(ii) For any $I$ and $x\in \hat \Om_I$ there exists an $2\hat R d/N$-Nash equilibrium
\newline $(x_N,b_1^*(x,N), \cdots, b_K^*(x,N))$ for $\Ga_{N,K}$
such  that the difference of any coordinates of $x_N$ and $x$ does not exceed $1/N$ in magnitude.
\end{theorem}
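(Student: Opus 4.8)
The plan is to repeat the argument of Theorem \ref{thfixpointNashl} almost verbatim, observing that the presence of several major players rather than one changes nothing essential in the analysis of the small players' side of the equilibrium conditions, since the vector $(b_1^*(x,N),\dots,b_K^*(x,N))$ plays in every estimate the role that $b^*(x,N)$ played before. First I would fix $I\subset\{1,\dots,d\}$ and a sequence of Nash equilibria $(x_N,b_1^*(x_N,N),\dots,b_K^*(x_N,N))$ whose coordinates in $I$ vanish. The no-regret condition for a small player sitting in state $i\notin I$ who contemplates moving to state $j$ gives, exactly as in \eqref{eqNashevolprin1}, the inequality $R_j(x_N-e_i/N+e_j/N,b_1^*(x_N,N),\dots,b_K^*(x_N,N))\le R_i(x_N,b_1^*(x_N,N),\dots,b_K^*(x_N,N))$. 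Using the uniform Lipschitz bound $\hat R$ to replace the perturbed argument $x_N-e_i/N+e_j/N$ by $x_N$ at a cost of $2\hat R/N$, and doing this in both directions for $i,j\notin I$, yields the analogue of \eqref{eqNashevolprin2},
\[
|R_j(x_N,b_1^*(x_N,N),\dots,b_K^*(x_N,N))-R_i(x_N,b_1^*(x_N,N),\dots,b_K^*(x_N,N))|\le \frac{2\hat R}{N},
\]
while keeping only one direction for $k\in I$, $i\notin I$ gives the analogue of \eqref{eqNashevolprin2a}. Passing to a limit point $x$ along the sequence and using the continuity of $R$ and of the branch $x\mapsto(b_1^*(x),\dots,b_K^*(x))$ together with \eqref{eqNashlimpr}, the first set of inequalities forces $R_j(x,b_1^*(x),\dots,b_K^*(x))=R_i(x,b_1^*(x),\dots,b_K^*(x))$ for all $i,j\notin I$, i.e. $x\in\Om_I$, and the second forces the inequalities defining $\hat\Om_I$; this proves (i).

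For (ii), given $x\in\hat\Om_I$ I would take a $1/N$-rational approximation $x_N\in\Si_d\cap\Z_+^d/N$ whose coordinates differ from those of $x$ by at most $1/N$ (and with the same coordinates as $x$ equal to zero on $I$). Setting the major players' strategies to $b_j^*(x_N,N)$ makes the major-player part of the equilibrium condition hold exactly by definition of the branch, and for the small players one checks, using the Lipschitz bound $\hat R$ to control both the replacement of $x_N\pm e/N$ by $x_N$ and the replacement of $x_N$ by $x$ (each contributing at most $\hat R d/N$ since coordinates differ by at most $1/N$ and a switch touches at most $d$ of them — more carefully, $|R(x_N-e_i/N+e_j/N,\cdot)-R(x,\cdot)|\le \hat R(1/N+ \text{distance from }x_N\text{ to }x)$, which is $O(\hat R d/N)$ in the $l_1$-norm), that the small-player inequalities \eqref{eqNashevolprin1} hold up to an additive error at most $2\hat R d/N$. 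Hence $(x_N,b_1^*(x_N,N),\dots,b_K^*(x_N,N))$ is a $2\hat R d/N$-Nash equilibrium for $\Ga_{N,K}$, which is the claim. (The constant can be tracked precisely as in Theorem \ref{thfixpointNashl}; I would not belabour it.)

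The only genuinely new point, and the place where a little care is needed, is that the existence and continuity of the branch $x\mapsto(b_1^*(x,N),\dots,b_K^*(x,N))$ of Nash equilibria of the $K$-player subgame — as opposed to a single argmax — is now a hypothesis rather than something automatic; I expect this to be the main obstacle to a fully clean statement, and it is exactly why the theorem is phrased under the standing assumption that such a branch exists with limit \eqref{eqNashlimpr}. Everything else is a routine transcription of the one-principal proof, with $b^*$ replaced throughout by the tuple $(b_1^*,\dots,b_K^*)$ and the definitions of $\Om_I$, $\hat\Om_I$ read with that tuple in place of $b^*(x)$.
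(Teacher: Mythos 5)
Your proposal is correct and follows essentially the same route as the paper, which indeed treats Theorem \ref{thfixpointNashl1} as an automatic transcription of the proof of Theorem \ref{thfixpointNashl} with the single control $b^*$ replaced throughout by the tuple $(b_1^*,\dots,b_K^*)$: the no-regret inequalities for the small players give the $2\hat R/N$-approximate equalities of the payoffs, continuity of the assumed branch of Nash equilibria and of $R$ lets you pass to the limit for (i), and the $1/N$-rational approximation gives (ii). Your closing remark that the existence and continuity of the branch with limit \eqref{eqNashlimpr} is now a standing hypothesis rather than a consequence of an argmax is exactly the point the paper also makes by assuming it explicitly.
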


Of course, the set of 'almost equilibria' $\Om$ may be empty or contain many points.
Thus one can naturally pose here the analog of the question, which is well discussed
in the literature on the standard evolutionary dynamics (see \cite{BinSam} and references therein),
namely which equilibria can be chosen  in the long run (the analogs of stochastically
stable equilibria in the sense of \cite{FoYou}) if
small mutations are included in the evolution of the Markov approximation.

A distinguished class of rest points of dynamics \eqref{eqpresreslimevolrep} (and the related Nash equilibria)
represent {\it stable rest points} (or stable sets of rest points), characterized by the property that all points
 starting motion in a neighborhood of such point (or set of points) remain in this neighborhood forever.
The analysis of stable rest points in the key class of examples will be preformed in Section \ref{stableanalin}.
It will be shown there that such stability implies certain 'long term stability', usually for times $t$ of order $N$,
for the approximating dynamics of $N$ players, see Theorem \ref{thstableequiMar}. This theorem is formulated for a class
of examples, but is very general in its nature. In many cases one gets much better results showing that the
equilibrium probabilities for the chains of $N$ players have supports in the $1/N$-neighborhood of the set of
stable rest points  of dynamics \eqref{eqpresreslimevolrep}.

\section{Main class of examples: inspection, corruption, security}

All models of inspection, corruption, counterterrorist measures and cyber-security
of Chapter \ref{chapintkolMaCor} fall in the general class of pressure and resistance games,
where payoffs to small players have the following structure:
\begin{equation}
\label{eqmainexpresres}
R_j(x,b)= w_j-p(b)f_j,
\end{equation}
where $w_j>0$ are profits (or winnings) resulting in applying $j$th strategy and $f_j>0$
are fines that have to be paid with some probabilities $p(b)$ depending on, and increasing
with, the efforts  of the principal measured by the budget parameter $b$.
By ordering the strategies of small players one can assume that
\begin{equation}
\label{eqmainexpresres0}
w_1<w_2<\cdots < w_d, \quad f_1<f_2<\cdots < f_d,
\end{equation}
the latter inequalities expressing the natural assumption that fines (risks) increase when
 one attempts to get higher profits. With the principal having essentially opposite interests
to the interest of the pool of small players, the payoff of the principal can be often expressed
 as the weighted average loss of small players with the budget used subtracted:
\begin{equation}
\label{eqmainexpresres1}
B(x,b)= -b + \ka \sum_j x_j (p(b)f_j-w_j)=-b +\ka (p(b)\bar f -\bar w),
\end{equation}
where $\ka$ is a constant and bars denote the averaging with respect to the distribution $x$.

\begin{remark} Of course, one can think of more general situations with $R_j= w_j-p_j(b)f_j$,
with probabilities depending on $j$ or with $R_j= w_j-p(b_j)f_j$ with different budgets used for
dealing with different strategies.
\end{remark}

Since $p$ is increasing, $p'(b)>0$. Moreover, interpreting $p$ as the probability of finding
certain hidden behavior of small players, it is natural to assume that search can never be perfect,
that is $p(b)\neq 1$, and thus $p:[0,\infty)\to [0,1)$. Assuming that $p$ is fast increasing for
small $b$ and this growth decreases with the increase of $b$ (the analog assumption to the law of
diminishing return in economics) leads to the conditions that $p'(0)=\infty$ and $p(b)$ is a concave
function. Summarising, the property of $p:(0,\infty)\to (0,1)$ that can be naturally assumed for a
rough qualitative analysis are as follows: $p$ is a smooth monotone bijection such
\begin{equation}
\label{eqmainexpresres2}
p''(b)<0 \,\, \text{for all} \,\,  b, \quad p'(0)=+\infty, \,\, p'(\infty)=0.
\end{equation}
Under these assumptions the function $B(x,b)$ from \eqref{eqmainexpresres1} is concave as a function of $b$,
its derivatives monotonically decreases from $\infty$ to $-1$ and hence there exists a unique point of
maximum $b^*=b^*(x)$ such that $-1+\ka \bar f p'(b^*)=0$, so that
\begin{equation}
\label{eqmainexpresres3}
b^*(x)=(p')^{-1}(1/\ka \bar f(x)),
\end{equation}
where $(p')^{-1}: (0,\infty)\to (0,\infty)$ is the (monotonically decreasing) inverse function to $p'$.
Moreover, $b^*(x)$ depends on $x$ only via the average $\bar f=\sum_j x_jf_j$, so that
\begin{equation}
\label{eqmainexpresres4}
b^*(x)=\hat b(\bar f(x)), \quad \hat b(\bar f)=(p')^{-1}(1/\ka \bar f(x)).
 \end{equation}
Clearly the range of possible $b^*(x)=\hat b(\bar f)$ is the interval
\begin{equation}
\label{eqmainexpresres5}
b^*\in [(p')^{-1}(1/\ka f_1),(p')^{-1}(1/\ka f_d)].
\end{equation}

Thus the controlled dynamics \eqref{eqpresreslimevolrep} takes the form
\begin{equation}
\label{eqmainexpresres6}
\dot x_j=x_j [w_j-p(b^*(x))f_j-(\bar w-p(b^*(x))\bar f)], \quad j=1,...,d.
\end{equation}

The rest point of this dynamics can be explicitly calculated and their stability analysed
for general classes of the dependence of fines $f_j$ on the profits $w_j$.
The simplest possibility is the {\it proportional fines} $f_j=\la w_j$ with a constant $\la>0$.

\begin{remark}
Such fines are used in the Russian legislation for punishment arising from tax evasion,
that is, in the context of inspection games.
\end{remark}

The next result classifying the rest points for proportional fines is straightforward.
\begin{prop}
\label{proppropfinerest}
For the case of proportional fines dynamics \eqref{eqmainexpresres6}
turns to the dynamics
\begin{equation}
\label{eqmainexpresres7}
\dot x_j=x_j (w_j-\bar w)(1- p(b^*(x))\la ), \quad j=1,...,d.
\end{equation}
The rest points of this dynamics
are the vertices of the simplex $\Si_d$ and, if
\begin{equation}
\label{eqmainexpresres8}
\frac{1}{\la} \in p [(p')^{-1}(1/\ka f_1),(p')^{-1}(1/\ka f_d)],
\end{equation}
then all points of the hyperplane defined uniquely by the equation $\bar f(x)=f^*$ with
\begin{equation}
\label{eqmainexpresres9}
p(\hat b(f^*))=1/\la,
\end{equation}
also represent rest points.
\end{prop}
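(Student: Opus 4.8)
The plan is to bring \eqref{eqmainexpresres6} into the explicit factored form \eqref{eqmainexpresres7} and then read off its zero set factor by factor. Substituting $f_j=\la w_j$, so that $\bar f=\la\bar w$, the bracket in \eqref{eqmainexpresres6} becomes
\[
(w_j-\la p(b^*(x))w_j)-(\bar w-\la p(b^*(x))\bar w)=(w_j-\bar w)(1-\la p(b^*(x))),
\]
which is exactly \eqref{eqmainexpresres7}. Consequently $x\in\Si_d$ is a rest point if and only if
\[
x_j\,(w_j-\bar w)\,(1-\la p(b^*(x)))=0\qquad\text{for every }j=1,\dots,d,
\]
and the product structure makes it natural to split into the cases $1-\la p(b^*(x))\neq0$ and $1-\la p(b^*(x))=0$.

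In the first case the last factor cannot absorb the vanishing, so one needs $x_j(w_j-\bar w)=0$ for all $j$. For every $j$ with $x_j>0$ this forces $w_j=\bar w$; but $\bar w=\sum_k x_k w_k$ is a convex combination of the $w_k$ with $x_k>0$, and by \eqref{eqmainexpresres0} these values are pairwise distinct, so $\{k:x_k>0\}$ can only be a singleton. Hence the rest points with $1-\la p(b^*(x))\neq0$ lie among the vertices $e_1,\dots,e_d$ of $\Si_d$. Conversely, I would check that each vertex $e_i$ is a rest point of \eqref{eqmainexpresres7} unconditionally: at $x=e_i$ one has $\bar w=w_i$, so $(w_j-\bar w)$ vanishes for $j=i$ while $x_j=0$ for all $j\neq i$.

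For the second case I would use the monotonicity of $\bar f\mapsto p(\hat b(\bar f))$. By \eqref{eqmainexpresres4}, $b^*(x)=\hat b(\bar f(x))$ with $\hat b(\bar f)=(p')^{-1}(1/\ka\bar f)$, and by \eqref{eqmainexpresres2} the map $(p')^{-1}:(0,\infty)\to(0,\infty)$ is a strictly decreasing bijection while $p$ is a strictly increasing bijection; since $\bar f\mapsto 1/\ka\bar f$ is strictly decreasing, $\hat b$ is strictly increasing, and hence so is $\bar f\mapsto p(\hat b(\bar f))$. Over $\Si_d$ the average $\bar f(x)=\sum_j x_j f_j$ takes exactly the values in $[f_1,f_d]$, so $b^*(x)$ takes exactly the values in the interval \eqref{eqmainexpresres5} and $p(b^*(x))$ the values in $p[(p')^{-1}(1/\ka f_1),(p')^{-1}(1/\ka f_d)]$. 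Therefore, if \eqref{eqmainexpresres8} holds, strict monotonicity yields a unique $f^*\in[f_1,f_d]$ with $p(\hat b(f^*))=1/\la$, i.e.\ the unique solution of \eqref{eqmainexpresres9}, and the equation $1-\la p(b^*(x))=0$ is equivalent to $\bar f(x)=f^*$; thus every point of the (uniquely determined) hyperplane $\{x\in\Si_d:\bar f(x)=f^*\}$ is a rest point, and together with the vertices these exhaust all rest points. If \eqref{eqmainexpresres8} fails, the second case is empty and only the vertices survive.

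The whole argument is essentially bookkeeping once \eqref{eqmainexpresres7} is in hand; the only point requiring a little care is assembling the chain of monotonicities to conclude that $\bar f\mapsto p(\hat b(\bar f))$ is a strictly increasing bijection of $[f_1,f_d]$ onto its image, which is what makes $f^*$ in \eqref{eqmainexpresres9} unique and singles out a single hyperplane of rest points. I do not anticipate a genuine obstacle.
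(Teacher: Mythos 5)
Your proof is correct and is exactly the routine verification the paper has in mind — the paper states this result without proof, calling it "straightforward," and your factorization of the bracket plus the case split on $1-\la p(b^*(x))$ together with the monotonicity chain for $\bar f\mapsto p(\hat b(\bar f))$ is the intended argument. No gaps.
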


\section{Optimal allocation}
\label{secoptimalocandgroup}

So far our small players were indistinguishable. However, in many cases
the small players can belong to different types. These can be inspectees
with various income brackets, the levels of danger or overflow of particular
traffic path, or the classes of computers susceptible to infection.
In this situation the problem for the principal becomes a policy problem,
that is, how to allocate efficiently her limited resources. Our theory
extends to a setting with various types more-or-less straightforwardly.
We shall touch it briefly.

The models of investment policies of Section \ref{projectselectdef} also
 belong to this class of problems.

Let our players, apart from being distinguished by states $i\in \{1,\cdots, d\}$,
can be also classified by their types or classes $\al \in \{1, \cdots, \AC \}$.
The state space of the group becomes $\Z^d_+\times \Z^{\AC}_+$, the set of matrices
$n=(n_{i\al})$, where $n_{i\al}$ is the number of players of type $\al$ in the state
$i$ (for simplicity of notation we identify the state spaces of each type, which is
not at all necessary). One can imagine several scenarios of communications between
classes, two extreme cases being as follows:

(C1) No-communication: the players of different classes can neither communicate
nor observe the distribution of states in other classes, so that the interaction
between types arises exclusively through the principal;

(C2) Full communication: the players can change both their types and states
via pairwise exchange of information, and can observe the total distribution
of types and states.

There are lots of intermediate cases, say, when types form a graph (or a network)
with edges specifying the possible channels of information. Let us deal here only
with cases (C1) and (C2). Starting with (C1), let $N_{\al}$ denote the number of
players in class $\al$ and $n_{\al}$ the vector $\{n_{i\al}\}, i=1, \cdots ,d$.
Let $x_{\al}=n_{\al}/N_{\al}$,
\[
x=(x_{i\al})= (n_{i\al}/N_{\al})\in (\Si_d) ^{\AC},
\]
and $b=(b_1, \cdots, b_{\AC})$ be the vector of the allocation of resources of the
principal, which may depend on $x$. Assuming that the principal uses the optimal policy
\begin{equation}
\label{eqdefgenmeanfcl0}
b^*(x)=argmax \, B(x,b)
\end{equation}
arising from some concave (in the second variable) payoff function $B$ on
$(\Si_d)^{\AC}\times \R^{\AC}$, the generator of the controlled Markov process
becomes

\[
L_{b*,N}f (x)=\sum_{\al=1}^{\AC}N_{\al}\ka_{\al} \sum_{i,j} x_{i\al} x_{j\al}
\]
\begin{equation}
\label{eqdefgenmeanfcl1}
\times
[R_j^{\al}(x_{\al},b^*(x))-R_i^{\al}(x_{\al},b^*(x))]^+
[f(x-e_i^{\al}/N_{\al}+e_j^{\al}/N_{\al})-f(x)],
\end{equation}
where $e_i^{\al}$ is now the standard basis in $\R^d\times \R^{\AC}$.
Passing to the limit as $N\to \infty$ under the assumption that
\[
\lim_{N\to \infty} N_{\al}/N=\om_{\al}
\]
with some constants $\om_{\al}$ we obtain a generalization of
\eqref{eqpresreslimevolrep} in the form
\begin{equation}
\label{eqdefgenmeanfcl2}
\dot x_{j\al}=\ka_{\al}\om_{\al} \sum_i x_{i\al} x_{j\al}
[R^{\al}_j(x_{\al},b^*(x))-R^{\al}_i(x_{\al},b^*(x))],
\end{equation}
for $j=1,...,d$ and $\al=1, \cdots , \AC$,
coupled with \eqref{eqdefgenmeanfcl0}.

In case (C2), $x=(x_{i\al})\in \Si_{d\al}$, the generator becomes
\[
L_{b*,N}f (x)=\sum_{\al, \be=1}^{\AC}N\ka \sum_{i,j} x_{i\be} x_{j\al}
\]
\begin{equation}
\label{eqdefgenmeanfcl1a}
\times
[R_j^{\al}(x,b^*(x))-R_i^{\be}(x,b^*(x))]^+[f(x-e_i^{\be}/N_{\al}+e_j^{\al}/N_{\al})-f(x)],
\end{equation}
and the limiting system of differential equations
\begin{equation}
\label{eqdefgenmeanfcl3}
\dot x_{j\al}=\ka \sum_{i, \be} x_{i\be} x_{j\al}[R^{\al}_j(x,b^*(x))-R^{\be}_i(x,b^*(x))].
\end{equation}

\section{Stability of rest points and its consequences}
\label{stableanalin}

As was noted above, an important class of rest points represent stable points.

Let us show that the hyperplane of fixed points \eqref{eqmainexpresres9} is
{\it stable} under dynamics  \eqref{eqmainexpresres7}, that is, if the dynamics
starts in a sufficiently small neighborhood of this hyperplane, then it would remain
there forever. We shall do it by the method of Lyapunov functions showing that the function
\[
V(x)=(1-\la p(b^*(x)))^2
\]
is a {\it Lyapunov function}\index{Lyapunov function} for dynamics \eqref{eqmainexpresres7} meaning
that $V(x)=0$ only on plane \eqref{eqmainexpresres9} and $\dot V(x)\le 0$
everywhere whenever $x$ evolves according to \eqref{eqmainexpresres7}.

\begin{prop}
\label{propstableequi}
If $x$ evolves according to \eqref{eqmainexpresres7}, then $\dot V(x)< 0$
for all $x$ that are not vertices of $\Si_d$ and do not belong to plane \eqref{eqmainexpresres9}
implying that the neighborhoods $\{x:V(x)< v\}$ are invariant under \eqref{eqmainexpresres7} for any $v$.
\end{prop}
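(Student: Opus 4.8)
The plan is to exploit that, with proportional fines, $V$ depends on the state only through the scalar $\bar f(x)=\sum_j x_j f_j=\la\bar w(x)$, to reduce $\dot V$ along trajectories of \eqref{eqmainexpresres7} to a product of factors of definite sign, and then to close with the standard Lyapunov invariance argument. First I would note that, by \eqref{eqmainexpresres4}, $b^*(x)=\hat b(\bar f(x))$, so $V(x)=(1-\la p(\hat b(\bar f(x))))^2=:h(\bar f(x))$ with $h$ smooth on the compact range of $\bar f$ over $\Si_d$ (smoothness of $\hat b$ following from the inverse function theorem applied to $p'$, which has $p''<0$, so that solutions of \eqref{eqmainexpresres7} are classical and the chain rule applies). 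Differentiating along a solution of \eqref{eqmainexpresres7} gives $\dot V=h'(\bar f)\,\dot{\bar f}$ with $\dot{\bar f}=\sum_j f_j\dot x_j$; pulling the common factor $(1-\la p(b^*))$ out of \eqref{eqmainexpresres7} and using $f_j=\la w_j$ yields
\[
\dot{\bar f}=\la(1-\la p(b^*))\sum_j x_j w_j(w_j-\bar w)=\la(1-\la p(b^*))\,\mathrm{Var}_x(w),
\]
where $\mathrm{Var}_x(w)=\sum_j x_j w_j^2-\bigl(\sum_j x_j w_j\bigr)^2\ge0$ is the variance of $w$ under the probability vector $x$.

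Next I would compute $h'(\bar f)=-2\la(1-\la p(b^*))\,p'(b^*)\hat b'(\bar f)$ and verify $p'(b^*)\hat b'(\bar f)>0$: the relation $p'(\hat b(\bar f))=1/(\ka\bar f)$ defining $\hat b$ gives $p'(b^*)=1/(\ka\bar f)>0$ directly, and differentiating it in $\bar f$ gives $\hat b'(\bar f)=-1/(\ka\bar f^2 p''(b^*))>0$ because $p''<0$ by \eqref{eqmainexpresres2}. Combining,
\[
\dot V=-2\la^2(1-\la p(b^*))^2\,p'(b^*)\hat b'(\bar f)\,\mathrm{Var}_x(w)\le0,
\]
with strict inequality unless $1-\la p(b^*)=0$ (equivalently $\bar f(x)=f^*$, which is precisely plane \eqref{eqmainexpresres9}, by injectivity of $p\circ\hat b$) or $\mathrm{Var}_x(w)=0$. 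Since the $w_j$ are pairwise distinct by \eqref{eqmainexpresres0}, $\mathrm{Var}_x(w)=0$ forces $x$ to be concentrated on a single state, i.e. a vertex of $\Si_d$. This gives $\dot V(x)<0$ for every $x$ that is neither a vertex nor on plane \eqref{eqmainexpresres9}, which is the first assertion.

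Finally, for invariance of $\{x:V(x)<v\}$ I would observe that $\dot V\le0$ holds at every point of $\Si_d$, including the two exceptional sets: at a vertex the right-hand side of \eqref{eqmainexpresres7} vanishes, so $V$ is constant along that stationary trajectory, and on plane \eqref{eqmainexpresres9} the factor $(1-\la p(b^*))^2$ vanishes. Hence $t\mapsto V(X_x(t))$ is non-increasing, so $V(x)<v$ implies $V(X_x(t))\le V(x)<v$ for all $t\ge0$, which is the claimed invariance. I do not anticipate a genuine obstacle; the only points needing care are the sign bookkeeping $p'(b^*)\hat b'(\bar f)>0$ (resting on $p''<0$) and the remark that $\mathrm{Var}_x(w)$ degenerates only at the vertices (resting on the strict ordering $w_1<\cdots<w_d$). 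One may add, via the Remark following \eqref{eqdefgenmeanfieldchainkineq1}, that an orbit starting off the vertices never reaches one, so the strict decrease of $V$ in fact persists along the whole orbit as long as it avoids plane \eqref{eqmainexpresres9}.
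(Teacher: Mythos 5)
Your proof is correct and follows essentially the same route as the paper: compute $\dot V$ by the chain rule along \eqref{eqmainexpresres7}, factor out $(1-\la p(b^*))^2\,p'(\hat b(\bar f))\,\hat b'(\bar f)$, and recognize the remaining sum as $\la$ times the variance of $w$ under $x$, which vanishes only at vertices since $w_1<\cdots<w_d$. Your explicit sign check $p'(b^*)\hat b'(\bar f)>0$ via $p''<0$ and the closing monotonicity argument for invariance are details the paper leaves implicit, but the argument is the same.
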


\begin{proof}
We have
\[
\frac{d}{dt} V(x)= \frac{\pa V}{\pa x} \dot x
=-2\la (1-\la p(b^*(x)))^2 p'(\hat b(\bar f))\hat b'(\bar f)\sum_j  x_j f_j (w_j-\bar w)
\]
\[
=-2\la^2 (1-\la p(b^*(x)))^2 p'(\hat b(\bar f))\hat b'(\bar f)(\sum_j x_jw_j^2-\bar w^2)
\]
\[
=-2\la^2 V(x) p'(\hat b(\bar f))\hat b'(\bar f)(\sum_j x_jw_j^2-\bar w^2)<0,
\]
since $\sum_j x_jw_j^2-\bar w^2$ is the variance of the random variable $w$
taking values $w_j$ with probabilities $x_j$, which is always non-negative.
\end{proof}

The statement of this proposition can be essentially improved yielding full portrait
of dynamics \eqref{eqmainexpresres7}.

\begin{theorem}
\label{thstableequi}
(i) If the plane of rest points \eqref{eqmainexpresres9} exists, that is condition
\eqref{eqmainexpresres8} holds, then this plane is the global attractor for dynamics \eqref{eqmainexpresres7}
outside vertices of $\Si_d$: for any $x$, which is not a vertex of $\Si_d$, $X_x(t)$ approaches this plane as $t\to \infty$.
Moreover, the simplex $\Si_d$ is decomposed in two invariant sets with $1-p(b^*(x))\la>0$ and $1-p(b^*(x))\la<0$.

(ii) If
\[
1/\la \ge p[(p')^{-1}(1/\ka f_d)],
\]
then the pure strategy of maximal activity $j=d$ is the global attractor: for any $x$, which is not a vertex of $\Si_d$,
$X_x(t)$ approaches the point $(0,\cdots, 0,1)$, as $t\to \infty$.

(iii) If
\[
1/\la \le p[(p')^{-1}(1/\ka f_1)],
\]
then the pure strategy of minimal activity $j=1$ is the global attractor: for any $x$, which is not a vertex of $\Si_d$,
$X_x(t)$ approaches the point $(1,0,\cdots, 0)$, as $t\to \infty$.
\end{theorem}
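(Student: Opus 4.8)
The plan is to collapse the $d$-dimensional system \eqref{eqmainexpresres7} to a single scalar ODE for the weighted mean $\bar w(t)=\sum_j x_j(t)w_j$ and then read all three statements off the monotonicity structure of that scalar equation, using the Lyapunov function of Proposition \ref{propstableequi} to pin down the limit sets. First I would differentiate $\bar w$ along \eqref{eqmainexpresres7}. Since $f_j=\la w_j$ and $\bar f=\la\bar w$, this gives
\[
\dot{\bar w}=(1-\la p(b^*(x)))\sum_j x_jw_j(w_j-\bar w)=(1-\la p(b^*(x)))\Big(\sum_j x_jw_j^2-\bar w^2\Big),
\]
and the bracket is the variance of $w$ under $x$, hence $\ge 0$, with equality exactly at the vertices of $\Si_d$ (the $w_j$ being distinct). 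Moreover $1-\la p(b^*(x))=\phi(\bar w(x))$ with $\phi(u)=1-\la p(\hat b(\la u))$, and because $p$ is increasing with $p''<0$ the inverse $(p')^{-1}$ is strictly decreasing, so $\hat b$ is strictly increasing and $\phi$ is strictly decreasing on $[w_1,w_d]$.

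Consequently, along any trajectory the sign of $\dot{\bar w}$ agrees with that of $\phi(\bar w)$, so $\bar w(t)$ is monotone, stays in $[w_1,w_d]$, and converges; and $\bar w$ cannot cross a zero of $\phi$ in finite time, since by Proposition \ref{proppropfinerest} the level set $\{\phi(\bar w)=0\}$ consists of rest points, which uniqueness of solutions forbids a non-constant trajectory from reaching. This already proves the "moreover" clause of (i): the sets $\{x:1-\la p(b^*(x))>0\}$ and $\{x:1-\la p(b^*(x))<0\}$ are invariant. Also, condition \eqref{eqmainexpresres8} is precisely the requirement $\phi(w_1)\ge0\ge\phi(w_d)$, which by strict monotonicity yields a unique $w^*\in[w_1,w_d]$ with $\phi(w^*)=0$, and $\{\bar w=w^*\}$ is the hyperplane \eqref{eqmainexpresres9}.

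To identify the actual limit I would invoke LaSalle's invariance principle with $V(x)=\phi(\bar w(x))^2=(1-\la p(b^*(x)))^2$: by Proposition \ref{propstableequi}, $\dot V\le 0$ on the compact invariant set $\Si_d$, with $\dot V=0$ only at vertices or where $\phi(\bar w)=0$, so every $\omega$-limit set is a connected invariant subset of $\{\text{vertices}\}\cup\{\bar w=w^*\}$, hence either a single vertex or a subset of the hyperplane. For an interior initial point in case (i) the monotone $\bar w(t)$ stays strictly on one side of $w^*$, say $\bar w(t)<w^*$; then no vertex $e_j$ with $w_j>w^*$ is a limit point, while for a vertex $e_j$ with $w_j<w^*$ and any $k$ with $w_k>w_j$ one has $\dot x_k=x_k(w_k-\bar w)\phi(\bar w)>0$ near $e_j$ (there $\bar w\approx w_j$ and $\phi(w_j)>0$), so $x_k$, which stays positive for all $t$ because $\dot x_k=x_k\cdot(\cdots)$ in \eqref{eqmainexpresres7}, cannot tend to $0$; hence the $\omega$-limit set lies in $\{\bar w=w^*\}$, i.e.\ $X_x(t)$ approaches that plane. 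In case (ii), $\phi(w_d)\ge0$ forces $\phi(\bar w)\ge0$ everywhere and $\phi(w_j)>0$ for $j<d$, so the same computation with $k=d$ shows no $e_j$ with $j<d$ can attract a trajectory with $x_d(0)>0$; the $\omega$-limit set then reduces to $e_d$, giving $X_x(t)\to(0,\dots,0,1)$. Case (iii) is the mirror image, using $\phi(w_1)\le0$ and monitoring $x_1$ near vertices $e_j$ with $j>1$.

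The main obstacle — and the only place real care is needed — is the boundary behaviour that the clean statements tacitly suppress. On a proper face $\{x_k=0:k\notin S\}$ the system is the same one with index set $S$, and there the extreme vertex of the face can be attracting even in case (ii) or (iii); so the vertex-exclusion step genuinely requires the relevant coordinate to be initially positive, and a fully precise version of (i)--(iii) should be phrased in terms of $\mathrm{supp}(x(0))$ (for an interior starting point all these provisos hold automatically). The remaining ingredients — the scalar identity for $\dot{\bar w}$, the monotonicity of $\phi$, the estimate $\dot x_k>0$ near the "wrong" vertices, and the appeal to LaSalle — are routine.
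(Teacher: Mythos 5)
Your proof is correct and rests on the same engine as the paper's: the Lyapunov function $V(x)=(1-\la p(b^*(x)))^2$ of Proposition \ref{propstableequi}, with your scalar reduction to $\dot{\bar w}=\phi(\bar w)\,\mathrm{Var}_x(w)$ and the explicit exclusion of vertices from the $\omega$-limit set (via $\dot x_k\ge \de x_k>0$ near a ``wrong'' vertex) supplying precisely the details the paper compresses into ``a straightforward argument by contradiction.'' Your boundary caveat is also well taken: as literally stated the theorem fails for starting points on proper faces whose support misses the relevant extreme index (e.g.\ in case (i) a face with $\max_{j\in\mathrm{supp}(x)}w_j<w^*$ is attracted to a vertex, not to the plane), so the clean statements should be read for interior $x$ or reformulated in terms of $\mathrm{supp}(x(0))$.
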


\begin{proof}
(i) Since for any $x$, which is not a vertex of $\Si_d$, $X_x(t)$ can also never become a vertex,
$(d/dt) V(X_x(t))<0$ on the whole trajectory. Hence there exists a limit of $V(X_x(t))$,
as $t\to \infty$. A straightforward argument by contradiction
shows that this limit should be zero.

(ii) In this case $1-p(b^*(x))\la>0$ in the whole $\Si_d$
and the point of minimum of $V$ in $\Si_d$ is $(0,\cdots, 0,1)$.

(iii) In this case $1-p(b^*(x))\la<0$ in the whole $\Si_d$
and the point of minimum of $V$ in $\Si_d$ is $(1,0,\cdots, 0)$.
\end{proof}

Let us describe a consequence of stability of hyperplane \eqref{eqmainexpresres9}
to the behavior of the dynamics $U_N^t$ of a finite number of players.

\begin{theorem}
\label{thstableequiMar}
If $V(x)\le v$, with high probability the points $X^N_x(t)$ stay in the neighborhood $\{V(y)\le rv\}$
for large $r$ and times $t$ much less than $N$. Namely,
\begin{equation}
\label{eqthstableequiMar}
\P (V(X^N_x(t))>rv)\le \frac{1}{r}\left(1 +\frac{\ep}{v} \| V^{(2)}\| \, \|Q\|\right)
\end{equation}
for $t\le \ep N$.
\end{theorem}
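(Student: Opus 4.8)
The plan is to combine the Lyapunov inequality already available for the limiting dynamics with the generator comparison \eqref{eqdefgenmeanfieldchainlima} and a standard nonnegative supermartingale estimate; this is a Foster--Lyapunov-type argument. Write $\La$ for the generator of the limiting dynamics \eqref{eqmainexpresres7} and $L^N$ for the generator of the controlled $N$-particle chain $X^N_x(\cdot)$ on $\Z^d_+/N\cap\Si_d$ (recall that for this class of examples $b^*(x)$ does not depend on $N$ by \eqref{eqmainexpresres3}, so no $N$-dependent correction of the control is involved). I would first record three properties of $V(x)=(1-\la p(b^*(x)))^2$: it is nonnegative; it belongs to $C^2(\Si_d)$, since $p$ and $\hat b$ are smooth, so that $\|V^{(2)}\|<\infty$; and $\La V(x)\le 0$ for every $x\in\Si_d$. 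The last property is exactly Proposition \ref{propstableequi} for $x$ away from the vertices of $\Si_d$ and from plane \eqref{eqmainexpresres9}, while at those remaining points the computation in the proof of that proposition gives $\dot V=\La V=0$ (the prefactor there being either $V(x)$ or the variance $\sum_j x_jw_j^2-\bar w^2$, one of which vanishes at such points).

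Next, applying \eqref{eqdefgenmeanfieldchainlima} with $f=V$ gives $L^NV(x)\le\La V(x)+\frac1N\|V^{(2)}\|\,\|Q\|\le\frac cN$ for all $x\in\Z^d_+/N\cap\Si_d$, where $c:=\|V^{(2)}\|\,\|Q\|$. By Dynkin's formula the process $V(X^N_x(t))-\int_0^tL^NV(X^N_x(s))\,ds$ is a martingale (the chain is finite-state and $V$ is bounded), so from $L^NV\le c/N$ pointwise the shifted process
\[
\tilde W_t:=V(X^N_x(t))-\frac{ct}{N}+c\ep
\]
is a supermartingale on $[0,\ep N]$; moreover it is nonnegative there, since $V\ge 0$ and $ct/N\le c\ep$ for $t\le\ep N$, and this same inequality shows $V(X^N_x(t))\le\tilde W_t$ on that interval. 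Its initial value satisfies $\E\tilde W_0=V(x)+c\ep\le v+c\ep$.

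Finally, Doob's maximal inequality for nonnegative supermartingales yields
\[
\P\Big(\sup_{0\le t\le\ep N}\tilde W_t\ge rv\Big)\le\frac{\E\tilde W_0}{rv}\le\frac1r\Big(1+\frac{\ep}{v}\|V^{(2)}\|\,\|Q\|\Big),
\]
and since $\{V(X^N_x(t))>rv\}\subset\{\tilde W_t>rv\}\subset\{\sup_{s\le\ep N}\tilde W_s\ge rv\}$ for each fixed $t\le\ep N$, this proves \eqref{eqthstableequiMar}, and in fact gives the stronger statement uniform in $t\in[0,\ep N]$. There is no serious obstacle here: the only points that require attention are that $\La V\le 0$ genuinely extends up to the boundary of $\Si_d$ (handled above via the vanishing prefactors) and that the generator estimate \eqref{eqdefgenmeanfieldchainlima} is applied only at lattice points $n/N\in\Si_d$, where it holds as stated.
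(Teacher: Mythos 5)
Your proposal is correct, and its core is identical to the paper's argument: the generator comparison \eqref{eqdefgenmeanfieldchainlima} applied to $V$, combined with $\La V\le 0$ from Proposition \ref{propstableequi} (including your careful observation that the inequality extends to the vertices and to plane \eqref{eqmainexpresres9} because one of the two factors $V(x)$ or the variance vanishes there), gives $L^NV\le \|V^{(2)}\|\,\|Q\|/N$ pointwise on the lattice. Where you diverge is the final step. The paper simply integrates this bound to get $\E V(X^N_x(t))\le v+\ep\|V^{(2)}\|\,\|Q\|$ and applies Markov's inequality at the fixed time $t$, which is all that \eqref{eqthstableequiMar} literally requires. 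You instead package the drift bound into the nonnegative supermartingale $\tilde W_t=V(X^N_x(t))-ct/N+c\ep$ and invoke Doob's maximal inequality, which yields the same constant but controls $\sup_{t\le\ep N}V(X^N_x(t))$ rather than a single marginal. This buys a genuinely stronger conclusion that better matches the informal phrasing ``the points stay in the neighborhood'' (the paper's fixed-$t$ bound does not by itself preclude excursions between observation times), at the modest cost of invoking Dynkin's formula and the supermartingale maximal inequality instead of a one-line expectation computation. Both routes are sound; yours is the one to use if the uniform-in-time statement is actually wanted.
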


\begin{proof}
By \eqref{eqdefgenmeanfieldchainlima},
\[
 \|L^N_tV -\La_t V\|\le \frac{1}{N} \| V^{(2)}\| \, \|Q\|.
\]
Hence, by Proposition \ref{propstableequi},
\[
L^N_tV(x)\le \frac{1}{N} \| V^{(2)}\| \, \|Q\|
\]
for all $x$. Consequently,
\[
\frac{d}{dt}U_N^tV(x)=L_N^tV(x)\le \frac{1}{N} \| V^{(2)}\| \, \|Q\|
\]
for all $x$ and therefore
\[
U_N^tV(x)\le V(x)+\frac{t}{N} \| V^{(2)}\| \, \|Q\|.
\]
Consequently, if $t\le \ep N$ for some small $\ep$ and $V(x)\le v$, then
\[
\E V(X^N_x(t))\le v+\ep \| V^{(2)}\| \, \|Q\|,
\]
implying \eqref{eqthstableequiMar} by Markov's inequality.
\end{proof}

This theorem is very rough and has almost straightforward extension to general stable rest points of dynamics
\eqref{eqkineqprinrep}. For the particular case of  dynamics \eqref{eqmainexpresres7} the full description
of large time behavior of the Markov chains $X^N_x(t)$ is available.

\begin{theorem}
\label{thstableequiMarprfi}
(i) Under condition of Theorem \ref{thstableequi} (ii) or (iii) the points $(0,\cdots, 0,1)$ or $(1,0,\cdots, 0)$, respectively,
are the absorbing points for Markov chains $X^N_x(t)$ for any $N$: starting from any profile of strategies, the trajectory
almost surely reaches this point in finite time and remains there forever.
(ii) Under condition of Theorem \ref{thstableequi} (i), the $2/N$-neighborhood of plane \eqref{eqmainexpresres9}
is absorbing for the Markov chains $X^N_x(t)$ for any $N$: starting from any profile of strategies, the trajectory
almost surely enters this neighborhood in a finite time and remains there forever. It follows, in particular, that
these Markov chains have stationary distributions supported on the states belonging to the $2/N$-neighborhoods
of plane \eqref{eqmainexpresres9}.
\end{theorem}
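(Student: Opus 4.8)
The plan is to read everything off the special structure that proportional fines, $f_j=\la w_j$, give to the jump rates of the chain $X^N_x(t)$ generated by \eqref{eqdefgenmeanfieldchainnorm1aa}. Since then $R_j(x,b^*(x))-R_i(x,b^*(x))=(w_j-w_i)(1-\la p(b^*(x)))$, the rate of the transition $n\to n^{ij}$, with $x=n/N$, equals $\ka n_in_j[(w_j-w_i)(1-\la p(b^*(x)))]^+/N$, which is strictly positive exactly when $n_i\ge1$, $n_j\ge1$, and the factors $w_j-w_i$ and $1-\la p(b^*(x))$ share a sign. Because $w$ is strictly increasing and, by \eqref{eqmainexpresres4}, $p(b^*(x))=p(\hat b(\bar f(x)))$ is strictly increasing in $\bar f(x)=\sum_jx_jf_j$, the sign of $1-\la p(b^*(x))$ is the sign of $f^*-\bar f(x)$, with $f^*$ as in \eqref{eqmainexpresres9}. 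Hence, under the hypothesis of Theorem \ref{thstableequi}(ii) only the ``up'' transitions $n\to n^{ij}$ with $j>i$ have positive rate at states $x\ne e_d$ (and $e_d$ is itself a vertex, hence absorbing); under that of (iii) only ``down'' transitions $j<i$; and under that of (i), up transitions when $\bar f(x)<f^*$ and down transitions when $\bar f(x)>f^*$. In each case a jump moves $\bar f(X^N(t))$ monotonically in that direction by an amount of absolute value in $[\mu_0/N,h]$, with $\mu_0:=\min_i(f_{i+1}-f_i)>0$ and $h:=(f_d-f_1)/N$.

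For part (i) I would argue as follows under condition (ii) (condition (iii) being the mirror image). Since only up transitions occur, a jump can never populate a state above the current maximum $M(t):=\max\{k:n_k(t)>0\}$ (the target must already be occupied), and state $M(t)$ never loses mass; so $M(t)\equiv M(x_0)$. Also $\bar f(X^N(t))$ strictly increases at every jump, and $\Si_d\cap\Z^d_+/N$ is finite, so only finitely many jumps are possible and the chain reaches an absorbing state in a.s. finite time. An absorbing state has $n_in_j=0$ for all $i\ne j$, hence is a vertex, and since $M(t)\equiv M(x_0)$ that vertex is $e_{M(x_0)}$. If the initial profile charges the top state $d$ --- in particular for every interior profile --- then $M(x_0)=d$ and the chain is absorbed at $(0,\dots,0,1)$; symmetrically under (iii) it is absorbed at $(1,0,\dots,0)$ when the initial profile charges state $1$. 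Vertices being absorbing, it stays there forever, which is (i).

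For part (ii), under condition (i), set $B:=\{n:|\bar f(n/N)-f^*|<h\}$; as $\bar f$ is affine on $\Si_d$ with increments bounded below by $\mu_0$ times the mass moved between extreme states, $B$ lies in an $\ell_1$-neighborhood of the plane \eqref{eqmainexpresres9} of radius $\rho/N$ for a constant $\rho$ depending only on the spacings $f_{i+1}-f_i$. First, $B$ is absorbing: from $n\in B$ with $\bar f(n/N)<f^*$ only up transitions are admissible, so one jump keeps $\bar f$ in $(\bar f(n/N),\bar f(n/N)+h)\subset(f^*-h,f^*+h)$; symmetrically for $\bar f(n/N)>f^*$; and a state with $\bar f(n/N)=f^*$ is absorbing. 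Second, the chain reaches $B$: take $\bar f(x_0)<f^*$ (the opposite case is the mirror image, $\bar f(x_0)=f^*$ being trivial). While $\bar f<f^*$ only up transitions occur, so the top $M(t)$ stays $=M(x_0)$ as in (i); provided $f_{M(x_0)}>f^*$ --- automatic for interior $x_0$, where $M(x_0)=d$ and $f^*\le f_d$ --- the support cannot collapse onto $e_{M(x_0)}$ without $\bar f$ first reaching $f_{M(x_0)}>f^*$, and it cannot collapse onto any other vertex since $M$ is pinned; so an up transition is always available, $\bar f$ grows by at least $\mu_0/N$ per jump, and after finitely many jumps $\bar f$ first attains a value $\ge f^*$, which by the one-jump bound is $<f^*+h$, i.e. the chain is in $B$. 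Finally, $B$ being absorbing and reached a.s. from such initial profiles, every state outside $B$ from which $B$ is accessible is transient, so the relevant stationary distributions are supported on $B$, i.e. within the $\rho/N$-neighborhood of \eqref{eqmainexpresres9}; note that this neighborhood may contain no lattice point lying exactly on the plane, in which case the chain is recurrent there rather than resting at a true absorbing state.

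The one genuinely delicate point --- and what I expect is the main obstacle --- is the ``no collapse onto a far-away vertex before crossing $f^*$'' step in (ii). It forces the hypothesis that the initial support straddle $f^*$, i.e. $f_{m(x_0)}<f^*<f_{M(x_0)}$ with $m(x_0)$ the support minimum (true for all interior profiles); an initial profile whose support lies entirely on one side of $f^*$ simply drains monotonically into a vertex $e_k$ sitting at $\ell_1$-distance $\Theta(1)$ from the plane and then stays there, so ``any profile'' in the statement should be read with this caveat, and the ``$2/N$'' as the $O(1/N)$ radius $\rho/N$ above. Apart from this, the whole result is immediate from the sign dichotomy for $R_j-R_i$ together with finiteness of the state space.
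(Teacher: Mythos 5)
Your proof rests on exactly the same observation as the paper's: for proportional fines the sign of $1-\la p(b^*(x))$ decides whether only the ``up'' transitions $i\to j>i$ or only the ``down'' transitions $i\to j<i$ have positive rate in the generator \eqref{eqgenprfiMa1}, and a single jump cannot cross the plane \eqref{eqmainexpresres9} from outside its $O(1/N)$-neighborhood but strictly decreases the distance to it. That is essentially the entirety of the paper's argument, so methodologically you are on the same track.

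The added value of your write-up is the bookkeeping of the support, which the paper's two-line proof skips and which you are right to insist on: since the rate of $n\to n^{ij}$ is proportional to $n_in_j$, an unoccupied strategy can never be populated, the extreme occupied state is frozen along the evolution, and therefore the literal wording of the theorem is too strong. Every vertex of $\Si_d$ is absorbing, so $(0,\dots,0,1)$ is not ``the'' absorbing point; under condition (ii) of Theorem \ref{thstableequi} an initial profile not charging state $d$ is absorbed at $e_{M(x_0)}$ rather than at $e_d$; and in part (ii) an initial profile whose support lies entirely on one side of $f^*$ drains into a vertex at distance of order one from the plane. Your qualifications (the initial profile must charge the extreme state, respectively its support must straddle $f^*$ --- both automatic for interior profiles, which is presumably what the authors had in mind) are exactly the corrections needed, and your finite-time absorption argument via the uniform lower bound $\mu_0/N$ on the increments of $\bar f$ together with the finiteness of the state space is the honest version of the paper's ``the points continue to move until they enter this neighborhood''. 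The discrepancy between your radius $\rho/N$ and the paper's $2/N$ is only a choice of how to measure the distance to the plane (a single jump has $\ell_1$-length $2/N$) and is immaterial.
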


\begin{proof}
 The generator \eqref{eqdefgenmeanfieldchainnorm1} of the Markov chain $X^N_x(t)$ for
  transitions \eqref{eqQmapresres}, \eqref{eqmainexpresres} is

\begin{equation}
\label{eqgenprfiMa}
L^N_tf (x)=N \sum_{i=1}^d \sum_{j=1}^d x_i x_j[w_j-w_i -p(b^*(x))(f_j-f_i)]^+[f(x-e_i/N+e_j/N)-f(x)], \quad x\in \Z^d_+/N,
\end{equation}
which, for the case of proportional fines, takes the form
\begin{equation}
\label{eqgenprfiMa1}
L^N_tf (x)=N \sum_{i=1}^d \sum_{j=1}^d x_i x_j[(w_j-w_i)(1 -p(b^*(x))\la)]^+[f(x-e_i/N+e_j/N)-f(x)].
\end{equation}

Therefore, when $1 -p(b^*(x))\la<0$, only transitions decreasing activity, $i\to j<i$ are allowed.
Vice versa,  when $1 -p(b^*(x))\la>0$, only transitions increasing activity, $i\to j>i$ are allowed.
This implies statement (i), as in this case the sign of $1 -p(b^*(x))\la$ is constant on the whole $\Si_d$.

In case (ii) the transitions from points on the 'upper set' and 'lower set',
\[
\Si_d^+=\{x: 1 -p(b^*(x))\la<0\},\quad \Si_d^-=\{x: 1 -p(b^*(x))\la>0\},
\]
decrease or increase the activity, respectively. As long as these points are outside $2/N$-neighborhood
of plane \eqref{eqmainexpresres9} any single transition cannot jump from the upper to the lower set or vise versa,
but it decreases the distance of the states to plane \eqref{eqmainexpresres9}. Thus the points continue to move
until they enter this neighborhood.
\end{proof}

The analysis of system \eqref{eqmainexpresres6} for non-proportional fines is given in \cite{KatKolYan}.
In cases of convex and concave dependence of fines on the profits, all fixed points turn out
to be isolated and supported by only two pure strategies.

These results lead to clear practical conclusions
showing how the manipulations with the structure of fines (often imposed by the principal)
can yield the desired distribution of the rest points of the related dynamics and hence the equilibria
of the corresponding games. The choice of the fine structure can be looked at as a kind of {\it mechanism design}
of the principal. For instance, in the case of proportional fines considered above, an appropriate choice of
the coefficient of proportionality can lead to the whole pool of small players to act on the maximal or on the
minimal level of activity.

\section{Stability for two-state models}
\label{sectwostatemod}

Of course, one expects the simplest evolutions to occur for two-state models.
In fact, under reasonable assumptions the behavior of both limiting and approximating dynamics
can be fully sorted out in this case.

Assume the state space of small players consists of only two strategies.
Then the rates are specified
just by two numbers $Q_{12}$ and $Q_{21}$, describing transitions from the first state to the second and vice versa,
and the overall distribution by one number, $x=n/N$, the fraction of players using the first strategy.
When dealing with time independent transitions, the limiting dynamics \eqref{eqdefgenmeanfieldchainkineq0}
reduces to the following single equation:

\begin{equation}
\label{eqtwostatekin}
\dot x =(1-x)Q_{21}(x)-x Q_{12} (x).
\end{equation}

Assume now that the interval $[0,1]$ is decomposed into a finite number of regions with the directions
of preferred transitions alternating between them. Namely, let
\[
0=a_0 <a_1 < \cdots < a_{k-1}<a_k=1, \quad I_k=(a_{k-1}, a_k),
\]
and let $Q_{21}>0$, $Q_{12}=0$ in $I_k$ with even $k$, and $Q_{21}=0$, $Q_{12}>0$ in $I_k$ with odd $k$.
Assume also that $Q_{12}(x)$ and $Q_{21}(x)$ are Lipschitz continuous and vanish on the boundary points $x=0$ and $x=1$.

The minority game of Section \ref{minorgamedef} fits to these assumptions with just two intervals:
 $I_0=(0,1/2)$, $I_1=(1/2, 1)$.

The fixed points of the dynamics are the points of 'changing interests': $x=a_j$, $j=0, \cdots, k$.
It is clear that the points $a_k$ with even (respectively odd) $k$ are stable (respectively unstable) fixed points
of dynamics \eqref{eqtwostatekin}. Therefore, if dynamics starts at $x\in I_k$ with even (resp. odd) $k$, the solution $X_x(t)$
of \eqref{eqtwostatekin} will tend to the left point $a_k$ (resp. right point $a_{k+1}$) of $I_k$, as $t\to \infty$.

What is interesting is that in this case the same behavior can be seen to hold for the approximating systems of
$N$ agents evolving according to the corresponding Markov $X^N_x(t)$ chain with the generator

\begin{equation}
\label{eqdefgenmeanfieldchainnorm1ag}
L^N_tf (x)=x Q_{12}(x)N[f(x-1/N)-f(x)]+(1-x) Q_{21}(x)N[f(x+1/N)-f(x)].
\end{equation}

\begin{prop}
\label{proptwostatemodMark}
Under the above assumptions and for any sufficiently large $N$, the Markov chain $X^N_x(t)$ starting
 at a point $x=n/N \in (a_k,a_{k+1})$ moves to the left (resp. right) if $k$ is even (resp. odd), reaches
an $1/N$- neighborhood of $a_k$ (resp. $a_{k+1}$) in finite time and remains in this neighborhood for ever after.
\end{prop}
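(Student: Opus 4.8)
The plan is to exploit the rigid structure of the chain on a single interval $(a_k,a_{k+1})$: on this interval one of the two rates vanishes identically, so as long as the chain stays inside it is a pure birth or pure death process on the lattice of points $m/N$, its location evolving deterministically and only the holding times being random. By the evident symmetry exchanging the two states (i.e.\ $Q_{12}\leftrightarrow Q_{21}$ and $x\leftrightarrow 1-x$) it suffices to treat $k$ even, so that $(a_k,a_{k+1})$ is one of the intervals on which $Q_{21}\equiv 0$, $Q_{12}>0$. For a lattice point $y=m/N$ in the open interval $(a_k,a_{k+1})$ the generator \eqref{eqdefgenmeanfieldchainnorm1ag} then reduces to the single transition $y\to y-1/N$ at rate $m\,Q_{12}(m/N)>0$ (here $m\ge 1$ because $y>a_k\ge 0$, and $Q_{12}>0$ strictly inside an odd interval). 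Hence, starting from $x=n/N\in(a_k,a_{k+1})$, the chain performs a monotone run of down-steps through $n/N,(n-1)/N,\dots$, each held for an independent exponential time of finite positive rate, until it first hits the largest lattice point $\ell/N\le a_k$. All the intermediate points lie in $(a_k,a_{k+1})$, so the run consists of only $n-\ell$ steps, each almost surely of finite duration; thus $\ell/N$ is reached in almost surely finite time, and since $a_k-1/N<\ell/N\le a_k$ this point already belongs to the $1/N$-neighbourhood of $a_k$.

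It remains to show that the chain cannot afterwards leave this neighbourhood, and this is where the hypothesis of large $N$ enters: take $N$ bigger than the reciprocal of the length of the shortest interval $(a_{j-1},a_j)$, so that every such interval contains lattice points and the $1/N$-neighbourhood of $a_k$ meets only the two adjacent intervals $(a_{k-1},a_k)$ and $(a_k,a_{k+1})$. If $a_k$ is itself a lattice point then $\ell/N=a_k$, and by continuity of $Q_{12},Q_{21}$ together with the alternation hypothesis both rates vanish at $a_k$ (approaching from the even interval $Q_{12}\to 0$, from the odd one $Q_{21}\to 0$); hence $a_k$ is absorbing and we are done. Otherwise $\ell/N<a_k<(\ell+1)/N$: the point $\ell/N$ lies in the even interval $(a_{k-1},a_k)$ where $Q_{12}\equiv 0$, $Q_{21}>0$, so its only transition is the up-step to $(\ell+1)/N$; and $(\ell+1)/N$ lies in $(a_k,a_{k+1})$ (since $(\ell+1)/N<a_k+1/N<a_{k+1}$), whose only transition is the down-step back to $\ell/N$. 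The chain is therefore trapped in the two-point set $\{\ell/N,(\ell+1)/N\}$, both members of which lie within $1/N$ of $a_k$, and oscillates there forever. This establishes the assertion for $k$ even; for $k$ odd the same argument applies after the symmetry reduction, with $a_{k+1}$ in place of $a_k$.

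I expect no serious obstacle here: the proof is bookkeeping built on the sign pattern of the rates and the finiteness of the state space, and needs none of the weak-convergence machinery of the earlier sections. The only points demanding genuine care are exactly those dictating the phrase ``sufficiently large $N$'': one must ensure the lattice neighbourhood of $a_k$ does not reach $a_{k-1}$ or $a_{k+1}$ (so that the rates really push back into it from both sides), and one must dispose separately of the degenerate sub-cases in which $a_k$, or an endpoint $0$ or $1$, is a lattice point — sub-cases which are in fact trivial, such points being genuinely absorbing since there $Q_{12}$ and $Q_{21}$ both vanish.
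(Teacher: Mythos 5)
Your proof is correct, and it is exactly the argument the paper has in mind: the published proof of Proposition \ref{proptwostatemodMark} consists of the single line ``This is straightforward,'' and your write-up supplies precisely the intended bookkeeping — a monotone run of one-directional jumps with positive rates inside the interval, followed by the two-point trap $\{\ell/N,(\ell+1)/N\}$ (or absorption when $a_k$ is a lattice point or a boundary point) around the stable switching point. The care you take with ``sufficiently large $N$'' and the degenerate sub-cases is exactly what the phrase in the statement is there for.
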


\begin{proof} This is straightforward.
\end{proof}

\section{Discontinuous transition rates}
\label{secdiscontrates}

An interesting topic to mention is the study of nonlinear Markov chains
with discontinuous rates $Q_{ij}(x)$.  Specifically important is the case
when the whole simplex $\Si_d$ is decomposed into a finite union of closed
domains, $\Si_d=\cup_{j=1}^k \bar X_j$ such that the interiors $X_j$ of
$\bar X_j$ do not intersect and $Q_{ij}(x)$ are Lipcshitz continuous in
each $X_j$ with the continuous extension to $\bar X_j$. Such system would
naturally arise in MFG forward-backward systems (see Chapter
\ref{chapthreestate}) with a finite set of controls.

We shall not develop here the full theory of discontinuous rates, but we shall set
 the ground for it by looking at the specific case of two-state models of the previous
section. Namely, using the notations of that section, let us assume now that $Q_{21}$
and $Q_{12}$ are not continuous through the switching points $a_k$. As the simplest
assumption let us choose these rates to be constant:
\begin{equation}
\label{eqtwostatedisc}
Q_{21}(x)=
\left\{
\begin{aligned}
& 1, \quad x\in I_k, \, k \, \text{even} \\
& 0, \quad x\in I_k, \, k \, \text{odd},
\end{aligned}
\right.
\quad \quad
Q_{12}(x)=
\left\{
\begin{aligned}
& 1, \quad x\in I_k, \, k \, \text{odd} \\
& 0, \quad x\in I_k, \, k \, \text{even}.
\end{aligned}
\right.
\end{equation}

The kinetic equations \eqref{eqtwostatekin} have to be understood now in the sense
of Filippov, see \cite{Filip60}, \cite{Filip88}, which means in this case that instead
of the equation we look at the kinetic differential inclusion

\begin{equation}
\label{eqtwostatekininc}
\dot x \in F(x), \quad F(x)=\left\{
\begin{aligned}
& (1-x)Q_{21}(x)-x Q_{12} (x), \quad x \notin\{a_0, \cdots, a_k\} \\
& [-a_j, 1-a_j], \quad x=a_j,
\end{aligned}
\right.
\end{equation}
so that away from points $a_j$ the inclusion coincides with the equation \eqref{eqtwostatekin}.
The solution of this inclusion is not unique only if started from the points $a_k$
with odd $k$ (i.e. from the unstable equilibria). Discarding these initial points we see that
starting from $x_0=a_k$ with even $k$ the unique solution to \eqref{eqtwostatekininc} remains in $a_k$.
Starting from $x_0 \in I_k$ with even $k$, the unique solution to  \eqref{eqtwostatekininc} moves according to
$\dot x=1-x$ until it reaches $a_k$, where it remains for ever, that is,
\[
x(t)=
\left\{
\begin{aligned}
& e^{-t}(x_0-1)+1, \quad t\le t_0=\ln (1-x_0)-\ln (1-a_k), \\
& a_k, t\ge t_0.
\end{aligned}
\right.
\]
Similarly, starting from  $x_0 \in I_k$ with odd $k$, the unique
solution to  \eqref{eqtwostatekininc} moves according to $\dot x=-x$
until it reaches $a_{k-1}$, where it remains for ever.

Comparing this behavior of the kinetic inclusion with the behavior of
the approximating Markov chain $X^N_x(t)$ (it was described in Section
\ref{sectwostatemod} and remains the same for the present discontinuous
case), we obtain the following.

\begin{prop}
\label{proptwostatemoddisc}
Under the above assumptions, the Markov chain $X^N_x(t)$ starting
at a point $x=k/N$, which does not equal any $a_k$ with odd $k$
converges weakly to the unique solution of the differential inclusion
\eqref{eqtwostatekininc} starting from the same point. Moreover,
this converges is even uniform in all times.
\end{prop}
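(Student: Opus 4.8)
The plan is to localize the discontinuous dynamics to the current interval of monotone motion, apply there the smooth-coefficient law of large numbers of Theorem~\ref{thconv1dis} to an auxiliary chain with constant (hence Lipschitz) rates, and then use the pathwise confinement of Proposition~\ref{proptwostatemodMark} to turn convergence on a bounded time interval into convergence uniform over all $t\ge 0$. For definiteness I would treat an initial point $x_0=m/N$ lying in an interval $I_k$ with $k$ even, where $Q_{21}\equiv 1$, $Q_{12}\equiv 0$ and the chain generated by \eqref{eqdefgenmeanfieldchainnorm1ag} admits only up-jumps; the limiting kinetic equation on $I_k$ is $\dot x=1-x$, and the Filippov solution from $x_0$ is $x(t)=1-(1-x_0)e^{-t}$ for $t\le t_0$ and $x(t)=a:=a_k$ for $t\ge t_0$, with $t_0=\ln(1-x_0)-\ln(1-a)$. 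The case $k$ odd (monotone motion to the left along $\dot x=-x$ toward $a_{k-1}$) is symmetric; the excluded initial points, the $a_k$ with $k$ odd, are exactly the unstable equilibria; and if the target $a$ is an endpoint $0$ or $1$ the Filippov trajectory only approaches it asymptotically ($t_0=\infty$), in which case the transition window below is vacuous and the monotone sandwich alone closes the tail.

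First I would introduce the auxiliary chain $\hat X^N_{x_0}$ defined by \eqref{eqdefgenmeanfieldchainnorm1ag} with the globally constant rates $\hat Q_{21}\equiv 1$, $\hat Q_{12}\equiv 0$; its kinetic equation is $\dot x=1-x$ with solution $\hat x(t)=1-(1-x_0)e^{-t}$, agreeing with $x(t)$ on $[0,t_0]$. Running $X^N_{x_0}$ and $\hat X^N_{x_0}$ off the same jump clocks, they coincide for all $t\le \si_N:=\inf\{t:X^N_{x_0}(t)\ge a\}$: indeed $X^N_{x_0}$ is non-decreasing and starts in $I_k$, so it never visits an interval lying to the left of $I_k$, and on $I_k$ the true rates equal $\hat Q$. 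Theorem~\ref{thconv1dis} gives $\sup_{0\le t\le T}|\hat X^N_{x_0}(t)-\hat x(t)|\to 0$ in probability for every $T$, and since $\hat x$ crosses the level $a<1$ transversally ($\dot{\hat x}(t_0)=1-a>0$) this forces $\si_N\to t_0$ in probability. Combining, $\sup_{0\le t\le t_0-\ep}|X^N_{x_0}(t)-x(t)|\to 0$ in probability for every $\ep>0$.

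For the tail I would invoke Proposition~\ref{proptwostatemodMark}, whose proof uses only the sign pattern of the rates on the two intervals adjacent to $a$ and so carries over to the present discontinuous rates: once $X^N_{x_0}$ has entered the $1/N$-neighborhood of $a$ it stays there forever, and the entry time is at most $\si_N$, hence $<t_0+\ep$ with probability tending to $1$; since $x(t)=a$ for $t\ge t_0$, this bounds $\sup_{t\ge t_0+\ep}|X^N_{x_0}(t)-x(t)|$ by $1/N$ on that event. On the window $[t_0-\ep,t_0+\ep]$ I would use that $x(t)$ stays within $O(\ep)$ of $a$ by continuity, while monotonicity of the chain sandwiches $X^N_{x_0}(t)$, for $t\ge t_0-\ep$, between $X^N_{x_0}(t_0-\ep)$ — which by the previous paragraph is within $O(\ep)$ of $a$ with probability tending to $1$ — and $a+1/N$. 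Collecting the three estimates and letting $N\to\infty$ and then $\ep\downarrow 0$ yields $\sup_{t\ge 0}|X^N_{x_0}(t)-x(t)|\to 0$ in probability, which is the asserted weak convergence, uniform over all times.

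The step I expect to be the real obstacle is the passage through the discontinuity at $a$ near time $t_0$, where the general LLN is powerless (it controls only bounded time intervals): one must simultaneously rule out \emph{lagging} of the chain behind the deterministic trajectory and \emph{overshoot} past $a$. Lagging is excluded by the LLN on $[0,t_0-\ep]$ together with $\si_N\to t_0$; overshoot is excluded because a single transition changes the state by only $1/N$, and from any grid point immediately past $a$ — where $Q_{12}\equiv 1$, $Q_{21}\equiv 0$ — the only admissible jump pushes the state back toward $a$, so the chain is trapped in the $1/N$-neighborhood of $a$ as soon as it reaches it. Assembling these two one-sided facts into a genuinely time-uniform bound, and verifying that the hitting/exit times $\si_N$ concentrate at $t_0$, is where the care of the argument lies; the remaining manipulations are routine.
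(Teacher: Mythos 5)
Your argument is correct and is essentially the paper's own (the paper only sketches it): describe the unique Filippov trajectory explicitly, use the sign pattern of the rates to see that the chain moves monotonically toward the stable point $a_k$ and is then trapped in its $1/N$-neighborhood (as in Proposition \ref{proptwostatemodMark}), and apply the dynamic LLN on the monotone phase — your coupling with the constant-rate auxiliary chain is just the natural way to make that last step precise. One small correction: the chain here is in continuous time, so the LLN you invoke should be Theorem \ref{thsiplestLLN} (or \ref{thconv1}) rather than the discrete-time Theorem \ref{thconv1dis}, and the passage from convergence of marginals to the pathwise supremum bound uses the monotonicity of both $\hat X^N$ and $\hat x$, which is worth stating explicitly.
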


\section{Nonexponential waiting times and fractional LLN}
\label{secfracLLN}

The key point in the probabilistic description of both Markov and nonlinear Markov chains
was the assumption that the waiting time between transitions is an exponential random variable.
Here we shall touch upon another extension of the results of Sections \ref{LLNsmooth} and
\ref{secLLNLip}, when more general waiting times are allowed. The characteristic feature
of the exponential waiting times is its memoryless property: the distribution of the remaining
 time does not depend on the time one has waited so far. Thus refuting exponential
 assumptions leads necessarily to some effects of memory and to non-Markovian evolutions.
Such evolutions are usually described by fractional in time differential equations,
which have drawn lots of attention in modern scientific literature. Let us obtain the dynamic LLN
for interacting multi-agent systems for the case of non-exponential waiting times with the
power tail distributions. As one can expect this LLN will not be deterministic anymore, and
our exposition in this section will be more demanding mathematically than in other parts of the book.

Let us say that a positive random variable $\tau$ has the probability law $\P$ on $[0,\infty)$
with a {\it power tail of index} $\al$ if
\[
\P(\tau > t) \sim \frac{1}{t^{\al}}
\]
for large $t$, that is the ration of the l.h.s. and the r.h.s tends to $1$, as $t \to \infty$.
For any $\al \in (0,1)$ such random variable has infinite expectation. Nevertheless, the index
$\al$ characterized in some sense the length of the waiting time (and thus the rate at which
the sequence of independent identically distributed random variables of this kind evolves), since
the waiting time increases with the decrease of $\al$.

As the exponential tails, the power tails are well suited for taking minima. Namely, if
$\tau_j$, $j=1, \cdots ,d$, are independent variables with a power tail of indices $\al_i$,
then $\tau=\min(\tau_1, \cdots, \tau_d)$ is clearly a variable with a power tail of index
$\al=\al_1+ \cdots + \al_d$.

Assume for simplicity that the
family $\{Q (x)\}$ does not depend on time.
Then in full analogy with the case of exponential times of Sections \ref{secmeanfieldch1}
or  \ref{secmeanfieldint} let us assume that the waiting time of the agents of type $i$ to change the strategy
has the power tail with the index $\al_i(x)=\al x_i |Q_{ii}|$ with some fixed $\al >0$. Consequently,
the minimal waiting time of all agents will have the probability law $P_x(dy)$ with a tail of the index
\[
\al A(x)=\sum \al_i(x)=\al \sum_i x_i |Q_{ii}(x)|.
\]

Our process with power tail waiting times can thus be described probabilistically
as follows. Starting from any time and current state $n$, or $x=n/N$, we wait a
random waiting time $\tau$, which has a power tail with the index $A(x)$. After
this time the choice of the pair $(i,j)$ such that the transition of one agent
from state $i$ to state $j$ occurs is carried out with probability \eqref{deftransprobmfMC}:
 \begin{equation}
\label{deftransprobmfMCrep}
\P(i\to j)= \frac{n_iQ_{ij}(n/N)}{\sum_k n_k |Q_{kk}(n/N)|}=\frac{x_iQ_{ij}(x)}{A(x)}.
\end{equation}
After such jump the process continues from the new state $x-e_i/N+e_j/N$ in the same way
as previously from $x$.

The easiest way to see, what kind of LLN one can expect under this setting and appropriate scaling, is to lift
the non-Markovian evolution on the space of sequences $x=(x_1, \cdots, x_d)\in \Si_d\cap \Z^d_+/N$ to the
discrete time Markov chain by considering the total waiting time $s$ as an additional space variable.
Namely, let us consider the Markov chain $(X^{N,\tau}_{x,s},S^{N,\tau}_{x,s})(k\tau)$ on $(\Si_d\cap \Z^d_+/N) \times \R_+$
with the jumps occurring at discrete times $k\tau$, $k\in \N$, such that the process at a state $(x,s)$ at time $\tau k$
jumps to $(x-e_i/N+e_j/N,s+\tau^{1/\al A(x)}y)$ with the probability distribution
\begin{equation}
\label{transprobmfMCtot}
P_x(dy)\P(i\to j) = P_x(dy) \frac{\tau x_iQ_{ij}(x)}{A(x)}.
\end{equation}

The key to the reasonable scaling is to choose $\tau =1/N$. We shall denote the
corresponding Markov chain  $(X^{\tau}_{x,s},S^{\tau}_{x,s})(k\tau)$. Its transition operator is
\begin{equation}
\label{transprobmfMCtot2}
U^{\tau}f(x,s)=\int P_x(dy)\sum_i \sum_{j\neq i} \frac{x_iQ_{ij}(x)}{A(x)}f\left(x-\frac{e_i}{N}+\frac{e_j}{N},s+\tau^{1/\al A(x)}y\right).
\end{equation}

What we are interested in is not exactly this chain, but the value of its first coordinate $X^{\tau}_{x,s}$
at the time $k\tau$, when the total waiting time $S^{\tau}_{x,s}(k\tau)$ (which is our real time, unlike the artificial time $s$) reaches $t$,
that is, at the time
\[
k\tau =T_{x,s}^{\tau}(t)=\inf \{ m\tau: S^{\tau}_{x,s} (m\tau) \ge t\},
\]
so that $T_{x,s}^{\tau}$ is the inverse process to $S^{\tau}_{x,s}$.
Thus the {\it scaled mean-field interacting system of agents with a power tail waiting time between jumps}
is the (non-Markovian) process
 \begin{equation}
\label{scaledmfchainwithtail}
\tilde X^{\tau}_{x,s}(t)= X^{\tau}_{x,s}(T_{x,s}^{\tau}(t)).
\end{equation}

Let us see what happens in the limit $\tau \to 0$, or equivalently $N=1/\tau \to \infty$.
Since the transitions in $\Si_d\cap \Z^d_+/N$ do not depend on the waiting times,
the first coordinate of the chain  $(X^{\tau}_{x,s},S^{\tau}_{x,s})(k\tau)$
is itself a discrete time Markov chain, which converges (by Theorems
\ref{thsiplestLLNdis} and \ref{thconv1dis}) to the deterministic process
described by the solutions $X^A_{x,s}(t)$ to the kinetic equations $\dot x=xQ(x)/A(x)$
with the initial condition $x$ at time $s$.  Thus we can expect that
the limit of the process \eqref{scaledmfchainwithtail}, which is the LLN we are looking for,
is the process obtained from the characteristics $X^A_{x,0}(t)$ evaluated at some random time $t$.

Let us see more precisely what happens with the whole chain $(X^{\tau}_{x,s},S^{\tau}_{x,s})(k\tau)$.
It is well known (see e.g. Theorem 8.1.1 of \cite{Ko11}) that if the operator
 \begin{equation}
\label{scalingMchains}
\La^Af= \lim_{\tau\to 0} \frac{1}{\tau}(U^{\tau}f-f)
\end{equation}
is well defined and generates a Feller process, then this Feller process represents the weak limit
of the scaled chain with the transitions $[U^{\tau}]^{[t/\tau]}$, where $[t/\tau]$ denotes the integer
 part of the number $t/\tau$. Thus we need to calculate \eqref{scalingMchains}. We have
\[
\frac{1}{\tau}(U^{\tau}f-f)
=\frac{1}{\tau}\int P_x(dy)\sum_i \sum_{j\neq i} \frac{x_iQ_{ij}(x)}{A(x)}
\left[f\left(x-\frac{e_i}{N}+\frac{e_j}{N},s+\tau^{1/\al A(x)}y\right)-f(x,s)\right]
\]
\[
=\frac{1}{\tau}\int P_x(dy)
\left[f(x,s+\tau^{1/\al A(x)}y)-f(x,s)\right]
\]
 \begin{equation}
\label{scalingMarlift}
+\frac{1}{\tau}\sum_i \sum_{j\neq i} \frac{x_iQ_{ij}(x)}{A(x)}
\left[f\left(x-\frac{e_i}{N}+\frac{e_j}{N},s\right)-f(x,s)\right]+R,
\end{equation}
where the error term equals
\[
R=\frac{1}{\tau}\int P_x(dy)\sum_i \sum_{j\neq i} \frac{x_iQ_{ij}(x)}{A(x)}
\]
\[
\times
\left[\left(f\left(x-\frac{e_i}{N}+\frac{e_j}{N},s+\tau^{1/\al A(x)}y\right)-f\left(x,s+\tau^{1/\al A(x)}y\right)\right)
-\left(f\left(x-\frac{e_i}{N}+\frac{e_j}{N},s\right)-f(x,s)\right)\right]
\]
\[
=\frac{1}{\tau}\int P_x(dy)\sum_i \sum_{j\neq i} \frac{x_iQ_{ij}(x)}{A(x)}
\left[g(x,s+\tau^{1/\al A(x)}y)-g(x,s)\right],
\]
with
\[
g(x,s)=f\left(x-\frac{e_i}{N}+\frac{e_j}{N},s\right)-f(x,s).
\]

From the calculations of Section \ref{LLNsmooth} we know that the second term in  \eqref{scalingMarlift} converges to
$\La f(x,s)/A(x)$, with
\[
\La f(x,s)=\sum_{i=1}^d \sum_{j\neq i} x_i Q_{ij}(x)[\frac{\pa f}{\pa x_j}-\frac{\pa f}{\pa x_i}](x,s)
=\sum_{k=1}^d \sum_{i\neq k} [x_i Q_{ik}(x)-x_k Q_{ki}(x)]\frac{\pa f}{\pa x_k}(x,s),
\]
whenever $f$ is continuously differentiable in $x$.

By Theorem 8.3.1 of \cite{Ko11} (see also \cite{Ko07a}), the first term in   \eqref{scalingMarlift} converges to
\[
\al A(x) \int_0^{\infty} \frac {f(x, s+y)-f(x,s)}{y^{1+\al A(x)}} dy,
\]
whenever $f$ is continuously differentiable in $s$.

To estimate the term $R$ we note that if $f \in C^1(\Si_d \times \R_+)$, then $g(x)$ is uniformly bounded by $1/N$
and $|\pa g/\pa s|$ is uniformly bounded. Hence applying again Theorem 8.3.1 of \cite{Ko11}  to $R$ we obtain that
$R/\tau \to 0$ as $\tau \to 0$. Hence it follows that, for $f \in C^1(\Si_d \times \R_+)$,
 \begin{equation}
\label{scalingMarlift1}
\La^Af(x,s)= \lim_{\tau\to 0} \frac{1}{\tau}(U^{\tau}f-f)(x,s)
=\al A(x) \int_0^{\infty} \frac {f(x, s+y)-f(x,s)}{y^{1+\al A(x)}} dy +\La f(x,s)/A(x).
\end{equation}

Thus we see again that the first coordinate of the chain  $(X^{\tau}_{x,s},S^{\tau}_{x,s})(k\tau)$
converges to the deterministic process
described by the solutions $X^A_{x,s}(t)$ (starting in $x$ at time $s$) to the kinetic equations $\dot x=xQ(x)/A(x)$. The second
coordinate of $(X^{\tau}_{x,s},S^{\tau}_{x,s})(k\tau)$ depends on the first coordinate. For $s=0$ it
 converges to the stable like process with the time dependent family of generators
 \begin{equation}
\label{scalingMarlift2}
\La^A_Sg(t)=\al A(X^A_{x,0}(t)) \int_0^{\infty} \frac {g(t+y)-g(t)}{y^{1+\al A(X^A_{x,0}(t))}} dy.
\end{equation}

Deriving the convergence of the processes from the convergence of the generators
as in Section \ref{LLNsmooth} and using \eqref{scaledmfchainwithtail} we arrive at the following result.

 \begin{theorem}
\label{thLLNfrac}
If $Q(x)$ is a Lipschitz continuous function, then the process of the mean-field interacting
system of agents $\tilde X^{\tau}_{x,s}(t)$ given by \eqref{scaledmfchainwithtail} converges in distribution to the process
 \begin{equation}
\label{eq1thLLNfrac}
\tilde X_{x,s}(t)= X^A_{x,s}(T_x^t),
\end{equation}
where $T_x^t$ is the random time when the stable-like process generated by \eqref{scalingMarlift2}
and started at $s$ reaches the time $t$.
Moreover, from the probabilistic
interpretation of the generalized Caputo-type derivative of mixed orders $A(x)$ (see \cite{Ko15}) it follows that the evolution
of averages $f(x,s)=\E f(\tilde X_{x,s}(t))$ satisfies the following generalized fractional differential equation
 \begin{equation}
\label{eq2thLLNfrac}
D_{t-*}^{A(x)}f(x,s)
= \frac{1}{A(x)}\sum_{k=1}^d \sum_{i\neq k} [x_i Q_{ik}(x)-x_k Q_{ki}(x)]\frac{\pa f}{\pa x_k}(x,s),
\quad s \in [0,t],
\end{equation}
with the terminal condition $f(x,t)=f(x)$,
where the left fractional derivative acting on the variable $s\le t$ of $f(x,s)$ is defined as
 \begin{equation}
\label{eq3thLLNfrac}
D_{t-*}^{A(x)}g(s) = \al A(x) \int_0^{t-s} \frac {g(s+y)-g(s)}{y^{1+\al A(x)}} dy
+\al A(x) (g(t)-g(s)) \int_{t-s}^{\infty}\frac {dy}{y^{1+\al A(x)}} dy.
\end{equation}
\end{theorem}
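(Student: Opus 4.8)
The plan is to assemble Theorem \ref{thLLNfrac} from three ingredients that have all been prepared in the excerpt: (i) the deterministic LLN of Section \ref{LLNsmooth}--\ref{secLLNLip} applied to the embedded chain in the $x$-variable, (ii) the general convergence theorem for Markov chains obtained from the convergence of their (pre-)generators under the scaling $\tau=1/N$, quoted from \cite{Ko11} (Theorems 8.1.1 and 8.3.1 there), and (iii) the time-change identity \eqref{scaledmfchainwithtail} together with the known probabilistic interpretation of the generalized Caputo-type derivative from \cite{Ko15}. Concretely, first I would make rigorous the generator computation already sketched around \eqref{scalingMarlift}--\eqref{scalingMarlift1}: for $f\in C^1(\Si_d\times\R_+)$ split $\frac1\tau(U^\tau f-f)$ into the ``spatial'' difference, the ``temporal'' difference, and the remainder $R$. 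The spatial term is exactly $\frac1{A(x)}$ times the expression whose convergence to $\La f/A(x)$ follows from \eqref{eqdefgenmeanfieldchainlim} and \eqref{eqdefgenmeanfieldchainlima} (Lipschitz version via the mollification argument of Section \ref{secLLNLip}); the temporal term converges to the stable-type integral operator by Theorem 8.3.1 of \cite{Ko11}; and $R$ is controlled because for $f\in C^1$ the function $g(x,s)=f(x-e_i/N+e_j/N,s)-f(x,s)$ is $O(1/N)$ uniformly with uniformly bounded $s$-derivative, so a second application of Theorem 8.3.1 gives $R\to 0$. This establishes \eqref{scalingMarlift1}, i.e.\ that $\La^A$ is the generator of the limiting Feller process $(X^A_{x,s}(t),S_{x,s}(t))$ whose $x$-component solves $\dot x = xQ(x)/A(x)$ and whose $s$-component is, conditionally on the $x$-trajectory, the stable-like process with the time-dependent generator \eqref{scalingMarlift2}.

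Next I would pass from the convergence of the two-component chain $(X^\tau_{x,s},S^\tau_{x,s})(k\tau)$ to the convergence of the time-changed process $\tilde X^\tau_{x,s}(t)=X^\tau_{x,s}(T^\tau_{x,s}(t))$. Here the key structural fact is that $S^\tau_{x,s}$ is nondecreasing, so $T^\tau_{x,s}$ is its (right-continuous) generalized inverse, and the inverse map is a.s.\ continuous at the limiting process $S_{x,s}$ provided the latter is a.s.\ strictly increasing and has no fixed discontinuities --- both of which hold for a stable-like subordinator-type process with index $\alpha A(x)\in(0,1)$ and strictly positive jump intensity (guaranteed because $A(x)\ge \tfrac12\inf_{i,x}|Q_{ii}(x)|>0$ under the standing assumption that $Q$ is a genuine Lipschitz $Q$-matrix family, together with $x\in\Si_d$). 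Then joint convergence of $(X^\tau,S^\tau)\Rightarrow(X^A,S)$ in the Skorokhod sense plus the continuous-mapping theorem for the composition ``trajectory $\circ$ inverse of trajectory'' (the standard argument used for CTRW scaling limits, cf.\ the references in \cite{Ko11}, \cite{Ko07a}) yields $\tilde X^\tau_{x,s}\Rightarrow X^A_{x,s}(T^t_x)$, which is \eqref{eq1thLLNfrac}.

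Finally, for the fractional equation \eqref{eq2thLLNfrac}--\eqref{eq3thLLNfrac}: the averaged function $f(x,s)=\E f(\tilde X_{x,s}(t))$ is, by the optional-stopping / Dynkin identity for the time-changed process, a solution of the boundary value problem associated with the generator of the inverse stable-like clock; this is precisely the probabilistic representation of the generalized Caputo-type derivative $D_{t-*}^{A(x)}$ of variable mixed order $A(x)$ established in \cite{Ko15}, and the explicit form \eqref{eq3thLLNfrac} is just the definition of that operator with the customary boundary correction term. Matching the ``drift'' part of $\La^A$ --- which from \eqref{scalingMarlift1} and \eqref{eqdefgenmeanfieldchainlim} equals $\frac1{A(x)}\sum_k\sum_{i\neq k}[x_iQ_{ik}-x_kQ_{ki}]\pa f/\pa x_k$ --- against the right-hand side of \eqref{eq2thLLNfrac} closes the argument.

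The main obstacle I anticipate is not the generator computation (that is essentially done in the excerpt) but justifying the continuous-mapping step for the time change with the required uniformity: one must rule out that the limiting clock $S_{x,s}$ is constant on an interval or has an atom at the terminal level $t$, and one must control the dependence of the stable index $\alpha A(X^A_{x,0}(r))$ on the (random, but in the limit deterministic) spatial trajectory so that the two components do not decouple pathologically. Handling the variable order $A(x)$ --- rather than a constant $\alpha$ --- is exactly what forces the appeal to the heavier machinery of \cite{Ko11}, \cite{Ko07a}, \cite{Ko15} rather than the elementary comparison-of-semigroups arguments used elsewhere in this chapter; everything else reduces to the already-established deterministic LLN for the embedded chain.
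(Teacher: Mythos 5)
Your proposal follows essentially the same route as the paper: the same lift to the two-component chain $(X^{\tau},S^{\tau})$ with $\tau=1/N$, the same splitting of $\frac{1}{\tau}(U^{\tau}f-f)$ into spatial, temporal and remainder parts handled via Theorems 8.1.1 and 8.3.1 of \cite{Ko11}, the same time-change identification \eqref{scaledmfchainwithtail}, and the same appeal to \cite{Ko15} for the generalized Caputo-type derivative. Your extra care with the continuous-mapping argument for the inverse clock (strict increase, no atom at level $t$) is a legitimate elaboration of a step the paper passes over quickly, but it does not change the method.
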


As usual with the nondeterministic LLN, the limiting process depends essentially on the scaling
and the choice of approximations. Instead of modeling the total waiting time by a random variable
with the tail of order $t^{-\al A(x)}$, we can equally well model this time by a random variable
with the tail of order $A^{-1}(x) t^{-\al }$, which would yield similar results with the mixed
fractional derivative of a simpler form
\[
\tilde D_{t-*}^{A(x)}g(s) = \frac{\al}{A(x)} \int_0^{t-s} \frac {g(s+y)-g(s)}{y^{1+\al}} dy
+\frac{\al (g(t)-g(s))}{A(x)} \int_{t-s}^{\infty} \frac {dy}{y^{1+\al}} dy
\]
\begin{equation}
\label{eq4thLLNfrac}
= \frac{\al}{A(x)} \int_0^{t-s} \frac {g(s+y)-g(s)}{y^{1+\al}} dy
+\frac{g(t)-g(s)}{A(x)(t-s)^{\al}}.
\end{equation}
However, any scaling of the jump times would lead to the process obtained from the characteristics
(solutions of $\dot x=xQ(x)$) via certain random time change.

The situation changes drastically if the family $Q(t,x)$ is time dependent. Then the partial decoupling
(possibility to consider the first coordinate $(X^{\tau}_{x,s}$ independent of the second one) does not occur,
and formula \eqref{eq1thLLNfrac} does not hold. Nevertheless, the fractional equation \eqref{eq2thLLNfrac}
extends directly to this case turning to the equation
 \begin{equation}
\label{eq5thLLNfrac}
D_{t-*}^{A(x)}f(x,s)
= \frac{1}{A(x)}\sum_{k=1}^d \sum_{i\neq k} [x_i Q_{ik}(s,x)-x_k Q_{ki}(s, x)]\frac{\pa f}{\pa x_k}(x,s),
\quad s \in [0,t].
\end{equation}
Unlike   \eqref{eq1thLLNfrac}, its solution can be represented probabilistically either via
the technique of Dynkin's martingale (see \cite{Ko15}) or via chronological Feynmann-Kac formula
(see \cite{Ko17a}).

\chapter{Dynamic control of major players}
\label{strategicprincipal}

Here we start exploiting another setting for the major player behavior. We shall assume
that the major player has some planning horizon with both running and (in case of a finite
horizon) terminal costs. For instance, running costs can reflect real spending and terminal
cost some global objective, like reducing the overall crime level by a specified amount.
This setting will lead us to the class of problem that can be called Markov decision
(or control) processes (for the principal) on the evolutionary background (of
permanently varying profiles of small players). We shall obtain the corresponding LLN limit,
both for discrete and continuous time. For discrete time the LLN limit turns to the deterministic
multi-step control problem  in case of one major player and to the deterministic multi-step
game between major players in case of several such players. In continuous time modeling
the LLN limit turns to the deterministic continuous time dynamic control problem in case
of one major player and to the deterministic differential game in case of several major players.
We analyze the problem both with finite and infinite horizons. The latter case is developed both
for the payoffs with discounting and without it. The last version leads naturally to the so-called
turnpike behavior of both limiting and prelimiting (finite $N$) evolutions. The theory of this chapter
also has an extension arising from non-exponential waiting time and leading to the control fractional
dynamics in the spirit of Section \ref{secfracLLN}, but we do not touch this extension here (see
however \cite{KolVer17}).

\section{Multi-step decision making of the principal}
\label{secmultistep1}

As above, we shall work with the case of a finite-state-space of small players,
so that the state space of the group is given by vectors $x=(n_1, \cdots, n_d)/N$
 from the lattice $\Z^d_+/N$.

Starting with a discrete time case,
we denote by $X_N(t,x,b)$ the Markov chain generated by \eqref{eqdefgenmeanfieldchainprin}
 with a fixed $b$ taken from certain convex compact subset of a Euclidean space, that is by the operator
\begin{equation}
\label{eqdefgenmeanfieldchainprinrep1}
L^{b,N}f (x)=N\sum_i x_i Q_{ij}(x,b)\left[f\left(x-\frac{e_i}{N}+\frac{e_j}{N}\right)-f(x)\right],
\end{equation}
and starting in $x\in \Z^d_+/N$ at the initial time $t=0$.

Let us assume that the principal is updating her strategy
 in discrete times $\{k\tau\}$, $k=0,1, \cdots ..., n-1$, with some fixed $\tau>0$, $n\in \N$, aiming
 at finding a strategy $\pi=\{b_0,b_1,\cdots , b_{n-1}\}$ maximizing the reward
\begin{equation}
\label{eqMarkdecevolback1}
V_n^{\pi,N}(x(N)) =\E_{N,x(N)}\left[\tau B(x_0,b_0)+\cdots +\tau B(x_{n-1},b_{n-1})+V(x_n)\right],
\end{equation}
where $B$ and $V$ are given functions (the running and the terminal payoff), $x(N)\in \Z^d_+/N$ also given,
\[
x_k=X_N(\tau,x_{k-1},b_{k-1}), \quad k=1,2, \cdots ,
\]
and $b_k=b_k(x_k)$ are measurable functions of the current state $x=x_k$
($\E_{N,x(N)}$ denotes the expectation specified by such process).

By the basic dynamic programming (see e.g. \cite{HeLa} or \cite{KolMal1}) the maximal rewards
$V_n^N(x(N))=\sup_{\pi} V_n^{\pi,N}(x(N))$
at different times $k$ are linked by the optimality equation
$V_k^N=S[N] V_{k-1}^N$,
where  the Shapley operator $S[N]$ (sometimes referred to as the Bellman operator)
is defined by the equation
\begin{equation}
\label{eqMarkdecevolback1a}
S[N]V(x)=\sup_b \left[\tau B(x,b)+\E V(X_N(\tau,x,b))\right],
\end{equation}
so that $V_n$
can be obtained by the $n$th iteration of the Shapley operator:
\begin{equation}
\label{eqMarkdecevolback1b}
V_n^N=S[N] V_{n-1}^N=S^n[N]V.
\end{equation}

We are again interested in the law of large numbers limit $N\to \infty$, where we expect the limiting
problem for the principal to be the maximization of the reward
\begin{equation}
\label{eqMarkdecevolback3}
V_n^{\pi}(x_0) =\tau B(x_0,b_0)+\cdots +\tau B(x_{n-1},b_{n-1})+V_0(x_n),
\end{equation}
where
\begin{equation}
\label{eqMarkdecevolback5}
x_0=\lim_{\N\to \infty} x(N)
\end{equation}
(which is supposed to exist) and
\begin{equation}
\label{eqMarkdecevolback6}
x_k=X(\tau, x_{k-1}, b_{k-1}), \quad k=1,2, \cdots,
\end{equation}
with $X(t,x,b)$ denoting the solution to the characteristic system (or kinetic equations)
\begin{equation}
\label{eqMarkdecevolback6a}
\dot x_k =\sum_{i=1}^d x_i Q_{ik}(x, b(x)), \quad k=1,...,d,
\end{equation}
with the initial condition $x$ at time $t=0$, or in the pressure and resistance framework,
\begin{equation}
\label{eqMarkdecevolback7}
\dot x_j=\sum_{i} \ka x_i x_j[R_j(x,b)-R_i(x,b)], \quad j=1,...,d.
\end{equation}

Again by dynamic programming, the maximal reward in this problem
$V_n(x)=\sup_{\pi} V_n^{\pi} (x)$, $\pi=\{b_k\}$,
is obtained by the iterations of the corresponding Shapley operator, $V_n =S^n V_0$, with
\begin{equation}
\label{eqMarkdecevolback8}
SV(x)=\sup_b \left[\tau B(x,b)+V(X(\tau,x,b))\right].
\end{equation}

Especially for the application to the continuous time models it is important to have estimates
of convergence uniform in $n=t/\tau$ for bounded total time $t=n\tau$.

As a preliminary step let us prove a rather standard fact about the propagation of continuity by the operator $S$.

\begin{prop}
\label{propcontdisccont}
Let $V$, $B$, $Q$ be bounded continuous functions, which are Lipschitz continuous in $x$, with
\begin{equation}
\label{eq0propcontdisccont}
\ka_V=\|V\|_{Lip}, \quad \ka_B=\sup_b \|B(.,b)\|_{Lip}<\infty,
\quad \om=\sup_b \|Q(.,b)\|_{bLip}<\infty.
\end{equation}
Then $S^nV\in C_{bLip}$ for all $n$ and
\begin{equation}
\label{eq1propcontdisccont}
\|S^nV\|\le t\|B\|+\|V\|, \quad \|S^nV\|_{Lip} \le (t\ka_B +\ka_V)e^{t \om }
\end{equation}
for $t=n\tau$.
\end{prop}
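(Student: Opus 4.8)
The plan is to establish the two estimates in \eqref{eq1propcontdisccont} for $n=1$ and then propagate them by induction, the induction step being possible precisely because the operator $S$ in \eqref{eqMarkdecevolback8} is built from two structurally stable ingredients: the supremum over the control $b$, which is non-expansive in the appropriate sense, and the characteristic flow $x\mapsto X(\tau,x,b)$, whose Lipschitz constant is controlled by Proposition \ref{ODEsmoo}. Concretely, I would first record the elementary fact that $|\sup_b h_1(b)-\sup_b h_2(b)|\le\sup_b|h_1(b)-h_2(b)|$ for any bounded families of reals, which applied to $h(x,b)=\tau B(x,b)+V(X(\tau,x,b))$ reduces the comparison of $SV(x)$ and $SV(y)$ to a supremum over a single fixed $b$. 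For fixed $b$, the map $x\mapsto X(\tau,x,b)$ is the time-$\tau$ flow (on $\Si_d$, which is forward invariant) of the autonomous system $\dot y=g_b(y)$ with $g_b(y)=yQ(y,b)$; by \eqref{eqQinkin} one has $\|g_b\|_{Lip}\le\|Q(.,b)\|_{bLip}\le\om$, so Proposition \ref{ODEsmoo}(i) yields $\|X(\tau,\cdot,b)\|_{Lip}\le e^{\tau\om}$, a bound uniform in $b$ and independent of the starting time.

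With these ingredients the sup-norm estimate is immediate: bounding $\tau B$ by $\tau\|B\|$ and $V\circ X(\tau,\cdot,b)$ by $\|V\|$ gives $\|SV\|\le\tau\|B\|+\|V\|$, and since the running cost $B$ and the flow bound are the same at every step, an induction on $n$ gives $\|S^nV\|\le n\tau\|B\|+\|V\|=t\|B\|+\|V\|$. For the Lipschitz part, the one-step inequality reads
\[
|SV(x)-SV(y)|\le\tau\sup_b|B(x,b)-B(y,b)|+\sup_b|V(X(\tau,x,b))-V(X(\tau,y,b))|\le(\tau\ka_B+e^{\tau\om}\|V\|_{Lip})\|x-y\|,
\]
so that, writing $\ell_n:=\|S^nV\|_{Lip}$ (finite by induction), we obtain the linear recursion $\ell_0=\ka_V$ and $\ell_n\le\tau\ka_B+e^{\tau\om}\ell_{n-1}$.

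It then remains to solve the recursion: unwinding it gives $\ell_n\le e^{n\tau\om}\ka_V+\tau\ka_B\sum_{j=0}^{n-1}e^{j\tau\om}$, and bounding each of the $n$ terms of the geometric sum crudely by $e^{(n-1)\tau\om}\le e^{t\om}$ yields $\tau\ka_B\sum_{j=0}^{n-1}e^{j\tau\om}\le n\tau\ka_B e^{t\om}=t\ka_B e^{t\om}$, hence $\ell_n\le(t\ka_B+\ka_V)e^{t\om}$, which is the second bound in \eqref{eq1propcontdisccont}. Finiteness of both norms, together with the continuity implied by the Lipschitz estimate, gives $S^nV\in C_{bLip}$. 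I do not expect a genuine obstacle: the computation is routine, and the only two points needing a little care are that the flow bound $\|X(\tau,\cdot,b)\|_{Lip}\le e^{\tau\om}$ must be uniform in $b$ before the supremum over $b$ is taken, and that one should bound the geometric sum by $ne^{t\om}$ rather than by $(e^{\tau\om}-1)^{-1}e^{t\om}$, since the latter would produce a $\ka_B/\om$ in place of the $t\ka_B$ asserted in the statement.
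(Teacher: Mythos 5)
Your proof is correct and follows essentially the same route as the paper: the sup-norm bound by direct iteration, and the Lipschitz bound via the uniform-in-$b$ flow estimate $\|X(\tau,\cdot,b)\|_{Lip}\le e^{\tau\om}$ (from \eqref{eqQinkin} together with the ODE sensitivity results), leading to the recursion $\ell_n\le\tau\ka_B+e^{\tau\om}\ell_{n-1}$ and the crude bound $\sum_{j=0}^{n-1}e^{j\tau\om}\le ne^{t\om}$. The only cosmetic difference is that you make the non-expansiveness of $\sup_b$ and the uniformity in $b$ explicit, which the paper leaves implicit.
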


\begin{proof}
First equation in \eqref{eq1propcontdisccont} follows from the definition of $SV$. Next,
by \eqref{eqQinkin} and \eqref{eqODErhs},
\[
|X(\tau,x,b)-X(\tau,y,b)|\le |x-y| e^{t \om },
\]
and therefore $|SV(x)-SV(y)|$ does not exceed
\[
\sup_b [\tau B(x,b)+V(X(\tau,x,b))-\tau B(y,b)-V(X(\tau,y,b))]
\le |x-y| (\tau \ka_B +\ka_V e^{t \om }).
\]
Similarly,
\[
|S^2V(x)-S^2V(y)| \le \sup_b [\tau B(x,b)+SV(X(\tau,x,b))-\tau B(y,b)-SV(X(\tau,y,b))]
\]
\[
\le |x-y| (\tau \ka_B +\|SV\|_{Lip}e^{t \om })
\le |x-y| (\tau \ka_B +(\tau \ka_B +\ka_V  e^{t \om })e^{t \om }).
\]
By induction we obtain that
\[
\|S^nV\|_{Lip} \le \tau \ka_B (1+e^{\tau \om}+\cdots +e^{\tau \om(n-1)})+\ka_V e^{\tau \om n}
\le n\tau \ka_B e^{\tau \om n}+  \ka_V e^{\tau \om n},
\]
implying the second estimate in \eqref{eq1propcontdisccont}.
\end{proof}

\begin{theorem}
\label{thMarkDeconEvolBack1}
(i) Assume \eqref{eq0propcontdisccont} and \eqref{eqMarkdecevolback5} hold.
Then, for any $\tau \in (0,1]$, $n\in N$ and $t=\tau n$,
\begin{equation}
\label{eq1thMarkDeconEvolBack1}
\|S^n[N]V-S^nV\|\le C(d,\om) (t\ka_B +\ka_V)e^{t\om} (t \sqrt{1/(\tau N)}+|x(N)-x|)
\end{equation}
with a constant $C(d,\om)$ depending on $d$ and $\om$.
In particular, for $\tau=N^{-\ep}$ with $\ep \in (0,1)$, this turns  to
\begin{equation}
\label{eq1athMarkDeconEvolBack1}
\|S^n[N]V-S^nV\|\le C(d,\om) (t\ka_B +\ka_V)e^{t\om} (t N^{-(1-\ep)/2}+|x(N)-x|).
\end{equation}

(ii) If there exists a Lipshitz continuous optimal policy $\pi=\{b_k\}$, $k=1, \cdots , n$,
 for the limiting optimization problem,
then $\pi$ is approximately optimal for the $N$-agent problem, in the sense that for any $\ep>0$ there exists
$N_0$ such that, for all $N>N_0$,
\[
|V_n^N(x(N)) -V_n^{N,\pi}(x(N))|\le \ep.
\]
\end{theorem}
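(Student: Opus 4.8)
The plan is to compare the iterated Shapley operators $S^n[N]$ and $S^n$ one step at a time, controlling each one‑step discrepancy by the dynamic LLN of Theorem~\ref{thconv1} and keeping the Lipschitz norms of the iterates in check via Proposition~\ref{propcontdisccont}. First I would record two elementary facts about the discrete Shapley operator $S[N]$ of \eqref{eqMarkdecevolback1a}: it is monotone, and it is non-expansive in the sup-norm, $\|S[N]W_1-S[N]W_2\|\le\|W_1-W_2\|$ for bounded $W_1,W_2$, since $\sup_b[\tau B+\E_b W_1]-\sup_b[\tau B+\E_b W_2]\le\sup_b\E_b(W_1-W_2)\le\|W_1-W_2\|$ and symmetrically; the limiting operator $S$ of \eqref{eqMarkdecevolback8} has the same properties. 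Writing $W_k=S^kV$, the telescoping identity
\[
S^n[N]V-S^nV=\sum_{k=0}^{n-1}\left(S^{n-k-1}[N](S[N]W_k)-S^{n-k-1}[N](SW_k)\right)
\]
together with non-expansiveness of the powers of $S[N]$ yields $\|S^n[N]V-S^nV\|\le\sum_{k=0}^{n-1}\|(S[N]-S)W_k\|$.

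Next I would bound the one-step error. For a Lipschitz $W$, any admissible control $b$, and any lattice point $x\in\Z^d_+/N$ one has $(S[N]-S)W(x)\le\sup_b|\E W(X_N(\tau,x,b))-W(X(\tau,x,b))|$, and since $\|Q(\cdot,b)\|_{bLip}\le\om$ uniformly in $b$, estimate \eqref{eq4thconv1} of Theorem~\ref{thconv1} (applied with $s=0$, time horizon $\tau\le 1$, and initial point $x(N)=x$) gives $\|(S[N]-S)W\|\le C(d,\om)\,\|W\|_{Lip}\,\tau^{1/2}N^{-1/2}$. Taking $W=W_k=S^kV$ and using $\|S^kV\|_{Lip}\le(t\ka_B+\ka_V)e^{t\om}$ from Proposition~\ref{propcontdisccont} (valid since $k\tau\le t$), then summing the $n=t/\tau$ terms, produces $\|S^n[N]V-S^nV\|\le C(d,\om)(t\ka_B+\ka_V)e^{t\om}\,t\sqrt{1/(\tau N)}$. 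Adding the shift from the initial data, $|S^nV(x(N))-S^nV(x)|\le\|S^nV\|_{Lip}|x(N)-x|\le(t\ka_B+\ka_V)e^{t\om}|x(N)-x|$, gives \eqref{eq1thMarkDeconEvolBack1}; the case $\tau=N^{-\ep}$ in \eqref{eq1athMarkDeconEvolBack1} is then the substitution $\tau^{-1/2}N^{-1/2}=N^{-(1-\ep)/2}$.

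For part (ii), fix $\ep>0$. Since $\pi=\{b_k\}$ is Lipschitz, the same telescoping-plus-LLN scheme applied to the fixed-policy operators $S^\pi_k[N]W=\tau B(\cdot,b_k)+\E_{\,\cdot}\,W(X_N(\tau,\cdot,b_k))$ and their deterministic analogues $S^\pi_k W=\tau B(\cdot,b_k)+W(X(\tau,\cdot,b_k))$ — which are again monotone and sup-norm non-expansive, and for which the per-step LLN rate is uniform in the starting point because $\|Q(\cdot,b_k(x))\|_{bLip}\le\om$ — shows that $V_n^{N,\pi}(x(N))=(S^\pi_0[N]\cdots S^\pi_{n-1}[N]V)(x(N))\to(S^\pi_0\cdots S^\pi_{n-1}V)(x_0)=V_n^\pi(x_0)$ as $N\to\infty$. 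By part (i), $V_n^N(x(N))=S^n[N]V(x(N))\to S^nV(x_0)=V_n(x_0)$, using $x(N)\to x_0$. Since $\pi$ is optimal for the limiting problem, $V_n^\pi(x_0)=V_n(x_0)$, so both $V_n^N(x(N))$ and $V_n^{N,\pi}(x(N))$ converge to the same limit $V_n(x_0)$; hence their difference tends to $0$ and is $\le\ep$ for all large $N$.

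The main obstacle is bookkeeping rather than conceptual: one must verify that the constant in the one-step estimate \eqref{eq4thconv1} is genuinely uniform over the control parameter $b$ (in the compact parameter set) and over the lattice starting point, so that the $n$ one-step errors can be summed against a single bound, and one must track the geometric-in-time growth of $\|S^kV\|_{Lip}$ — both of which are already supplied by the uniform bound $\sup_b\|Q(\cdot,b)\|_{bLip}\le\om$ and by Proposition~\ref{propcontdisccont}.
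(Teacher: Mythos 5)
Your proof is correct and follows essentially the same route as the paper: the paper organizes the step-by-step comparison as an induction on $n$ (peeling off one application of $S[N]$ at a time and using its non-expansiveness), whereas you write the identical decomposition as a telescoping sum, but both reduce to $n$ one-step errors each controlled by \eqref{eq4thconv1} applied to $S^kV$ with the Lipschitz bound from Proposition \ref{propcontdisccont}. Part (ii) is likewise the paper's argument (convergence of the fixed-policy values combined with part (i)), just written out in more detail.
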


\begin{proof}
(i) Let $L=\sup_{k\le n}\|S^nV\|_{Lip}$. By \eqref{eq1propcontdisccont} it is bounded by
$(t\ka_B +\ka_V)e^{t \om }$. By \eqref{eq4thconv1},
\[
|S[N]V(x)-SV(x)|\le sup_b | \E V(X^N_x(\tau))-V(X_x(\tau))| \le C(d,\om)L\sqrt{\tau/N},
\]
where we used the condition $\tau \le 1$ to estimate $e^{3\tau \om } \le C(\om)$.
Next,
\[
 |S^2[N]V(x)-S^2V(x)| \le sup_b |\E S[N]V(X^N_x(\tau))-SV(X_x(\tau))|
 \]
 \[
 \le sup_b \E | S[N]V(X^N_x(\tau))- SV(X^N_x(\tau))|+sup_b |\E SV(X^N_x(\tau))-SV(X_x(\tau))|
\]
\[
\le  C(d,\om) L \sqrt{\tau/N}+C(d,\om) L \sqrt{\tau/N} \le 2C(d,\om) L \sqrt{\tau/N}.
\]
It follows by induction that
\begin{equation}
\label{eq2thMarkDeconEvolBack1}
 \|S^n[N]V-S^nV\|\le C(d,\om) n L \sqrt{\tau/N} = C(d,\om) t L \sqrt{1/(\tau N)},
\end{equation}
yielding \eqref{eq1thMarkDeconEvolBack1}.

(ii) One shows as above that for any Lipschitz continuous policy $\pi$,
the corresponding value functions $V^{\pi,N}$ converge. Combined with (i),
this yields Statement (ii).
\end{proof}

\section{Infinite horizon: discounted payoff}
\label{secdiscinfhor}

The standard optimization problem of infinite horizon planning related to the finite horizon problem
 of optimising \eqref{eqMarkdecevolback1} is the problem of maximizing the discounted sum
\begin{equation}
\label{eqMarkinfhor1}
\Pi^{\pi,N}(x(N)) =\E_{N,x(N)}\sum_{k=0}^{\infty} \tau \be^k B(x_k,b_k),
\end{equation}
with a $\be \in (0,1)$, where, as above,
\[
x_k=X_N(\tau,x_{k-1},b_{k-1}), \quad k=1,2, \cdots ,
\]
$\pi=\{b_k\}$ and $b_k=b_k(x_k)$ are measurable functions depending on the current state $x=x_k$.

In the law of large numbers limit, $N\to \infty$, we expect the limiting
problem for the principal to be the maximization of the reward
\begin{equation}
\label{eqMarkdecevolback4}
\Pi^{\pi}(x) =\sum_{k=0}^{\infty} \be^k \tau B(x_k,b_k)
\end{equation}
with $x_k=X(\tau,x_{k-1}, b_{k-1})$.

Notice firstly that
the solution to the finite-time discounting problem of the maximization of the payoff
 \begin{equation}
\label{eq1Markdecevolback2}
V_n^{\pi,N}(x(N)) =\E_{N,x(N)}\left[\tau B(x_0,b_0)+\cdots +\be^{n-1}\tau B(x_{n-1},b_{n-1})+\be^nV(x_n)\right],
\end{equation}
is given by the iterations
\begin{equation}
\label{eq2Markdecevolback2}
V_n^{N}=S_{\be}[N] V_{n-1}^N=S^n_{\be}[N]V
\end{equation}
of the corresponding discounted Shapley operator
\begin{equation}
\label{eq3Markdecevolback2}
S_{\be}[N]V(x)=\sup_b \left[\tau B(x,b)+\be \E V(X_N(\tau,x,b))\right].
\end{equation}
Similarly the solution to the corresponding limiting discounted problem
 \begin{equation}
\label{eq4Markdecevolback2}
V_n(x) =\max_{\pi}\left[\tau B(x_0,b_0)+\cdots +\be^{n-1}\tau B(x_{n-1},b_{n-1})+\be^nV(x_n)\right],
\end{equation}
is given by the iterations
\begin{equation}
\label{eq5Markdecevolback2}
V_n=S_{\be} V_{n-1}^N=S^n_{\be}V
\end{equation}
of the corresponding discounted Shapley operator
\begin{equation}
\label{eq6Markdecevolback2}
S_{\be}V(x)=\sup_b \left[\tau B(x,b)+\be V(X(\tau,x,b))\right].
\end{equation}

As a preliminary step let us recall a standard fact about the finite-step approximations to the optimal $\Pi^{\pi}$.

\begin{prop}
\label{propfiniteapprdiscdet}
Assume
\[
 \om=\sup_b \|Q(.,b)\|_{bLip}<\infty.
 \]
Let $B$ and $V$ be bounded continuous functions.
Then the sequence $S^n_{\be}V(x)$ converges, as $n\to \infty$, to the discounted infinite horizon optimal reward
\[
\Pi(x)=\sup_{\pi}\Pi^{\pi}(x),
\]
and the sequence $S^n_{\be}[N]V(x(N))$ converges, as $n\to \infty$, to the discounted infinite horizon optimal reward
\[
\Pi^{N}(x(N))=\sup_{\pi}\Pi^{\pi,N}(x(N)).
\]
\end{prop}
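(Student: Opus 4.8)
The plan is to reduce the assertion to a uniform-in-the-policy geometric tail estimate, exploiting the finite-horizon dynamic-programming identities $V_n=S^n_\be V$ and $V_n^N=S^n_\be[N]V$ already recorded in \eqref{eq5Markdecevolback2} and \eqref{eq2Markdecevolback2}, in which $V_n$ and $V_n^N$ are the maxima over policies of the $n$-step truncated discounted payoffs \eqref{eq4Markdecevolback2} and \eqref{eq1Markdecevolback2}. As a conceptual backbone I would first observe that $S_\be$ and $S_\be[N]$ are $\be$-contractions of the Banach space $C(\Si_d)$: one has $\|S_\be V\|\le\tau\|B\|+\be\|V\|$ because $X(\tau,x,b)\in\Si_d$, so $|V(X(\tau,x,b))|\le\|V\|$ (the well-posedness and forward invariance of $\Si_d$ for the characteristic flow being guaranteed by $\om=\sup_b\|Q(.,b)\|_{bLip}<\infty$), continuity of $S_\be V$ follows from the joint continuity of $(x,b)\mapsto X(\tau,x,b)$ (Proposition~\ref{ODErhs}) together with compactness of the control set, and
\[
|S_\be V_1(x)-S_\be V_2(x)|\le\be\sup_b|V_1(X(\tau,x,b))-V_2(X(\tau,x,b))|\le\be\|V_1-V_2\|,
\]
using $|\sup_b F_1-\sup_b F_2|\le\sup_b|F_1-F_2|$; the same one-line estimate handles $S_\be[N]$ with $V(X(\tau,x,b))$ replaced by the average $\E V(X_N(\tau,x,b))$, whose sup-norm is again $\le\|V\|$. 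Hence the two iterated sequences converge, uniformly on $\Si_d$, to unique fixed points independent of the starting function $V$.

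It remains to identify these limits with $\Pi=\sup_\pi\Pi^\pi$ and $\Pi^N=\sup_\pi\Pi^{\pi,N}$, which I would do directly. Write $V_n^\pi(x)=\sum_{k=0}^{n-1}\be^k\tau B(x_k,b_k)+\be^n V(x_n)$ for the discounted $n$-step reward of a fixed policy $\pi$ (the bracket in \eqref{eq4Markdecevolback2}), so that $V_n=\sup_\pi V_n^\pi$ and, in the finite-$N$ case, $V_n^{\pi,N}$ as in \eqref{eq1Markdecevolback2}. Since $|B|\le\|B\|$ and $|V|\le\|V\|$ along any trajectory staying in $\Si_d$, every $\Pi^\pi(x)$ is finite with $|\Pi^\pi(x)|\le\tau\|B\|/(1-\be)$, and, comparing the infinite discounted sum with its $n$-step truncation along the \emph{same} trajectory,
\[
|\Pi^\pi(x)-V_n^\pi(x)|\le\sum_{k\ge n}\be^k\tau\|B\|+\be^n\|V\|=\be^n\Big(\|V\|+\frac{\tau\|B\|}{1-\be}\Big).
\]
This bound is deterministic, hence passes unchanged through the expectation $\E_{N,x(N)}$ in the finite-$N$ case, and is uniform in $\pi$ and in $x\in\Si_d$. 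Taking the supremum over $\pi$ and using $|\sup_\pi a_\pi-\sup_\pi b_\pi|\le\sup_\pi|a_\pi-b_\pi|$ gives $\|V_n-\Pi\|\le\be^n(\|V\|+\tau\|B\|/(1-\be))\to0$; since $V_n=S^n_\be V$ this is exactly $S^n_\be V\to\Pi$, and the identical computation with $V_n^N=S^n_\be[N]V$ yields $S^n_\be[N]V\to\Pi^N$. (In particular the convergence is uniform on $\Si_d$, slightly more than stated.)

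There is no genuine analytic difficulty here: all the estimates are elementary geometric-series bounds governed by the discount factor $\be$. The one point deserving care — and already taken for granted in the passage preceding the statement — is the legitimacy of the finite-horizon dynamic-programming principle, i.e.\ the identities $\sup_\pi V_n^\pi=S^n_\be V$ and $\sup_\pi V_n^{\pi,N}=S^n_\be[N]V$ over the class of admissible feedback policies $\pi=\{b_k(\cdot)\}$ that are merely measurable in the current state; this is classical under the standing continuity and compactness hypotheses (a near-optimal selector in \eqref{eq6Markdecevolback2} and \eqref{eq3Markdecevolback2} being furnished by Berge's maximum theorem), and I would simply cite the dynamic-programming references already used in the chapter, e.g.\ \cite{HeLa}, \cite{KolMal1}.
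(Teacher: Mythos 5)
Your proof is correct and follows essentially the same route as the paper's: both rest on the $\be$-contraction property of the Shapley operators together with a geometric tail estimate comparing the $n$-step truncated discounted payoff with the infinite sum, uniformly over policies. The only cosmetic difference is that the paper first uses the contraction to reduce to the case $V=0$ and then bounds $\|S^n_\be 0-\Pi\|$ directly, whereas you estimate $\|S^n_\be V-\Pi\|$ for general $V$ in one step; the content is identical.
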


\begin{proof}
Since
\[
\|S^n_{\be}V-S^n_{\be}\tilde V\|\le \be^n \|V-\tilde V\|,
\]
\[
\|S^n_{\be}[N]V-S^n_{\be}[N]\tilde V\|\le \be^n \|V-\tilde V\|,
\]
it follows that, if the sequences $S^n_{\be}V$ or $S^n_{\be}[N]V$ converge for some $V$,
then these iterations converge to the same limit for any bounded $V$.
But for $V=0$ we see directly that
\[
\|S^n_{\be}V-\Pi\|\le \be^n \|B\| \frac{2}{1-\be},
\]
\[
\|S^n_{\be}[N]V-\Pi^N\|\le \be^n \|B\| \frac{2}{1-\be}.
\]
\end{proof}

\begin{theorem}
\label{thMarkDeconEvolBack2}
 Assume \eqref{eq0propcontdisccont} and \eqref{eqMarkdecevolback5} hold and let
\[
\be e^{\tau \om}\le \be_0 <1.
\]
Then the discounted optimal rewards
\[
\Pi^{N}(x(N))=\sup_{\pi}\Pi^{\pi,N}(x(N))
\]
converge, as $N\to \infty$ and $x(N)\to x$, to the discounted best reward
\[
\Pi(x)=\sup_{\pi}\Pi^{\pi}(x).
\]
\end{theorem}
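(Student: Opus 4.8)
The plan is to reduce the statement to the finite-horizon estimates already available, via a truncation-and-telescoping argument of the same flavor as the proof of Theorem \ref{thMarkDeconEvolBack1}, but now exploiting the contraction factor $\be$ of the discounted Shapley operators to obtain bounds that are uniform in the number of steps $n$. Recall $\tau$ is a fixed parameter of the discrete-time model, so $\sqrt{\tau/N}\to 0$ as $N\to\infty$.

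First I would record a uniform Lipschitz bound on the limiting iterates. Arguing as in Proposition \ref{propcontdisccont} but tracking the discount factor: from \eqref{eqODErhs} and \eqref{eqQinkin}, $|X(\tau,x,b)-X(\tau,y,b)|\le |x-y|e^{\tau\om}$, hence $|S_{\be}W(x)-S_{\be}W(y)|\le |x-y|(\tau\ka_B+\be e^{\tau\om}\|W\|_{Lip})$, and by induction, using $\be e^{\tau\om}\le\be_0<1$,
\[
\|S^n_{\be}V\|_{Lip}\le \tau\ka_B\sum_{k\ge 0}(\be e^{\tau\om})^k+\ka_V(\be e^{\tau\om})^n\le \frac{\tau\ka_B}{1-\be_0}+\ka_V=:L_{\infty}.
\]
This uniformity in $n$ is exactly where the hypothesis on $\be$ is used, and it is the step I expect to be the main obstacle: without it the Lipschitz constants would grow like $e^{n\tau\om}$ and the telescoping sum below would diverge. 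Everything else is a combination of the finite-horizon LLN of Theorem \ref{thMarkDeconEvolBack1} with standard dynamic-programming contraction bounds.

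Next I would telescope. Taking $V=0$ (legitimate by Proposition \ref{propfiniteapprdiscdet}, since all iterations converge to the same limit for any bounded $V$), I write
\[
S^n_{\be}[N]V-S^n_{\be}V=\sum_{j=0}^{n-1}S^{n-j-1}_{\be}[N]\bigl(S_{\be}[N]S^j_{\be}V-S_{\be}S^j_{\be}V\bigr).
\]
Since $\|S_{\be}[N]W-S_{\be}[N]\tilde W\|\le\be\|W-\tilde W\|$, each factor $S^{n-j-1}_{\be}[N]$ contributes $\be^{n-j-1}$; and by \eqref{eq4thconv1} applied with $t-s=\tau$ fixed (handling the lattice-versus-simplex discrepancy of the argument as in Theorem \ref{thMarkDeconEvolBack1}, and using that the LLN estimates hold uniformly over $b$ in the compact control set), $\|S_{\be}[N]W-S_{\be}W\|\le \be\, C(d,\om)\|W\|_{Lip}\sqrt{\tau/N}$. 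Combining with $\|S^j_{\be}V\|_{Lip}\le L_{\infty}$ gives
\[
\|S^n_{\be}[N]V-S^n_{\be}V\|\le C(d,\om)L_{\infty}\sqrt{\tau/N}\sum_{j=0}^{n-1}\be^{n-j}\le \frac{\be}{1-\be}\,C(d,\om)\,L_{\infty}\sqrt{\tau/N},
\]
uniformly in $n$.

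Finally I would assemble the triangle inequality: for every $n$,
\[
|\Pi^{N}(x(N))-\Pi(x)|\le \|S^n_{\be}[N]V-\Pi^{N}\|+\|S^n_{\be}[N]V-S^n_{\be}V\|+\|S^n_{\be}V\|_{Lip}\,|x(N)-x|+\|S^n_{\be}V-\Pi\|,
\]
where the first and last terms are each at most $\tfrac{2\be^n}{1-\be}\|B\|$ by Proposition \ref{propfiniteapprdiscdet}. Given $\ep>0$, I first choose $n$ so large that these two terms sum to less than $\ep/2$; for that fixed $n$, the uniform estimate of the previous paragraph and the convergence $x(N)\to x$ force the two middle terms to $0$ as $N\to\infty$. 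Hence $\limsup_{N\to\infty}|\Pi^{N}(x(N))-\Pi(x)|\le\ep$, and since $\ep$ is arbitrary the convergence follows.
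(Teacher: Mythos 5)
Your proof is correct and follows essentially the same route as the paper: the uniform-in-$n$ Lipschitz bound on $S^n_{\be}V$ from the condition $\be e^{\tau\om}\le\be_0<1$, the telescoping of $S^n_{\be}[N]V-S^n_{\be}V$ with the geometric factors $\be^{n-j}$ absorbing the one-step LLN errors, and the passage to the infinite-horizon limit via Proposition \ref{propfiniteapprdiscdet}. Your final $\ep$-argument with the triangle inequality is merely a more explicit version of the paper's concluding sentence.
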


\begin{proof}
Following the arguments of Proposition \ref{propcontdisccont}, we obtain
\[
\|S^n_{\be}V\| \le  \tau \|B\| \frac{1}{1-\be_0}+\be_0^n \|V\|,
\]
\[
\|S^n_{\be}V\|_{Lip} \le \tau \ka_B (1+\be e^{\tau \om}+\cdots +\be^{n-1}e^{\tau \om(n-1)})+\be^n\ka_V e^{\tau \om n}
\le \tau \ka_B \frac{1}{1-\be_0}+\be_0^n \ka_V,
\]
that is, unlike optimization without discounting, these norms are bound uniformly in the number of steps used.

Estimating the differences $\|S^n_{\be}[N]V-S^n_{\be}V\|$ as in the proof of Theorem \ref{thMarkDeconEvolBack1} yields
\[
\|S^n_{\be}[N]V-S^n_{\be}V\|\le C\frac{\sqrt{\tau}}{\sqrt N} (\be \|S^n_{\be}V\|_{bLip}+\cdots +\be^n \|S_{\be}V\|_{bLip})
\]
\[
\le C\frac{\sqrt{\tau}}{\sqrt N} \left(\tau \|B\|_{bLip} \frac{1}{(1-\be_0)^2}+\be_0^{n+1}\|V\|_{bLip}\right).
\]
Since  $S^n_{\be}V(x(N))$ converges to $\Pi(x)$, as $n\to \infty$, it follows that
$S^n_{\be}[N]V(x)$ converges to $\Pi(x)$, as $n\to \infty$ and $N\to \infty$.
\end{proof}

\begin{remark}
The optimal payoffs $\Pi(x)$ and $\Pi^N(x)$ are the fixed points of the Shapley operator:
$S_{\be}\Pi=\Pi$, $S_{\be}[N]\Pi^N=\Pi$. This fact can be used as a basis for another proof
of Theorem \ref{thMarkDeconEvolBack2}.
\end{remark}

\section{Continuous time modeling}
\label{secconttimemajor}

Here we initiate the analysis of the optimization problem for a forward-looking principal.
Namely, let the state space of the group being again given by vectors $x=(n_1, \cdots, n_d)/N$ from the lattice $\Z^d_+/N$,
but the efforts (budget) $b$ of the major player are chosen continuously in time aiming at optimizing
the payoff
\[
\int_t^T B(x(s),b(s)) \, ds +S_T(x(T))
\]
where $B,S_T$ are some continuous functions uniformly Lipschitz in all their variables. The optimal payoff of the major player is thus
\begin{equation}
\label{eqHJBmaj0}
S_N(t,x(N))=\sup_{b(.)}\E^N_{x(N)}  \left\{ \int_t^T B(x(s),b(s)) \, ds +S_T(x(T))\right\},
\end{equation}
where $\E^N_{x}$ is the expectation of the corresponding Markov process starting at the position $(x)$ at time $t$,
and $\tilde U$ is some class of controls (say, piecewise constant).
We are now in the standard Markov decision setting of a controlled Markov process generated by
the operator $L_{b,N}$ from \eqref{eqdefgenmeanfieldchainprinrep1}

As was shown above, the operators $L_{b,N}$ tend to a simple first order PDO, so that the limiting
optimization problem of the major player turns out to be the problem of finding
\begin{equation}
\label{eqlimoptimmajorconttime}
S(t,x)=\sup_{b(.)} \left\{ \int_t^T B(x(s),b(s)) ds +S_T(x(T))\right\},
\end{equation}
where $x(s)$ solve the system of equations
\begin{equation}
\label{eqlimoptimmajorconttimea}
\dot x_j=\sum_i x_i Q_{ij}(x,b), \quad j=1,...,d.
\end{equation}
The well-posedness of this system is a straightforward extension of the well-posedness of equations
\eqref{eqdefgenmeanfieldchainkineq1}.

It is well known (see e.g. \cite{KolMas} or any textbook on deterministic optimal control)
that the optimal payoff $S(t,x)$ of \eqref{eqlimoptimmajorconttime} represents the unique
generalized solution (so-called viscosity solution) to the HJB equation
\begin{equation}
\label{eqHJBa}
\frac{\pa S}{\pa t}+\sup_b \left[B(x,b) +\left(\frac{\pa S}{\pa x},  x Q(x,b)\right) \right] =0,
\end{equation}
with the initial (or terminal) condition $S(T,x)=S_T(x)$.

Instead of proving the convergence $ S_N(t,x(N)) \to S(t,x)$, we shall concentrate on a more
practical issue comparing the corresponding discrete time approximations, as these approximations are
usually exploited for practical calculations of $S_N$ or $S$.

The discrete-time approximation to the limiting problem of finding \eqref{eqlimoptimmajorconttime}
is the problem of finding
\begin{equation}
\label{eqMarkdecevolcontdisc1}
V_{t,n}(x)=\sup_{\pi} V_{t,n}^{\pi}(x) =\sup_{\pi}\left[\tau B(x_0,b_0)+\cdots +\tau B(x_{n-1},b_{n-1})+V(x_n)\right],
\end{equation}
where $\tau =(T-t)/n$, $x_0=x$, $V(x)=S_T(x)$ and
\begin{equation}
\label{eqMarkdecevolcontdisc2}
x_k=X(\tau, x_{k-1}, b_{k-1}), \quad k=1,2, \cdots,
\end{equation}
with $X(t,x,b)$ solving equation \eqref{eqMarkdecevolback6a}
with the initial condition $x$ at time $t=0$. It is known (see e. g. Theorem 3.4 of \cite{KolMas}) that the discrete approximations
$V_n(x)$ approach the optimal solution $S(T-t,x)$ given by \eqref{eqlimoptimmajorconttime} and solving the Cauchy problem
for the HJB \eqref{eqHJBa}.

The discrete-time approximation to the initial optimization problem
is the problem of finding
\[
V_{t,n}^N(x_0)=\sup_{\pi} V_{t,n}^{\pi,N}(x_0)
\]
\begin{equation}
\label{eqMarkdecevolcontdisc3}
 =\sup_{\pi}\E_{N,x(N)}
\left[\tau B(x_0,b_0)+\cdots +\tau B(x_{n-1},b_{n-1})+V_0(x_n)\right],
\end{equation}
where $x_k=X_N(\tau,x_{k-1},b_{k-1})$ with $X_N(t,x,b)$ denoting the Markov process with generator
\eqref{eqdefgenmeanfieldchainprinrep1} and with the strategies $\pi=\{b_k\}$ as in Section \ref{secmultistep1}.

\begin{remark}
It is also known, see e. g. Theorem 4.1 of \cite{FleSo}, that $V_n^N(x)$ with $V_0=S_T$ approach
the optimal solutions $S_N(T-t,x)$ given by \eqref{eqHJBmaj0} and solving certain HJB equation.
\end{remark}

\begin{theorem}
\label{thMarkDeconEvolcontdisc}
Let $B,S_T$ be uniformly Lipschitz in all their variables. Then,
for any $x$, $t\in [0,T]$ and $\tau=N^{-\ep}$ with $\ep \in (0,1)$,
\begin{equation}
\label{eq1thMarkDeconEvolcontdisc}
|V_{t,n}^N(x)-V_{t,n}(x)|\le C(d,\om,T) (\ka_B +\ka_V)(N^{-(1-\ep)/2}+|x(N)-x|).
\end{equation}
And consequently $V_{t,n}^N(x)$ converge, as $N\to \infty$ (and $n=(T-t)/\tau$, $\tau =N^{-\ep}$),
to the optimal solution $S(T-t,x)$ given by \eqref{eqlimoptimmajorconttime} and solving the
Cauchy problem for the HJB \eqref{eqHJBa}.
\end{theorem}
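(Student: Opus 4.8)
The plan is to recognize that the two discrete-time problems \eqref{eqMarkdecevolcontdisc1} and \eqref{eqMarkdecevolcontdisc3} are exactly the multi-step optimization problems analyzed in Section \ref{secmultistep1}, specialized to the terminal payoff $V=S_T$, running payoff $B$ and \emph{no} discounting. By dynamic programming one has $V_{t,n}=S^nV$ and $V_{t,n}^N=S^n[N]V$, where $S$ and $S[N]$ are the Shapley operators \eqref{eqMarkdecevolback8} and \eqref{eqMarkdecevolback1a} iterated $n$ times with step $\tau=(T-t)/n$. Hence the estimate \eqref{eq1thMarkDeconEvolcontdisc} is nothing but the bound \eqref{eq1athMarkDeconEvolBack1} of Theorem \ref{thMarkDeconEvolBack1}(i), read with total horizon $T-t$ in place of the total time denoted $t$ there.

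First I would verify the hypotheses of Theorem \ref{thMarkDeconEvolBack1}: the uniform Lipschitz assumptions on $B$ and $S_T$ supply \eqref{eq0propcontdisccont} with $\ka_B=\sup_b\|B(\cdot,b)\|_{Lip}$, $\ka_V=\|S_T\|_{Lip}$, while $\om=\sup_b\|Q(\cdot,b)\|_{bLip}<\infty$ is the standing regularity assumption on the rates, and $x=\lim x(N)$ is precisely \eqref{eqMarkdecevolback5}. For $\tau=N^{-\ep}$ with $\ep\in(0,1)$ one has $\tau\le 1$ once $N$ is large, so \eqref{eq1athMarkDeconEvolBack1} applies; using $n\tau=T-t\le T$ and $\sqrt{1/(\tau N)}=N^{-(1-\ep)/2}$ it gives
\[
\|S^n[N]S_T-S^nS_T\|\le C(d,\om)\,((T-t)\ka_B+\ka_V)\,e^{(T-t)\om}\,\big((T-t)N^{-(1-\ep)/2}+|x(N)-x|\big).
\]
Absorbing the factors $T-t\le T$ and $e^{(T-t)\om}\le e^{T\om}$ into a constant $C(d,\om,T)$ yields \eqref{eq1thMarkDeconEvolcontdisc}.

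For the concluding assertion I would combine this bound with the deterministic discrete-to-continuum stability already quoted in the text: by Theorem~3.4 of \cite{KolMas}, $V_{t,n}(x)=S^nS_T(x)$ converges, as $n\to\infty$ (equivalently $\tau\to 0$), to the optimal payoff $S(T-t,x)$ of \eqref{eqlimoptimmajorconttime}, which is the unique viscosity solution of the Cauchy problem for \eqref{eqHJBa} with terminal data $S(T,\cdot)=S_T$. Since $\tau=N^{-\ep}\to 0$ forces $n=(T-t)/\tau\to\infty$ as $N\to\infty$, and since the right-hand side of \eqref{eq1thMarkDeconEvolcontdisc} tends to $0$ (using $x(N)\to x$), the triangle inequality
\[
|V_{t,n}^N(x)-S(T-t,x)|\le |V_{t,n}^N(x)-V_{t,n}(x)|+|V_{t,n}(x)-S(T-t,x)|
\]
gives $V_{t,n}^N(x)\to S(T-t,x)$.

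There is essentially no new analytic obstacle here: the entire substantive content is already contained in Theorem \ref{thMarkDeconEvolBack1} and in the cited stability of the discrete control scheme. The one point demanding genuine care is the \emph{order of limits}: the bound \eqref{eq1athMarkDeconEvolBack1} is uniform in the number of steps $n$ only when the total horizon $n\tau=T-t$ is held fixed, so one cannot first send $N\to\infty$ and then $\tau\to 0$; instead $\tau$ must be coupled to $N$ via $\tau=N^{-\ep}$, so that the stochastic error $N^{-(1-\ep)/2}$ and the discretization error $\|S^nS_T-S(T-t,\cdot)\|$ vanish \emph{simultaneously}. The restriction $\ep\in(0,1)$ is exactly what makes both errors tend to zero along this coupled limit.
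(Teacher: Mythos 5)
Your proposal is correct and follows exactly the paper's route: the paper's proof of Theorem \ref{thMarkDeconEvolcontdisc} is literally the one-line observation that it is a direct consequence of Theorem \ref{thMarkDeconEvolBack1}(i), combined with the already-quoted convergence of the deterministic discrete scheme $V_{t,n}$ to $S(T-t,x)$ from \cite{KolMas}. Your spelled-out verification of the hypotheses and your remark on coupling $\tau=N^{-\ep}$ to $N$ so that both error sources vanish simultaneously are faithful elaborations of what the paper leaves implicit.
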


\begin{proof}
This is a direct consequence of Theorem \ref{thMarkDeconEvolBack1} (i).
\end{proof}

One can also imagine the situation that changing the budget bears some costs,
so that instantaneous adjustments of policies become unfeasible,
in which case the efforts (budget) $b$ of the major player can evolves depending
 on some more flexible control parameter $u\in U\in \R^r$, for instance, according to the equation
$\dot b=u$. In this case the payoff of the major player can be given, as usual, by the function
\[
\int_t^T J(x(s),b(s), u(s)) \, ds +S_T(x(T),b(T))
\]
where $J,S_T$ are some Lipschitz continuous functions.
The optimal payoff of the major player is then
\begin{equation}
\label{eqHJBmaj00}
S_N(t,x(N),b)=\sup_{u(.)}\E^N_{x(N),b}  \left\{ \int_t^T J(x(s),b(s),u(s)) \, ds +S_T(x(T),(b(T))\right\},
\end{equation}
where $\E^N_{x,b}$ is the expectation of the corresponding Markov process starting at the position $(x,b)$ at time $t$,
and the corresponding limiting optimal payoff  is
\begin{equation}
\label{eqlimoptimmajorconttime0}
S(t,x,b)=\sup_{u(.)} \left\{ \int_t^T J(x(s),b(s),u(s)) ds +S_T(x(T),(b(T))\right\},
\end{equation}
where $(x(s),(b(s))$ (depending on $u(.)$) solve the system of equations
\[
\dot b=u, \quad \dot x_j=\sum_i x_i Q_{ij}(x,b), \quad j=1,...,d.
\]
The well-posedness of this system is a straightforward extension of the well-posedness of equations
\eqref{eqdefgenmeanfieldchainkineq1}. Everything extends directly to this case,
where the limiting HJB equation becomes
\begin{equation}
\label{eqHJBa0}
\frac{\pa S}{\pa t}+\sup_u \left[J(x,b,u) +\left(\frac{\pa S}{\pa x},  x Q(x,b)\right) + u \frac{\pa S}{\pa b}\right] =0,
\end{equation}
with the initial (or terminal) condition $S(T,x,b)=S_T(x,b)$.

Similar analysis can be performed for the continuous-time discounted payoff setting, the
limiting payoff being then given by a solution to the corresponding stationary HJB equation.

\begin{remark}
Performing the same analysis for the case of non-exponential jumps of Section \ref{secfracLLN}
would lead to the limit that would not be controlled by the HJB equation
\eqref{eqHJBa}, but its fractional counterpart, the {\it fractional HJB}, derived in general setting setting in
\cite{KolVer14} and \cite{KolVer17}, which in the present case gets the form
 \begin{equation}
\label{eqHJBafr}
D_{T-*}^{A(x)}S(x,s)=\sup_b \left[B(x,b)+\left(\frac{\pa S}{\pa x},  x Q(x,b)\right) \right],
\quad s \in [0,T].
\end{equation}
\end{remark}

\section{Several major players}
\label{sectwoplayerzeromult}

Let us extend the above result to the case of several players competing on the background of
the pool of small players.

As was pointed out in Chapter \ref{chapintkolMaCor}, in the games with several principals one
can naturally distinguished two cases:  the pool of small players is common for the major
agents (think about advertising models, or several states fighting together a terrorist group)
or is specific to each major agent (think about generals controlling armies, big banks
controlling their subsidaries, etc). Of course mathematically, the second case can be embedded
in the first case (with an appropriate increase of dimension). However, the second case
is quite special, because in the deterministic limit (we are mostly concern with) it turns
to the differential games with the so-called {\it separated dynamics}, where each player
controls her own position independently of other players (like in the classical games of
pursuit and evasion). And this case is much better developed in the theory of games.

Let us start with the case of two major players, I and II,  playing a
zero-sum game. We shall first discuss the general case of a common pool of small players
 and then point out the specific features of 'separated dynamics'.

Let $b^1$ and $b^2$ denote the control parameters of players I
and II respectively, each taken from a compact subset of a Euclidean space. Let player I tries
to maximise the reward $B(x,b^1,b^2,N)$ and player II tries to minimise it.

The corresponding modification of Section \ref{secmultistep1} leads to the study
of the Marokv chains $X_N(t,x,b^1, b^2)$, which are the chains generated
by operators \eqref{eqdefgenmeanfieldchainprinrep1} with $b$ substituted by the pair $(b^1,b^2)$.

Starting again with the discrete setting we assume that players I and II update their strategies
 in discrete times $\{k\tau\}$, $k=0,1, \cdots ..., n-1$.
 The rules of the game are distinguished by the order of the moves of players I or II.

 In the case of the {\it upper game}\index{upper game} we assume that player I always makes the
 first move, which directly becomes known to player II. This leads to the problem of finding
\begin{equation}
\label{sectwoplayerzeromult1}
V_{n,up}^N(x(N))
=\inf_{\pi^1}\sup_{\pi^2} \E_{N,x(N)}
\left[\tau B(x_0,b_0^1, b_0^2)+\cdots +\tau B(x_{n-1},b^1_{n-1},b^2_{n-1})+V(x_n)\right],
\end{equation}
with $x_0=x(N)\in \Z^d_+/N$. Here strategies $\pi^1$ are rules that assign the controls $b_k^1$
 of player I on the basis of the history of the chain up to steps $k$, that is, on the basis of
 known $x_0, \cdots , x_k$, $b^1_0, \cdots , b^1_{k-1}$, $b^1_0, \cdots , b^1_{k-1}$, strategies
$\pi^2$ are rules that assign the controls $b_k^2$ of player II on the basis of the history of
the chain up to steps $k$ plus the choice $b^1_k$, that is, on the basis of known $x_0, \cdots , x_k$,
$b^1_0, \cdots , b^1_{k-1}, b^k$, $b^1_0, \cdots , b^1_{k-1}$, and
\[
x_k=X_N(\tau,x_{k-1},b^1_{k-1}, b^2_{k-1}), \quad k=1,2, \cdots.
\]
Similarly, in the case of the {\it lower game}\index{lower game} we assume that player II always makes the
 first move, which directly becomes known to player I. This leads to the problem of finding
\begin{equation}
\label{sectwoplayerzeromult2}
V_{n,low}^N(x(N))
=\sup_{\pi^2} \inf_{\pi^1} \E_{N,x(N)}
\left[\tau B(x_0,b_0^1, b_0^2)+\cdots +\tau B(x_{n-1},b^1_{n-1},b^2_{n-1})+V(x_n)\right].
\end{equation}
Here $\pi^2$ are strategies assigning controls $b_k^2$ of player II on the basis of the history
of the chain up to steps $k$ and  $\pi^1$ are strategies that assign controls $b_k^1$ of player I
 on the basis of the history of the chain up to steps $k$ plus the choice $b^2_k$.

The values $V_{n,up}^N$ and $V_{n,low}^N$ are called the {\it upper} and {\it low values}
of the multistage stochastic game with payoff $B(x,b^1,b^2,N)$ and Markov dynamics $X_N(\tau,x,b^1, b^2)$.

It is a standard fact (see e.g. \cite{PetrZen} or \cite{KolMal1}) that $V_{n,up}\ge V_{n,low}$
and each of these values can be calculated as the iterations of certain Shapley operators:
\begin{equation}
\label{sectwoplayerzeromult3}
V_{n,low}^N=S_{low}[N] V_{n-1,low}^N=S_{low}^n[N]V,
\quad V_{n,up}^N=S_{up}[N] V_{n-1,up}^N=S_{up}^n[N]V,
\end{equation}
where
\begin{equation}
\label{sectwoplayerzeromult4}
S_{low}[N]V(x)=\sup_{b^2} \inf_{b^1} \left[\tau B(x,b_1,b_2,N)+\E V(X_N(\tau,x,b_1,b_2))\right],
\end{equation}
\begin{equation}
\label{sectwoplayerzeromult5}
S_{up}[N]V(x)=\inf_{b^1} \sup_{b^2}  \left[\tau B(x,b_1,b_2,N)+\E V(X_N(\tau,x,b_1,b_2))\right],
\end{equation}

We are again interested in the law of large numbers limit, $N\to \infty$, where we expect the limiting
values $V_{n,up}$ and $V_{n,low}$ to be given by the corresponding problems with the limiting deterministic dynamics:

\begin{equation}
\label{sectwoplayerzeromult6}
V_{n,up}(x)
=\inf_{\pi^1}\sup_{\pi^2}
\left[\tau B(x_0,b_0^1, b_0^2)+\cdots +\tau B(x_{n-1},b^1_{n-1},b^2_{n-1})+V(x_n)\right],
\end{equation}
\begin{equation}
\label{sectwoplayerzeromult7}
V_{n,low}(x)
=\sup_{\pi^2} \inf_{\pi^1}
\left[\tau B(x_0,b_0^1, b_0^2)+\cdots +\tau B(x_{n-1},b^1_{n-1},b^2_{n-1})+V(x_n)\right],
\end{equation}
with $x_0=x$, the same rules for strategies as above, but with the dynamics
\[
x_k=X(\tau,x_{k-1},b^1_{k-1}, b^2_{k-1}), \quad k=1,2, \cdots,
\]
where $X(t,x,b^1,b^2)$ denotes the solution of the kinetic equation
 \begin{equation}
\label{sectwoplayerzeromult8}
\dot x_k =\sum_{i=1}^d x_i Q_{ik}(x, b^1,b^2), \quad k=1,...,d,
\end{equation}
with the initial data $x$ at time zero.

As in the case of finite $N$ games, the values $V_{n,up}$ and $V_{n,low}$
are given by the iterations of the corresponding Shapley operators:
\begin{equation}
\label{sectwoplayerzeromult9}
V_{n,low}=S_{low} V_{n-1,low}=S_{low}^nV,
\quad V_{n,up}=S_{up} V_{n-1,up}=S_{up}^nV,
\end{equation}
where
\begin{equation}
\label{sectwoplayerzeromult10}
S_{low}V(x)=\sup_{b^2} \inf_{b^1} \left[\tau B(x,b_1,b_2)+V(X(\tau,x,b_1,b_2))\right],
\end{equation}
\begin{equation}
\label{sectwoplayerzeromult11}
S_{low}V(x)=\inf_{b^1} \sup_{b^2}  \left[\tau B(x,b_1,b_2)+V(X(\tau,x,b_1,b_2))\right],
\end{equation}

The next analog of Theorem \ref{thMarkDeconEvolBack1} is straightforward.
\begin{theorem}
\label{thMarkDeconEvolBack1ze}
(i) Assume
\begin{equation}
\label{eqthMarkDeconEvolBack1ze}
\ka_V=\|V\|_{Lip}, \quad \ka_B=\sup_{b^1,b^2} \|B(.,b^1,b^2)\|_{Lip}<\infty,
\quad \om=\sup_{b^1,b^2} \|Q(.,b^1,b^2)\|_{bLip}<\infty.
\end{equation}
and \eqref{eqMarkdecevolback5}.
Then, for any $\tau \in (0,1]$, $n\in N$ and $t=\tau n$,
\[
\max(\|S_{low}^n[N]V-S_{low}^nV\|, \|S_{up}^n[N]V-S_{up}^nV\|)
\]
\begin{equation}
\label{eq1thMarkDeconEvolBack1ze}
\le C(d,\om) (t\ka_B +\ka_V)e^{t\om} (t \sqrt{1/(\tau N)}+|x(N)-x|)
\end{equation}
with a constant $C(d,\om)$ depending on $d$ and $\om$.
In particular, for $\tau=N^{-\ep}$ with $\ep \in (0,1)$, this turns  to
\[
\max(\|S_{low}^n[N]V-S_{low}^nV\|, \|S_{up}^n[N]V-S_{up}^nV\|)
\]
\begin{equation}
\label{eq1athMarkDeconEvolBack1ze}
\le C(d,\om) (t\ka_B +\ka_V)e^{t\om} (t N^{-(1-\ep)/2}+|x(N)-x|).
\end{equation}

(ii) If there exists Lipshitz continuous optimal policies $\pi^1, \pi^2$ for the low or upper game (the policies giving
minmax in \eqref{sectwoplayerzeromult6} or maxmin in \eqref{sectwoplayerzeromult7} respectively,
then $\pi^1, \pi^2$ are approximately optimal for the $N$-agent problem.
\end{theorem}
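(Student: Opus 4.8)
The plan is to reduce Theorem \ref{thMarkDeconEvolBack1ze} entirely to the machinery already developed for the single-principal case in Theorem \ref{thMarkDeconEvolBack1}, exploiting the fact that the only structural difference is that the single supremum in the Shapley operator $S[N]$ is replaced by a nested $\sup_{b^2}\inf_{b^1}$ (for $S_{low}[N]$) or $\inf_{b^1}\sup_{b^2}$ (for $S_{up}[N]$). First I would record the elementary Lipschitz fact that for any pair of functions $g_1,g_2$ on a product of control sets, $|\sup_{b^2}\inf_{b^1}g_1(b^1,b^2)-\sup_{b^2}\inf_{b^1}g_2(b^1,b^2)|\le \sup_{b^1,b^2}|g_1(b^1,b^2)-g_2(b^1,b^2)|$, and the same for $\inf_{b^1}\sup_{b^2}$. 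This is the only new observation needed: it says the map-valued operations $S_{low}, S_{up}$ are nonexpansive with respect to the sup-norm in exactly the same way the single-player $S$ is.

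Next I would re-run the proof of Proposition \ref{propcontdisccont} verbatim with $b$ replaced by the pair $(b^1,b^2)$ and $\sup_b$ replaced by $\sup_{b^2}\inf_{b^1}$ (resp.\ $\inf_{b^1}\sup_{b^2}$), using assumption \eqref{eqthMarkDeconEvolBack1ze} in place of \eqref{eq0propcontdisccont}. The min-max analogue of the Lipschitz estimate propagates identically, because $|{\inf\sup}\,g_1-{\inf\sup}\,g_2|\le\sup|g_1-g_2|$ combined with the Lipschitz bound \eqref{eqODErhs} on the characteristics $X(\tau,x,b^1,b^2)$ gives $\|S_{low}^nV\|_{Lip}\le (t\ka_B+\ka_V)e^{t\om}$ exactly as before, and likewise for $S_{up}$. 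Then, mimicking the induction in the proof of Theorem \ref{thMarkDeconEvolBack1}(i): the one-step estimate $|S_{low}[N]V(x)-S_{low}V(x)|\le \sup_{b^1,b^2}|\E V(X^N_x(\tau))-V(X_x(\tau))|\le C(d,\om)L\sqrt{\tau/N}$ follows from \eqref{eq4thconv1} with $L=\sup_{k\le n}\max(\|S_{low}^kV\|_{Lip},\|S_{up}^kV\|_{Lip})$ and $\tau\le1$ used to absorb $e^{3\tau\om}$ into $C(\om)$; the two-step bound splits as $|S_{low}^2[N]V-S_{low}^2V|\le \E|S_{low}[N]V-S_{low}V|(X^N_x(\tau))+\sup|\E S_{low}V(X^N_x(\tau))-S_{low}V(X_x(\tau))|$, each term $\le C(d,\om)L\sqrt{\tau/N}$; induction on $n$ then yields \eqref{eq1thMarkDeconEvolBack1ze}, and the substitution $\tau=N^{-\ep}$ gives \eqref{eq1athMarkDeconEvolBack1ze}. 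Part (ii) follows as in Theorem \ref{thMarkDeconEvolBack1}(ii): for a fixed Lipschitz policy pair $(\pi^1,\pi^2)$ the corresponding (non-optimized) value functions converge by the same argument applied with the operators $S_{low}, S_{up}$ replaced by their policy-evaluation versions, and combining with (i) shows that policies optimal in the limit are $\ep$-optimal for finite $N$.

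I do not expect a serious obstacle here — the proof is genuinely a transcription — but the one point requiring a little care is the precise bookkeeping of which value $L$ is the relevant Lipschitz bound: because the $n$-fold iterates $S_{low}^k V$ and $S_{up}^k V$ differ, one must take $L$ to be the supremum over $k\le n$ of the larger of the two Lipschitz norms, and check that Proposition \ref{propcontdisccont}'s induction bounds both uniformly by $(t\ka_B+\ka_V)e^{t\om}$; this is immediate from the nonexpansiveness of min-max but should be stated explicitly. A second minor point worth a sentence is that $V_{n,up}\ge V_{n,low}$ for the deterministic limiting game follows from the same inequality at finite $N$ together with the established convergence, so the limiting upper/lower values are well defined and consistent; in the separated-dynamics case one could add that, by Isaacs' condition when it holds, the two values coincide, but this is not needed for the statement as written.
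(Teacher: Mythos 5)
Your proposal is correct and is precisely the argument the paper has in mind: Theorem \ref{thMarkDeconEvolBack1ze} is stated in the text as a ``straightforward analog'' of Theorem \ref{thMarkDeconEvolBack1} with no written proof, and the transcription you give --- the nonexpansiveness of $\sup_{b^2}\inf_{b^1}$ and $\inf_{b^1}\sup_{b^2}$ under sup-norm perturbations, the min-max version of Proposition \ref{propcontdisccont}, and the same one-step-plus-induction telescoping --- is exactly the intended route. Your two bookkeeping remarks (taking $L$ as the larger of the two iterated Lipschitz norms, and the consistency of the limiting upper and lower values) are sensible additions but do not change the substance.
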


In continuous-time-setting the efforts $b^1,b^2$ of the major players are chosen continuously, with player I
aiming at maximizing and player II at minimizing
the payoff
 \begin{equation}
\label{eqHJBauppay}
\int_t^T B(x(s),b^1(s),b^2(s)) \, ds +S_T(x(T))
\end{equation}
where $J,S_T$ are some continuous functions uniformly Lipschitz in all their variables.

The discrete-time approximation to this game is the problem of finding
the upper and lower values \eqref{sectwoplayerzeromult1}, \eqref{sectwoplayerzeromult2}
with the limiting values \eqref{sectwoplayerzeromult6}, \eqref{sectwoplayerzeromult7}.

It is known (see e.g. \cite{FleSo}, \cite{PetrZen}) that under the assumptions of Lipschitz continuity of all functions involved,
the limit of values \eqref{sectwoplayerzeromult6} and \eqref{sectwoplayerzeromult7} as
$n \to \infty$, $\tau=t/n$, are the generalized (so-called {\it viscosity}) solutions
to the upper and lower HJB-Isaacs equations, respectively:
 \begin{equation}
\label{eqHJBaup}
\frac{\pa S}{\pa t}+\inf_{b^1}\sup_{b^2} \left[B(x,b^1,b^2) +\left(\frac{\pa S}{\pa x},  x Q(x,b^1,b^2)\right) \right] =0,
\end{equation}
 \begin{equation}
\label{eqHJBalow}
\frac{\pa S}{\pa t}+\sup_{b^2}\inf_{b^1} \left[B(x,b^1,b^2) +\left(\frac{\pa S}{\pa x},  x Q(x,b^1,b^2)\right) \right] =0,
\end{equation}
with the terminal condition $S_T$.

In many cases (see e.g. \cite{FleSo}, \cite{PetrZen}) one can show that solutions to \eqref{eqHJBaup} and \eqref{eqHJBalow} coincide,
the common value $V(t,x)$ being known as the {\it value}\index{value of differential game} of the zero-sum differential game with the dynamics
  \begin{equation}
\label{eq0thMarkDeconEvolcontdiscze}
\dot x_k =\sum_{i=1}^d x_i Q_{ik}(x, b^1,b^2), \quad k=1,...,d,
\end{equation}
and payoff function \eqref{eqHJBauppay}. Such differential games are said to {\it have a value}.

The following result is a straightforward extension of Theorem \ref{thMarkDeconEvolcontdisc}.
\begin{theorem}
\label{thMarkDeconEvolcontdiscze}
Assume that the game with dynamics \eqref{eq0thMarkDeconEvolcontdiscze}
and payoff \eqref{eqHJBauppay} has a value $V(t,x)$ and that all functions
involved as parameters are Lipschitz continuous Then,
for any $x$, $t\in [0,T]$ and $\tau=N^{-\ep}$ with $\ep \in (0,1)$,
both the upper and lower values \eqref{sectwoplayerzeromult1},
\eqref{sectwoplayerzeromult2} converge to $V(t,x)$, as $N\to \infty$.
\end{theorem}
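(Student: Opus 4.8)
The plan is to derive this as a direct consequence of the dynamic law of large numbers for the two-player discrete game, Theorem \ref{thMarkDeconEvolBack1ze}(i), combined with the classical fact that the time-discretized deterministic differential game converges to the HJB--Isaacs solutions; the hypothesis that the game \emph{has a value} is exactly what makes the upper and lower limits agree.

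First I would fix $t\in[0,T]$, put $\tau=N^{-\ep}$ and $n=(T-t)/\tau$, so that $n\tau=T-t$ is fixed while $n\to\infty$ and $\tau\to0$ as $N\to\infty$. Taking $V=S_T$ as terminal payoff and $B$ as running cost, the Lipschitz hypotheses guarantee $\ka_V=\|S_T\|_{Lip}<\infty$, $\ka_B=\sup_{b^1,b^2}\|B(.,b^1,b^2)\|_{Lip}<\infty$ and $\om=\sup_{b^1,b^2}\|Q(.,b^1,b^2)\|_{bLip}<\infty$, so Theorem \ref{thMarkDeconEvolBack1ze}(i) (in its form \eqref{eq1athMarkDeconEvolBack1ze}) gives
\[
\max\bigl(|V_{n,up}^N(x(N))-V_{n,up}(x)|,\ |V_{n,low}^N(x(N))-V_{n,low}(x)|\bigr)\le C(d,\om)\,(T\ka_B+\ka_V)\,e^{T\om}\,\bigl((T-t)\,N^{-(1-\ep)/2}+|x(N)-x|\bigr).
\]
Since $\ep\in(0,1)$ and $x(N)\to x$, the right-hand side tends to $0$ as $N\to\infty$; this is the step that absorbs the coupling of the two limits $N\to\infty$ and $n=(T-t)N^\ep\to\infty$, the estimate being uniform in $n$ for bounded total time.

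Next I would invoke the standard convergence of discrete approximations for deterministic differential games (as quoted in the text via \cite{FleSo}, \cite{PetrZen}): as $n\to\infty$ with $\tau=(T-t)/n$, the discrete upper and lower values $V_{n,up}(x)$ and $V_{n,low}(x)$ converge respectively to the viscosity solutions $S_{up}(T-t,x)$ and $S_{low}(T-t,x)$ of the HJB--Isaacs equations \eqref{eqHJBaup} and \eqref{eqHJBalow} with terminal data $S_T$. By assumption the game has a value, i.e. $S_{up}(T-t,x)=S_{low}(T-t,x)=V(t,x)$. A triangle inequality
\[
|V_{n,up}^N(x(N))-V(t,x)|\le |V_{n,up}^N(x(N))-V_{n,up}(x)|+|V_{n,up}(x)-V(t,x)|,
\]
and likewise for the lower value, then shows that both $V_{n,up}^N(x(N))$ and $V_{n,low}^N(x(N))$ converge to $V(t,x)$ as $N\to\infty$.

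The only nonroutine ingredient is Theorem \ref{thMarkDeconEvolBack1ze}(i) itself, whose proof repeats that of Theorem \ref{thMarkDeconEvolBack1} with the Shapley operator $S[N]$ replaced by the minmax/maxmin operators $S_{up}[N],S_{low}[N]$: the one-step bound $\|S_{up}[N]V-S_{up}V\|\le\sup_{b^1,b^2}|\E V(X^N_x(\tau))-V(X_x(\tau))|$ follows from the elementary inequality $|\inf_a\sup_c f(a,c)-\inf_a\sup_c g(a,c)|\le\sup_{a,c}|f(a,c)-g(a,c)|$, after which \eqref{eq4thconv1} and induction (using $\tau\le1$ to bound $e^{3\tau\om}$) close the estimate exactly as before. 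Thus the genuine difficulty — establishing that the Isaacs equations have a common solution — is precisely the hypothesis and requires no further argument here.
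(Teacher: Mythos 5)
Your proof is correct and follows exactly the route the paper intends: the theorem is stated there as a direct consequence of Theorem \ref{thMarkDeconEvolBack1ze}(i) (uniform-in-$n$ comparison of the stochastic and deterministic discrete upper/lower values) combined with the quoted classical convergence of the discrete deterministic values to the viscosity solutions of \eqref{eqHJBaup} and \eqref{eqHJBalow}, the existence of a value collapsing the two limits to $V(t,x)$. Your closing remark on how the minmax/maxmin Shapley operators inherit the one-step estimate via $|\inf_a\sup_c f-\inf_a\sup_c g|\le\sup_{a,c}|f-g|$ makes explicit a step the paper leaves implicit, but it is the same argument.
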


Let us describe in more detail the setting of two players controlling different pools of minor players.
Suppose we are given two families of
$Q$-matrices $\{Q(t,u,x)=(Q_{ij})(u, x)\}$ and $\{P(t,v,x)=(P_{ij})(v, x)\}$, $i,j=1, \cdots d$, depending
on $x \in \Si_d$ and parameters $u$ and $v$ from two subsets $U$ and $V$ of Euclidean spaces.
Any given bounded measurable curves $u(t),v(t)$, $t\in [0,T]$, defines a Markov chain on
\[
(\Si_d \cap \Z^d_+/N)\times (\Si_d \cap \Z^d_+/M),
\]
 specified by the generator
 \[
L^{N,M}_{t,u(t),v(t)} f (x,y)=N\sum_{i=1}^d \sum_{j=1}^d x_i Q_{ij}(t,u(t),x)[f(x-e_i/N+e_j/N,y)-f(x,y)]
\]
 \begin{equation}
\label{eqdefgenmeanfieldchaininter2norm1}
+M\sum_{i=1}^d \sum_{j=1}^d y_i P_{ij}(t,v(t),y)[f(x, y-e_i/M+e_j/M)-f(x,y)].
 \end{equation}
For $f\in C^1(\Si_d\times \Si_d)$,
 \[
\lim_{N,M \to \infty} L_{t,u(t),v(t)}^{N,M}f (x, y)
=\La_{t,u(t),v(t)} f(x,y),
\]
where
 \[
\La_{t,u(t),v(t)} f(x,y)
 =\sum_{k=1}^d \sum_{i\neq k} [x_i Q_{ik}(t,u(t),x)-x_k Q_{ki}(t,u(t), x)]\frac{\pa f}{\pa x_k}(x,y)
 \]
 \begin{equation}
\label{eqdefgenmeanfieldchaininter2lim}
+\sum_{k=1}^d \sum_{i\neq k} [y_i P_{ik}(t,v(t),y)-y_k P_{ki}(t,v(t),y)]\frac{\pa f}{\pa y_k}(x,y).
\end{equation}
The corresponding controlled characteristics are governed by the equations
\begin{equation}
\label{eqdefgenmeanfieldchaininter2kineq}
\dot x_k
=\sum_{i \neq k} [x_i Q_{ik}(t,u(t),x)-x_k Q_{ki} (t,u(t),x)]=\sum_{i=1}^d x_i Q_{ik}(t,u(t),x), \quad k=1,...,d,
\end{equation}
\begin{equation}
\label{eqdefgenmeanfieldchaininter2kineq1}
\dot y_k
=\sum_{i \neq k} [y_i P_{ik}(t,v(t),y)-y_k P_{ki} (t,v(t),y)]=\sum_{i=1}^d y_i P_{ik}(t,v(t),y), \quad k=1,...,d.
\end{equation}

For a given $T>0$, let us denote by $\Ga(T,\tau)^N$ the discrete-time stochastic game with the dynamics specified by
the generator \eqref{eqdefgenmeanfieldchaininter2norm1}, with the objective of the player $I$ (controlling $Q$ via $u$) to maximize the payoff
$\E V_T(x(T),y(T))$  for a given function $V_T$ (terminal payoff), and with the objective of player $II$ (controlling $P$ via $v$) to minimize this payoff (zero-sum game),
 while the decision for choosing $u$ and $v$ are taken at time $\tau k$, $k\in \N$.
As previously we want to approximate it by the deterministic zero-sum differential game
$\Ga(T)$, defined by dynamics \eqref{eqdefgenmeanfieldchaininter2kineq}, \eqref{eqdefgenmeanfieldchaininter2kineq1}
and the payoff of player $I$ given by $V_T(X_{t,x}(T),Y_{t,y}(T)$.

For this game with the separated dynamics it is known that the value $V(t,x,y)$ exists (see e.g. \cite{FleSo} or \cite{Mal00}).
Hence Theorem \ref{thMarkDeconEvolcontdiscze} applies, so that the corresponding upper and lower discrete-timer values
converge to $V(t,x)$ as as $N\to \infty$, where $V$ is the generalized (viscosity or minimax) solution (see \cite{FleSo} or \cite{Subbotin})
of the HJB-Isaacs equation
 \begin{equation}
\label{eqHJBIsasep}
\frac{\pa S}{\pa t}+\sup_u \left(\frac{\pa S}{\pa x},  x Q(x,u)\right) + \inf_v\left(\frac{\pa S}{\pa y},  y P(y,v)\right) =0.
\end{equation}

One can similarly explore the corresponding competitive version of setting
\eqref{eqHJBmaj00}, \eqref{eqlimoptimmajorconttime0} that takes into account
 the costs of changing the budgets of the major players.

The situation with many major players is quite similar. Though the results on the Nash equilibria in
several-player differential games are less developed (see however \cite{Mal00}, \cite{PetrZen})
the general conclusion from the above discussion is as follows: {\it whenever the discrete approximations
to certain differential game have a common limit, the same common limit have the discrete approximations
to the corresponding stochastic game of the major players and} $N$ {\it minor players, as} $N\to \infty$,
 $\tau=N^{-\ep}$, $\ep \in (0,1)$.

 \begin{remark}
 \label{remongapinme}
 Original paper \cite{Ko12} dealing with Markov chains generated by \eqref{eqdefgenmeanfieldchaininter2norm1}
 as a gap in the argument showing directly the convergence of the limiting $\tau \to 0$ dynamics of fixed number of players
 to the limiting differential game. This gap is corrected by showing the convergence of the discrete approximations
 $\tau\to 0$ with $\tau \sim N^{-\ep} \sim M^{-\ep}$.
 \end{remark}

\section{Turnpike theory for nonlinear Markov games}
\label{secturnpikenonlinM}

In Section \ref{secdiscinfhor} we discussed the infinite horizon problems with discounting payoff.
Here we shall touch upon the analysis of long time optimization problems without discounting,
working in the framework of Section \ref{secconttimemajor}. This analyses also allows one to link
 the best response principal modeling of Chapter \ref{chbestres} with the present setting of a
 forward looking principal.

For simplicity, let us assume, as in Section \ref{secLLNmajor}, that for any $x$ there exists the unique
point of maximum $b^*(x)$ of the function $B(x,b)$.

Our analysis will be based on the following assumption (A) on the existence of an optimal steady
state for the kinetic dynamics:

(A) There exists a unique $x^*$ such that
  \begin{equation}
\label{eqturnpass}
0=B(x^*,b^*(x^*)) =\max_x B(x, b^*(x))= \max_{x,b} B(x,b).
\end{equation}
Moreover, $(x^*,b^*(x^*))$ is an asymptotically stable rest point of the kinetic equations, that is
 $x^*Q(x^*,b^*(x^*))=0$ and  there exists a neighborhood $U$ of $x^*$ such that for all $x\in U$ the solution
 $X_x(t)$ of the best response dynamics \eqref{eqkineqprinrep}, that is, of the equation $\dot x=xQ(x,b^*(x))$
 starting at $x$, converges to $x^*$ exponentially:
 \begin{equation}
\label{eqturnpass1}
\|X_x(t)-x^*\|\le e^{-\la t}\|x-x^*\|.
\end{equation}

Let us also assume a power type nondegeneracy of the maximum point $x^*$, that is, the following condition:

(B) For $x$ in $U$, $J$ behaves like a power function:
\begin{equation}
\label{eqturnpass2}
J(x, b^*(x)) \sim -\tilde J |x-x^*|^{\al}
\end{equation}
for some constants $\tilde J,\al >0$.

Under these assumptions the rest point solution $X(t)=x^*$ of the best
response dynamics \eqref{eqkineqprinrep} represents a {\it turnpike}\index{turnpike}
for the optimization problem \eqref{eqlimoptimmajorconttime}-\eqref{eqlimoptimmajorconttimea}
 in the following sense.

\begin{theorem}
\label{thturnpikeODE}
Under assumptions (A) and (B) let $x\in U$, $S_T$ be bounded,  and $\tilde X_x(s)$ be the optimal trajectory for
the optimization problem \eqref{eqlimoptimmajorconttime}-\eqref{eqlimoptimmajorconttimea}.
Then, for any sufficiently small $\ep$ and any time horizon $T-t$, the time $\tau$ this trajectory spends away from the $\ep$-neighborhood of $x^*$
satisfies the following bound:
\begin{equation}
\label{eq1thturnpikeODE}
\tau \le \frac{1}{\ep^{\al}} \left[|x-x^*|^{\al} \la^{-1} +(\sup S_T-S(x^*))\tilde J^{-1}\right].
\end{equation}
Thus for large $T-t$, an optimal trajectory spends most of the time near $x^*$.
Moreover, the payoff $V_t$ for the optimization problem \eqref{eqlimoptimmajorconttime}-\eqref{eqlimoptimmajorconttimea}
has the following bounds
\begin{equation}
\label{eq1athturnpikeODE}
C |x-x^*|^{\al}+  S(x^*)\le   V_t\le \sup S_T
\end{equation}
with a constant $C$.
\end{theorem}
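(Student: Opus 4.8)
\emph{Overall approach.} Both bounds \eqref{eq1thturnpikeODE} and \eqref{eq1athturnpikeODE} will come from two facts extracted from (A) and (B): the running payoff is everywhere $\le 0$ with a quantitative gap away from $x^*$, and the best-response feedback is an admissible control whose cost lies only $O(\|x-x^*\|^\al)$ above optimal. By \eqref{eqturnpass} we have $B(x,b)\le 0$ for all $(x,b)$, and since $b^*(x)$ maximises $B(x,\cdot)$, $B(x,b)\le B(x,b^*(x))$, the latter vanishing only at $x=x^*$. Hence along every admissible trajectory $\int_t^T B\,ds\le 0$, so $V_t\le S_T(\tilde X(T))\le\sup S_T$, which is the upper half of \eqref{eq1athturnpikeODE}. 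Setting
\[
\eta(\ep)=-\sup\{B(y,b^*(y)):y\in\Si_d,\ \|y-x^*\|\ge\ep\},
\]
compactness of $\Si_d$ and uniqueness of $x^*$ give $\eta(\ep)>0$, and for small $\ep$ condition (B) near $x^*$ together with the uniform strict negativity of $B(\cdot,b^*(\cdot))$ away from $x^*$ yields $\eta(\ep)\ge\tilde J\ep^\al$.

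\emph{Lower bound on $V_t$.} I would feed the control problem \eqref{eqlimoptimmajorconttime}--\eqref{eqlimoptimmajorconttimea} with the best-response feedback $b(s)=b^*(x(s))$ on $[t,T]$. Then $x(\cdot)$ solves $\dot x=xQ(x,b^*(x))$, so by \eqref{eqturnpass1} it stays in $U$ and $\|x(s)-x^*\|\le e^{-\la(s-t)}\|x-x^*\|$. By (B), $\int_t^T B(x(s),b^*(x(s)))\,ds\ge-\tilde J\int_t^\infty e^{-\al\la(s-t)}\|x-x^*\|^\al\,ds=-C'\|x-x^*\|^\al$ with $C'$ of order $\tilde J/(\al\la)$, while the terminal state lies within $e^{-\la(T-t)}\|x-x^*\|$ of $x^*$; bounding $S_T(x(T))$ below by $S(x^*)$ (the point where the terminal term must be controlled, see below) gives $V_t\ge S(x^*)-C'\|x-x^*\|^\al$, i.e. the lower half of \eqref{eq1athturnpikeODE}.

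\emph{Time bound.} Let $\tilde X_x$ with control $\tilde b$ be an optimal trajectory and $\tau$ the Lebesgue measure of $\{s\in[t,T]:\|\tilde X_x(s)-x^*\|\ge\ep\}$. On this set $B(\tilde X_x(s),\tilde b(s))\le B(\tilde X_x(s),b^*(\tilde X_x(s)))\le-\eta(\ep)$, and elsewhere $B\le 0$, so
\[
V_t=\int_t^T B(\tilde X_x(s),\tilde b(s))\,ds+S_T(\tilde X_x(T))\le-\eta(\ep)\,\tau+\sup S_T.
\]
Hence $\tau\le(\sup S_T-V_t)/\eta(\ep)$; inserting $\eta(\ep)\ge\tilde J\ep^\al$ and the lower bound $V_t\ge S(x^*)-(\tilde J/\la)\|x-x^*\|^\al$ from the previous step produces \eqref{eq1thturnpikeODE}. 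Since its right-hand side does not depend on the horizon $T-t$, the optimal trajectory indeed spends all but a bounded amount of time in the $\ep$-neighbourhood of $x^*$.

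\emph{Main obstacle.} The delicate step is the lower bound on $V_t$: one must turn the ``$\sim$'' of \eqref{eqturnpass2} into a genuine two-sided inequality with a single constant, guarantee that the best-response trajectory stays where (B) applies (which \eqref{eqturnpass1} provides), and, above all, dominate the terminal contribution $S_T(x(T))$ by $S(x^*)$ \emph{uniformly in the horizon} --- this is exactly where exponential stability, rather than mere continuous dependence on initial data, is indispensable, since an open-loop comparison of the trajectories started at $x$ and at $x^*$ would lose a factor $e^{(T-t)\om}$ and wreck the turnpike estimate. When $S_T$ is only assumed bounded, the clean statement is with $S(x^*)$ read as the turnpike value under the normalisation \eqref{eqturnpass} (essentially $S_T(x^*)$, or crudely $\inf S_T$); modest continuity of $S_T$ removes this caveat.
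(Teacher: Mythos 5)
Your proof is correct and follows essentially the same route as the paper's: you bound the optimal value from below by the payoff of the best-response trajectory (using exponential stability and condition (B)), bound any trajectory's payoff from above by $-\tilde J\ep^{\al}\tau+\sup S_T$ using the sign and the quantitative gap of $B$ away from $x^*$, and compare. Your additional remarks on the uniform control of the terminal term and the two-sided reading of \eqref{eqturnpass2} only make explicit points the paper's argument passes over silently.
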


\begin{proof}
The cost on the trajectory  $X_x(s)$ of the best response dynamics \eqref{eqkineqprinrep}
starting in $x$ at time $t$ is of order
\begin{equation}
\label{eq2thturnpikeODE}
S_T(x^*)-\int_t^T e^{-\la (s-t)} \tilde J |x-x^*|^{\al}\, ds
=S_T(x^*)- \frac{\tilde J}{\la} |x-x^*|^{\al}(1-e^{-(T-t)}).
\end{equation}
If a trajectory spends time $\tau$ away from the $\ep$-neighborhood of $x^*$, then
the cost on this trajectory does not exceed
\[
 -\tau \tilde J \ep^{\al}+\sup S_T.
 \]
 If it is an optimal trajectory it should perform better than $X_x(s)$. Comparing with
\eqref{eq2thturnpikeODE} yields \eqref{eq1thturnpikeODE}.  The lower bound in \eqref{eq1athturnpikeODE}
is again obtained from the trajectory $X_x(s)$.
\end{proof}

Combining Theorems \ref{thturnpikeODE} and \ref{thMarkDeconEvolcontdisc} we obtain the following result.

\begin{theorem}
\label{thturnpikeODE1}
Under the assumptions of Theorem \ref{thturnpikeODE} the optimal cost $V^N_{t,n}$ for the approximate Markov chain
(from Theorem \ref{thMarkDeconEvolcontdisc}) are approximated by the cost on the limiting best response dynamics $X_x(s)$.
\end{theorem}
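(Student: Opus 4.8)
The plan is to combine a ``vertical'' limit ($N\to\infty$, comparing the stochastic $N$-agent control problem with its deterministic kinetic counterpart) with a ``horizontal'' one (the turnpike reduction of the deterministic problem to the explicitly integrable best response dynamics). On the vertical side, Theorem \ref{thMarkDeconEvolcontdisc} with estimate \eqref{eq1thMarkDeconEvolcontdisc} shows that, under the Lipschitz hypotheses (which are part of the hypotheses of Theorem \ref{thturnpikeODE}), the optimal cost $V^N_{t,n}(x)$ lies within $C(d,\om,T)(\ka_B+\ka_V)(N^{-(1-\ep)/2}+|x(N)-x|)$ of $V_{t,n}(x)$, and $V_{t,n}(x)$ converges (Theorem~3.4 of \cite{KolMas}) to the value $S(T-t,x)=V_t$ of the continuous-time problem \eqref{eqlimoptimmajorconttime}. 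On the horizontal side, Theorem \ref{thturnpikeODE} bounds $V_t$ through \eqref{eq1athturnpikeODE} and \eqref{eq1thturnpikeODE}, while \eqref{eq2thturnpikeODE} computes the payoff
\[
\Phi(x)=\int_t^T B(X_x(s),b^*(X_x(s)))\,ds+S_T(X_x(T))
\]
accrued along the best response trajectory $X_x(\cdot)$ solving \eqref{eqkineqprinrep}; for $x$ in the basin $U$ this payoff differs from $V_t$ only by the transient contribution $\tfrac{\tilde J}{\la}|x-x^*|^{\al}$. Chaining the two chains of inequalities gives that $V^N_{t,n}(x)$ is approximated by $\Phi(x)$ with an explicit error.

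Concretely, I would first fix $\ep\in(0,1)$, set $\tau=N^{-\ep}$ and $n=\lfloor(T-t)/\tau\rfloor$, and apply \eqref{eq1thMarkDeconEvolcontdisc} verbatim; this and the identification $V_{t,n}(x)\to S(T-t,x)$ are pure bookkeeping and use nothing beyond the cited results. Then I would invoke assumptions (A) and (B) exactly as in the proof of Theorem \ref{thturnpikeODE}: the cost along $X_x(\cdot)$ is given by \eqref{eq2thturnpikeODE}, the time spent away from the $\ep$-neighbourhood of $x^*$ is bounded by \eqref{eq1thturnpikeODE}, and the optimum is sandwiched by \eqref{eq1athturnpikeODE}, which together let me replace $V_t$ by $\Phi(x)$ up to $O(|x-x^*|^{\al})$. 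Assembling the three estimates yields the statement.

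The main obstacle is matching the two asymptotic regimes. The constant $C(d,\om,T)$ in \eqref{eq1thMarkDeconEvolcontdisc} grows with $T$ (it carries the $e^{t\om}$ inherited from Theorems \ref{thMarkDeconEvolBack1} and \ref{thconv1}), whereas the turnpike conclusion of Theorem \ref{thturnpikeODE} only bites for large horizon $T-t$. Thus the result is not a product of two independent limits: one must couple the horizon to the particle number, e.g. require $e^{\om T}N^{-(1-\ep)/2}\to 0$, so that $T=T(N)$ may grow at most logarithmically in $N$, and only along such a joint regime does the combined error vanish. A secondary subtlety, which I would handle in the spirit of Theorem \ref{thstableequiMar}, is that \eqref{eq1athturnpikeODE} is one-sided ($\Phi(x)\lesssim V_t\le\sup S_T$): to pin $V_t$ two-sidedly to $\Phi(x)$ one has to note that an optimal trajectory, spending all but $O(1)$ of its time within $\ep$ of $x^*$, can change the terminal state by only $O(1)$ and hence, by Lipschitz continuity of $S_T$, cannot beat $\Phi(x)$ by more than a bounded boundary-layer amount; and to check that the $1/N$-discretization of the state space does not destroy the basin condition \eqref{eqturnpass1}, so that the prelimiting Markov trajectories also concentrate near $x^*$. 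Once these two points are in place, the rest is a routine concatenation of results already established.
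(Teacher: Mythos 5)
Your proposal is correct and follows essentially the same route as the paper: Theorem \ref{thturnpikeODE1} is stated there without a separate proof, simply as the combination of the convergence estimate of Theorem \ref{thMarkDeconEvolcontdisc} (which pins $V^N_{t,n}$ to the deterministic value $V_t$) with the turnpike bounds \eqref{eq1thturnpikeODE}--\eqref{eq1athturnpikeODE} of Theorem \ref{thturnpikeODE} (which pin $V_t$ to the cost along the best response trajectory $X_x(\cdot)$), exactly as you do. Your additional remarks on coupling the horizon $T$ to $N$ because of the $e^{\om T}$ growth of the constant, and on the one-sidedness of the sandwich \eqref{eq1athturnpikeODE}, go beyond what the paper records but are consistent with, and a useful sharpening of, its loosely stated ``are approximated by'' conclusion.
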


For a general introduction to the turnpike theory, related background and bibliography we can refer to paper \cite{KoWei12}, see also monograph \cite{Zas06} for the case of deterministic control.


\chapter{Models of growth under pressure}
\label{secmodgrowthpres}

The results of this chapter extend the results of Chapters \ref{chbestres}
and \ref{strategicprincipal} to the case of a countable state-space of small players,
and moreover, to the case of processes that allow for the change of the number of particles
(thus going beyond the simple migrations that we played with so far), where physical particles
correspond in this setting to the coalitions (stable groups) of agents. This extension is
carried out in order to include important models of evolutionary coalition building, merging
and splitting (banks, subsidiaries, etc), strategically enhanced preferential attachment
and many other. The mathematics of this chapter is more demanding than in the rest of our
presentation, and its results are not used in other parts of the book. It is based on some elements
of infinite -dimensional analysis, the analysis of functions on the Banach space of sequences
$l_1$ and of the ODEs in this space. We start with brief description of the tools used.

\section{Preliminaries: ODEs in $l_1$}
\label{secprelonl1}

We recall some elementary results on the positivity preserving ODEs in $l_1^+$
starting with reminding some basic notations.
As usual, we denote by $\R^{\infty}=\{(x_1, x_2, \cdots )\}$ the space of sequences of real numbers
and by $l_1$ the Banach space of sumable sequences
\[
l_1=\{(x_1, x_2, \cdots ): \|x\|=\sum_j |x_j|<\infty\}.
\]
By $\R^{\infty}_+$ and $l_1^+$ we denote the subsets of theses space with non-negative coordinates $x_j$.

All notations for basic classes of regular functions extend automatically to this infinite-dimensional setting.
For instance, for a convex closed subset $Z$ of $l_1$, $C(Z)$ denotes the space of bounded continuous functions
equipped with the sup-norm: $\|f\|=\sup_x |f(x)|$. For these functions the Lipschitz constant is
 \begin{equation}
\label{eqdefLipnorml1l1}
\|f\|_{Lip} = \sup_{x\neq y} \frac{|f(x)-f(y)|}{\|x-y\|}=\sup_j \sup \frac{|f(x)-f(y)|}{|x_j-y_j|},
\end{equation}
where the last $\sup$ is the supremum over the pairs $x,y$ that differ only in its $j$th coordinate.
By $C_{bLip}(Z)$ we denote the space of bounded Lipschitz functions
with the norm $\|f\|_{bLip}=\|f\|+\|f\|_{Lip}$.
By $C^k(Z)$ we denote the space of functions on $Z$ with continuous uniformly bounded partial derivatives equipped with the norm
\[
\|f\|_{C^k(Z)}=\|f\|+\sum_{j=1}^k \| f^{(j)}(Z)\|,
\]
where $\|f^{(j)}(Z)\|$ is the supremum of the magnitudes of all partial derivatives of $f$ of order $j$.
As in case of functions on $\R^d$, $\|f\|_{C^1}=\|f\|_{bLip}$.
The space  $C(Z,l_1)$ of bounded continuous $l_1$-valued functions $f=(f_i):Z\to l_1$,
will be also usually denoted $C(Z)$, their norm being
$\|f\|=\sup_{x\in Z} \|f(x)\|$.

We shall denote by $(g,f)$ the inner product $(g,f)=\sum_{k=1}^{\infty} g_kf_k$ for the elements $g,f \in \R^{\infty}$
such that $\sum_{k=1}^{\infty} |g_k| \, |f_k| <\infty$.

As subsets $Z$ of $l_1$ we shall mostly use the sets
\[
\MC(L)=\{y\in \R^{\infty}: \sum_j L_j|y_j| < \infty \}, \quad \MC_{\le \la}(L)=\{y\in \MC(L): \sum_j L_j|y_j| \le \la \},
\]
\[
\MC^+(L)=\MC(L)\cap \R^{\infty}_+, \quad \MC^+_{\le \la}(L)=\MC_{\le \la}(L)\cap \R^{\infty}_+.
\]
for a non-decreasing sequence $L\in \R^{\infty}_+$ and  $\la >0$.
For our purposes we will need only two examples $L$: $L(j)=1$, for which $\MC(L)=l_1$, and $L(j)=j$.
If $L_j\to \infty$, as $j\to \infty$, the sets $\MC^+_{\le \la}(L)$ are easily seen to be compact in $l_1$.


The function $g:\MC^+(L)\to \MC(L)$ for some $L\in \R^{\infty}_+$ is called
{\it conditionally positive} if $g_j(x) \ge 0$ whenever $x_j=0$.
We will work with the $l_1$-valued ODEs
\begin{equation}
\label{eqeqinRinfty}
\dot x = g(x) \Longleftrightarrow \{ \dot x_j =g_j(x) \, \text{for all} \, j =1, \cdots \},
\end{equation}
with such r.h.s. $g$. Since $g$ may be defined only on $\MC^+(L)$, we are thus reduced to the analysis
of solutions that belong to $\MC^+(L)$ for all times. The vector $L$ (or the corresponding function on $\N$)
is said to be a {\it Lyapunov vector} (or a {\it Lyapunov function})
for equation \eqref{eqeqinRinfty} (or for its r.h.s. function $g$), if the {\it Lyapunov condition}
\begin{equation}
\label{eqeqinRinfLyiap}
(L,g(x))\le a (L,x)+b, \quad x\in \MC^+(L),
\end{equation}
holds with some constants $a,b$.

The Lyapunov function $L$ is called {\it subcritical} (resp. {\it critical}) for $g$,
or $g$ is said to be $L$- {\it subcritical} (resp. $L$- {\it critical}),
if $(L,g(x)) \le 0$ (resp. $(L,g(x))=0$) for all $x\in \MC^+(L)$. In the last case $L$
is also referred to as the {\it conservation law}.

\begin{theorem}
\label{ODERinfty}
Let $g:\MC^+(L) \to \MC(L)$ be conditionally positive and Lipschitz continuous
on any set $\MC^+_{\le \la}(L)$ with some Lipschitz constant $\ka(\la)$, and $L$ be the Lyapunov vector for $g$.
Then for any $x\in \MC^+(L)$
there exists a unique global solution $X(t,x)\in \MC^+(L)$ (defined for all $t\ge 0$) of equation \eqref{eqeqinRinfty}
in $l_1$ with the initial condition $x$. Moreover, if $x\in \MC^+_{\le \la}(L)$ and $a\neq 0$, then
\begin{equation}
\label{eq1ODERinfty}
X(t,x)\in \MC^+_{\le \la(t)}(L),
\quad \la(t)=e^{at}\left(\la+\frac{b}{a}\right) - \frac{b}{a}=e^{at}\la +(e^{at}-1)\frac{b}{a}.
\end{equation}
If $a=0$, then the same holds with $\la(t)= \la +bt$, and if $f$ is $L$-critical, then with $\la(t)=\la$.
Finally, $X(t,x)$ is Lipschitz as the function of $x$:
  \begin{equation}
\label{eq2ODERinfty}
\|X(t,x)-X(t,y)\| \le e^{\ka(\la(t))}\|x-y\|.
\end{equation}
\end{theorem}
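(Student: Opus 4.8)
The plan is to treat \eqref{eqeqinRinfty} as a Banach-space ODE on the positive cone and run a Picard-type argument, using the Lyapunov estimate to keep solutions in a fixed compact-ish set $\MC^+_{\le\la(t)}(L)$ where the Lipschitz bound $\ka(\la)$ is under control. First I would establish \textbf{a priori boundedness}: if $X(t,x)$ is a solution starting in $\MC^+_{\le\la}(L)$, then for the test sequence $L$ one has, formally, $\frac{d}{dt}(L,X(t,x)) = (L,g(X(t,x))) \le a(L,X(t,x)) + b$, and Gronwall's inequality yields $(L,X(t,x)) \le e^{at}(\la + b/a) - b/a =: \la(t)$ (with the obvious modifications when $a=0$ or $g$ is $L$-critical). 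To make this rigorous one must justify differentiating the infinite sum $(L,X(t,x)) = \sum_j L_j X_j(t,x)$ term by term; since all terms are non-negative (once positivity is known) and the partial sums are monotone, monotone convergence plus the equation for each coordinate handles this. This is essentially \eqref{eqeqinRinfLyiap} doing its job.

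Second, \textbf{local existence and uniqueness}: fix $\la$ and let $T_\la$ be small enough that $\la(t) \le \la + 1$ (say) for $t \le T_\la$ and that $\ka(\la+1)\,T_\la < 1$. On the complete metric space $C([0,T_\la], \MC^+_{\le \la+1}(L))$ the integral operator $(\Phi X)(t) = x + \int_0^t g(X(s))\,ds$ is a contraction by the Lipschitz bound $\ka(\la+1)$, so there is a unique fixed point. One checks the fixed point stays in the positive cone: this is where \emph{conditional positivity} ($g_j(x)\ge 0$ when $x_j=0$) is used — the standard argument is that if $x\in\R^\infty_+$ then $X_j(t,x) \ge x_j - \int_0^t |g_j(X(s)) - g_j(\text{projection})|\,ds$, or more cleanly, approximate $g$ by $g^\ep_j(x) = g_j(x) + \ep$ which is strictly inward-pointing on the boundary, solve, and let $\ep\to 0$; conditional positivity guarantees the limit stays in $\R^\infty_+$. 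Combined with the a priori bound from step one, the local solution never leaves $\MC^+_{\le\la(t)}(L)$, so it can be continued: on each successive interval the relevant bound is $\la(t)$ which grows at most exponentially, hence is finite for every $t$, giving a \textbf{global} solution and \eqref{eq1ODERinfty}.

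Third, \textbf{Lipschitz dependence on the initial data}, \eqref{eq2ODERinfty}: for two solutions $X(t,x), X(t,y)$ with $x,y \in \MC^+_{\le\la}(L)$, both stay in $\MC^+_{\le\la(t)}(L)$ for $t$ in the time interval of interest, so writing $X(t,x) - X(t,y) = (x-y) + \int_0^t (g(X(s,x)) - g(X(s,y)))\,ds$ and using $\|g(u)-g(v)\| \le \ka(\la(t))\|u-v\|$ for $u,v \in \MC^+_{\le\la(t)}(L)$ (the Lipschitz constant is monotone in $\la$, so $\ka(\la(s)) \le \ka(\la(t))$ for $s\le t$), Gronwall gives $\|X(t,x)-X(t,y)\| \le e^{\ka(\la(t))\,t}\|x-y\|$; the stated bound $e^{\ka(\la(t))}\|x-y\|$ presumably absorbs the $t$ into the constant or reflects a slightly different bookkeeping, but the mechanism is the same.

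The main obstacle I anticipate is the \textbf{interplay between the a priori Lyapunov bound and the domain of definition of $g$}: $g$ is only defined (and only Lipschitz, with $\la$-dependent constant) on the sets $\MC^+_{\le\la}(L)$, so one cannot invoke an off-the-shelf Banach-space Picard–Lindelöf theorem directly — one must interleave the contraction argument with the Lyapunov estimate so that the solution is confirmed to remain in the region where the next step's Lipschitz constant applies, and one must make sure the successive time steps $T_{\la(t_k)}$ do not shrink to zero (they don't, because $\la(t)$ grows only exponentially and $\ka$ is a fixed function, so $T$ can be chosen locally bounded below). The term-by-term differentiation of the infinite inner product $(L,X(t,x))$ is a second, more technical point requiring care, resolved by the non-negativity of all summands once positivity is in hand.
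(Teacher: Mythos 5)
Your overall architecture (Lyapunov a priori bound via Gronwall, local fixed point, continuation, Gronwall again for \eqref{eq2ODERinfty}) matches the paper's, and you correctly identify the crux: the interplay between positivity, the Lyapunov bound, and the fact that $g$ is only defined on $\MC^+(L)$. But the step where you resolve that crux has a genuine gap. The map $(\Phi X)(t)=x+\int_0^t g(X(s))\,ds$ does \emph{not} send $C([0,T],\MC^+_{\le\la+1}(L))$ into itself: an iterate $X$ can have $X_j(s)=\de>0$ small with $g_j(X(s))$ of order $-K\de$, so $(\Phi X)_j(t)=x_j-K\de t$ can go negative. Since $g$ is not defined off the cone, the Banach fixed point argument on the space you chose simply does not start, and this is exactly the obstruction the paper flags in the remark following the theorem. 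Neither of your two patches repairs this as stated: the first ("$X_j\ge x_j-\int|g_j(X)-g_j(\text{projection})|$") is not an argument about the \emph{iteration}, only about a solution once you have one; and the second, $g^\ep_j=g_j+\ep$, is not even a map into $l_1$ (you would be adding the non-summable vector $(\ep,\ep,\dots)$), and even with a summable perturbation you would still have to prove existence and positivity for the perturbed system, which is the same problem.

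The paper's device is the one your first patch is groping toward. From conditional positivity plus the Lipschitz bound one gets $g_j(y)\ge -Ky_j$ on $\MC^+_{\le\la(T)}(L)$ with $K=\ka(\la(T))$; one then rewrites the equation as $\dot y=(g(y)+Ky)-Ky$ and defines $\Phi_x(y)$ not as the naive integral operator but as the solution of the \emph{linear} equation $\frac{d}{dt}\Phi_x(y)=g(y(t))+Ky(t)-K\Phi_x(y)$, i.e.
\[
[\Phi_x(y)](t)=e^{-Kt}x+\int_0^te^{-K(t-s)}\bigl[g(y(s))+Ky(s)\bigr]\,ds,
\]
which manifestly preserves positivity because the integrand is non-negative. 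One then checks by a direct computation with the Lyapunov condition that $\Phi_x$ preserves the sets $\{y:(L,y)\le\la(t)\}$, and that it is a contraction for small $t$; fixed points of $\Phi_x$ are exactly the positive solutions. With this substitution your remaining steps (term-by-term differentiation of $(L,X(t,x))$, continuation using the at-most-exponential growth of $\la(t)$, and the final Gronwall estimate, which indeed yields $e^{t\ka(\la(t))}\|x-y\|$) go through as you describe.
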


\begin{remark}
Intuitively, this result is clear. In fact, by conditional positivity, the vector field $g(x)$
on any boundary point of $\R^{\infty}_+$ is directed inside or tangent to the boundary, thus not allowing
a solution to leave it. On the other hand, by the Lyapunov condition,
\[
(L,X(t,x)) \le (L,x)+a\int_0^t (L, X(s,x)) \, ds +bt
\]
implying \eqref{eq1ODERinfty} by Gronwall's lemma.
However, a rigorous proof is not fully straightforward, because already the existence of a solution
is not clear: an attempt to construct it via a usual fixed-point argument reducing it to the integral equation
$x(t)=\int_0^tg(x(s))\,ds$, or by the standard Euler or Peano approximations encounters a problem, since all these approximations
may not preserve positivity (and thus may jump out of the domain where $g$ is defined).
\end{remark}

\begin{proof}
Assuming $a\neq 0$ for definiteness, let
\[
M^x_{a,b}(t)=\left\{y\in \R^{\infty}_+: (L,y)\le e^{at}\left((L,x)+\frac{b}{a}\right) - \frac{b}{a} \right\}.
\]
Fixing $T$, let us define the space $C_{a,b}(T)$ of continuous functions $y: [0,T] \mapsto \R_+^{\infty}$ such that
$y([0,t]) \in M^x_{a,b}(t)$ for all $t\in [0,T]$.

Let $g_L=g_L(x)$ be the maximum of the Lipschitz constants of all $g_j$ on $M^x_{a,b}(T)$.
Let us pick up a constant $K=K(x)\ge g_L$. By conditional positivity, $g_j(y) \ge -Ky_j$ in $M^x_{a,b}(T)$,
because
\[
\|g_j(y)-g_j(y_1, \cdots, y_{j-1},0,y_{j+1}, \cdots )\|\le Ky_j
\,\, \text{and} \,\, g_j(y_1, \cdots, y_{j-1},0,y_{j+1}, \cdots )\ge 0.
\]
Hence, by rewriting equation \eqref{eqeqinRinfty} equivalently as
\begin{equation}
\label{eqeqinRnneglin}
\dot y = (g(y)+Ky)-Ky \, \Longleftrightarrow \, \{  \dot y_j = (g_j(y)+Ky_j)-Ky_j, \quad j =1, \cdots , n \},
\end{equation}
we ensure that the 'nonlinear part' $g_j(y)+Ky_j$ of the r.h.s. is always non-negative.

We modify the usual approximation scheme (see remark above) for ODEs
by defining the map $\Phi_x$ from $C_{a,b}(T)$ to itself in the following way:
for a $y\in C([0,T], \R_n^+)$ let $\Phi_x(y)$ be the solution of the equation
\[
\frac{d}{dt} [\Phi_x(y)](t) = g(y(t))+Ky(t) -K[\Phi_x(y.)](t),
\]
with the initial data $[\Phi_x(y)](0)=x$. It is a linear equation with the unique explicit solution,
which can be taken as an alternative definition  of $\Phi_x$:
\[
[\Phi_x(y)](t)=e^{-Kt}x+\int_0^te^{-K(t-s)}[g(y(s))+Ky(s)] \, ds.
\]
Clearly, $\Phi_x$ preserves positivity and fixed points of $\Phi$ are positive solutions to \eqref{eqeqinRnneglin} with the initial data $x$.

Let us check that $\Phi$ takes $C_{a,b}(T)$ to itself. If $y\in C_{a,b}(T)$,
\[
(L, [\Phi_x(y](t)))=e^{-Kt} (L,x)+\int_0^t e^{-K(t-s)}[(a+K)(L,y(s))+b] \, ds
\]
\[
\le e^{-Kt} (L,x)+\int_0^t e^{-K(t-s)}[(a+K) \left( e^{as}\left((L,x)+\frac{b}{a}\right) - \frac{b}{a}\right) +b] \, ds
 \]
 \[
=(L,x)e^{-Kt}+e^{-Kt}(e^{(K+a)t}-1) ((L,x)+b/a)-(b/a)e^{-Kt} (e^{Kt}-1)
\]
\[
 =(L,x)e^{at}+\frac{b}{a} (e^{at}-1).
  \]
Notice that, due to this bound, the iterations of $\Phi$ remain in $C_{a,b}(T)$
and hence it is justified to use the Lipshitz constant $K$ for $g$.

Next, the map $\Phi(t)$ is a contraction for small $t$, because
 \[
\|[\Phi_{x_1} (y^1_.)](t)-[\Phi_{x_2} (y^2_.)](t)\|
\le (K+g_L) \int_0^t \|y^1_s-y^2_s\| \, ds +\|x_1-x_2\|
\]
and the proof of the existence and uniqueness of a fixed point is completed by the usual application of the Banach fixed point principle.

Finally,
\[
\|X(t,x)-X(t,y)\|\le \int_0^t \ka(\la(t))\|X(s,x)-X(s,y)\| \, ds +\|x-y\|,
\]
and \eqref{eq2ODERinfty} follows by usual Gronwall's lemma.
\end{proof}

Once the solution $X(t,x)$ are constructed the linear operators $T^t$,
\begin{equation}
\label{eqdefflowonkineq}
T^tf(x)=f(X(t,x)), \quad t\ge 0,
\end{equation}
become well defined contractions in $C(\MC^+(L))$ forming a semigroup. In case $a=b=0$,
the operators $U^t$ form a semigroup of contractions also in $C(\MC_{\le \la}^+(L))$ for any $\la$.

\begin{theorem}
\label{lemonsenseBanach1}
Under the assumptions of Theorem \ref{ODERinfty} assume additionally
that $g$ is twice continuously differentiable on $\MC^+(L)$ so that
for any $\la$
\begin{equation}
\label{eq1lemonsenseBanach1}
 \|g^{(2)}(\MC^+_{\le \la}(L))\| \le D_2(\la)
\end{equation}
with some continuous function $D_2$.
Then the solutions to $X(t,x)$ from Theorem \ref{ODERinfty} are twice continuously differentiable with respect to initial data and
\begin{equation}
\label{eq1lemonsenseBanach10}
\|X(t,.)^{(1)}(\MC^+_{\le \la}(L))\|\le e^{t\ka(\la(t))},
\end{equation}
\begin{equation}
\label{eq2lemonsenseBanach1}
 \|X(t,.)^{(2)}(\MC^+_{\le \la}(L))\|=\sup_{j,i}\sup_{x\in \MC^+_{\le \la}(L)}  \|\frac{\pa ^2X(t,x)}{\pa x_i \pa x_j}\|
\le t D_2(\la(t)) e^{3t\ka(\la(t))}.
\end{equation}
Moreover,
\begin{equation}
\label{eq3lemonsenseBanach1}
\|(T^tf)^{(1)}(\MC^+_{\le \la}(L))\| =\sup_j \sup_{x\in \MC^+_{\le \la}(L)} \left| \frac{\pa}{\pa x_j}f(X(t,x))\right|
\le \|f^{(1)}(\MC^+_{\le \la}(L))\| e^{t\ka(\la(t))},
 \end{equation}
  \[
\|(T^tf)^{(2)}(\MC^+_{\le \la}(L))\| =\sup_{j,i} \sup_{x \in \MC^+_{\le \la}(L)} \left| \frac{\pa^2}{\pa x_j \pa x_i}f(X(t,x))\right|
\]
 \begin{equation}
\label{eq4lemonsenseBanach1}
\le \|f^{(2)}(\MC^+_{\le \la (t)}(L))\|   e^{2t\ka(\la(t))}+t\|f^{(1)}(\MC^+_{\le \la(t)}(L))\| D_2(\la(t))  e^{3t\ka(\la(t))}
\end{equation}
\end{theorem}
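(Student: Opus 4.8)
The plan is to follow the same route as in the finite-dimensional Proposition \ref{ODEsmoo}, differentiating the kinetic ODE \eqref{eqeqinRinfty} with respect to the initial data and controlling the resulting linear variational equations by Gronwall's lemma; the only genuinely new ingredient is that every estimate must be carried out on the compact invariant sets $\MC^+_{\le \la(t)}(L)$ supplied by Theorem \ref{ODERinfty}. First I would recall that for $x\in \MC^+_{\le \la}(L)$ the solution $X(t,x)$ stays in $\MC^+_{\le \la(t)}(L)$, and that on this set $g$ is Lipschitz with constant $\ka(\la(t))$ and, by \eqref{eq1lemonsenseBanach1}, has second differential bounded by $D_2(\la(t))$; in particular the first differential $g^{(1)}(y)$, viewed as a bounded linear operator on $l_1$, has operator norm $\le \ka(\la(t))$ there, so the variational equations below are linear ODEs in $l_1$ with coefficients bounded uniformly on $[0,T]$.

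For the first derivative, $\xi_j(t)=\pa X(t,x)/\pa x_j$ should solve $\dot\xi_j = g^{(1)}(X(t,x))\,\xi_j$ with $\xi_j(0)=e_j$. To justify that this derivative exists I would work, as in the proof of Theorem \ref{ODERinfty}, with the shifted equation \eqref{eqeqinRnneglin} on the space $C_{a,b}(T)$ and differentiate the fixed-point map $\Phi_x$, or equivalently pass to the limit in the difference quotients $(X(t,x+he_j)-X(t,x))/h$: since $g$ is $C^2$ with derivatives uniformly bounded on $\MC^+_{\le \la(T)}(L)$, these quotients converge uniformly on $[0,T]$ and their limit solves the linear equation above. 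Gronwall's lemma applied to $\|\xi_j(t)\|$ then gives \eqref{eq1lemonsenseBanach10}, and differentiating \eqref{eqdefflowonkineq} by the chain rule, $(T^tf)^{(1)}(x)=f^{(1)}(X(t,x))\,X(t,.)^{(1)}(x)$, yields \eqref{eq3lemonsenseBanach1}.

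For the second derivative, $\eta_{ij}(t)=\pa^2 X(t,x)/\pa x_i\pa x_j$ satisfies the inhomogeneous linear equation $\dot\eta_{ij}=g^{(1)}(X(t,x))\,\eta_{ij}+g^{(2)}(X(t,x))[\xi_i,\xi_j]$ with $\eta_{ij}(0)=0$, obtained by differentiating the equation for $\xi_j$ once more, with the same difference-quotient justification one order up. From the integrated form and the bound $\|\xi_i(s)\|\le e^{s\ka(\la(t))}$ one gets $\|\eta_{ij}(t)\|\le \int_0^t \ka(\la(t))\|\eta_{ij}(s)\|\,ds + t\,D_2(\la(t))\,e^{2t\ka(\la(t))}$, and Gronwall's lemma produces the factor $e^{3t\ka(\la(t))}$ of \eqref{eq2lemonsenseBanach1}. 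Finally, differentiating $T^tf(x)=f(X(t,x))$ twice, $(T^tf)^{(2)}=f^{(2)}(X(t,x))[X^{(1)},X^{(1)}]+f^{(1)}(X(t,x))\,X^{(2)}$, and inserting \eqref{eq1lemonsenseBanach10} and \eqref{eq2lemonsenseBanach1} gives \eqref{eq4lemonsenseBanach1}. The main obstacle is precisely the rigorous justification of differentiability of the flow in $l_1$: one cannot simply quote a textbook theorem, and the positivity-preservation mechanism that forced the nonstandard construction in Theorem \ref{ODERinfty} must be respected — I expect the cleanest route is to carry the difference quotients through the contraction $\Phi_x$ on $C_{a,b}(T)$, where the uniform bounds on $g^{(1)}$ and $g^{(2)}$ make the induction on the order of the derivative routine, after which all the norm estimates are exactly the infinite-dimensional transcriptions of those in Proposition \ref{ODEsmoo}.
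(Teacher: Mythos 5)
Your proposal is correct and follows essentially the same route as the paper: differentiate the flow to obtain the linear variational equations for $\pa X/\pa x_j$ and $\pa^2 X/\pa x_i\pa x_j$, bound them by Gronwall using the invariance of $\MC^+_{\le\la(t)}(L)$, and then apply the chain rule to $T^tf=f(X(t,\cdot))$. The one place you go further — justifying the existence of the derivatives by pushing difference quotients through the contraction $\Phi_x$ — is exactly the point the paper acknowledges it leaves incomplete and defers to the references, so your sketch is a welcome supplement rather than a divergence.
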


\begin{proof}
Differentiating the equation $\dot x=g(x)$ with respect to initial conditions yields
\begin{equation}
\label{eq5alemonsenseBanach1}
\frac{d}{dt} \frac{\pa X(t,x)}{\pa x_j}=\sum_k \frac{\pa g}{\pa x_k} (X(t,x)) \frac{\pa X_k(t,x)}{\pa x_j},
\end{equation}
implying \eqref{eq1lemonsenseBanach10}.
Differentiating the equation $\dot x=f(x)$ twice yields
\begin{equation}
\label{eq6blemonsenseBanach1}
\frac{d}{dt} \frac{\pa ^2X(t,x)}{\pa x_j \pa x_i}=\sum_k \frac{\pa g}{\pa x_k} (X(t,x)) \frac{\pa ^2X_k(t,x)}{\pa x_j \pa x_i}
+\sum_{k,l} \frac{\pa ^2g}{\pa x_k \pa x_l} (X(t,x))\frac{\pa X_k(t,x)}{\pa x_j}\frac{\pa X_l(t,x)}{\pa x_i}.
\end{equation}
Solving this linear equation with the initial condition  $\pa ^2X(0,x)/\pa x_j \pa x_i=0$ yields \eqref{eq2lemonsenseBanach1}.

Differentiating \eqref{eqdefflowonkineq} yields
\begin{equation}
\label{eq7lemonsenseBanach1}
\frac{\pa}{\pa x_j}(T^tf)(x)=\sum_k \frac{\pa f}{\pa x_k}(X(t,x))\frac{\pa X_k(t,x)}{\pa x_j},
 \end{equation}
 implying \eqref{eq3lemonsenseBanach1}. Differentiating second time yields
 \begin{equation}
\label{eq8lemonsenseBanach1}
\frac{\pa^2}{\pa x_j \pa x_i}(T^tf)(x)=\sum_k \frac{\pa f}{\pa x_k}(X(t,x))\frac{\pa ^2X_k(t,x)}{\pa x_j\pa x_i}
+\sum_{k,l} \frac{\pa ^2f}{\pa x_k \pa x_l} (X(t,x))\frac{\pa X_k(t,x)}{\pa x_j}\frac{\pa X_l(t,x)}{\pa x_j},
 \end{equation}
implying \eqref{eq4lemonsenseBanach1}.
\end{proof}

\begin{remark}
The proof above is not complete. All estimates are proved based on the
assumption that the required derivatives exist. However, the corresponding
justification argument are standard in the theory of ODEs and thus are not
reproduced here, see e.g. \cite{Konewbook}.
\end{remark}

\section{Mean-field interacting systems in $l^1$}
\label{secmeanfieldinl1}

In this chapter  we extend the main results of Chapters \ref{chbestres}
and \ref{strategicprincipal} in two directions, namely, by working with
a countable (rather than finite or compact) state-space and unbounded
rates, and with more general interactions allowing in particular for
a change in the number of particles.

Therefore we take the set of natural numbers $\{1,2, \cdots \}$ as
the state space of each small player, the set of finite Borel measures
on it being the Banach space $l^1$ of sumable real sequences $x=(x_1, x_2, \cdots ) $.

Thus the state space of the total multitude of
small players will be formed by the set $\Z^{fin}_+$ of sequences of integers
$n=(n_1, n_2, \cdots )$ with only finite number of non-vanishing ones,
with $n_k$ denoting the number of players in the state $k$, the total number
of small players being $N=\sum_k n_k$.
As we are going to extend the analysis to processes not preserving the number of particles,
we shall work now with a more general scaling
of the states, namely with the sequences
\[
x=(x_1,x_2, \cdots ...)=hn =h(n_1,n_2, \cdots ...)\in h\Z_+^{fin}
\]
with the parameter $h=1/N_0$, the inverse number to
the total number of players $\sum_k n_k$ at the initial moment of observation.
The necessity to distinguish initial moment is crucial here, as this number changes over time.
Working with the scaling related to the current number of particles $N$ may lead, of course,
to different evolutions.

The general processes of birth, death, mutations and binary interactions that
 can occur under the effort $b$ of the principal are Markov chains on $h\Z_+^{fin}$
 specified by the generators of the following type
\[
L_{b,h}F(x)=\frac{1}{h}\sum_j \be_j(x,b)[F(x+he_j)-F(x)]
+\frac{1}{h}\sum_j \al_j(x,b)[F(x-he_j)-F(x)]
\]
\[
+\frac{1}{h}\sum_{i,j} \al_{ij}^1(x,b)[F(x-he_i+he_j)-F(x)]
+\frac{1}{h}\sum_{i,(j_1,j_2)} \al_{i(j_1j_2)}^1(x,b)[F(x-he_i+he_{j_1}+he_{j_2})-F(x)]
\]
\[
+\frac{1}{h}\sum_{(i_1,i_2),j} \al^2_{(i_1i_2)j}(x,b)[F(x-he_{i_1}-he_{i_2}+he_j)-F(x)]
\]
\begin{equation}
\label{eqgenbininterpres}
+\frac{1}{h}\sum_{(i_1,i_2)}\sum_{(j_1,j_2)}
\al^2_{(i_1i_2)(j_1j_2)}(x,b)[F(x-he_{i_1}-he_{i_2}+he_{j_1}+he_{j_2})-F(x)],
\end{equation}
where brackets $(i,j)$ denote the pairs of states. Here the terms with $\be_j$
and $\al_j$ describe the spontaneous injection (birth) and death of agents,
the terms with $\al^1_{ij}$ describe the mutation or migration of single agents,
the terms with $\al^1_{i(lj)}$ describe the fragmentation or splitting, the terms
with $\al^2$ describe the binary interactions, which can result in either merging
of two agents producing an agent with another strategy or their simultaneous
migration to any other pair of strategies. All terms include possible mean-field
interactions. Say, our model \eqref{eqdefgenmeanfieldchainnorm1} was an example of migration.

Let $L$ be a strictly positive non-decreasing function on $\N$. We shall refer
to such functions as Lyapunov functions. We say that the generator $L_{b,h}$
with $\be_j=0$ and the corresponding process do not increase $L$ if for any allowed
transition the total value of $L$ cannot increase, that is if $\al_{ij}^1 \neq 0$,
then $L(j)\le L(i)$, if  $\al_{i(j_1,j_2)}^1 \neq 0$, then $L(j_1)+L(j_2)\le L(i)$,
if $\al_{(i_1i_2)j}^2 \neq 0$, then $L(j)\le L(i_1)+L(i_2)$, if
$\al_{(i_1i_2)(j_1j_2)}^2 \neq 0$, then $L(j_1)+L(j_2)\le L(i_1)+L(i_2)$.
If this is the case, the chains generated by $L_{b,h}$ always remain in
$\MC^+_{\le \la}(L)$, if they were started there. Hence, if $L_j\to \infty$,
as $j\to \infty$, the sets $h\Z_+^{fin}\cap \MC^+_{\le \la}(L)$ are finite for
any $h$ and $\la$, and $L_{b,h}$ generates well-defined Markov chains
$X_{b,h}(t,x)$ in any of these sets.

A generator $L_{b,h}$ is called $L$-subcritical if $L_{b,h}(L) \le 0$. Of course,
if $L_{b,h}$ does not increase $L$, then it is $L$-subcritical. Though the condition
to not increase $L$ seems to be restrictive, many concrete models satisfy it, for
instance, the celebrated merging-splitting (Smoluchovskii) process considered below.
On the other hand, models with spontaneous injections may increase $L$, so that one
is confined to work with the weaker property of sub-criticality. We shall first
analyse the case of chains that do not increase $L$, and then will consider
subcritical evolutions and evolutions allowing for a mild growth of $L$.

By Taylor-expanding $F$ in \eqref{eqgenbininterpres} one sees that if $F$ is
sufficiently smooth, the sequence $L_{b,h}F$ converges to
\[
\La_bF(x)=\sum_j (\be_j(x,b)-\al_j(x,b))\frac{\pa F}{\pa x_i}
+\sum_{i,j} \al_{ij}^1(x,b)[\frac{\pa F}{\pa x_j}-\frac{\pa F}{\pa x_i}]
\]
\[
+\sum_{i,(j_1,j_2)} \al_{i(j_1j_2)}^1(x,b)[\frac{\pa F}{\pa x_{j_1}} +\frac{\pa F}{\pa x_{j_2}}-\frac{\pa F}{\pa x_i}]
+\sum_{(i_1,i_2)}\sum_j \al^2_{(i_1i_2)j}(x,b)[\frac{\pa F}{\pa x_j}-\frac{\pa F}{\pa x_{i_1}}-\frac{\pa F}{\pa x_{i_2}}]
\]
\begin{equation}
\label{eqgenbininterpreslim}
+\sum_{(i_1,i_2)}\sum_{(j_1,j_2)} \al^2_{(i_1i_2)(j_1j_2)}(x,b)
[\frac{\pa F}{\pa x_{j_1}} +\frac{\pa F}{\pa x_{j_2}}-\frac{\pa F}{\pa x_{i_1}}-\frac{\pa F}{\pa x_{i_2}}].
\end{equation}
Moreover,
\begin{equation}
\label{eqgenbininterpreslim1}
\|(L_{b,h}-\La_b)F\|_{C(\MC^+_{\le \la}(L))}
 \le 8h \ka (\la) \|F\|_{C^2(\MC^+_{\le \la}(L))},
\end{equation}
with $\ka(L,\la)$ being the $\sup_b$ of the norms
\begin{equation}
\label{eqgenbininterpreslim1a}
\left\|\sum_i (\al_i+\be_i) +\sum_{i,j} \al_{ij}^1+\sum_{i,(j_1,j_2)} \al^1_{i(j_1j_2)}
+\sum_{(i_1,i_2),j} \al^2_{(i_1i_2)j}
+\sum_{(i_1,i_2),(j_1,j_2)} \al^2_{(i_1i_2)(j_1j_2)}\right\|_{C(\MC^+_{\le \la}(L))}.
\end{equation}

By regrouping the terms of $\La_b$, it can be rewritten in the form of the general first order operator
\begin{equation}
\label{eqgenbininterpreslim2}
\La_b F(x)=\sum_j g_j(x,b)\frac{\pa F}{\pa x_j},
\end{equation}
where $g=(g_i)$ with
\[
g_i=\be_i -\al_i+\sum_k (\al^1_{ki}-\al^1_{ik})
+\sum_k [\al^1_{k(ii)}+\sum_{j\neq i} (\al^1_{k(ij)}+\al^1_{k(ji)}]-\sum_{(j_1,j_2)} \al^1_{i(j_1j_2)}
\]
\[
+\sum_{(j_1,j_2)} \al^2_{(j_1j_2)i}-\sum_k [\al^2_{(ii)k}+\sum_{j\neq i} (\al^2_{(ij)k}+\al^2_{(ji)k})]
\]
\[
+\sum_{(j_1,j_2)}[\al^2_{(j_1j_2)(ii)}+\sum_{j\neq i} (\al^2_{(j_1j_2)(ji)}+\al^2_{(j_1j_2)(ij)})]
\]
\[
-\sum_{(j_1,j_2)}[\al^2_{(ii)(j_1j_2)}+\sum_{j\neq i} (\al^2_{(ij)(j_1j_2)}+\al^2_{(ji)(j_1j_2)})].
\]

Its characteristics solving the ODE $\dot x =g(x)$ can be expected to describe the limiting behavior
of the Markov chains $X_{b,h}(x,t)$ for $h\to 0$.

As in finite-dimensional case we can build the semigroups of transition operators $U^t_hF(x)= \E F(X_h(t,x))$,
$U^tF(x)= \E F(X(t,x))$ and we are going to search for conditions ensuring that $U^t_h \to U^t$ as $h\to 0$.

Let us stress again that the present models are much more general than the models of Chapter \ref{chbestres}.
The present $g$ cannot always be written in the form $g=xQ(x)$ with a $Q$-matrix $Q$ describing the evolution
of one player, because the possibility of birth and death makes the tracking of a particular player throughout
the game impossible.

\section{Convergence results for evolutions in $l^1$}
\label{secconvcount}

As in  Chapter \ref{chbestres} let us start with the case of smooth coefficients.

\begin{theorem}
\label{th1inl1}
Assume the operators $L_{b,h}$ are $L$ non-increasing for
a (positive nondecreasing) Lyapunov function $L$ on $\Z$,
and the function $g: \MC^+_{\le \la}(L) \to l^1$ belongs to $C^2(\MC^+_{\le \la}(L))$.
Then the Markov chains $X_h(t,x(h))$ with $x(h)\in  \MC^+_{\le \la}(L)$
such that $x(h)\to x$, as $h\to 0$, converge in distribution to the deterministic
evolution $X(t,x)$ and moreover
\[
\|U^t_hF-U^tF\|=\sup_x|\E F (X_h(t,x))-F(X(t,x))|
\]
\begin{equation}
\label{eq1th1inl1}
\le 8th\ka (L,\la) (\|F^{(2)}\| +t\|F^{(1)}\| \, \|g^{(2)}\|) e^{3t\|g\|_{Lip}},
\end{equation}
where all norms are understood as the $\sup_b$ of the norms of functions on $\MC^+_{\le \la}(L)\subset l^1$.
\end{theorem}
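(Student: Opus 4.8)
The plan is to mimic the proof of Theorem \ref{thsiplestLLN} (the finite-dimensional case with optimal regularity), replacing the compactness of $\Si_d$ by the invariant compact sets $\MC^+_{\le \la}(L)$ and the elementary Taylor estimate \eqref{eqdefgenmeanfieldchainlima} by its infinite-dimensional counterpart \eqref{eqgenbininterpreslim1}. First I would record that, since the operators $L_{b,h}$ are $L$ non-increasing, every chain $X_h(t,x(h))$ started in $\MC^+_{\le \la}(L)$ stays in $\MC^+_{\le \la}(L)$ for all $t\ge 0$; and since $g$ is $L$-critical (or at worst $L$-subcritical — here the non-increasing hypothesis gives $(L,g(x))\le 0$), Theorem \ref{ODERinfty} guarantees that the deterministic flow $X(t,x)$ is globally well defined and also stays in $\MC^+_{\le \la}(L)$, with the Lipschitz bound \eqref{eq2ODERinfty}. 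Thus all objects live on the fixed compact set $K_\la=\MC^+_{\le \la}(L)$, and it suffices to bound $\|U^t_hF-U^tF\|_{C(K_\la)}$.

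The core is the semigroup comparison identity. Writing $U^t_h$ for the (discrete-state, continuous-time) Markov semigroup with generator $L_{b^*(\cdot),h}$ restricted to the finite set $h\Z^{fin}_+\cap K_\la$, and $U^t$ for the flow semigroup \eqref{eqdefflowonkineq} generated by $\La=\La_{b^*(\cdot)}$ in the form \eqref{eqgenbininterpreslim2}, both are contractions on $C(K_\la)$, so the telescoping formula
\begin{equation}
\label{eqcomppropl1}
(U^t-U^t_h)F=\int_0^t \frac{d}{ds} U^{t-s}_h U^s F\, ds=\int_0^t U^{t-s}_h (\La-L_{\cdot,h}) U^s F\, ds
\end{equation}
holds, exactly as \eqref{eqcompprop}. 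By \eqref{eqgenbininterpreslim1},
\[
\|(L_{\cdot,h}-\La) U^sF\|_{C(K_\la)} \le 8h\,\ka(L,\la)\,\|U^sF\|_{C^2(K_\la)} .
\]
It remains to control $\|U^sF\|_{C^2(K_\la)}$ uniformly for $s\in[0,t]$, and this is precisely the content of Theorem \ref{lemonsenseBanach1}: formulas \eqref{eq3lemonsenseBanach1}–\eqref{eq4lemonsenseBanach1} give
\[
\|(U^sF)^{(1)}\|\le \|F^{(1)}\| e^{s\ka(\la(s))},\qquad
\|(U^sF)^{(2)}\|\le \|F^{(2)}\| e^{2s\ka(\la(s))}+s\|F^{(1)}\|\,D_2(\la(s))\,e^{3s\ka(\la(s))},
\]
where, since $g$ is non-increasing for $L$, $\la(s)=\la$ is constant and we may take $\ka(\la(s))=\|g\|_{Lip}$ (the Lipschitz constant of $g$ on $K_\la$) and $D_2(\la(s))=\|g^{(2)}\|$. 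Plugging these bounds into \eqref{eqcomppropl1}, using the contraction property of $U^{t-s}_h$, and integrating $ds$ over $[0,t]$ yields \eqref{eq1th1inl1}: the factor $\|F^{(2)}\|$ comes from the $e^{2s\|g\|_{Lip}}$ term and the factor $t\|F^{(1)}\|\,\|g^{(2)}\|$ from the $s$-term, both dominated by $e^{3t\|g\|_{Lip}}$, with the overall $8th\,\ka(L,\la)$ prefactor from $\int_0^t 8h\ka\,ds$. The convergence in distribution of $X_h(t,x(h))$ to $X(t,x)$ then follows by the standard density argument — bound $\|U^t_hF-U^tF\|$ for $F\in C^2$, combine with the Lipschitz continuity \eqref{eq2ODERinfty} of $X(t,x)$ in $x$ to absorb $x(h)\to x$, and approximate an arbitrary $F\in C(K_\la)$ by $C^2$ functions (here one uses that $K_\la$ is compact in $l^1$, guaranteed by $L_j\to\infty$).

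The main obstacle, and the one point needing genuine care rather than bookkeeping, is the rigorous justification of the telescoping identity \eqref{eqcomppropl1} in this infinite-dimensional, unbounded-rate setting: one must check that $s\mapsto U^{t-s}_h U^s F$ is differentiable with derivative $U^{t-s}_h(\La-L_{\cdot,h})U^sF$, i.e. that $U^sF$ lies in the domain of both generators and that $L_{\cdot,h}$ is a bounded operator on $C(h\Z^{fin}_+\cap K_\la)$ — which it is, because on the finite set $h\Z^{fin}_+\cap K_\la$ only finitely many transitions have non-zero rate and $K_\la$ is compact, so the total jump intensity is bounded by $\ka(L,\la)/h$ — while $\La$ applied to $U^sF$ is controlled by the $C^1$ bound above since $g$ maps $K_\la$ into $l^1$ with bounded norm. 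A secondary subtlety is the strong continuity needed to differentiate under the integral; this is handled exactly as in the finite case once the uniform $C^2$-bounds on $U^sF$ are in hand, together with the Feller-type continuity of the flow semigroup inherited from Theorem \ref{ODERinfty}. Everything else — the density/compactness argument, the passage to $x(h)\to x$, and the regrouping of $\La_b$ into the divergence-free first-order form \eqref{eqgenbininterpreslim2} — is routine given the cited results.
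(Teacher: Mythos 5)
Your proof is correct and follows essentially the same route as the paper's: the propagator comparison identity \eqref{eqcompprop}, the generator-difference bound \eqref{eqgenbininterpreslim1}, and the uniform $C^2$-estimate on $U^sF$ from Theorem \ref{lemonsenseBanach1} with $\la(s)=\la$ thanks to the $L$ non-increasing hypothesis. The paper's own proof is a terser version of exactly this argument (deferring the domain/differentiability and density issues to the analogy with Theorem \ref{thsiplestLLN}), so your additional care about justifying the telescoping identity and the approximation of continuous $F$ is welcome detail rather than a departure.
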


\begin{proof}
The proof is the same as the proof of Theorem \ref{thsiplestLLN}.

All function-norms below are the norms of functions defined on $\MC^+_{\le \la}(L)$.
By \eqref{eq3lemonsenseBanach1},
\begin{equation}
\label{eq2th1inl1}
\|(U^tF)^{(2)}\| \le (\|F^{(2)}\| +t\|F^{(1)}\| \, \|g^{(2)}\|) e^{3t\|g\|_{Lip}},
 \end{equation}
 because $\la(t)=\la$ (this is precisely the simplifying assumption that all transitions do not increase $L$).
 Hence by \eqref{eqgenbininterpreslim1},
 \[
 \|(L_{b,h}-\La_b)U^tF\| \le 8h\ka (L,\la) (\|F^{(2)}\| +t\|F^{(1)}\| \, \|g^{(2)}\|) e^{3t\|g\|_{Lip}},
 \]
 implying \eqref{eq1th1inl1}.
\end{proof}

Let us turn to the case of Lipschitz continuous coefficients.

\begin{theorem}
\label{th2inl1}
Assume the operators $L_{b,h}$ are $L$ non-increasing for
a Lyapunov function $L$ on $\Z$ such that $L(j)/j^{\ga} \to 1$ with $\ga>0$, as $j\to \infty$.
Let $g: \MC^+_{\le \la}(L) \to l^1$ belongs to $(\MC^+_{\le \la}(L))_{bLip}$.
Then the Markov chains $X_h(t,x(h))$ with $x(h)\in  \MC^+_{\le \la}(L)$
such that $x(h)\to x$, as $h\to 0$, converge in distribution to the deterministic
evolution $X(t,x)$ and moreover
\begin{equation}
\label{eq1th2inl1}
|\E F (X_h(t,x))-F(X(t,x))| \le  C \|F\|_{Lip} (t\|g\|_{bLip}+\ka(L,\la)) (ht)^{\ga/(1+2\ga)}
\end{equation}
with a constant $C$. In particular, if $L(j)=j$, as is often the case in applications,
\begin{equation}
\label{eq2th2inl1}
|\E F (X_h(t,x))-F(X(t,x))| \le  C \|F\|_{Lip} (t\|g\|_{bLip}+\ka(L,\la)) (ht)^{1/3}.
\end{equation}
\end{theorem}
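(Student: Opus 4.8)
The plan is to mimic the proof of Theorem \ref{thconv1} (the Lipschitz finite-dimensional case), combining the smooth estimate of Theorem \ref{th1inl1} with a mollification argument, but now performed on the Banach space $l^1$ restricted to the compact invariant set $\MC^+_{\le \la}(L)$. The point of the restriction to $\MC^+_{\le \la}(L)$ is that, since the operators $L_{b,h}$ are assumed $L$ non-increasing, every chain started in this set stays there, and — crucially, by the condition $L(j)/j^\ga\to 1$ with $\ga>0$ — this set is compact in $l^1$. Compactness lets us approximate Lipschitz functions on it uniformly by smooth functions in a controlled way, which is what the finite-dimensional argument exploited via convolution mollifiers.

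First I would set up the mollification of both the vector field $g$ and the test function $F$. In $l^1$ one cannot convolve against a fixed compactly supported bump as in $\R^{d-1}$, so instead I would work with a finite-dimensional projection: fix $m$ and write $x=(x',x'')$ with $x'=(x_1,\dots,x_m)$; mollify in the first $m$ coordinates with scale $\de$ and also control the tail $x''$ using the Lyapunov bound $\sum_j L_j x_j\le \la$, which forces $\sum_{j>m}x_j\le \la/L_m\to 0$. This yields, for a Lipschitz function $V$ on $\MC^+_{\le \la}(L)$, an approximation $\Phi_{\de,m}[V]$ with $\|\Phi_{\de,m}[V]-V\|\le C\|V\|_{Lip}(\de + \la/L_m)$, with $\|\Phi_{\de,m}[V]\|_{C^1}\le \|V\|_{bLip}$, $\|\Phi_{\de,m}[V]^{(2)}\|\le C\|V\|_{bLip}/\de$, and (this is the new feature) the second-derivative bound involving the number of coordinates $m$ through the smoothing of an $m$-dimensional bump, which is why $m$ must be chosen in balance with $\de$ and $h$. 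Applying this to $Q$ — here to $g$ — one builds auxiliary chains $X_{h,\de,m}$ and the deterministic flow $X_{\de,m}$ with the mollified coefficients; these are $C^2$, so Theorem \ref{th1inl1} applies to them directly, and Proposition \ref{ODERinfty}, \eqref{eq2ODERinfty} (stability in the right-hand side, the $l^1$-analogue of \eqref{eqODErhs}) controls $\|X(t,x)-X_{\de,m}(t,x)\|$ and, via \eqref{eqcompprop} applied to the propagators, also $\|U^t_h - U^t_{h,\de,m}\|$.

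The key steps, in order, are: (1) verify compactness of $\MC^+_{\le \la}(L)$ in $l^1$ from $L(j)/j^\ga\to 1$; (2) construct $\Phi_{\de,m}$ and prove the three estimates above with explicit $\de,m$ dependence; (3) bound $\|U^t f - U^t_{\de,m} f\|$ and $\|U^t_h f - U^t_{h,\de,m} f\|$ by $C\|f\|_{bLip}(\de + \la/L_m)$ using \eqref{eq2ODERinfty}, \eqref{eqgenbininterpreslim1} and the comparison-of-propagators identity; (4) bound $\|U^t_{h,\de,m}f - U^t_{\de,m}f\|$ by Theorem \ref{th1inl1} applied to the mollified data, obtaining a term like $h t \ka(L,\la)\|f\|_{bLip}/\de$ (plus the $m$-dependent piece from the second derivative of the $m$-dimensional mollifier, which with $L(j)\sim j^\ga$ forces the choice $L_m\sim \de^{-1}$, i.e. $m\sim\de^{-1/\ga}$); (5) assemble $\|U^t_h f - U^t f\|\le C\|f\|_{bLip}(t\|g\|_{bLip}+\ka(L,\la))(\de + \la/L_m + ht/\de)$ and optimise. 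With $L_m\sim \de^{-1}$ the three contributions are $\de$, $\de$, $ht/\de$, so $\de\sim (ht)^{1/2}$ would give $(ht)^{1/2}$; the stated exponent $\ga/(1+2\ga)$ (and $1/3$ for $\ga=1$) indicates that the second-derivative bound on $\Phi_{\de,m}$ costs an extra factor scaling like $m$ or $L_m^{1/\ga}\sim \de^{-1/\ga}$, so the middle term is really $ht/\de^{1+1/\ga}$ (equivalently $ht\, m/\de$), and balancing $\de\sim ht/\de^{1+1/\ga}$ gives $\de\sim (ht)^{\ga/(1+2\ga)}$, hence \eqref{eq1th2inl1}.

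The main obstacle will be step (2)–(4): getting an honest, quantitative handle on the second-order error $\|(L_{b,h}-\La_b)\Phi_{\de,m}[V]\|$ on a subset of the infinite-dimensional space $l^1$. In $\R^d$ one simply uses $\|(L^N-\La)f\|\le \|f^{(2)}\|\|Q\|/N$; here the analogue is \eqref{eqgenbininterpreslim1}, $\|(L_{b,h}-\La_b)F\|\le 8h\ka(L,\la)\|F\|_{C^2(\MC^+_{\le\la}(L))}$, but $\|F\|_{C^2}$ for a mollified function depends both on $1/\de$ and on how many coordinates were smoothed, and one has to track that the tail contribution of $\La_b$ acting on $\Phi_{\de,m}[V]$ (coordinates $j>m$, where $\Phi_{\de,m}[V]$ is not smoothed) remains small, which is exactly where the Lyapunov bound $\sum_{j>m} x_j \le \la/L_m$ and the $L$ non-increasing hypothesis must be used carefully. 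Once this bookkeeping is done the optimisation over $\de$ (with $m$ slaved to $\de$ through $L_m\sim 1/\de$) is routine and yields the exponent $\ga/(1+2\ga)$.
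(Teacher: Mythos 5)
Your proposal is correct and follows essentially the same route as the paper's proof: finite-dimensional projection $P_m$ combined with mollification at scale $\de$, the Lyapunov tail bound $\sum_{j>m}x_j\le\la/L(m)$, the smooth-coefficient Theorem \ref{th1inl1} applied to the mollified data, the comparison-of-propagators identity, and the optimisation with $L(m)\sim 1/\de$ yielding the exponent $\ga/(1+2\ga)$. The only (immaterial) difference is where the dimension factor $m$ is booked — you place it in the second-derivative bound, while the paper's product mollifier puts it in the first-order approximation error $\|\Phi_{\de,m}[V]-V\|\le C\|V\|_{Lip}(m\de+\la/L(m))$ — a trade-off the paper itself notes is a matter of mollifier choice and which leaves the final balance unchanged.
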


\begin{proof}
The proof is similar to the proof of Theorem \ref{thconv1}. The new difficulty arises from the fact that
the number of pure states (which is the maximal $j$ such that the sequence $(\de_i^j)\in l^1$ belongs to
$\MC^+_{\le \la}(L)\cap h\Z^{\infty}_+$) increases with decrease of $h$, while in Theorem \ref{thconv1} the number
of pure states $d$ was fixed. Thus the smoothing will be combined with finite-dimensional approximations.

Let $P_j$ denotes the projection on the first $j$ coordinates in $\R^{\infty}$, that is $[P_j(x)]_k=x_k$
for $k\le j$ and $[P_j(x)]_k=0$ otherwise. For $x\in \MC_{\le \la}(L)$,
\[
\|P_j(x)-x\|_{l^1}\le \frac{\la}{L(j)}.
\]
For functions on $x\in \MC_{\le \la}(L)$ we shall use the smooth approximation $\Phi_{\de,j}(F)=\Phi_{\de}(F\circ P_j)$
with $\Phi_{\de}$ as in Theorem \ref{thconv1}. More precisely,
\[
 \Phi_{\de,j}[V](x)=\int_{R^j}\frac{1}{\de^j} \prod_{k=1}^j \chi \left(\frac{y_k}{\de}\right)V(x-y) \, \prod_{k=1}^j dy_k
 =  \int_{\R^j}\frac{1}{\de^j} \prod_{k=1}^j \chi \left(\frac{x_k-y_k}{\de}\right)V(y) \, \prod_{k=1}^j dy_k,
 \]
 where $\chi$ is a mollifier, a non-negative infinitely smooth even function on $\R$ with
 a compact support and $\int \chi (w) \, dw=1$. Thus
 \[
  \Phi_{\de,j}[V](x)=\int_{R^j}\frac{1}{\de^j} \prod_{k=1}^j \chi \left(\frac{y_k}{\de}\right)V(P_j(x-y)) \, \prod_{k=1}^j dy_k.
  \]
Estimates \eqref{eq5athconv1} and \eqref{eq7thconv1} of Theorem \ref{thconv1} remain the same:
\[
\|\Phi_{\de,j}[V]\|_{C^1} =|\Phi_{\de}[V]\|_{bLip}\le  \|V\|_{bLip},
\]
and
\begin{equation}
\label{eq4th2inl1}
\|\Phi_{\de,j}[V]^{(2)}\| \le \|V\|_{bLip} \frac{1}{\de} \int |\chi'(w) | \, dw
\end{equation}
for any $\de$, and \eqref{eq6thconv1} generalizes to
 \begin{equation}
\label{eq3th2inl1}
 \|\Phi_{\de,j}[V]-V\| \le C \|V\|_{Lip} (j\de + \la/L(j)).
 \end{equation}

 Consequently, arguing as in Theorem \ref{thconv1} we obtain (using similar notations)
 \[
\|U_{h,\de,j}^tf -U^t_hf\|\le  Ct \|Q\|_{bLip} \|f\|_{bLip} (\de j + \la/L(j)) \exp\{t\|g\|_{Lip}\},
\]
 \[
\|U_{\de,j}^tf -U^tf\|\le  Ct \|Q\|_{bLip} \|f\|_{bLip} (\de j + \la/L(j)) \exp\{t\|g\|_{Lip}\},
\]
\[
\|U_{h,\de,j}^tf -U^t_{\de,j}f\|\le 8 t h\ka (L,\la) (\|F^{(2)}\|+t\|F\|_{Lip}\|g^{(2)}\|) e^{3t\|g\|_{Lip}}.
\]
And therefore
\[
\|U_h^tF -U^tF\| \le 8th\ka \|F^{(2)}\| e^{3t\|g\|_{Lip}}+
\]
\[
 C t  \|F\|_{Lip} \|g\|_{bLip}\left[\frac{ht}{\de}\ka(L,\la) +j\de + \frac{\la}{L(j)}\right] e^{3t\|g\|_{Lip}}.
\]

To get rid of $F^{(2)}$ we also approximate $F$ by the function $\Phi_{\de,j}[F]$, so that
\[
\|U_h^tF -U^tF\| \le
 C  \|F\|_{Lip} (t\|g\|_{bLip}+\ka(L,\la)) \left[\frac{ht}{\de}\ka(L,\la) +j\de + \frac{\la}{L(j)}\right] e^{3t\|g\|_{Lip}}.
\]
Choosing
\[
j=(ht)^{-1/(1+2\ga)}, \de= (ht)^{(1+\ga)/(1+2\ga)},
\]
makes all three terms in the square bracket of the same order, yielding \eqref{eq1th2inl1}.
\end{proof}

Let us now write down the analog of the multi-step optimization result of Theorem
\ref{thMarkDeconEvolBack1}. Namely, assume the principal is updating her strategy
 in discrete times $\{k\tau\}$, $k=0,1, \cdots ..., n-1$, with some fixed $\tau>0$, $n\in \N$ aiming
 at finding a strategy $\pi$ maximizing the reward \eqref{eqMarkdecevolback1}, but now with
 $x_0=x(h)\in h\Z^{fin}\cap \MC^+_{\le \la}(L)$ and the transitions
$x_k=X_h(\tau,x_{k-1},b_{k-1})$ defined by the Markov chain generated by  \eqref{eqgenbininterpres}.
The Shapley operators $S{N}$ and $S$ are defined analogously to Section \ref{secmultistep1},
so that their iterations define the optimal payoffs for the random evolution with fixed $h$
and for the limiting deterministic evolution.

\begin{theorem}
\label{th3inl1}
Under the assumption of Theorem \ref{th2inl1} let $x_0=x(h)\in h\Z^{fin}\cap \MC^+_{\le la}(L)$
and $x(h)\to x$ in $l^1$.
Then, for any $\tau \in (0,1]$, $n\in N$ and $t=\tau n$,
\begin{equation}
\label{eq1th3inl1}
\|S^n[N]V(x(h))-S^nV(x)\|\le C(t,\|g\|_{bLip},\|V\|_{bLip}) (h^{\ga/(1+2\ga)} \tau^{-(1+\ga)/(1+2\ga)} +|x(h)-x|),
\end{equation}
with a constant $C$ depending on $(t,\|g\|_{bLip},\|V\|_{bLip})$.
The r.h.s. tends to zero if $h\to 0$ faster than $\tau^{1+1/\ga}$.
\end{theorem}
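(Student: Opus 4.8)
The plan is to transcribe the proof of Theorem~\ref{thMarkDeconEvolBack1} to the $l^1$-setting, using the LLN estimate \eqref{eq1th2inl1} in place of \eqref{eq4thconv1} and the Lipschitz bounds \eqref{eq2ODERinfty}, \eqref{eq1lemonsenseBanach10} for the characteristics in place of \eqref{eqODErhs}. Throughout I would work on the invariant set $\MC^+_{\le\la}(L)$: since $L_{b,h}$ is $L$ non-increasing, the chains $X_h(\cdot,x(h),b)$ and the flow $X(\cdot,x,b)$ of $\dot x=g(x,b)$ all preserve it, so $\la(t)\equiv\la$, and the operators $S[N]$, $S$ and their iterates act on $C(\MC^+_{\le\la}(L))$.

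First I would establish the $l^1$-analogue of Proposition~\ref{propcontdisccont}: with $\ka_V=\|V\|_{Lip}$, $\ka_B=\sup_b\|B(\cdot,b)\|_{Lip}$ and $\om=\sup_b\|g(\cdot,b)\|_{bLip}$ on $\MC^+_{\le\la}(L)$, one has $S^nV\in C_{bLip}$ for all $n$ with $\|S^nV\|\le t\|B\|+\|V\|$ and $\|S^nV\|_{Lip}\le(t\ka_B+\ka_V)e^{t\om}$ for $t=n\tau$. The induction is identical to that of Proposition~\ref{propcontdisccont}, using $\|X(\tau,x,b)-X(\tau,y,b)\|\le e^{\tau\om}\|x-y\|$ (from \eqref{eq2ODERinfty}, noting $\ka(\la(\tau))=\ka(\la)\le\om$ here); the crucial feature, as in the finite case, is that this bound is uniform in the number of steps for fixed total time $t$. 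The operators $S[N]$ are plainly nonexpansive on $C(\MC^+_{\le\la}(L))$.

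Next I would estimate the one-step error: for $W\in C_{bLip}(\MC^+_{\le\la}(L))$,
\[
|S[N]W(x)-SW(x)|\le\sup_b\bigl|\E W(X_h(\tau,x,b))-W(X(\tau,x,b))\bigr|\le C\|W\|_{Lip}\bigl(\tau\|g\|_{bLip}+\ka(L,\la)\bigr)(h\tau)^{\ga/(1+2\ga)}
\]
by \eqref{eq1th2inl1} applied with time $\tau$. Then, via the telescoping identity $S^n[N]V-S^nV=\sum_{k=1}^{n}S^{n-k}[N](S[N]-S)S^{k-1}V$, the nonexpansiveness of $S^{n-k}[N]$, and the uniform Lipschitz bound for $S^{k-1}V$ from Step~1, summing $n=t/\tau$ terms gives
\[
\|S^n[N]V-S^nV\|\le n\,C(t,\|g\|_{bLip},\|V\|_{bLip})(h\tau)^{\ga/(1+2\ga)}=C(t,\dots)\,t\,h^{\ga/(1+2\ga)}\tau^{-(1+\ga)/(1+2\ga)},
\]
since $\ga/(1+2\ga)-1=-(1+\ga)/(1+2\ga)$. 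Adding the initial-data discrepancy $|S^nV(x(h))-S^nV(x)|\le\|S^nV\|_{Lip}|x(h)-x|$, controlled again by Step~1, yields \eqref{eq1th3inl1}. Finally, $h^{\ga/(1+2\ga)}\tau^{-(1+\ga)/(1+2\ga)}=\bigl(h^{\ga}/\tau^{1+\ga}\bigr)^{1/(1+2\ga)}\to0$ exactly when $h/\tau^{1+1/\ga}\to0$, i.e. when $h\to0$ faster than $\tau^{1+1/\ga}$.

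The hard part will be the bookkeeping in the first step in infinite dimensions: verifying that the constants $\ka(L,\la)$ of \eqref{eq1th2inl1} and $\|g\|_{bLip}$ are finite on $\MC^+_{\le\la}(L)$ and that the iterates $S^{k-1}V$ stay in $C_{bLip}$ with norms bounded uniformly in $k\le n$. This is precisely where the $L$-non-increasing hypothesis is essential: it freezes $\la(t)\equiv\la$, so all $h$-independent constants are uniform along the whole evolution and the entire finite-dimensional argument goes through with $\MC^+_{\le\la}(L)$ in place of $\Si_d$.
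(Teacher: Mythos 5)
Your proof is correct and is essentially the argument the paper intends: the paper states Theorem \ref{th3inl1} without a written proof, implicitly deferring to the scheme of Theorem \ref{thMarkDeconEvolBack1} with the one-step LLN estimate \eqref{eq1th2inl1} substituted for \eqref{eq4thconv1}, which is exactly your telescoping decomposition combined with the uniform Lipschitz propagation of Proposition \ref{propcontdisccont} on the invariant set $\MC^+_{\le \la}(L)$. The exponent bookkeeping and the derivation of the threshold $h \ll \tau^{1+1/\ga}$ are both accurate.
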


All other results of Chapter \ref{strategicprincipal} extend automatically (with appropriate modifications)
to the case of Markov chains generated by \eqref{eqgenbininterpres}.

\section{Evolutionary coalition building under pressure}
\label{seccoalbuildpres}

As a direct application of Theorems \ref{th2inl1}, \ref{th3inl1}, let us
discuss the model of evolutionary coalition building.
Namely, so far we talked about small players that occasionally and randomly
exchange information in small groups (mostly in randomly formed pairs) resulting in copying
the most successful strategy by the members of the group. Another natural reaction of the society
of small players to the pressure exerted by the principal can be executed by forming stable groups that
can confront this pressure in a more effective manner (but possibly imposing certain obligatory regulations
for the members of the group). Analysis of such possibility leads one naturally to models of mean-field-enhanced coagulation
processes under external pressure.
Coagulation-fragmentation processes are well studied in statistical physics, see e. g. \cite{Nor00}.
In particular, general mass-exchange processes, that in our social environment become general coalition forming processes
preserving the total number of participants, were analyzed in \cite{Ko04} and
\cite{Ko06} with their law of large number limits for discrete and general
state spaces. Here we add to this analysis a strategic framework for a major player fitting the model
to the more general framework of the previous section.
Instead of coagulation and fragmentation of particles we shall use here the terms merging and splitting or breakage
of the coalitions of agents.

For simplicity, we ignore here any other behavioral distinctions
(assuming no strategic space for an individual player)
concentrating only on the process of forming coalitions.
Thus the state space of the total multitude of
small players will be formed by the set $\Z^{fin}_+$ of sequences of integers
$n=(n_1, n_2, \cdots ...)$ with only finite number of non-vanishing ones,
with $n_k$ denoting the number of coalition of size $k$, the total number
of small players being $N=\sum_k kn_k$ and the total number of coalitions
(a single player is considered to represent a coalition of size $1$) being $\sum_k n_k$.
 Also for simplicity we reduce attention
to binary merging and breakage only, extension to arbitrary regrouping processes
from \cite{Ko04} (preserving the number of players) is more-or-less straightforward.

 As previously, we will look for the evolution of appropriately scaled states, namely the sequences
\[
x=(x_1,x_2, \cdots ...)=hn =h(n_1,n_2, \cdots ...)\in h\Z_+^{fin}
\]
with certain parameter $h>0$, which can be taken, for instance, as the inverse number to
the total number of coalitions $\sum_k n_k$ at the initial moment of observation.

If any randomly chosen pair of coalitions of sizes $j$ and $k$ can merge with the rates $C_{kj}(x,b)$,
which may depend on the whole composition $x$ and the control parameter $b$ of the major player,
and any randomly chosen coalition of size $j$ can split (break, fragment) into two groups of sizes $k<j$ and $j-k$
with rate $F_{jk}(x,b)$, the limiting deterministic evolution of the state is known to be described by the
system of the so-called Smoluchovski equations
\begin{equation}
\label{eqSmoleqcoagfrag}
\dot x_k=g_k(x)=\sum_{j<k} C_{j,k-j}(x,b) x_j x_{k-j}-2\sum_j C_{kj}(x,b) x_j x_k
+2\sum_{j>k} F_{jk}(x,b) x_j -\sum_{j<k} F_{kj}(x,b) x_k.
\end{equation}
In addition to the well known setting with constant $C_{jk}$ and $F_{jk}$ (see e. g. \cite{BaCa}) we added here the
mean field dependence of these coefficients (dependence on $x$) and the dependence on the control parameter $b$.

As one easily checks, equations \eqref{eqSmoleqcoagfrag} can be written in the equivalent weak form
\begin{equation}
\label{eqSmoleqcoagfragweak}
\frac{d}{dt} \sum_j \phi_jx_j=\sum_{j,k}(\phi_{j+k}-\phi_j-\phi_k) C_{jk}(x,b) x_j x_k
+\sum_j \sum_{k<j} (\phi_{j-k}+\phi_k-\phi_j) F_{jk}(x,b) x_j,
\end{equation}
which should hold for a suitable class of test functions $\phi$. For instance, under the assumption of bounded coefficients
(see \eqref{eqSmoleqcoagfragboundco} below), the class of test functions is the class of all functions from
$l^{\infty}=\{\phi: \sup_j |\phi_j|<\infty\}$.
This implies, in particular, that the corresponding semigroups \eqref{eqdefflowonkineq}
on the space of continuous functions,
that is $U^tG(x)=G(X(t,x))$, have the generator
\[
\La_b G(x)=\sum_k g_k(x) \frac{\pa G}{\pa x_k}(x)
=\sum_{j,k}\left(\frac{\pa G}{\pa x_{k+j}}-\frac{\pa G}{\pa x_j}-\frac{\pa G}{\pa x_k}\right) C_{jk}(x,b) x_j x_k
\]
\begin{equation}
\label{eqencoagfrag}
+\sum_j \sum_{k<j} \left(\frac{\pa G}{\pa x_{j-k}}-\frac{\pa G}{\pa x_j}+\frac{\pa G}{\pa x_k}\right) F_{jk}(x,b) x_j
\end{equation}
of type \eqref{eqgenbininterpreslim} with
\[
\al_{i(j_1j_2)}^1=F_{ij_1}(x)x_i \quad \text{for} \quad j_1+j_2=i;
\quad \al^2_{(i_1i_2)j}=C_{i_1i_2}(x)x_{i_1}x_{i_2} \quad \text{for} \quad j=i_1+i_2
\]
and all other coefficients vanishing. Thus the Smoluchovski equations are particular representatives
of the characteristics of the first order PDE \eqref{eqgenbininterpreslim} or \eqref{eqgenbininterpreslim2}.

Let $R_j(x,b)$ be the payoff for the member of a coalition of size $j$. In our strategic setting,
the rates $C_{jk}(x,b)$ and $F_{jk}(x,b)$ should depend on the differences of these rewards before and after merging or splitting.
For instance, the simplest choices can be
\begin{equation}
\label{eqcoagstrateg1}
C_{kj}(x,b)=a_{j+k,k}  (R_{k+j}- R_k)^+
+a_{j+k,j}(R_{k+j}- R_j)^+,
\end{equation}
with some constants $a_{lk}\ge 0$
reflecting the assumption that merging may occur whenever it is beneficial for all members
concerned but weighted according to the size of the coalitions involved. Similarly
\begin{equation}
\label{eqfragstrateg1}
F_{kj}(x,b)=\tilde a_{kj} (R_j-R_k)^++\tilde a_{k,k-j} (R_{k-j}-R_k)^+.
\end{equation}

A Markov approximation to dynamics \eqref{eqSmoleqcoagfrag} is constructed in the standard way,
which is analogous to the constructions of approximating Markov chains described in the previous section
(for coagulation - fragmentation processes this Markov approximation is often referred
 to as the Markus-Lushnikov process, see e.g. \cite{Nor00}), namely, by attaching exponential
clocks to any pair of coalitions that can merge with rates $C_{kj}$ and to any coalition
that can split with rates $F_{kj}$. This leads to the Markov chain $X_h(t,x,b)$ on $h\Z^{fin}_+$
describing the process of {\it coalition formation} with the generator
\[
L^{coal}_{b,h}G (x)=\frac{1}{h}\sum_{i,j} C_{ij}(x,b) x_i x_j [G(x-he_i-he_j+he_{i+j})-G(x)]
\]
\begin{equation}
\label{eqSmolgencoagfrag}
+\frac{1}{h}\sum_i \sum_{j<i} F_{ij}(x,b) x_i [G(x-he_i+he_j+he_{i-j})-G(x)],
\end{equation}
of type \eqref{eqgenbininterpres} with the same identification of coefficients as above.

We shall propose here only the simplest result in this direction assuming that
the intensities of individual transition are uniformly bounded and uniformly Lipschitz,
 that is
\begin{equation}
\label{eqSmoleqcoagfragboundco}
C=\sup_{j,k} C_{jk}(x,b) <\infty, \quad F =\sup_j \sum_{k<j} F_{kj}(x,b) <\infty,
\end{equation}
\begin{equation}
\label{eqSmoleqcoagfragboundco2}
\begin{aligned}
& C(1)=\sup_{b,j,k} \|C_{jk} (.,b)\|_{C_{bLip}(\MC^+_{\le \la}(L))} <\infty, \\
& F(1) =\sup_{b,j} \sum_{k<j} \|F_{kj}(.,b)\|_{C_{bLip}(\MC^+_{\le \la}(L))} <\infty,
\end{aligned}
\end{equation}
with the Lyapunov function $L(j)=j$.
Notice however that the overall intensities are still unbounded (quadratic),
so that we are still far beyond the assumptions of Chapter  \ref{chbestres}.

For the function $L(j)=j$ the Markov chains $X_h(t,x,b)$ do not increase $L$.
Moreover,  \eqref{eqSmoleqcoagfragboundco} implies
\begin{equation}
\label{eqSmoleqcoagfragboundco1}
\begin{aligned}
& \|g\| \le 3C\la^2+3F\la, \\
& \|g\|_{bLip} \le 6C\la+3F+3(C(1)\la+F(1))\la,
\end{aligned}
\end{equation}
where we use the convention of the previous section that all functional norms are understood as $\sup_b$
of the norms of functions on $\MC^+_{\le \la}(L)$ with some fixed $\la$.
Hence the following result holds.

\begin{theorem}
\label{thMarkDisccontEvolucoal}
For a model of strategically enhanced coalition building subject to
\eqref{eqSmoleqcoagfragboundco} and \eqref{eqSmoleqcoagfragboundco2}
the conditions of Theorems \ref{th2inl1}, \ref{th3inl1} are satisfied for operators
\eqref{eqencoagfrag} and \eqref{eqSmolgencoagfrag} that represent particular cases of operators
\eqref{eqgenbininterpreslim} and \eqref{eqgenbininterpres}. Hence the corresponding Markov chains
$X_h(t,x,b)$ converge to the deterministic limit governed by the equation $\dot x=f(x,b)$
and the corresponding multi-step Markov decision problem converge to its deterministic limit.
\end{theorem}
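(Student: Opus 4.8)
The plan is to verify, one by one, the hypotheses of Theorems \ref{th2inl1} and \ref{th3inl1} for the generators \eqref{eqencoagfrag} and \eqref{eqSmolgencoagfrag}, and then to invoke those theorems verbatim. First I would record the identification already displayed above: \eqref{eqencoagfrag} and \eqref{eqSmolgencoagfrag} are the specialisations of the general operators \eqref{eqgenbininterpreslim} and \eqref{eqgenbininterpres} obtained by taking $\al_{i(j_1j_2)}^1=F_{ij_1}(x,b)x_i$ for $j_1+j_2=i$, $\al^2_{(i_1i_2)j}=C_{i_1i_2}(x,b)x_{i_1}x_{i_2}$ for $j=i_1+i_2$, and all remaining birth, death, single-migration and binary-migration coefficients equal to zero. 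Comparing \eqref{eqSmoleqcoagfrag}--\eqref{eqSmoleqcoagfragweak} with \eqref{eqgenbininterpreslim2} then shows that the vector field $g=(g_k)$ of the Smoluchovski system \eqref{eqSmoleqcoagfrag} is precisely the $g$ of \eqref{eqgenbininterpreslim2} for this choice of coefficients.

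Second I would check the two structural conditions required by those abstract theorems. \emph{Conditional positivity:} if $x_k=0$, every term in \eqref{eqSmoleqcoagfrag} carrying the factor $x_k$ drops out and the surviving $\sum_{j<k}C_{j,k-j}(x,b)x_jx_{k-j}+2\sum_{j>k}F_{jk}(x,b)x_j$ is non-negative on $\R^\infty_+$, so $g_k(x)\ge0$. \emph{Non-increase of $L(j)=j$:} for a merging transition $i_1,i_2\to i_1+i_2$ one has $L(i_1+i_2)=L(i_1)+L(i_2)$, and for a splitting transition $i\to j,\,i-j$ one has $L(j)+L(i-j)=L(i)$; thus $L$ is in fact conserved along every transition (this is conservation of the number of players $N=\sum_k kn_k$), in particular not increased, which is exactly the standing assumption of Theorems \ref{th2inl1} and \ref{th3inl1}. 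Since $L(j)=j\to\infty$, the sets $h\Z^{fin}_+\cap\MC^+_{\le\la}(L)$ are finite and the chains $X_h(t,x,b)$ are genuine finite-state Markov chains on them.

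Third comes the regularity bookkeeping. The total intensity functional \eqref{eqgenbininterpreslim1a} reduces here to $\|\sum_{i,j}C_{ij}x_ix_j+\sum_i\sum_{j<i}F_{ij}x_i\|_{C(\MC^+_{\le\la}(L))}$, which by \eqref{eqSmoleqcoagfragboundco} is at most $C\la^2+F\la$, so $\ka(L,\la)<\infty$; and \eqref{eqSmoleqcoagfragboundco2}, fed through the product rule applied to the quadratic merging terms and the linear splitting terms on $\MC^+_{\le\la}(L)$, gives $g\in C_{bLip}(\MC^+_{\le\la}(L))$ with the explicit bounds \eqref{eqSmoleqcoagfragboundco1}. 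The Lyapunov function is $L(j)=j$, so the scaling hypothesis $L(j)/j^\ga\to1$ of Theorem \ref{th2inl1} holds with $\ga=1$. With all hypotheses in place, Theorem \ref{th2inl1} yields the weak convergence $U^t_h\to U^t$, i.e.\ $X_h(t,x(h),b)$ converges in distribution to the solution $X(t,x,b)$ of $\dot x=g(x,b)$ with the explicit rate $(ht)^{1/3}$, and Theorem \ref{th3inl1} (together with the extensions of Chapter \ref{strategicprincipal} noted after it) gives the convergence of the iterated Shapley operators, that is, of the multi-step Markov decision problem to its deterministic limit.

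The one genuinely non-routine step, which I would isolate as a lemma, is the $l^1\to l^1$ Lipschitz estimate \eqref{eqSmoleqcoagfragboundco1} for the quadratic Smoluchovski field on the ball $\MC^+_{\le\la}(L)$: one must bound $\sum_k|g_k(x,b)-g_k(y,b)|$ by a constant times $\|x-y\|_{l^1}$, uniformly in $b$, by splitting each increment $C_{ij}(x,b)x_ix_j-C_{ij}(y,b)y_iy_j$ as $(C_{ij}(x,b)-C_{ij}(y,b))x_ix_j+C_{ij}(y,b)(x_i-y_i)x_j+C_{ij}(y,b)y_i(x_j-y_j)$ and summing over $k=i+j$ with the aid of $\sum_{i,j}x_ix_j=\|x\|^2\le\la^2$ and the Lipschitz bound $C(1)$ of \eqref{eqSmoleqcoagfragboundco2}; the fragmentation contributions are handled identically but are only linear in the state, hence easier. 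Everything else is a direct citation of the abstract convergence theorems of Section \ref{secconvcount}.
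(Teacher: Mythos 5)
Your proposal is correct and follows essentially the same route as the paper, which proves the theorem by exactly this verification: $L(j)=j$ is conserved by merging and splitting, the bound \eqref{eqSmoleqcoagfragboundco} controls the total intensity \eqref{eqgenbininterpreslim1a} by $C\la^2+F\la$, and \eqref{eqSmoleqcoagfragboundco2} yields the $C_{bLip}$ estimates \eqref{eqSmoleqcoagfragboundco1} for $g$, after which Theorems \ref{th2inl1} and \ref{th3inl1} are cited directly. The only difference is that you spell out the trilinear splitting behind the $l^1$-Lipschitz bound \eqref{eqSmoleqcoagfragboundco1}, which the paper states without derivation.
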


\section{Strategically enhanced preferential attachment on evolutionary background}
\label{secbirthdeath}

A natural and useful extension of the theory presented above can be obtained by the
inclusion in our pressure-resistance evolutionary-type game the well known model
of linear growth with preferential attachment (Yule, Simon and others,
see  \cite{SimRoy} for review) turning the latter into a strategically enhanced preferential attachment model
that includes evolutionary-type interactions between agents and a major player having tools
to control (interfere into) this interaction.

We shall work with the general framework of Theorem \ref{th2inl1}, having in mind that the basic
examples of the approximating Markov chains $X_h(t,x(h))$ can arise from the merging and splitting coalition model
of the previous section (with generator \eqref{eqSmolgencoagfrag})
or from mean-field interacting particle systems of Chapters \ref{chapintkolMaCor} and \ref{chbestres} (for instance,
\eqref{eqQmapresres}), where the number of possible states becomes infinite, which can be looked at as
describing  the evolution of coalitions resulting from individual migrations from one coalition to another.
Denoting by $j$ the size of coalitions (rather than the type of an agent, as in \eqref{eqQmapresres}),
the generator of this Markov chain becomes an infinite-dimensional version of \eqref{eqdefgenmeanfieldchainnorm1aa}:
\begin{equation}
\label{eqdefgenmeanfpool2modrep}
L^{migr}_{b,h}G (x)=\frac{1}{h}\sum_{i,j} \ka x_i x_j
[R_j(x,b)-R_i(x,b)]^+[G\left(x-h e_i+h e_j\right)-G(x)],
\end{equation}
where $R_j(x,b)$ is the payoff to a member of a coalition of size $j=1,2, \cdots$. The Markov chain with generator
\eqref{eqdefgenmeanfpool2modrep} describes the process where agents  can move from one coalition to another
choosing the size of the coalition that is more beneficial under the control $b$ of the principal.

The most studied form of preferential attachment evolves by the discrete time injections
of agents (see \cite{BaAl99}, \cite{DeMo13}, \cite{SimRoy} and references therein).
Along these lines, we can assume that with time intervals $\tau$ a new agent enters the system
in such a way that with some probability $p(x,b)$ (which, unlike the standard model, can now depend
on the distribution $x$ and the control parameter $b$ of the principal) she does not enter
any of the existing coalitions (thus forming a new coalition of size $1$), and with probability $1-p(x,b)$
she joins one of the coalitions, the probability to join a coalition being proportional to its size
(this reflects the notion of preferential attachment coined in \cite{BaAl99}).

A continuous time version of these evolutions can be modeled by a Markov process, where the injection occurs with
some intensity $\al (x,b)$ (that can be influenced by the principal subject to certain costs).
In other words, it can be included by adding to generator
\eqref{eqdefgenmeanfpool2modrep} or \eqref{eqSmolgencoagfrag}
the additional term of the type
\[
L_{b,h}^{att}G(x)=\frac{p(x,b) \al (x,b)}{h}[G(x+he_1)-G(x)]
\]
\[
+ \frac{(1-p(x,b))\al (x,b)}{h}\sum_{k=1}^{\infty} k x_k[G(x-h e_k+h e_{k+1})-G(x)],
\]
or denoting $a(x,b)=p(x,b)\al(x,b)$, $P(x,b)=(1-p(x,b))\al (x,b)$,
\begin{equation}
\label{eqgenatt}
L_{b,h}^{att}G(x)=\frac{a(x,b)}{h}[G(x+he_1)-G(x)]
+ \frac{P(x,b)}{h}\sum_{k=1}^{\infty} k x_k[G(x-h e_k+h e_{k+1})-G(x)].
\end{equation}
Division by $h=1/N_0$ in this equation makes the rate of growth of the number of agents
comparable to the total number of agents. If it were of smaller scale, it would not influence
significantly the limiting LLN evolution.

The limiting generator obtained form \eqref{eqgenatt} as $h\to 0$ is the first order operator
\begin{equation}
\label{eqprefattgen}
\La_{b,h}^{att}G(x)=a(x,b)\frac{\pa G}{\pa x_1}
+ P(x,b)\sum_{k=1}^{\infty} k x_k\left[\frac{\pa G}{\pa x_{k+1}}-\frac{\pa G}{\pa x_k}\right],
\end{equation}
with the characteristics or the kinetic equations
\begin{equation}
\label{eqprefattgenchar}
\dot x_j=\de^j_1 a(x,b)+ P(x,b)[(j-1) x_{j-1}-jx_j].
\end{equation}

Combining this with the processes given by \eqref{eqdefgenmeanfpool2modrep} or \eqref{eqSmolgencoagfrag}
yields the model of {\it coalition building with the influx of new agents} or the
{\it strategically enhanced preferential attachment model on the evolutionary background}.
To shorten formulas let us ignore the contribution of individual migration \eqref{eqdefgenmeanfpool2modrep}
and concentrate on the combination of \eqref{eqSmolgencoagfrag} and \eqref{eqprefattgen}, that is, on the chain
$X_h(t,x(h))$ generated by the operator
\begin{equation}
\label{eqprefattmetrgsplit}
L_{b,h} =L_{b,h}^{att}+L_{b,h}^{coal},
\end{equation}
with the limiting generator  $\La_{b,h} =\La_{b,h}^{att}+\La_{b,h}^{coal}$.
The corresponding characteristics represent the controlled
(via discrete or continuous-time choice of parameter $b$ by the principal)
 infinite-dimensional ODEs combining \eqref{eqprefattgenchar} and
\eqref{eqSmoleqcoagfrag}:
\[
\dot x_k=g_k(x)=\de^j_1 a(x,b)+ P(x,b)[(j-1) x_{j-1}-jx_j]
\]
\begin{equation}
\label{eqprefattmetrgsplitchar}
+\sum_{j<k} C_{j,k-j}(x,b) x_j x_{k-j}-2\sum_j C_{kj}(x,b) x_j x_k
+2\sum_{j>k} F_{jk}(x,b) x_j -\sum_{j<k} F_{kj}(x,b) x_k.
\end{equation}

The proof that this system does in fact represent the dynamic LLN for the Markov chains defined by the generator
\eqref{eqprefattmetrgsplit} (i.e. the proof of the convergence of these Markov chains as $h\to 0$) requires advanced tools from
the theory of infinite-dimensional ODEs. It will be discussed in the next Section under very general assumption of unbounded rates,
namely under the assumption of the so called {\it additive bounds for rates}:
\begin{equation}
\label{eqaddboundrates}
a(x,b)+P(x,b) \le c,
\quad C_{kj}(x,b)\le c(k+j),
\quad \sum_{j<k} F_{kj}(x,b)\le cj
\end{equation}
with some constant $c$.

An important issue which we will not touch upon here is to understand the
controllability of the limiting (now in the sense $t\to \infty$) stationary solutions,
which may lead to the possibility for developing tools for influencing the power tails of distributions (Zipf's law)
appearing in many situations of practical interest, as well as the proliferation or extinction
of certain desirable (or undesirable) characteristics of the processes of evolution.

\section{Unbounded coefficients and growing populations}
\label{secunboundgrow}

This section goes much further in mathematical sophistication. We shall exploit the theory of the regularity and
sensitivity of ODEs in $l_1$ with unbounded coefficients. The key ingredient in our argument is
the following well-posedness and sensitivity result for kinetic equation \eqref{eqprefattmetrgsplitchar}.

\begin{theorem}
\label{thregsensODEl1addb}
For some $b$ let $a(x,b), P(x,b), C(x,b), F(x,b)$ be twice continuously differentiable in $x$
and satisfy the additive bound condition \eqref{eqaddboundrates} together with their derivatives,
that is, \eqref{eqaddboundrates} and the estimates
\begin{equation}
\label{eqaddboundrates1}
|\frac{\pa a(x,b)}{\pa x_p}|+|\frac{\pa P(x,b)}{\pa x_p}| \le c,
\quad  |\frac{\pa C_{kj}(x,b)}{\pa x_p}|\le c(k+j),
\quad  \sum_{j<k} |\frac{\pa F_{kj}(x,b)}{\pa x_p}|\le cj,
\end{equation}
\begin{equation}
\label{eqaddboundrates2}
|\frac{\pa a^2(x,b)}{\pa x_p \pa x_q}|+|\frac{\pa^2 P(x,b)}{\pa x_p \pa x_q}| \le c,
\quad  |\frac{\pa^2 C_{kj}(x,b)}{\pa x_p \pa x_q}|\le c(k+j),
\quad  \sum_{j<k} |\frac{\pa^2 F_{kj}(x,b)}{\pa x_p \pa x_q}|\le cj,
\end{equation}
uniformly for all $x,p,q$.

Then for any $x\in \MC^+(L^3)$ with the function $L_j=j$ there exists a unique global solution $X(t,x)$  of equation
\eqref{eqprefattmetrgsplitchar}, which also belong to $\MC^+(L^3)$ for all times. Moreover, the derivatives with respect
to the initial data
\[
\xi^p(t,x)=\frac{\pa X(t,x)}{\pa x_p},
\quad
\eta^{p,q}(t,x)=\frac{\pa^2 X(t,x)}{\pa x_p \pa x_q}
\]
are well defined for all times and belong to
the spaces   $\MC^+(L^2)$ and  $\MC^+(L)$ respectively, where they are uniformly bounded on bounded times and bounded $x$:
\begin{equation}
\label{eq1thregsensODEl1addb}
\sup_x \sum_j j |\xi^p_j (t,x)|\le C(\la, T)L_p,
\quad
\sup_x  \sum_j j^2 |\xi^p_j (t,x)|\le C(\la, T)L_p^2,
\end{equation}
\begin{equation}
\label{eq2thregsensODEl1addb}
\sup_x \sum_j j |\eta^{p,q}_j (t,x)|\le tC(\la, T)L_pL_q,
\end{equation}
uniformly for $t\le T$, where $\sup_x$ is over $x\in \MC_{\le \la}(L^3)$, that is, over $x$ satisfying the estimates
$\sum_j j^3 x_j \le \la$.
\end{theorem}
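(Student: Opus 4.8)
The plan is to treat \eqref{eqprefattmetrgsplitchar} as an ODE in the Banach space $l_1$ whose right-hand side $g$ is conditionally positive (every $g_j$ is $\ge 0$ when $x_j=0$, since the only negative contributions to $\dot x_j$ are proportional to $x_j$) and then to run the general machinery of Theorems \ref{ODERinfty} and \ref{lemonsenseBanach1}, but with the Lyapunov function $L^3$ (i.e. $j\mapsto j^3$) rather than $L$ itself. First I would verify the Lyapunov bound in the form $(L^3,g(x))\le a(L^3,x)+b$ on $\MC^+(L^3)$. For the coagulation terms this is the standard Smoluchowski computation: in the weak form \eqref{eqSmoleqcoagfragweak} with $\phi_j=j^3$, the merging contribution carries the factor $(j+k)^3-j^3-k^3 = 3jk(j+k)$, and combining this with the additive bound $C_{jk}\le c(j+k)$ gives a term bounded by a constant times $(\sum_j j^2 x_j)^2$, which on $\MC^+_{\le\la}(L^3)$ is bounded since $\sum j^2 x_j\le(\sum j^3 x_j)(\sum x_j)^{1/2}$-type interpolation — more simply, on $\MC^+_{\le\la}(L^3)$ one has $\sum_j j^2 x_j\le\la^{2/3}(\sum_j x_j)^{1/3}$, and one needs $\sum x_j$ controlled too, which follows because the influx term $a(x,b)$ is bounded and splitting/merging do not create coalitions out of nothing at a rate faster than $c$ per unit mass. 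The preferential-attachment term contributes $(k+1)^3-k^3=3k^2+3k+1$ times $kx_k$, i.e.\ $O(\sum k^3 x_k)$, which is the linear-in-$(L^3,x)$ part giving the constant $a$; the injection term $a(x,b)$ adds a bounded amount, giving $b$. So $L^3$ is a (not necessarily critical) Lyapunov function, and Theorem \ref{ODERinfty} yields a unique global solution $X(t,x)\in\MC^+_{\le\la(t)}(L^3)$.

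Second, I would establish Lipschitz continuity of $g$ on each $\MC^+_{\le\la}(L^3)$, which is what Theorem \ref{ODERinfty} needs for uniqueness and for \eqref{eq2ODERinfty}, and $C^2$-regularity of $g$ there, which is what Theorem \ref{lemonsenseBanach1} needs. The $C^2$ bounds \eqref{eqaddboundrates1}--\eqref{eqaddboundrates2} are precisely tailored so that $\partial g_k/\partial x_p$ and $\partial^2 g_k/\partial x_p\partial x_q$ inherit the same additive structure as $g_k$ itself: differentiating the quadratic coagulation terms produces terms like $(\partial C_{jk}/\partial x_p)x_jx_k$ and $C_{jk}x_j$ (from the explicit $x$-dependence), all of which are again $O((j+k))$-weighted, and the attachment terms differentiate into $O(k^2)$-weighted expressions. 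The point is to bound these \emph{as operators with the weight $L^3$ on the target}, which is why the theorem asserts $\xi^p\in\MC^+(L^2)$ and $\eta^{p,q}\in\MC^+(L)$ rather than merely $\in l_1$: one loses one power of $L$ per derivative (because $e_p$, as an initial perturbation, sits in $\MC^+(L^3)$ scaled by $L_p^3$, but the $C^2$-bounds only control things with one fewer power).

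Third — and this is where the real work is — I would derive the weighted a priori estimates \eqref{eq1thregsensODEl1addb}--\eqref{eq2thregsensODEl1addb} directly from the variational equations \eqref{eq5alemonsenseBanach1} and \eqref{eq6blemonsenseBanach1}, rather than quoting \eqref{eq1lemonsenseBanach10}--\eqref{eq4lemonsenseBanach1} verbatim (those were proved under a genuine Lipschitz bound on $g$ in the $l_1$-norm, which fails here). Concretely, for $\xi^p(t,x)$ satisfying $\dot\xi^p_k=\sum_m (\partial g_k/\partial x_m)(X(t,x))\,\xi^p_m$ with $\xi^p(0)=e_p$, I would test with the weight $j$ and with $j^2$: for the weight $j$ one forms $\frac{d}{dt}\sum_j j|\xi^p_j|$, bounds it using the additive structure of $\partial g/\partial x$ by $C(\la,T)\sum_j j|\xi^p_j| + (\text{lower order involving }\sum_j j^2|\xi^p_j|)$, getting a coupled pair of differential inequalities in $(\sum j|\xi^p_j|,\sum j^2|\xi^p_j|)$ closed by Gronwall; the boundedness of $X(t,x)$ in $\MC^+_{\le\la(t)}(L^3)$ (from Step 1) is what makes the coefficients in these inequalities finite. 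The initial data give $\sum_j j|\xi^p_j(0)|=L_p$ and $\sum_j j^2|\xi^p_j(0)|=L_p^2$, producing the two bounds in \eqref{eq1thregsensODEl1addb}. For $\eta^{p,q}$ the inhomogeneous term in \eqref{eq6blemonsenseBanach1} is $\sum_{m,l}(\partial^2 g_k/\partial x_m\partial x_l)\xi^p_m\xi^q_l$; weighting by $j$, this is controlled by $\|\partial^2 g\|$-type sums against $\xi^p\otimes\xi^q$, and here one uses the $L^2$-bound on $\xi$ from \eqref{eq1thregsensODEl1addb} to absorb the extra power of $j$ coming from the second derivative, yielding a source term bounded by $C(\la,T)L_p^2\cdot(\text{something})\le C(\la,T)L_pL_q$ after re-symmetrizing; the factor $t$ in \eqref{eq2thregsensODEl1addb} appears because $\eta(0)=0$ so the estimate comes from integrating a bounded source over $[0,t]$, as in \eqref{eq2lemonsenseBanach1}.

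The main obstacle I anticipate is the bookkeeping in Step 3: the variational system is infinite-dimensional with unbounded coefficients, so one must be careful that the formal differentiation is justified (existence of $\xi^p,\eta^{p,q}$ as genuine $l_1$-valued solutions of the variational equations). The clean way is to first prove everything for the finite-dimensional truncations (projecting onto the first $n$ coordinates and cutting off rates, as in the remark preceding Theorem \ref{ODERinfty}), obtain the weighted bounds \emph{uniformly in the truncation parameter} using exactly the Gronwall arguments above (the bounds depend only on $c$, $\la$, $T$, not on $n$), and then pass to the limit using compactness of $\MC^+_{\le\la}(L^3)$ in $l_1$ together with the uniform weighted estimates — the higher weight $L^3$ versus the working weight $L^2$ for $\xi$ and $L$ for $\eta$ gives exactly the tightness (uniform integrability of the tails) needed for this limiting step. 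Verifying that the limit of the truncated derivatives is the derivative of the limit is then a routine equicontinuity argument, and the uniqueness from Step 1 pins down the limit.
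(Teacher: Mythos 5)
You should first be aware that the paper does not prove Theorem \ref{thregsensODEl1addb} at all: it is stated with a pointer to the sensitivity results of \cite{Ko10} and to Chapter 3 of \cite{Konewbook}, where the full argument occupies a substantial amount of space. Your overall strategy --- bootstrap the moments $(1,x),(L,x),(L^2,x),(L^3,x)$ by Gronwall to get global well-posedness in $\MC^+(L^3)$, then derive weighted a priori estimates for the first and second variational equations, and justify everything by finite-dimensional truncations with bounds uniform in the truncation parameter --- is indeed the strategy of those references. One small caveat in your Step 1: the Lyapunov inequality for $L^3$ is not of the literal form \eqref{eqeqinRinfLyiap}, since the coagulation contribution to $(L^3,g(x))$ is of order $(L^3,x)(L,x)+(L^2,x)^2$, i.e.\ quadratic in the moments; so Theorem \ref{ODERinfty} cannot be invoked verbatim with $L$ replaced by $L^3$, and you must first close the lower moments (which do satisfy linear Lyapunov bounds thanks to $a+P\le c$ and $\sum_{j<k}F_{kj}\le cj$) and then treat $(L,x(t))$ as a bounded time-dependent coefficient.

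The genuine gap is in Step 3, where you assert that the differential inequalities for $\sum_j j|\xi^p_j|$ and $\sum_j j^2|\xi^p_j|$ close as a coupled pair "using the additive structure of $\pa g/\pa x$". A term-by-term estimate of $\pa g_k/\pa x_m$ does \emph{not} close this system: the differentiated coagulation gain term, weighted by $k^2$ and summed over $k$, produces a quantity of order $\sum_l(l+m)^3x_l\,|\xi_m|$, whose worst piece is $m^3(1,x)|\xi_m|$; summing over $m$ brings in $\sum_m m^3|\xi_m|$, i.e.\ the $L^3$-weighted norm of $\xi$, and the moment hierarchy escapes upward indefinitely. What closes it is the transport structure of the coagulation operator: the differentiated gain and loss terms must be tested against the weight $\phi$ \emph{together}, producing the combination $\phi_{m+l}-\phi_l-\phi_m$ (and, after inserting the signs needed to pass to $|\xi_k|$, the combination $\phi_{m+l}+\phi_l-\phi_m$), which for $\phi_k=k^2$ equals $2l(m+l)$ and removes the offending power of $m$. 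The same issue recurs, more delicately, for $\eta^{p,q}$: a crude bound on the source $\sum_{m,l}(\pa^2 g_k/\pa x_m\pa x_l)\xi^p_m\xi^q_l$ in the $\MC(L)$-norm gives $C(L_p^2L_q+L_pL_q^2)$, not the asserted $C\,L_pL_q$, and the stronger bound \eqref{eq2thregsensODEl1addb} is exactly what the proof of Theorem \ref{thLLNprefatt} consumes (it needs $\sum_i|\eta^{k,k}_i|\lesssim k^2$ so that only third moments of $x$ are required). Obtaining $L_pL_q$ requires exploiting the exact identity $\phi_{m+l}-\phi_m-\phi_l=0$ for the linear weight before absolute values are taken. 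So the "bookkeeping" you defer is precisely where the proof lives, and as written your sketch would not reach the stated powers of $L_p$; this is why the paper outsources the argument to a chapter-length treatment.
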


This theorem is a slight extension of the general sensitivity result from \cite{Ko10}.
Its full and detailed proof is given in Chapter 3 of \cite{Konewbook}.

We can now follow the line of arguments from Theorems \ref{thsiplestLLN} or \ref{th1inl1} to prove
the convergence of the Markov chains $X_h(t,x(h))$ generated by \eqref{eqprefattmetrgsplit}, that is, to obtain
the following main result of this Chapter. Let $U^t_h$ be the semigroup of the Markov chain $X_h(t,x(h))$,
and let $U^t$ be the semigroup generated by equation \eqref{eqprefattmetrgsplitchar},
that is $U^tG(x)=G(X(t,x))$.

\begin{theorem}
\label{thLLNprefatt}
Under the assumption of Theorem \ref{thregsensODEl1addb}
let the initial states $x(h)$ of the Markov chains $X_h(t,x(h))$ generated by
\eqref{eqprefattmetrgsplit} belong to $\MC^+_{\le \nu}(L^3)$ with some $\nu$ and converge
to a state $x$ so that $\sum_j j|x_j(h)-x_j| \to 0$, as $h\to 0$.
Then the Markov chains $X_h(t,x(h))$ converge in distribution to the deterministic
evolution $X(t,x)$ and moreover, for any $G\in C^2(l_1)$ and $T>0$,
\begin{equation}
\label{eq1thLLNprefatt}
\sup_{t\le T}\|U^t_hG(x(h))-U^tG(x)\|\le C_T(th+\sum_j j|x_j(h)-x_j|)\|G\|_{C^2(l_1)},
\end{equation}
\end{theorem}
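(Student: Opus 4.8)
The plan is to follow the standard semigroup comparison scheme already used in Theorems \ref{thsiplestLLN} and \ref{th1inl1}, but now the smoothness and boundedness estimates needed for the limiting flow $U^t$ come from the unbounded-coefficient sensitivity result, Theorem \ref{thregsensODEl1addb}, rather than from the bounded-rate lemmas of Section \ref{secprelonl1}. First I would fix $T>0$ and $\nu$, and use the additive bound \eqref{eqaddboundrates} together with $L$-monotonicity of $L_{b,h}$ with $L_j=j$ to check that the chain $X_h(t,x(h))$ started in $\MC^+_{\le\nu}(L^3)$ stays (with the relevant moments controlled) in $\MC^+_{\le\la}(L^3)$ for some $\la=\la(\nu,T)$ on $[0,T]$; since the splitting and merging terms do not increase $L=L^1$, and since \eqref{eqaddboundrates} gives $L^3$-subcriticality of the generator up to a bounded drift (a Gronwall argument on $\E\sum_j j^3 X_h(t)_j$), the third moments remain bounded uniformly in $h$ and $t\le T$. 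This confines the whole analysis to the compact-moment set $\MC^+_{\le\la}(L^3)$.

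Next I would write, exactly as in \eqref{eqcompprop},
\[
(U^t-U^t_h)G=\int_0^t U^{t-s}_h(\La_b-L_{b,h})U^sG\,ds,
\]
so that $\|(U^t-U^t_h)G\|\le t\sup_{s\le t}\|(\La_b-L_{b,h})U^sG\|$ by the contraction property of $U^{t-s}_h$ on $C(\MC^+_{\le\la}(L^3))$. The consistency estimate \eqref{eqgenbininterpreslim1} bounds $\|(\La_b-L_{b,h})U^sG\|$ by $8h\,\ka(L,\la)\,\|U^sG\|_{C^2}$, where now $\ka(L,\la)$ is the $\sup$ of the $C(\MC^+_{\le\la}(L^3))$-norm of the aggregate intensity \eqref{eqgenbininterpreslim1a}; by the additive bounds \eqref{eqaddboundrates} this aggregate intensity is bounded by a constant times $\sum_j (j x_j + \text{quadratic merging terms})$, which is finite on $\MC^+_{\le\la}(L^3)$ — indeed the quadratic coagulation rate $\sum_{i,j}C_{ij}x_ix_j\le c\sum_{i,j}(i+j)x_ix_j\le 2c\la\,(\sum_j x_j)$ is controlled by the first and third moments. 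So the heart of the matter is the bound on $\|U^sG\|_{C^2(\MC^+_{\le\la}(L^3))}$.

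That bound is supplied by Theorem \ref{thregsensODEl1addb}: differentiating $U^sG(x)=G(X(s,x))$ twice in $x$ gives
\[
\frac{\pa^2}{\pa x_p\pa x_q}U^sG(x)=\sum_k \frac{\pa G}{\pa x_k}(X(s,x))\,\eta^{p,q}_k(s,x)
+\sum_{k,l}\frac{\pa^2 G}{\pa x_k\pa x_l}(X(s,x))\,\xi^p_k(s,x)\,\xi^q_l(s,x),
\]
and since $\|G\|_{C^2(l_1)}$ controls $\sup_k|\pa G/\pa x_k|$ and $\sup_{k,l}|\pa^2G/\pa x_k\pa x_l|$, estimates \eqref{eq1thregsensODEl1addb}--\eqref{eq2thregsensODEl1addb} (with $p=q=1$, so $L_p=L_q=1$, giving $\sum_k|\xi^1_k|\le\sum_k k|\xi^1_k|\le C(\la,T)$ and $\sum_k|\eta^{1,1}_k|\le sC(\la,T)$) yield $\|U^sG\|_{C^2}\le C_T\|G\|_{C^2(l_1)}$ uniformly on $[0,T]$. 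Plugging this back gives the $th$-part of \eqref{eq1thLLNprefatt}. The initial-data part is handled as in \eqref{eq2thsiplestLLN}: $|U^tG(x(h))-U^tG(x)|\le \|U^tG\|_{Lip}\,\|x(h)-x\|$, and \eqref{eq1thregsensODEl1addb} with $p=1$ gives $\|U^tG\|_{Lip}\le C_T\|G\|_{bLip}$, while $\|x(h)-x\|_{l_1}\le\sum_j j|x_j(h)-x_j|$; combining and absorbing constants yields \eqref{eq1thLLNprefatt}, and weak convergence follows by density of $C^2(l_1)$ in $C(\MC^+_{\le\la}(L^3))$ plus uniform boundedness of the third moments.

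The main obstacle is the bookkeeping for the unbounded quadratic coagulation term: one must be sure that, along the prelimiting chain and along the limiting flow alike, the second and third moments stay finite and bounded uniformly in $h$, because every estimate above ($L$-monotonicity confinement, the consistency constant $\ka(L,\la)$, and the sensitivity bounds from Theorem \ref{thregsensODEl1addb}) is valid only on $\MC^+_{\le\la}(L^3)$. The subcriticality/moment propagation for $L^3$ under additive bounds is the delicate point and is exactly what Theorem \ref{thregsensODEl1addb} (and the Markov-side analogue, proved by Dynkin's formula applied to the test function $x\mapsto\sum_j j^3 x_j$ truncated appropriately) is designed to deliver; once that confinement is in hand, the rest is the by-now-routine Duhamel comparison.
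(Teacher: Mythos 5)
Your overall skeleton (Duhamel comparison of propagators plus the sensitivity estimates of Theorem \ref{thregsensODEl1addb}) is the same as the paper's, but there is a genuine gap at the central step: you invoke the blanket consistency bound \eqref{eqgenbininterpreslim1}, which requires a \emph{uniform} bound on $\|U^sG\|_{C^2}=\sup_{p,q,x}|\pa^2 U^sG/\pa x_p\pa x_q|$, and you claim this follows from \eqref{eq1thregsensODEl1addb}--\eqref{eq2thregsensODEl1addb} ``with $p=q=1$, so $L_p=L_q=1$''. Setting $p=q=1$ only controls the single $(1,1)$ entry of the Hessian. For general $p,q$ those estimates give $\sum_k|\xi^p_k|\le C(\la,T)L_p$ and $\sum_k|\eta^{p,q}_k|\le tC(\la,T)L_pL_q$ with $L_p=p$, so $|\pa^2 U^sG/\pa x_p\pa x_q|$ grows like $pq\,\|G\|_{C^2(l_1)}$ and the sup over $p,q$ is infinite. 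Hence $\|U^sG\|_{C^2}$ is \emph{not} bounded by $C_T\|G\|_{C^2(l_1)}$ and \eqref{eqgenbininterpreslim1} cannot be applied as stated. (The same issue silently affects your initial-data step: $\|U^tG\|_{Lip}$ in the $p$-th direction grows like $p$, which is precisely why the error must be measured in the weighted norm $\sum_j j|x_j(h)-x_j|$ rather than in $\|x(h)-x\|_{l_1}$; your final inequality happens to be the right one, but the intermediate claim $\|U^tG\|_{Lip}\le C_T\|G\|_{bLip}$ is false.)

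The paper avoids the blanket bound entirely: it Taylor-expands $(L_{b,h}-\La_b)U^tG$ term by term, so that each second derivative $\pa^2\phi/\pa x_p\pa x_q$ (of size $\sim pq$) appears multiplied by the corresponding transition weight ($k x_k$ for preferential attachment, $x_ix_j$ with additive rates $\sim(i+j)$ for merging, $ix_i$ for splitting), and the resulting sums are of the form $\sum_k kx_k\cdot k^2$, i.e.\ controlled by the third moment $\sum_k k^3x_k$ — this is exactly why the hypothesis $x(h)\in\MC^+_{\le\nu}(L^3)$ is needed. To repair your argument you must replace the appeal to \eqref{eqgenbininterpreslim1} by this weighted, term-by-term pairing of the quadratic growth of the Hessian of $U^sG$ against the decay of the jump weights; your correct observation that the aggregate intensity $\ka(L,\la)$ is finite does not substitute for this, since finiteness of $\ka$ and unboundedness of $\|U^sG\|_{C^2}$ enter the product \eqref{eqgenbininterpreslim1} independently.
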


\begin{proof}
Recall that
\[
\|G\|_{C^2(l_1)}=\sup|G(x)|+\sup_{x,j} \left|\frac{\pa G}{\pa x_j}\right|
+\sup_{x,j,k} \left|\frac{\pa ^2G}{\pa x_j \pa x_k}\right|.
\]

Following the argument of Theorems \ref{thsiplestLLN} or \ref{th1inl1},
we have to obtain a bound for the expression $|(L_{b,h}-\La_b)U^tG(x)|$,
which is uniform for $x\in \MC^+_{\le \nu}(L^3)$.
Looking at the expressions for $L_{b,h}$ and $\La_b$, we find that
 \[
 \|(L_{b,h}-\La_b)U^tG(x)\| =h a(x) \frac{\pa^2 \phi}{\pa x_1^2}+\frac12 hP(x)\sum_k k x_k
 \left(\frac{\pa^2 \phi}{\pa x_k^2}+\frac{\pa^2 \phi}{\pa x_{k+1}^2}-2\frac{\pa^2 \phi}{\pa x_k \pa x_{k+1}}\right)
 \]
\[
+ \frac12 h \sum_{j,k} \left( \frac{\pa^2 \phi}{\pa x_{k+j}^2}+\frac{\pa^2 \phi}{\pa x_j^2}+\frac{\pa^2 \phi}{\pa x_k^2}
+2\frac{\pa^2 \phi}{\pa x_k \pa x_j}-2\frac{\pa^2 \phi}{\pa x_k \pa x_{k+j}}-2\frac{\pa^2 \phi}{\pa x_j \pa x_{k+j}}\right)
\]
\[
+ \frac12 h \sum_{j,k} \left( \frac{\pa^2 \phi}{\pa x_{j-k}^2}+\frac{\pa^2 \phi}{\pa x_j^2}+\frac{\pa^2 \phi}{\pa x_k^2}
-2\frac{\pa^2 \phi}{\pa x_k \pa x_j}+2\frac{\pa^2 \phi}{\pa x_k \pa x_{j-k}}-2\frac{\pa^2 \phi}{\pa x_j \pa x_{j-k}}\right),
\]
where the derivatives are taken at points in small neighborhoods of $x$ and where $\phi(x)=U^tG(x)=G(X(t,x))$, so that
\[
\frac{\pa \phi}{\pa x_j}=\sum_i\frac{\pa G}{\pa x_i}(y)|_{y=X(t,x)} \xi^j_i(t,x),
\]
\[
\frac{\pa ^2\phi}{\pa x_p\pa x_q}=\sum_i\frac{\pa G}{\pa x_i}(y)|_{y=X(t,x)} \eta^{p,q}_i(t,x)
+\sum_{i,m}\frac{\pa ^2G}{\pa x_i \pa x_m}(y)|_{y=X(t,x)} \xi^p_i(t,x)\xi^q_m(t,x).
\]

Using the estimate for the derivatives of $X(t,x)$ from Theorem \ref{thregsensODEl1addb}, all terms here are
estimated in magnitude straightforwardly by some constants. For instance, let us look at the second term.
It can be estimated as
\[
P(x)\sum_k k x_k
 \left|\frac{\pa^2 \phi}{\pa x_k^2}+\frac{\pa^2 \phi}{\pa x_{k+1}^2}-2\frac{\pa^2 \phi}{\pa x_k \pa x_{k+1}}\right|
 \]
 \[
 \le 2c \|G\|_{C^2(l_1)} \sum_k k x_k \sum_{i,j} \left(|\xi^k_i(t,x)\xi^k_j(t,x)|+ |\xi^{k+1}_i(t,x)\xi^{k+1}_j(t,x)|+ |\xi^k_i(t,x)\xi^{k+1}_j(t,x)|\right)
 \]
 \[
 +2c \|G\|_{C^2(l_1)} \sum_k k x_k \sum_i \left(|\eta^{k,k}_i(t,x)| + |\eta^{k+1,k+1}_i(t,x)|+ |\eta^{k,k+1}_i(t,x)|\right)
 \]
 \[
 \le C \|G\|_{C^2(l_1)} \sum_k kx_k (k^2+(k+1)^2+k(k+1))\le C \|G\|_{C^2(l_1)},
 \]
 as required with $C$ depending on $\sum_k k^3 x_k$.
 Estimating analogously other terms we get the required uniform bound for $|(L_{b,h}-\La_b)U^tG(x)|$
 and then complete the proof as in  Theorem \ref{th1inl1}.
\end{proof}

Analogous to Section \ref{secconvcount} and using the theory of stability of the kinetic equations
 we can generalize this result to the case of Lipschitz coefficients.

 \begin{theorem}
\label{thLLNprefattLip}
Let $P(x),a(x), Q_{kl}(x), P^{m,k}(x)$ be continuous non-negative functions on $l_1$ such that
\begin{equation}
\label{eq1thwelposmergesplitpref}
P(x)+a(x) \le  c, \quad
C_{kl}(x) \le  c(k+l), \quad  \sum_{j<k} F^{kj}(x) \le  ck,
\end{equation}
\begin{equation}
\label{eq2thwelposmergesplitpref}
|P(x)-P(y)|+|a(x)-a(y)| \le  c (L, |x-y|), \quad
|C_{kl}(x)-C_{kl}(y)| \le  c(k+l) (L, |x-y|),
\end{equation}
\begin{equation}
\label{eq3thwelposmergesplitpref}
\sum_{j< k} |F^{kj}(x)-F^{kj}(y)| \le  ck (L, |x-y|),
\end{equation}
for $L=(1,2, \cdots )$ and a constant $c$.
Let the initial states $x(h)$ of the Markov chains $X_h(t,x(h))$ generated by
\eqref{eqprefattmetrgsplit} belong to $\MC^+_{\le \nu}(L^3)$ with some $\nu$ and converge
to a state $x$ so that $\sum_j j|x_j(h)-x_j| \to 0$, as $h\to 0$.
Then the Markov chains $X_h(t,x(h))$ converge in distribution to the deterministic
evolution $X(t,x)$ and moreover, for any Lipschitz continuous $G$ on $l_1$ and any $T>0$,
\begin{equation}
\label{eq4thwelposmergesplitpref}
\sup_{t\le T}\|U^t_hG(x(h))-U^tG(x(h))\|\le C_T(th)^{2/5}\|G\|_{Lip}.
\end{equation}
\end{theorem}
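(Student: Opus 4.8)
The plan is to run the three--layer approximation already used for Theorems~\ref{thconv1} and \ref{th2inl1} and, in the smooth case, for Theorem~\ref{thLLNprefatt}: replace the merely Lipschitz rates by $C^2$ ones, apply the smooth dynamic LLN of Theorem~\ref{thLLNprefatt} to the smoothed model while tracking how its constant degrades, control the gap between the two deterministic flows by the $l_1$--stability theory for the kinetic equations, and finally optimise the smoothing and truncation scales. The weak convergence in distribution then follows from the rate \eqref{eq4thwelposmergesplitpref} for $C^2$ test functions together with the usual density argument.

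First I would record well--posedness. Under \eqref{eq1thwelposmergesplitpref}--\eqref{eq3thwelposmergesplitpref} the r.h.s. $g$ of \eqref{eqprefattmetrgsplitchar} is conditionally positive, and $L=(1,2,\dots)$ is a Lyapunov vector for it (the attachment term makes the total mass grow only additively, while the merging/splitting terms conserve or decrease it), so by a Theorem~\ref{ODERinfty}--type argument there is a unique global $X(t,x)$; moreover, the kernels being of sum type, the third moments propagate on bounded intervals, i.e. $X(t,x)\in\MC^+_{\le\nu(t)}(L^3)$ whenever $x\in\MC^+_{\le\nu}(L^3)$ (this is precisely the regularity setting of Theorem~\ref{thregsensODEl1addb}, see \cite{Ko10}, \cite{Konewbook}). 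The Markov chains $X_h(t,x(h))$ started in $\MC^+_{\le\nu}(L^3)$ likewise stay in some $\MC^+_{\le\nu'}(L^3)$ for $t\le T$; below all function norms are taken on such an invariant set.

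For scales $\de,\tilde\de>0$ and a truncation level $m$, mollify the rates $a,P,C_{kl},F^{kj}$ by $\Phi_{\de,m}[\cdot]$ and the test function by $\tilde G=\Phi_{\tilde\de,m}[G]$, as in the proof of Theorem~\ref{th2inl1}. The smoothed rates are $C^2$, still obey the additive bounds \eqref{eqaddboundrates}, have first derivatives controlled through \eqref{eq2thwelposmergesplitpref}--\eqref{eq3thwelposmergesplitpref}, and second derivatives of order $\de^{-1}$ times the Lipschitz weights, so \eqref{eqaddboundrates1}--\eqref{eqaddboundrates2} hold with a constant of order $c(1+\de^{-1})$. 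With $\tilde X,\tilde U^t,\tilde U^t_h$ denoting the associated flow and semigroups, the decomposition
\[ U^t_hG-U^tG=(U^t_h-\tilde U^t_h)G+\tilde U^t_h(G-\tilde G)+(\tilde U^t_h-\tilde U^t)\tilde G+\tilde U^t(\tilde G-G)+(\tilde U^t-U^t)G \]
reduces the estimate to four pieces. (a) By the Lipschitz sensitivity of the kinetic equations (the $\MC^+(L^3)$--analogue of Proposition~\ref{ODErhs}) combined with the smoothing bound $\|\Phi_{\de,m}[\cdot]-\cdot\|\le C(m\de+\nu/m^3)$ on $\MC^+_{\le\nu}(L^3)$, one gets $|X(t,x)-\tilde X(t,x)|\le C_T t(m\de+\nu/m^3)$ and hence $\|(U^t-\tilde U^t)G\|\le C_T t(m\de+\nu/m^3)\|G\|_{Lip}$. (b) Its Markov counterpart follows from the propagator--comparison identity \eqref{eqcompprop} applied to $\tilde U^t_h$ and $U^t_h$, using $\|(L_{b,h}-\tilde L_{b,h})F\|\le C_T(m\de+\nu/m^3)\|F\|_{Lip}$, where the a priori unbounded quadratic rates are turned into a finite prefactor by the moment bound $\sum_k k^3x_k\le\nu$. (c) $\|\tilde U^t_h(G-\tilde G)\|\le\|G-\tilde G\|\le C(m\tilde\de+\nu/m^3)\|G\|_{Lip}$, and the same for $\tilde U^t(\tilde G-G)$, since $\tilde U^t_h,\tilde U^t$ are contractions. (d) Theorem~\ref{thLLNprefatt} applied to the smoothed model (both semigroups started at the common point $x(h)$) gives $\|(\tilde U^t_h-\tilde U^t)\tilde G\|\le C_T(th)\|\tilde G\|_{C^2(l_1)}$, but the constant is now degraded: the $\eta$--sensitivity estimate \eqref{eq2thregsensODEl1addb} for the smoothed characteristics carries a source of order $\|\tilde g^{(2)}\|\sim\de^{-1}$, and $\|\tilde G\|_{C^2(l_1)}\le C\tilde\de^{-1}\|G\|_{bLip}$, so this term is of order $C_T\,th\,\de^{-1}\tilde\de^{-1}\|G\|_{bLip}$.

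Adding (a)--(d), $|U^t_hG(x(h))-U^tG(x(h))|$ is bounded, up to $C_T\|G\|_{bLip}$, by a sum of the shape $t(m\de+m\tilde\de+\nu/m^3)+th\,\de^{-1}\tilde\de^{-1}$; optimising successively over $m$ (balancing $m\de\sim\nu/m^3$), then over $\tilde\de$, and finally over $\de$ yields the exponent $2/5$ and the bound \eqref{eq4thwelposmergesplitpref}. The main obstacle, and the reason the exponent drops from the $1/3$ of Theorem~\ref{th2inl1}, is step (d): in the present unbounded--rate setting the smooth LLN constant is accessible only through the $C^2$--sensitivity estimates \eqref{eq1thregsensODEl1addb}--\eqref{eq2thregsensODEl1addb} for the infinite--dimensional characteristics of the Smoluchovski/preferential--attachment system with quadratic coefficients, so one has to follow carefully how these estimates blow up as $\de\to0$ while keeping the $L^3$--moment prefactors uniform, and this blow--up then interacts in the final optimisation with the truncation terms $\nu/m^3$ that are forced by working in $\MC^+(L^3)$ rather than on a fixed finite simplex.
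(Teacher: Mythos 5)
Your architecture is the right one and is essentially the paper's: approximate the coefficients by finite-dimensional truncations composed with mollifiers, apply the smooth LLN of Theorem~\ref{thLLNprefatt} to the regularized model, control the two deterministic flows by the stability theory of the kinetic equations in $\MC^+(L^3)$, and optimise the scales. The gap is in the final quantitative step, which is precisely where the exponent $2/5$ has to come from, and as written your error terms do not produce it. Two items of bookkeeping are off. First, the truncation error must be measured in the weighted metric $(L,|x-y|)$, because that is the only metric in which the coefficients are assumed Lipschitz (see \eqref{eq2thwelposmergesplitpref}); replacing $x$ by its projection onto the first $m$ coordinates then costs $(L,|x-P_m x|)=\sum_{j>m}j\,x_j\le \nu/m^2$ under the third-moment bound, not $\nu/m^3$ (the latter is the plain $l_1$ error, which is not what the Lipschitz hypotheses let you use). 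Second, in the smooth LLN bound \eqref{eq1th1inl1} the second derivative of the test function and the second derivative of the vector field enter \emph{additively}, as $\|F^{(2)}\|+t\|F^{(1)}\|\,\|g^{(2)}\|$, so after mollifying both at scales $\tilde\de$ and $\de$ the singular term is of order $th(\tilde\de^{-1}+\de^{-1})$, not the product $th\,\de^{-1}\tilde\de^{-1}$ that you wrote.

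If one actually optimises your stated sum $t(m\de+m\tilde\de+\nu/m^3)+th\,\de^{-1}\tilde\de^{-1}$, the best exponent is $(th)^{3/11}$, not $(th)^{2/5}$; the claim that the balance ``yields the exponent $2/5$'' is asserted rather than checked. With the two corrections the total error becomes
\[
\frac{ht}{\de}+m\de+\frac{\nu}{m^2},
\]
and equalising the three terms forces $\de\sim m^{-3}$ and $ht\sim m^{-5}$, giving the common value $m^{-2}=(ht)^{2/5}$ and hence \eqref{eq4thwelposmergesplitpref}. This is exactly the computation in the paper's proof (which phrases the truncation as composing the coefficients with the projection $P_n$, with deviation of order $1/n^2$, rather than projecting the whole right-hand side). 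So nothing in your plan needs to be discarded, but the optimisation has to be redone with the weighted truncation error and the additive second-derivative term before the stated rate is actually established.
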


\begin{proof} The difference with the proof of Theorem \ref{th2inl1} is that here it is  more convenient
to approximate the coefficients $P(x),a(x), Q_{kl}(x), P^{m,k}(x)$ by their finite-dimensional
approximations $P(\PC_n(x)),a(\PC_n(x)), Q_{kl}(\PC_n(x)), P^{m,k}(\PC_n(x))$, rather than the whole r.h.s.
Let us denote the corresponding solution of the kinetic equations $X^n(t,x)$ and the corresponding
approximating Markov chain by $X_h^n(t,x)$.
As proved in  Chapter 3 of \cite{Konewbook}, the evolutions $X^n(t,x)$ and $X(t,x)$ are well defined and the
deviation of the transition operators of the evolutions
$X^n(t,x)$ and $X(t,x)$ is of order $1/n^2$ for the evolutions with the initial conditions
from $\MC^+_{\le \nu}(L^3)$ with any given $\nu$. Similarly one shows the same deviation for the transition
operators of the Markov chains $X_h^n(t,x)$ and $X_h(t,x)$. Approximating now the coefficients
 $P(\PC_n(x)),a(\PC_n(x)), Q_{kl}(\PC_n(x)), P^{m,k}(\PC_n(x))$ by the smooth functions
 $P_{\de}(\PC_n(x)),a_{\de}(\PC_n(x)), Q^{\de}_{kl}(\PC_n(x)), P_{\de}^{m,k}(\PC_n(x))$ as in Theorem \ref{th2inl1}
 we get new approximations with the deviations from the transitions without $\de$ of order $\de$.
 The required second derivative needed to apply  Theorem \ref{thLLNprefatt} to these last approximations
 is of order $n/\de$ again as in Theorem \ref{th2inl1}. Thus the total error term is of order
 \[
 \frac{ht}{\de}+n\de +\frac{1}{n^2}.
 \]
 Choosing $n$ and $\de$ that make all terms of equal decay in $ht$ yields the required order $(th)^{2/5}$.
\end{proof}

So far in this Chapter we kept the parameter $b$ of the principal in all our results, but did not use it.
However, in this chapter we extended the convergence results of Chapter \ref{chbestres} devoted to finite-state models
to the case of countable state space. Once this is done the results on the forward looking principal of Chapter \ref{strategicprincipal},
where parameter $b$ becomes an important controlled variable,
can be extended more or less automatically to this new setting of countable spaces, like in Theorem \ref{th3inl1}
of Section \ref{secconvcount}.      



\chapter{Appendix: Notes and compliments}
\label{chapbibl}

As a general reference for the analysis of multi-agent systems we can refer to
\cite{Shoham} and\cite{Ren11}. The development of multi-agent games and multi-agent
systems can be considered as a part of big data science contributing to the
understanding of smart technologies, the systems of systems and the internet of things.

The literature on both our main methodology (dynamic law of large numbers) and the
concrete areas of applications (inspection, corruption, etc)
 is quite abundant and keeps growing rapidly. In this section we try to overview
 at least the main trends most closely related to our methods and objectives.

1. {\it Dynamic LLN for optimal control}.

The dynamic law of large numbers (LLN)
for interacting particles is a well developed field of research in statistical mechanics
and stochastic analysis, see e.g. \cite{Ko10} for a review, as well as in the kinetic
models see e.g. \cite{AlmKinMod} and references therein. For agents, that is, in the
framework of controlled processes and games, the LLN was strongly advanced in the setting
of evolutionary biology leading to the deep analysis of the replicator dynamics
of evolutionary games and its various versions and modifications (see e.g. \cite{HS} or
\cite{KolMal1} for a general picture). A random multi-agent approximations for a
given dynamics can be cooked up in various ways. For instance, for the classical
replicator dynamics a multi-agent approximations was analyzed via migrations between
the states preserving the total number of agents  (see e.g. \cite{BeWe})
or via the model arising from the standard biological interpretation of payoffs
as the reproduction rate (thus changing the number of agents, see the last chapter
in \cite{KolMal1}).

For the model, where the pairwise interaction is organized in
 discrete time so that at any moment a given fraction $\al(N)$ of a homogeneous population of $N$ species
 is randomly chosen and decomposed into matching pairs, which afterwards experience simultaneous
transformations into other pairs according to a given distribution, the convergence to
 a deterministic ODE is proved in \cite{Boy95}. Paper
\cite{CorSar00} extends this setting to include several types of species
and the possibility of different scaling that may lead, in the limit $N\to \infty$, not only to ODE,
but to a diffusion process. In \cite{Ko04a} the general class of stochastic dynamic law of large number
is obtained from binary or more general $k$th order interacting particle systems
(including jump-type and L\'evy processes as a noise). The study of \cite{BeWe}
 concentrates on various subtle estimates for the deviation
of the limiting deterministic evolution from the approximating Markov chain for the evolution
that allows a single player (at any random time) to change her strategy to the strategy of another
randomly chosen player. Namely, it is shown, using the methods developed in \cite{FreWe},
that the probability that the maximum deviation of the
trajectories of the Markov chain with $N$ players from the limiting evolution exceeds some $\ep$
is bounded by the exponent $2me^{-\ep^2cN}$, where $m$ is the number of states and $c$ some constant.

 A related trend of research analyzes various choices of Markov approximation to repeated
games and their consequences to the question of choosing a particular Nash equilibrium amongst
the usual multitude of them. Seminal contribution \cite{KaMaRo} distinguishes specifically the
myopic hypothesis, the mutation or experimentation hypothesis and the inertia hypothesis in
building a Markov dynamics of interaction. As shown in \cite{KaMaRo} (with similar result in
\cite{You93}), introducing mutation of strength $\la$ and then passing to the limit $\la \to 0$
allows one to choose a certain particular Nash equilibrium, called a long run equilibrium
(or statistically stable, in the terminology of \cite{FoYou}) that for some coordination games
 turns out to coincide with the risk-dominant (in the sense of \cite{HaSe}) equilibrium. Further
important contributions in this direction include \cite{Elli}, \cite{BinSam}, \cite{BinSamVa}
showing how different equilibria could be obtained by a proper fiddling with noise (for instance
local or uniform as in \cite{Elli}) and discussing the important practical question of 'how long'
is the 'long-run' (for a recent progress on this question see \cite{KrYi13}). In particular paper
\cite{BinSamVa} exploits the model of 'musical chairs', where the changes of strategies occurs
according to a model, which is  similar to the children game, where the players having no spare
 chairs should leave the game. This paper discusses in detail the crucial question of the effect
of applying the limits $t\to \infty$, $\tau \to 0$ (the limit from discrete to continuous
replicator dynamics), $N\to \infty$ and $\la \to 0$ in various order. Further development
of the idea of local interaction leads naturally to the analysis of the corresponding Markov
processes on large networks, see \cite{LoPi06} and references therein. Some recent general
results of the link between Markov approximation to the mean field (or fluid) limit can be
found in \cite{LeBo13} and \cite{BeLeBo08}. Though in many papers on Markov approximation, the
switching probabilities of a revising player depends on the current distribution of strategies
used (assuming implicitly that this distribution is observed by all players) there exist also
interesting results (initiated in \cite{San01}, see new developments in \cite{San12}) arising
from the assumption that the switching of a revising player is based on an observed sample of
given size of randomly chosen other players.
Let us also mention papers \cite{Ga17}, \cite{Ga17} devoted to refined mean-field approximations,
where next terms are derived by assuming higher regularity of the coefficients.

A slightly different (but still very close) trend of research represents
the analysis of general stochastic approximation in association with the
 so-called method of ordinary differential equations,
see e. g. \cite{RoSa13} and \cite{Be96} and references therein.

The convergence results for a centrally controlled Markov chains of large number of constituents
to the deterministic continuous-time dynamics given by ordinary differential equations (kinetic
equations), dealt with in detail in our Chapters \ref{chbestres}, \ref{strategicprincipal}, were
initiated seemingly in papers \cite{GaGaLe}  and \cite{Ko12} from two different approaches, on
the level of trajectories (in spirit of paper \cite{BeWe}) and for the averages via the semigroup
 methods, as we do it here, respectively. In paper \cite{CecKol} the theory of coalition building
 was developed via the method of  \cite{GaGaLe}. An extension of the method of \cite{Ko12}
 is given in \cite{Averb17}.

The analysis of kinetic equations with the fractional derivatives was initiated in \cite{KochKondr17}.
However, the setting (and the equations discussed) of this paper is different from the one of our Section
\ref{secfracLLN}.

Proving the convergence of the process of fluctuations of multi-agent or multi-particle systems
from their dynamic LLN, that is, the dynamic central limit theorem (CLT) is of course harder than
getting the LLN itself. In fact, proving such CLT for the model of coagulation (or merge and splitting)
of Chapter \ref{secmodgrowthpres} was specifically mentioned as an important open problem in the
influential review \cite{Al}, the solution being provided in  \cite{Ko08} based on the general
analytic methodology \cite{Ko06}.  The latter methodology can be also directly applied to finite
state models of this book.

2. {\it Mean-field games (MFGs)}.

Mean-field games, which are dealt with in the second part of the book, present
a quickly developing area of the game theory. Mean-field games were initiated
 by Lasry-Lions \cite{LL2006} and Huang-Malhame-Caines \cite{HCM3}, \cite{HCM07}.
Roughly speaking they are dealing with the situations of many players with
similar interests, when the influence of each player on the overall outcome
becomes negligible, as the number of players tends to infinity. As insightful
illustrative examples for the theory one can mention two  problems (suggested
by P.-L. Lions): 1) 'when does the meeting start?', where agents choose time to
plan to arrive to a meeting which should start when certain fixed amount of
participants arrive and where the costs for participants arise from being too
early and too late (a game of time), and 2) 'where did I put my towel on the
beach?', where agents try to choose a place on the beach that should be possibly
closer to some point of interest (restaurant or see shore),
but possibly not too close to other agents (a congestion game).

At present there exist already several excellent surveys and monographs on various directions
of the theory,  see \cite{Ba13}, \cite{BenFr}, \cite{GLL2010}, \cite{Gomsurv}, \cite{Cain14},
\cite{CarDelbook18}. For other key recent development let us mention papers \cite{CarLac15},
\cite{Averb15}, \cite{Car13}, \cite{CarD13}, \cite{BasRa14}, \cite{KTY14}, \cite{TemBas14},
and references therein. Let us mention specifically the development of the mean-field games
with a major player, see \cite{Hu10}, \cite{NoCa13}, \cite{WaZh13}, \cite{LL2006}, where
also the necessity to consider various classes of players is well recognized,
see also \cite{Bens} and \cite{CarDel}.

The papers \cite{CMS2010} and \cite{Gom13}, as well as \cite{Gueant11}, \cite{Gueant11},
initiated the study of finite-state-space mean-field games that are the objects of our
analysis in the second part of the book. Paper  \cite{CMS2010}  develops the theory of
discrete-time MFGs showing, in particular, the convergence of the solutions of
backward-forward MFG consistency problem to a stationary problem, thus initiating a very
fruitful discussion of the precise links between stationary and time-dependent solutions.
Paper \cite{Gom13} deals already with more conventional continuous-time models proving
results on the existence and uniqueness of the solutions and the convergence of Nash
equilibria of approximating $N$-player games to a solution of the MFG consistency problem.
Papers \cite{Gueant11} and \cite{Gueant15} are devoted specifically to state spaces being
graphs of nontrivial geometry. All these finite-state models (their formulations and results)
are based on rather restrictive structure of control and payoff (for instance, the transitions
rates are linear functions of control parameter, strong convexity assumptions for payoffs
are assumed), which also yield quite specific form of the master equation, which in some cases
(for the so-called potential MFGs) is even reduced to a hyperbolic system of quasi-linear PDEs.
For models with more general dependence of dynamics and payoffs on the control parameters, the
analysis of MFGs was initiated in paper \cite{BasRa17}, \cite{KolMa15}, \cite{KolMa17}
developed further in this book. Of course, the simplest MFGs arise from the two-state models,
for which case many things can be calculated explicitly, see e.g. papers \cite{Besan} and
\cite{Gom14a}, where these two-state games are applied to the analysis of socio-economic models
of paradigm shifts in scientific research and of consumer choices of products, whose price depends
on the consumption rates. A setting for stationary problems with a finite lifetime of agents
(and thus with the varying number of agents) was suggested and analyzed in \cite{Wie}.

The applications of mean-field games are developing rapidly. For instance, \cite{Djehiche17}
and \cite{AchdLaur} analyse the problem of evacuation in a crowded building or room (say,
in case of a fire), opinion dynamics in social networks is analyzed in \cite{BaTemBa16}.
Paper \cite{BagDa14} deals with the demand management of
electrical heating or cooling appliances providing desynchronization of agents using a
bang-bang switching control. Paper \cite{BaBa16} deals with the dynamic of characteristic
functions (values of coalitions) in the transferable utility coalition games. Optimal
stopping games are dealt with in \cite{Nutz18}.  Examples with more economics content
include the standard models of the exploitation of common resources and the formation of
 prices that influence the total sales (see e.g. \cite{GraBen18}  and \cite{CarDelbook18}
 and references therein). As an example with a biological flavor one can mention the problem
 of flocking of birds, see  \cite{NoCa13} and \cite{CarDelbook18}.

Some preliminary ideas pointing into the direction of the MFG structure
can be traced back to paper \cite{Jovan} and references therein.

3. {\it Corruption and inspection games}.

Analysis of the spread of corruption in bureaucracy is a well recognized area
of the application of game theory, which attracted attention of many researchers.
General surveys can be found in \cite{Aidt}, \cite{Jain}, \cite{LeTsi98}, see also
monographs \cite{Rose-Ackerman1} and \cite{Rose-Ackerman2}. In his
Prize lecture \cite{Hurw}, L. Hurwicz gives an introduction in laymen terms of
various problems arising in an attempt to find out 'who will guard the guardians?'
and which mechanisms can be exploited to enforce the legal behavior?

In a series of papers \cite{LaMoMaRa09},\cite{LaMoMaRa08} the authors analyze
the dynamic game, where entrepreneurs have to apply to a set of bureaucrats
(in a prescribed order) in order to obtain permission for their business projects.
For an approval the bureaucrats ask for bribes with their amounts being considered
as strategies of the bureaucrats. This model is often referred to as petty corruption,
as each bureaucrat is assumed to ask for a small bribe, so that the large bureaucratic
losses of entrepreneurs occur from a large number of bureaucrats. This is an extension
of the classical ultimatum game, because the game stops whenever an entrepreneur
declines to pay the required graft. The existence of an intermediary undertaking the
contacts with bureaucrats for a fee may essentially affect the outcomes of this game.

In the series of works \cite{Vasinbook}, \cite{VasinKarUr10}, \cite{Nik14} the
authors develop an hierarchical model of corruption, where the inspectors of each
level audit the inspectors of the previous level and report their finding to the
inspector of the next upper level. For a graft they may choose to make a falsified
report. The inspector of the highest level is assumed to be honest but very costly
for the government. The strategy of the government is in the optimal determination
of the audits on various levels with the objective to achieve the minimal level of
 corruption with minimal cost. Related hierarchical models are developed in
\cite{GorUgUsbook14}, \cite{GorUgUsbook15}, where the stress is on the interaction
 of the three types of players:  benevolent dictator (the government), corrupted
 bureaucrat and an agent (a producer of goods), and in particular, on the conditions
 allowing for the stable development of the economy

Paper \cite{Stark14} develops a simple model to get an insight into the problem of
when unifying efforts result in strengthening of corruption. In paper \cite{Mal14}
the model of a network corruption game is introduced and analyzed, with the dynamics
of corrupted services between the entrepreneurs and corrupted bureaucrats
 propagating via the chain of  intermediary. In \cite{NgenZac} the dichotomy between
public monitoring and governmental corruptive pressure on the growth of economy
was modeled.  In \cite{LeeSig} an evolutionary model of corruption is developed for
ecosystem management and bio-diversity conservation.

In \cite{KoleMal17} a model of corruption with regard to psychological mimicry in the
administrative apparatus with three forms of corruption is constructed. It is given in
terms of the system of four differential equations describing the number of different
groups. The equilibrium states that allow to specify the dominant form of corruption
and investigate its stability, depending on the parameters of the psychological mimicry
and the rigor of anti-corruption laws are found.
In \cite{Mal17} the corruption dynamics is analyzed by means of the lattice model
similar to the three-dimensional Ising model: agents placed at the nodes of the corrupt
network periodically choose to perform or not to perform the act of corruption.
In \cite{Mal17a} the transportation problem of multi-agent interaction between
different goods' transporters with a corruption component is introduced and studied.
A statistical procedure of anti-corruption control of economic activity is proposed in
\cite{PichMalaf16}. A model of optimal allocation of resources for anticorruption purposes
is developed in \cite{NevMal15}. Various approaches to modeling corruption are collected
 in monograph \cite{Mal16}.

The research on the political aspects of corruption develops around the Acton's
dictum that 'power corrupts', where the elections serve usually as a major tool
of public control, see \cite{GiovSeid14} and references therein.

Closely related are the so-called inspection games,  see surveys e. g. in
\cite{AvSZ2002}, \cite{KolMal1}, \cite{KatKolYan}. Our evolutionary approach
was initiated in \cite{KoPaYa}, similar ideas can be found in \cite{Gubar17}.
Inspection games model non-cooperative interactions between two strategic parties,
called inspector and inspectee. The inspector aims to verify that certain regulations,
imposed by the benevolent principal he/she is acting for, are not violated by the
inspectee. On the contrary, the inspectee has an incentive to disobey the established
 regulations, risking the enforcement of a punishment fine in the case of detection.
 The introduced punishment mechanism is a key element of inspection games, since
deterrence is generally the inspector's highest priority. Typically, the inspector
has limited means of inspection at his/her disposal, so that his/her detection
efficiency can only be partial.
One of the first models was a two-person zero-sum recursive inspection game
proposed by Dresher \cite{Dresher}, where it was supposed that a given number $n$ of periods
are available for an inspectee to commit, or not, a unique violation,
and a given number $m\le n$ of one-period lasting inspections available
for the inspector to investigate the inspectee's abidance by the rules, assuming
 that a violator can be detected only if he/she is caught (inspected) in the act.
 Important extensions were given in papers \cite{Diamond} and \cite{Hopfinger}.
 This work initiated the application of inspection games to arms control and
 disarmament, see \cite{ACKvSZ1996} and references therein. This basic model was generalised
 in \cite{Masch66} to a non-zero-sum game adopting the notion of inspector leadership and showing
(among others) that the inspector's option to pre-announce and commit to a mixed
inspection strategy actually increases his/her payoff.

In \cite{Thom76}a similar framework was applied to investigate the problem of a
patroller aiming to inhibit a smuggler's illegal activity. In their so-called
customs-smuggler game, customs patrol, using a speedboat, in order to detect a
smuggler's motorboat attempting to ship contraband through a strait. They introduced
 the possibility of more than one patrolling boats, namely the possibility of two or
 more inspectors, potentially not identical, and suggested the use of linear programming
  methods for the solution of those scenario. A closed-form solution for the case of
two patrolling boats and three patrolling boats were provided in \cite{Bast91} and
\cite{Gar94} respectively. This research initiated the flux of literature on the
so-called {\it patrolling games}, see \cite{Alp11} and \cite{Alp13} for further development.

In a series of papers Von Stengel (see \cite{VonSten14} and references therein)
Von Stengel introduced a third parameter in Dresher's game, allowing multiple
violations, but proving that the inspector's optimal strategy is independent of the
 maximum number of the inspectee's intended violations. He studied several variation,
(i) optimising the detection time of a unique violation that is detected at the following
inspection, given that inspection does not currently take place, (ii)  adding different
rewards for the inspectee's successfully committed violations, and (iii) extending
Maschler's inspector leadership version under the multiple intended violations assumption.
Papers \cite{Ferg98} and \cite{Sakaguchi} studied  a similar three-parameter, perfect-capture,
sequential game, where: (i) the inspectee has the option to "legally" violate at an additional
cost; (ii) a detected violation does not terminate the game; (iii) every non-inspected
violation is disclosed to the inspector at the following stage.

Non-zero-sum inspection became actively studied in the 1980s,
in the context of the nuclear non-proliferation treaty (NPT).
The prefect-capture assumption was partly abandoned, and errors
of Type 1 (false alarm) and Type 2 (undetected violation given
that inspection takes place) were introduced to formulate the
so-called imperfect inspection games, see e.g. \cite{Canty}
and references therein for the solution of imperfect, sequential
and non-sequential games, assuming that players ignore any
information they collect during their interaction, where an
illegal action must be detected within a critical timespan
before its effect is irreversible. Imperfect inspection and
timely detection in the context of environmental control were
 developed in \cite{RothenZa}.

Avenhaus and Kilgour \cite{AK2004} developed a non-zero-sum, imperfect (Type 2 error) inspection game,
where a single inspector can continuously distribute his/her effort-resources between two
non-interacting inspectees, exempted from the simplistic dilemma whether to inspect or not.
They related the inspector's detection efficiency with the inspection effort through a non-linear
 detection function and derived results for the inspector's optimum strategy subject to its
convexity. Paper \cite{Hohzaki} extended this model, considering a similar $N+1$ players inspection game,
where the single inspection authority not only intends to optimally distribute his/her effort
among $N$ inspectee countries, but also among several facilities within each inspectee country.
These and related models are now presented in detail in the fundamental monograph \cite{AvKi2018}.

The methodology of the dynamic LLN, which we develop in this book, can be applied in fact to
almost all these models  allowing one to deal effectively with the situations when many agents
take part and some bulk characteristics of the dynamics of the game are of interest.

4. {\it Security and bioterrorism}.

Game-theoretic papers dealing with counterterrorism modeling were briefly mentioned
in Section \ref{seccyber-secint}. Our approaches with the LLN makes the bridge between
these papers and another trend of research, where the development of the extremists
activity is modelled by some system of ODEs of low dimension, which present the variations
of the models of the propagation of infectious diseases in epidemiology. The standard
abbreviations for the population classes in the latter theory are $S$ for susceptible,
$L$ or $E$ for latent or exposed (infected but yet not infectious), $I$ for infectious
and $R$ for recovered individuals. Depending on which classes are taken into account
several standard models were developed like $SI$ epidemics, $SEI$ epidemics, $SIR$ epidemics
and $SEIR$ (or $SLIR$ in other notations) epidemics. For instance, the latter model studies
the process of the evolution of the number of classes $S,E,I,R$ under the intuitively very
meaningful system of $4$ ODEs
\[
\dot S=-\la SI, \quad \dot L=\la XI -\al L, \quad \dot I=\al L -\mu I, \quad \dot R =\mu I
\]
with some constants $\al,\mu,\la$, and the $SIR$ model is obtained by replacing the
middle two equations by the equation $\dot I=\la XI -\mu I$, see reviews e.g. in
\cite{Heth00} or \cite{Rass03} for various modifications.

Modifying this class of models to the propagation of extremism, the authors of \cite{Cast03}
group the population in 4 classes: fanatics $F$, semifanatics (not fully converted) $E$,
susceptible $S$ and raw material (indifferent) $G$. With some reasonable
assumption on the propagation of influences via binary interaction the authors build a system
of ODEs on four variables with a quadratic r.h.s. (depending on certain numeric coefficients)
and study its rest points. In \cite{Sant08}, devoted concretely to modeling ETA (Basque
nationalist organization) in Spain, the authors distinguish the following classes of
population: those against independence, $E$, those striving for independence but without violence,
$N$, those supporting the fight for independence with violence, $V$, and the rest, $A$. Again
a reasonable system of ODEs on 4 variables is built and its predictions are compared with real
figures in attempt to evaluate the key parameters. In \cite{Goy14} the authors group the
population in three classes: extremists $E$, susceptible $S$ and reserved (isolated in jails).
Moreover they add an additional variable $G$ measuring the level of efforts of the government $G$,
and build and study the system of ODEs on four variables (depending on certain numeric coefficients).
Paper \cite{Udwadia} builds a model with three variables, the number of terrorists $T$, the number
of susceptible (to terrorist ideas) $S$ and the number of nonsusceptible $N$,
where the efforts of society are distributed between two types
of interference: direct military or police intervention (the analog of the preemptive measures of
\cite{RosSand04} discussed in Section \ref{seccyber-secint}) decreasing $T$, and propaganda or concessions
decreasing $S$, with the usual transitions between the groups via binary interactions. The system
of ODEs suggested in  \cite{Udwadia} looks like
\[
 \dot T=aTS-bT^2 +cT, \quad \dot S=-aTS -eT^2S+fT+gS, \quad \dot N=eT^2S-f_1T+hN
 \]
with constants $a,b,c,e,f,g,f_1,h$. The less intuitive quadratic and third order terms
with $b$ and $e$ reflect two methods above of the influence of the society. Rest points
of this dynamics are classified depending on the 'control parameters' $b$ and $e$.
Paper \cite{Saper08} builds a model with terrorist groups of size $T_1$ and $T_2$ acting
in two neighboring country with counterterrorist measures in these country, $N_1, N_2$,
measured in some units, the corresponding system of ODEs being
\[
\dot T_1 =-a_1 N_1 T_1 -b_1 \dot T_1 +g_{12} T_2, \,\, \dot T_2 =-a_2 N_2 T_2 -b_2 \dot T_2 +g_{21} T_1,
\,\,  \dot N_1=-\ga_1 T_1, \,\,  \dot N_2=-\ga_2 T_2.
\]

We showed just some examples of models. Their variety manifests that mathematical modeling
of these very complex  processes is effectively in the initial state without any consensus
about the relevant parameters and the ways of their estimations.

 All these models can be enhanced by our methodology by linking their evolutions with more real multi-agent
models and by considering the efforts of the government as a strategic parameter (in spirit of Chapter
\ref{strategicprincipal}) or including some optimization objective for the individuals of each group (in spirit of
Chapters \ref{chapintkolMaCor} or \ref{chapthreestate}), leading to the estimates of the investments
needed to control the situation.

An extensive introduction to various approaches for analysing terrorism can be found in \cite{Endersbook}
including statistical parameter estimations, linear regression, utility minimization on $\R^2_+$ (for the
 choice between proactive and defensive measures) and several game-theoretic formulations, for example,
 as social dilemmas (static 2 player games) or as dynamic modelling in the spirit of the entry deterrence
 games. The latter consider the question of committing or not a terrorist attack under the same setting as
 entering or not the market occupied by a monopolist.

 Let us also mention paper \cite{WrzaczekKapFeich17} (and several references therein)
 that uses the methods of deterministic optimal control to analyse governmental investments in
 the detection and interdiction of terrorist's attacks. The main model of this paper looks
 at the evolution of the number of undetected $X$ and detected $Y$ plots of attacks subject to the ODEs
 \[
 \dot X= \al -\mu X -\de (f-Y)X, \quad \dot Y =\de (f-Y)X -\rho Y,
 \]
 where $\al$ and $f$ are the control parameters of terrorists and the government respectively.

An extensive empirical analysis of crime models from the point of view of statistical physics is
developed in \cite{OrsonPerc15} and \cite{Perc17}.

Similar models can be used for the analysis of the propagation of scientific ideas. Paper
\cite{BetCin} uses all basic versions of deterministic epidemics above ($SEI$, $SIR$, $SEIR$) to
assess the propagations of the method of Feynmann's diagrams through the community of physicists,
the method that had many influential opponents at its initiation, like Oppenheimer in USA and
Lev Landau in USSR. Yet another twist of the problem concerns the illegal consumption of energy
resources, see \cite{Alesk05} and references therein.

Recently more attention has been paid to the models of cyber-security, as was discussed in Section
\ref{seccyber-secint} with reference to \cite{BenKaHo} for a review.

5. {\it Optimal resource allocation}. Approaches to the optimal allocation of resources are numerous
and are applied in various frameworks. For instance, one can distinguish resource allocation in the
framework of reliability theory, where one looks for the optimal amount of redundant units to sustain
the work of complex systems without interruption (see e.g.  \cite{Ushak13}), the approaches based on
the mechanism design methodology (distinguishing market and non-market mechanisms for allocations,
see e.g. \cite{Cond13} and references therein) and on the Bayesian
statistical inference (see e.g. \cite{Zhang17} and references therein). An approach from the swarm
intelligence (see \cite{BonaSwarmbook}) models the allocation of tasks and resources by the analogy
with the procedures found in the insect communities (ants or bees), and the market-based control by the
analogy with the market bidding - clearing mechanisms (see \cite{ClearMBCbook} and \cite{Lynch02}).
Closely related are the questions of the exploitation of limited common resources, often formulated
 as the fish wars, (see e.g. \cite{Mazabook}, \cite{MazRetFish}, \cite{KorKor}), project management
problems (see e. g.  \cite{NovikovProj07}), distribution of investments between economic sectors
(see \cite{TarasMgta16}) and general control of crowd behavior and the information propagation
through the crowds (see \cite{BarKor10}, \cite{BreerNovMob17} and references therein). Our approach
to optimal allocation, taken from \cite{Ko17},  deals with the distribution of the efforts of the principal
for better management of mean-field interacting particle systems and is also close in spirit to \cite{Nowzari17}.

\backmatter

\addcontentsline{toc}{chapter}{Bibliography}

\addcontentsline{toc}{chapter}{Index}
\printindex

\end{document}